\documentclass[11pt,a4paper,reqno]{amsart}

\usepackage[utf8]{inputenc}
\usepackage[T1]{fontenc}

\usepackage[left=1in,right=1in]{geometry}

\usepackage{amsmath}
\usepackage{amssymb}
\usepackage{amsfonts}
\usepackage{graphicx}
\usepackage{amsthm}
\usepackage{physics}
\usepackage{xfrac}
\usepackage[colorlinks=true,linkcolor=cyan,citecolor=magenta]{hyperref}

\usepackage{tikz}
\usetikzlibrary{hobby}

\usepackage{mathrsfs}
\usepackage{sobolev}
\DefaultSet{\Gs}
\usepackage{esint}
\usepackage{bbm}
\usepackage{mathtools}
\usepackage{cases}
\usepackage{enumerate}
\usepackage{stmaryrd}

\theoremstyle{plain}
\newtheorem{thm}{Theorem}[section]
\newtheorem{prop}[thm]{Proposition}

\newtheorem{lem}[thm]{Lemma}

\theoremstyle{definition}
\newtheorem{defi}[thm]{Definition}

\theoremstyle{remark}
\newtheorem*{rk}{Remark}

\newcommand\mrm[1]{\ensuremath{\mathrm{#1}}}

\newcommand{\wt}[1]{\widetilde{#1}}
\newcommand{\wh}[1]{\widehat{#1}}
\newcommand{\R}{\mathbb{R}}

\newcommand\pd{\partial}

\DeclareMathOperator{\dist}{\mathrm{dist}}
\DeclareMathOperator{\Div}{\mathrm{div}}
\newcommand*\lap{\mathop{}\!\mathbin{\triangle}}
\newcommand*\DD{\mathop{}\!\mathbin{\mathrm{D}}}

\newcommand*{\txfrac}[2]{\textstyle{\frac{#1}{#2}}}

\providecommand\given{} 
\newcommand\SetSymbol[1][]{\nonscript\:#1\vert \allowbreak \nonscript\: \mathopen{}}
\DeclarePairedDelimiterX\Set[1]\{\}{ \renewcommand\given{\SetSymbol[\delimsize]} #1 }

\newcommand{\ini}[1]{\qty(#1)_{\mathrm{I}}}
\newcommand{\itn}[1]{{#1}^{(n)}}
\newcommand{\itm}[1]{{#1}^{(n+1)}}

\newcommand{\vv}{{\vb{v}}}
\newcommand{\vh}{{\vb{h}}}
\newcommand{\vbu}{{\vb{u}}}

\newcommand{\vnu}{{\vb*{\nu}}}
\newcommand{\vmu}{{\vb*{\mu}}}

\newcommand{\vf}{{\vb{f}}}

\newcommand{\vj}{{\vb{j}}}
\newcommand{\veta}{{\vb*{\eta}}}
\newcommand{\vxi}{{\vb*{\xi}}}
\newcommand{\vV}{{\vb{V}}}
\newcommand{\vH}{{\vb{H}}}
\newcommand{\vW}{{\vb{W}}}
\newcommand*{\vhh}{{\vu{h}}}
\newcommand*{\vuu}{{\vu{u}}}
\newcommand*{\hh}{\qty(\txfrac{1}{2}\abs*{\vhh}^2)}

\newcommand{\Om}{{\Omega}}
\newcommand{\Gt}{{\Gamma_t}}
\newcommand{\vn}{{\vb{N}}}

\newcommand{\II}{{\vb{I\!I}}}
\newcommand{\Gs}{{\Gamma_\ast}}
\newcommand{\kk}{{\frac{3}{2}k}}
\newcommand{\vom}{{\vb*{\omega}}}
\newcommand{\gt}{{\gamma_\Gt}}

\newcommand{\OGs}{{\Omega\setminus\Gs}}

\newcommand{\X}{\mathcal{X}}
\newcommand{\id}{\mathrm{Id}}
\newcommand{\ls}{\Lambda_*}
\newcommand{\gmgm}{{\gamma_\Gamma}}

\newcommand{\K}{{\mathfrak{K}}}
\newcommand{\ka}{{\varkappa_a}}

\newcommand{\LL}{{\mathscr{L}}}
\newcommand{\f}{{\mathfrak{f}}}
\newcommand{\g}{{\mathfrak{g}}}
\newcommand{\baf}{\bar{\f}}
\newcommand{\E}{\mathfrak{E}}

\newcommand{\h}{{\mathcal{H}}}
\newcommand{\n}{{\mathcal{N}}}

\newcommand{\opA}{{\mathscr{A}}}
\newcommand{\opR}{{\mathscr{R}}}
\newcommand{\opC}{{\mathscr{C}}}
\newcommand{\opB}{{\mathscr{B}}}
\newcommand{\Pb}{{\mathbb{P}}}
\newcommand{\opb}{{\mathbf{B}}}
\newcommand{\opf}{{\mathbf{F}}}
\newcommand{\opg}{{\mathbf{G}}}
\newcommand{\opF}{{\mathscr{F}}}
\newcommand{\opG}{{\mathscr{G}}}
\newcommand{\opd}{{\mathcal{D}}}

\newcommand{\Dt}{{\mathbb{D}_t}}

\newcommand{\Dts}{{\mathbb{D}_{t\ast}}}
\newcommand{\Dbt}{{\mathbb{D}_{\beta}}}

\newcommand{\less}{\lesssim_{\Lambda_*}}
\newcommand{\lesq}{\lesssim_{Q}}

\numberwithin{equation}{section}

\usepackage[type1]{garamondlibre}

\begin{document}
	
	\title[Plasma-Vacuum Problems]{On the Free Boundary Problems for the Ideal Incompressible MHD Equations}
	\author[S. Liu]{Sicheng Liu}
	\address{The Institute of Mathematical Sciences, the Chinese University of Hong Kong, Hong Kong.}
	\email{scliu@link.cuhk.edu.hk}
	
	\thanks{This is part of the Ph.D. thesis of the first author written under the guidance of the second author at the Institute of Mathematical Sciences, the Chinese University of Hong Kong. This research is supported in part by Zheng Ge Ru Foundation, Hong Kong RGC Earmarked Research Grants CUHK-14301421, CUHK-14300819, CUHK-14302819, CUHK-14300917, and the key project of NSFC (Grant No. 12131010).}
	
	\author[Z. Xin]{Zhouping Xin}
	\address{The Institute of Mathematical Sciences, the Chinese University of Hong Kong, Hong Kong.}
	\email{zpxin@ims.cuhk.edu.hk}
	
	\date{\today}
	
	\subjclass[2020]{76W05 (76B03, 76B47, 35Q35, 76E25)}
	
	\keywords{Plasma-vacuum interface, free boundary, ideal MHD, local well-posedness}

	\begin{abstract}
		We investigate the general plasma-vacuum interface problems for the ideal incompressible MHD equations with or without surface tension and prove their nonlinear local well-posedness in standard Sobolev spaces under either non-zero surface tension or the stability condition that the magnetic fields are everywhere non-collinear on the interface. In particular, the results show that both capillary forces and tangential magnetic fields can stabilize the motion of the plasma-vacuum interfaces. Moreover, the vanishing surface tension limit results are established under the Rayleigh-Taylor sign condition or the non-collinearity condition. All these results hold with no graph assumption on the free interface.
	\end{abstract}
	
	\maketitle

	\tableofcontents
	
	\section{Introduction}
	\subsection{Formulations of the Problems}
	Magnetohydrodynamics (MHD) concerns the correlation between magnetic fields and electrically conductive fluids (like plasmas, liquid metals, electrolytes, and saltwater), whose applications span various domains, including astrophysics, geophysics, and engineering. It is imperative to comprehend the fundamental principles and intrinsic phenomena of these intricate systems, which necessitates the mathematical theories on MHD. When studying the dynamics of a plasma enclosed within a vacuum, such as in those scenarios related to fusion reactors or metallurgy, it is natural to analyze the free boundary problems.
	
	In this paper, we study the following plasma-vacuum interface problems for the ideal incompressible MHD equations with or without surface tension (c.f. \cite{Landau-Lifshitz-Vol8, book_Davidson}):
	\begin{equation}\label{MHD}
		(\mathrm{MHD})\, \begin{cases*}
			\pd_t \vv + \qty(\vv \vdot \grad) \vv + \grad p = \qty(\vh \vdot \grad) \vh &in $ \Om_t^+ $, \\
			\pd_t \vh + \qty(\vv \vdot \grad) \vh = \qty(\vh \vdot \grad) \vv &in $ \Om_t^+ $, \\ 
			\div \vv = 0 = \div \vh &in $ \Om_t^+ $;
		\end{cases*}
	\end{equation}
	\begin{equation}\label{eqn pM}
		(\mathrm{pre-Maxwell})\, \begin{cases*}
			\div \vhh = 0 \qc \curl \vhh = \vb{0} &in $ \Om_t^- $, \\
			\div \vu{E} = 0 \qc \curl \vu{E} = - \pd_t \vhh &in $ \Om_t^- $;
		\end{cases*}
	\end{equation}
	\begin{equation}\label{BC}
		(\mathrm{BC}) \begin{cases*}
			\vv \vdot \vn = \theta &on $ \Gt $, \\
			p = \alpha^2 \kappa + \txfrac{1}{2}\abs*{\vhh}^2 &on $ \Gt $, \\
			\vh \vdot \vn = 0 = \vhh \vdot \vn &on $ \Gt $, \\
			\wt{\vn} \cp \vhh = \vu{J} \qc \wt{\vn} \vdot \vu{E} = {0} &on $ \pd\Om $.
		\end{cases*}
	\end{equation}
	
	Here $ \Om \subset \R^3 $ is a bounded simply-connected domain with a smooth boundary, $ \Om_t^+ \subsetneq \Om $ is the plasma region, evolving with the plasma velocity $ \vv $, $ \Gt = \pd\Om_t^+ $ is the free interface, $\vh$ is the magnetic field in the plasma region, and $p$ is the effective pressure. In addition, $ \Om_t^- \coloneqq \Om \setminus \Om_t^+ $ is the vacuum region, $\vu{E}$ and $ \vhh $ are respectively the electric and magnetic fields in the vacuum, and $ \pd\Om $ is a solid conducting wall. Furthermore, $ \vn $ is the outer normal of $ \pd\Om_t = \Gt $, $ \theta $ is the normal speed of $ \Gt $ in the direction of $ \vn $, $ \kappa $ is the mean curvature of $ \Gt $ with respect to $ \vn $, $ \alpha \in [0, 1] $ is the surface tension coefficient, $ \wt{\vn} $ is the unit outer normal of $ \pd\Om $, and $ \vu{J} \in \mathrm{T}\pd\Om $ is the surface current (which can be generated by a coil) satisfying the compatibility condition $ \Div_{\pd\Om} \vu{J} = 0 $. Assume further that $ \Om_t^+ $ and $ \Om_t^- $ are both simply-connected.
	
	\begin{center}
		\begin{tikzpicture}[use Hobby shortcut, yscale=0.8]
			\path
			(5,0) coordinate (a0)
			(0,3) coordinate (a1)
			(-5,0) coordinate (a2)
			(0,-3) coordinate (a3);
			\draw[closed,ultra thick] (a0) .. (a1) .. (a2) .. (a3);
			\node at (4.5,0) {\LARGE $ \Omega $};
			
			\path
			(0,1) coordinate (b0)
			(2,1.5) coordinate (b1)
			(3,0) coordinate (b2)
			(0,-1.5) coordinate (b3)
			(-3,0) coordinate (b4)
			(-2,1.5) coordinate (b5);
			\draw[red,closed] (b0) .. (b1) .. (b2) .. (b3) .. (b4) .. (b5);
			\node[red] at (0, 1.5) {\LARGE$ \Gamma_t $};
			\node[blue] at (0,0) {\LARGE$ \Omega_t^+ $};
			\node[blue] at (0,-2.3) {\LARGE $ \Om_t^- $};
		\end{tikzpicture}
	\end{center}
	
	The equations \eqref{MHD} are the ideal incompressible MHD equations, which describe the motion of an inviscid perfectly conducting fluid (plasma) under the influence of magnetic fields. The first line is the combination of the incompressible Euler equations in hydrodynamics and the Maxwell equations in electrodynamics, for which the Lorentz force serves as the exterior body force. Here $p$ is the effective pressure, which is the sum of the fluid one and the magnetic one. Note that the effect of displacement currents is neglected here, due to the assumption that the velocity scale of the plasma is much less than the speed of light, i.e., the plasma motion is non-relativistic. The second line is the combination of Faraday's Law and Ohm's Law, together with the assumption that the plasma is a perfect conductor. The third line is the incompressibility of the plasma and Gauss's Law for magnetism.
	
	The system \eqref{eqn pM} is the pre-Maxwell system, which differs from the full Maxwell system on the curl of magnetic fields. Actually, for $\vu{B} \coloneqq \mu_0\vhh$, where $\mu_0$ is the permeability of the vacuum, Maxwell's equations imply that
	\begin{equation*}
		\curl \vu{B} = \frac{1}{c^2} \pdv{\vu{E}}{t}.
	\end{equation*}
	If the scale of the characteristic velocity satisfies $v \ll c$, then one has
	\begin{equation*}
		\abs{\curl \vu{B}} \sim \frac{1}{c^2} \frac{E}{\tau} \ll 1,
	\end{equation*}
	where $\tau$ is the scale of time. In other words, the Maxwell equations can be reduced to the pre-Maxwell ones \eqref{eqn pM}, as long as the plasma motion is non-relativistic.
	
	The first boundary condition in \eqref{BC} is also known as the kinematic boundary condition, which means that the free interface evolves with the plasma, or, in other words, the particles on the boundary will not enter the interior of the plasma region. The second condition is derived from the balance of stress tensor on both sides of the free interface, and the third one follows from Gauss's Law for magnetism and the physical character of the plasma. The fourth condition can be derived from Ampere's Law and the the assumption that the solid wall is a perfect conductor. Next, we shall show the conservation of physical energy to demonstrate the reasonableness of these boundary conditions.
	
	\subsection{Conservation of the Physical Energy}
	The expression of the physical energy is:
	\begin{equation}
		{E}(t) \coloneqq \int_{\Om_t^+} \frac{1}{2} \qty(\abs{\vv}^2 + \abs{\vh}^2) \dd{x} + \int_{\Om_t^-} \frac{1}{2}\abs*{\vhh}^2 \dd{x} + \int_\Gt \alpha^2 \dd{S_t},
	\end{equation}
	which consists of the kinetic part, the magnetic part, and the surface part.
	
	Indeed, it follows from \eqref{MHD} and the transport formula (c.f. \cite[p. 5]{Majda-Bertozzi2002}) that
	\begin{equation}\label{energy conserv 1}
		\begin{split}
			\dv{t}\frac{1}{2}\int_{\Om_t^+} \abs{\vv}^2 + \abs{\vh}^2 \dd{x} &= \int_{\Om_t^+} \vv \vdot \qty[\pd_t \vv + (\vv\vdot\grad) \vv] + \vh \vdot \qty[\pd_t\vh + (\vv \vdot \grad)\vh] \dd{x} \\
			&= \int_{\Om_t^+} \vv \vdot \qty[(\vh\vdot\grad)\vh] - \vv \vdot \grad p + \vh \vdot \qty[(\vh\vdot\grad)\vv]  \dd{x} \\
			&= \int_{\Om_t^+} -\div(p\vv) + \div[\vh \qty(\vv\vdot\vh)] \dd{x} \\
			&= \int_\Gt -p \vv\vdot\vn + (\vv\vdot\vh)\cdot\qty(\vh\vdot\vn) \dd{S_t} \\
			&= -\int_\Gt \alpha^2 \kappa \theta \dd{S_t} - \int_\Gt \frac{1}{2}\abs{\vhh}^2 \theta \dd{S_t}.
		\end{split}
	\end{equation}
	The transportation formula for a moving surface implies (here "$ \Div_\Gt $" is the divergence operator on $ \Gt $, and $ \vv^\top $ is the tangential projection of $ \vv $ onto $ \Gt $; for details, see \textsection~\ref{sec geo} or \cite{Ecker2004, Shatah-Zeng2008-Geo}):
	\begin{equation}\label{energy conserv 2}
		\begin{split}
			\dv{t} \int_\Gt \dd{S_t} &= \int_{\Gt} \Div_\Gt \vv \dd{S_t}  = \int_{\Gt} \Div_\Gt \vv^\top + \Div_\Gt \qty(\theta\vn) \dd{S_t} \\
			&= \int_\Gt \theta \Div_\Gt \vn + \Div_\Gt \vv^\top + \grad^\top \theta \vdot \vn \dd{S_t} \\
			&= \int_\Gt \theta\kappa \dd{S_t}.
		\end{split}
	\end{equation}
	Thus, it remains only to compute the rate of change of the magnetic energy in the vacuum. Since the interface $ \Gt $ is evolving with normal speed $ \theta $, we may assume that there is a visual fluid with velocity $ \vu{v} $ in the vacuum so that $ \vu{v} \vdot \vn = \theta $ on $ \Gt $ and $\vu{v} \vdot \wt{\vn} = 0$ on $\pd\Om$. Hence,
	\begin{equation*}\label{key}
		\begin{split}
			\dv{t} \int_{\Om_t^-} \frac{1}{2}\abs{\vhh}^2 \dd{x} &= \int_{{\Om_t^-}} \pd_t \qty(\frac{1}{2}\abs{\vhh}^2) + \div(\vu{v} \frac{1}{2}\abs{\vhh}^2) \dd{x} \\
			&= \int_{{\Om_t^-}} \vhh \vdot \pd_t \vhh \dd{x} + \int_{\Gt} \qty(-\vn \vdot \vu{v}) \cdot \frac{1}{2}\abs{\vhh}^2 \dd{S_t} \\
			&= \int_{{\Om_t^-}} \vhh \vdot \pd_t \vhh \dd{x} - \int_{\Gt} \frac{1}{2}\abs{\vhh}^2\theta \dd{S_t}.
		\end{split}
	\end{equation*}
	We note that the above relation can also be shown without introducing the visual velocity $ \vu{v} $ (c.f. \cite[p. 1326]{Wang-Xin2021}).
	Furthermore, it follows from \eqref{eqn pM} that
	\begin{equation*}\label{key}
		\begin{split}
			\int_{{\Om_t^-}} \vhh \vdot \pd_t\vhh \dd{x} &= \int_{{\Om_t^-}} - \vhh \vdot \qty(\curl\vu{E}) \dd{x} \\
			&= \int_{{\Om_t^-}} - \div(\vu{E} \cp \vhh) - \vu{E} \vdot \qty(\curl\vhh) \dd{x} \\
			&= \int_\Gt \vn \vdot (\vu{E} \cp \vhh) \dd{S_t} - \int_{\pd\Om} \wt{\vn} \vdot (\vu{E} \cp \vhh) \dd{\wt{S}} \\
			&= \int_\Gt \vhh \vdot \qty(\vn\cp\vu{E}) \dd{S_t} - \int_{\pd\Om} \vu{E} \vdot \qty(\vhh \cp \wt{\vn}) \dd{\wt{S}},
		\end{split}
	\end{equation*}
	here $ \vu{E} \cp \vhh =: \vu{S} $ is the Poynting vector, which is exactly the electromagnetic energy flux.
	
	Denote by $\vb{E}$ the electric field in the plasma region. Then Faraday's Law can be written as
	\begin{equation*}
		\curl \vb{E} = -\pdv{\vh}{t}.
	\end{equation*}
	Therefore, the following jump condition on the free interface can be derived via standard arguments:
	\begin{equation*}
		\vn \cp \qty(\vb{E} - \vu{E}) = \qty(\vh \vdot \vv) \qty(\vh - \vhh).
	\end{equation*}
	On the other hand, it follows from Ohm's Law and the perfect conductor assumption on the plasma that
	\begin{equation*}
		\vb{E} + \vv \cp \vh = \vb{0},
	\end{equation*}
	which implies the following boundary condition on $\Gt$:
	\begin{equation}\label{bc EH}
		\vn \cp \vu{E} = \qty(\vv \vdot \vn) \vhh.
	\end{equation}
	One can then derive from \eqref{bc EH} and \eqref{BC} that
	\begin{equation*}
		\int_{{\Om_t^-}} \vhh \vdot \pd_t\vhh \dd{x} = \int_\Gt \theta \abs{\vhh}^2 \dd{S_t} + \int_{\pd\Om} \vu{E} \vdot \vu{J} \dd{\wt{S}}.
	\end{equation*}
	Hence,
	\begin{equation*}\label{key}
		\dv{t} \frac{1}{2}\int_{{\Om_t^-}} \abs{\vhh}^2 \dd{x} = \int_\Gt \frac{1}{2}\abs{\vhh}^2 \theta \dd{S_t} + \int_{\pd\Om} \vu{E} \vdot \vu{J} \dd{\wt{S}}.
	\end{equation*}
	In conclusion,
	\begin{equation}\label{conserv physic energy}
		\dv{t} E(t) = - \int_{\pd_\Om} \wt{\vn} \vdot \vu{S} \dd{\wt{S}} =  \int_{\pd\Om} \vu{E} \vdot \vu{J} \dd{\wt{S}},
	\end{equation}
	whose right expression is exactly the rate of energy input by the surface current.
	
	For more discussions on the physical models for MHD, one can see \cite[Chapter~4]{goedbloed_poedts_2004}. 
	
	\subsection{Review of Previous Works}
	
	Free interface motions in hydrodynamics, such as water wave problems, have garnered considerable interest within the mathematical community, resulting in numerous noteworthy advancements. We cite Wu \cite{Wu1997, Wu1999}, Lannes \cite{Lannes2005} and Alazard-Burq-Zuily \cite{Alazard-Burq-Zuily2014} for the nonlinear local well-posedness of irrotational water waves. If the fluid flow contains vorticity, one can refer to Christodoulou-Lindblad \cite{Christodoulou-Lindblad2000}, Lindblad \cite{Lindblad2005}, Coutand-Shkoller \cite{Coutand-Shkoller2007}, Cheng-Coutand-Shkoller \cite{Cheng-Coutand-Shkoller2008}, Zhang-Zhang \cite{Zhang-Zhang2008}, and Shatah-Zeng \cite{Shatah-Zeng2008-Geo, Shatah-Zeng2008-vortex, Shatah-Zeng2011} for the nonlinear local theories of water waves and vortex sheet problems. One may see the monograph by Lannes \cite{Lannes2013} for more comprehensive references on water wave problems.
	
	Unlike the free boundary problems for the Euler equations, whose local theories have been thoroughly investigated, the study of MHD cases is still in its infancy due to the strong coupling between the magnetic and velocity fields. Since two hyperbolic systems must be solved simultaneously, it is more demanding to establish the mathematical theories on free boundary problems for MHD. In particular, the mechanism of the plasma motion in the presence of magnetic fields is a critical issue that remains to be fully understood. An intriguing observation is that magnetic fields may stabilize the motion of current-vortex sheets (c.f. \cite{Trakinin2005, Coulombel-Morando-Secchi-Trebischi2012, Sun-Wang-Zhang2018, Wang-Xin2021, Liu-Xin2023}), which is quite different from the Kelvin-Helmholtz instability of vortex sheets (c.f. \cite{Ebin1988, Majda-Bertozzi2002}). 
	
	For current-vortex sheet problems (describing the motion of two plasmas separated by a free interface, along which the velocity and magnetic fields admit tangential jumps), Syrovatskij \cite{Syrovatskij} and Axford \cite{axford1962note} discovered the stability conditions for planer incompressible current-vortex sheets in the middle of last century (see Landau-Lifshitz \cite[\textsection~71]{Landau-Lifshitz-Vol8}):
	\begin{equation}\label{Syr 1"}
		\rho_+ \abs{\vh_+}^2 + \rho_- \abs{\vh_-}^2 > \frac{\rho_+ \rho_-}{\rho_+ + \rho_-}\abs{\llbracket\vv\rrbracket}^2,
	\end{equation}
	\begin{equation}\label{Syr 2"}
		\qty(\rho_+ + \rho_-)\abs{\vh_+ \cp \vh_-}^2 \ge \rho_+ \abs{\vh_+ \cp \llbracket\vv\rrbracket}^2 + \rho_-\abs{\vh_-\cp\llbracket\vv\rrbracket}^2,
	\end{equation}
	where $ \llbracket\vv\rrbracket \coloneqq \vv_+ - \vv_- $ is the velocity jump. If the original Syrovatskij condition \eqref{Syr 2"} were replaced by the following strict one: 
	\begin{equation}\label{Syro 3"}
		\qty(\rho_+ + \rho_-)\abs{\vh_+ \cp \vh_-}^2 > \rho_+ \abs{\vh_+ \cp \llbracket\vv\rrbracket}^2 + \rho_-\abs{\vh_-\cp\llbracket\vv\rrbracket}^2,
	\end{equation}
	from which \eqref{Syr 1"} follows, we mention the works by Morando-Trakhinin-Trebeschi \cite{Morando-Trakhinin-Trebeschi2008}, Sun-Wang-Zhang \cite{Sun-Wang-Zhang2018}, and Liu-Xin \cite{Liu-Xin2023} for the local well-posedness under \eqref{Syro 3"}. The above works demonstrate that the strict Syrovatskij condition \eqref{Syro 3"} indeed has a nonlinear stabilizing effect on the free interface. For the results under the surface tension, one can see Li-Li \cite{Li-Li2022} and Liu-Xin \cite{Liu-Xin2023}.
	
	For plasma-vacuum interface problems, if the magnetic field is parallel to the free boundary and $\vhh \equiv \vb{0}$, under the following Rayleigh-Taylor sign condition for the effective pressure:
	\begin{equation}\label{RT}
		-\nabla_\vn p > 0 \qq{on} \Gt,
	\end{equation}
	Hao-Luo derived a priori estimates \cite{Hao-Luo2014} and linear local well-posedness \cite{Hao-Luo2021}. Gu-Wang \cite{Gu-Wang2019} proved the nonlinear local well-posedness for a flat initial surface under \eqref{RT}. For the case with surface tension, see Gu-Luo-Zhang \cite{Gu-Luo-Zhang2021, Gu-Luo-Zhang2022} for the local well-posedness and vanishing surface tension limit for a flat initial interface. In \cite{Hao-Luo2020}, Hao-Luo gave an counterexample indicating that the correct sign in \eqref{RT} is essential for the well-posedness, as indicated by Ebin \cite{Ebin1987} for water waves. If the magnetic field in the vacuum is nontrivial and tangential to the free surface, as a reduction of \eqref{Syro 3"}, it is natural to consider a stability condition that the magnetic fields are everywhere non-collinear on the interface (which was first used by Trakhinin \cite{Trakhinin2010} in the study of  the compressible problems), i.e.
	\begin{equation}\label{eqn non collienar}
		\abs*{\vh \cp \vhh} > 0 \qq{on} \Gt.
	\end{equation}
	For the incompressible plasma-vacuum problems under \eqref{eqn non collienar}, with the graph assumption on the interface, Mordando-Trakhinin-Trebeschi \cite{Morando-Trakhinin-Trebeschi2014} showed the local well-posedness of the linearized problems, and Sun-Wang-Zhang \cite{Sun-Wang-Zhang2019} proved the nonlinear local well-posedness. Concerning the global theory, Wang-Xin \cite{Wang-Xin2021} established it for both the plasma-vacuum and the plasma-plasma cases, assuming the presence of transversal magnetic fields, the magnetic diffusivity, and surface tensions. 
	
	Despite these recent significant advancements, many issues concerning the plasma-vacuum motions require further study. For example, all existing local theories, except for the a priori estimates \cite{Hao-Luo2014} and linear results \cite{Hao-Luo2021} by Hao and Luo, are established under graph assumptions on the free interface. Meanwhile, in many realistic physical and engineering models, the free interface cannot be represented simply by a graph, which necessitates more research to understand fully these practical phenomena. Deriving the well-posedness of general problems from those results for graph interfaces is quite daunting, even for the pure fluid cases (c.f. \cite{Coutand-Shkoller2007, Shatah-Zeng2011}), because utilizing partitions of unity involves fairly complicated estimates due to simultaneous manipulations of numerous hyperbolic systems.  Furthermore, in the presence of capillary forces, the results by Gu-Luo-Zhang \cite{Gu-Luo-Zhang2021, Gu-Luo-Zhang2022} were established under the assumptions that $\vhh\equiv\vb{0}$ and the initial interface is flat. The local well-posedness for plasma-vacuum problems under non-zero $\vhh$ and non-zero surface tensions is still open. Whether the magnetic field in the vacuum can influence the stabilization effect of the surface tension remains to be studied. 
	
	Instead of using partitions of unity, we opt for a more geometric approach introduced in \cite{Shatah-Zeng2011} to characterize the problem and derive the nonlinear local theory in a more intrinsic way (see also our previous work on the current-vortex sheet problems \cite{Liu-Xin2023}). Moreover, such an approach can also help to understand various stability conditions more clearly. Indeed, it will be seen in the equation \eqref{eqn Dt^2 kappa} that the surface tension corresponds to a third order positive differential operator acting on the free interface, which serves as a stabilizer for the surface motion, while the non-collinearity condition on the magnetic fields and the Rayleigh-Taylor sign condition respectively correspond to a second order positive differential operator and a first order one. Thus, concerning the stabilization effect, it seems that the following orders are plausible: \textit{surface tensions} $ > $ \textit{the non-collinearity condition} $ > $ \textit{the Rayleigh-Taylor sign condition}.
	The results in this paper not only establish the nonlinear local well-posedness of general plasma-vacuum problems but also facilitate the study of long-term plasma dynamics, especially the creation of splash singularities within a finite time. Practically, they may also help to comprehend and compute abundant phenomena in physics and engineering that entail free boundaries, such as those related to geodynamos, solar winds, liquid metal batteries, and fusion reactors.
	
	\section{Main Results}	
	\subsection{Stabilization Effect of the Surface Tension} Assume that the surface current $ \vu{J} : \pd\Om \to \mathrm{T}\pd\Om $ satisfies 
	\begin{equation}\label{space vJh}
		\vu{J}\in C^0\qty{[0, T^*]; H^{\kk-\frac{1}{2}}(\pd\Om)} \cap C^1\qty{[0, T^*]; H^{\kk-\frac{3}{2}}(\pd\Om)}, \qand \Div_{\pd\Om} \vu{J}(t) = 0
	\end{equation}
	for a large constant $T^* \gg 1$. When capillary forces prevail, the following theorem holds:
	\begin{thm}[$ \alpha = 1 $ case]\label{thm p-v wp}
		Suppose that $ k \ge 2 $ is an integer, $ \Om \subset \R^3 $ is a bounded simply-connected domain with a $C^1 \cap H^{\kk+1} $ boundary, and $ \vu{J}(t) $ is a tangential vector field on $ \pd\Om $ satisfying \eqref{space vJh}.
		For given initial hypersurface $ \Gamma_0 \in H^{\kk+1} $ and two solenoidal vector fields $ \vv_0, \vh_0 \in H^{\kk}(\Om_0) $, if $ \Gamma_0 $ separates $ \Om $ into two disjoint simply-connected parts, then there exists a constant $ T > 0 $, so that the problem \eqref{MHD}-\eqref{BC} has a solution in the space
		\begin{equation*}
			\Gt\in C^0_t H^{\kk+1}, \quad \vv\in C^0_t H^{\kk}(\Om_t^+), \quad \vh \in C^0_t H^{\kk}(\Om_t^+), \quad \vhh \in C^0_tH^{\kk}(\Om_t^-)
		\end{equation*}
		for $ t \in [0, T] $. Furthermore, if $ k \ge 3 $, the solution is unique and  depends on the initial data continuously, i.e. the problem \eqref{MHD}-\eqref{BC} is locally well-posed.
	\end{thm}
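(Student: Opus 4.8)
We recast the problem intrinsically as a quasilinear evolution on the infinite-dimensional manifold of hypersurfaces enclosed in $\Om$, following the geometric framework of Shatah--Zeng \cite{Shatah-Zeng2011} and our treatment of the current-vortex sheet problem \cite{Liu-Xin2023}. Writing $\Dt=\pd_t+\vv\vdot\grad$ for the material derivative along the plasma flow, we parametrize $\Gt$ over a fixed reference hypersurface $\Gs$, so that the kinematic condition $\vv\vdot\vn=\theta$ in \eqref{BC} becomes the transport law for the parametrization, the interface being driven by the normal velocity $\theta$. The plasma velocity is split by a Hodge-type decomposition $\vv=\vvr+\vvir$: the rotational part $\vvr$ carries the vorticity $\vom=\curl\vv$, which solves a transport equation forced by the magnetic (Lorentz) term, while the irrotational part $\vvir$ is recovered from an elliptic div-curl problem with boundary datum $\theta$; the plasma field $\vh$, being tangential on $\Gt$ and divergence-free, is frozen into the flow through $\Dt\vh=(\vh\vdot\grad)\vv$, so that its constraints propagate; and the vacuum field $\vhh$ is reconstructed at each time from \eqref{eqn pM} on the simply-connected region $\Om_t^-$ with $\vhh\vdot\vn=0$ on $\Gt$ and $\wt{\vn}\cp\vhh=\vu{J}$ on $\pd\Om$ (solvable since $\Div_{\pd\Om}\vu{J}=0$). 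The effective pressure $p$ is obtained by taking the divergence of the first line of \eqref{MHD}: it solves an elliptic equation in $\Om_t^+$ whose right-hand side is quadratic in $\grad\vv$ and $\grad\vh$, with Dirichlet datum $p=\kappa+\tfrac{1}{2}\abs{\vhh}^2$ on $\Gt$ from the second line of \eqref{BC}. In this way the full system collapses to a closed evolution for $(\Gt,\vvr,\vh)$ in which, crucially, the surface tension contributes to the equation \eqref{eqn Dt^2 kappa} for $\Dt^2\kappa$ a \emph{third-order, self-adjoint, positive} operator on $\Gt$ --- the stabilizing mechanism whose strictly higher order underpins the whole scheme.

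Next we run a linearized iteration (or Galerkin-type) scheme which, at each step, advances the interface by the kinematic law, propagates the vorticity and the plasma field $\vh$ by their transport equations, and reconstructs the irrotational velocity, the vacuum field $\vhh$, and the pressure $p$ from their elliptic problems; the object is to obtain uniform bounds for the iterates. The core is the uniform \emph{a priori} estimate at the energy level $\kk$. We let $\E(t)$ collect $\norm{\vv(t)}_{H^{\kk}(\Om_t^+)}^2+\norm{\vh(t)}_{H^{\kk}(\Om_t^+)}^2+\norm{\vhh(t)}_{H^{\kk}(\Om_t^-)}^2$, a geometric term controlling $\Gt$ at the level $H^{\kk+1}$ --- built, as is natural for the surface-tension energy $\int_\Gt\dd{S_t}$, from a top-order norm of the mean curvature $\kappa$ together with a norm of its material derivative $\Dt\kappa$ --- and the norms of $\vu{J}$ furnished by \eqref{space vJh}. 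Then: $\vhh$ and $p$ are controlled by elliptic estimates on domains of the prescribed regularity (a div-curl system with mixed boundary conditions for $\vhh$, a Dirichlet problem for $p$); $\vh$ and $\vom$ are controlled by transport estimates once $\grad\vv$ is bounded; and the surface is controlled through the $\Dt^2\kappa$-equation by pairing it with a suitable power of the positive surface operator and commuting $\Dt$ past that operator, so that the leading term is coercive while every remaining term --- including those carrying $\vhh$ and the normal trace $\grad_\vn p$ --- is of strictly lower order relative to $\E$. This gives $\dv{t}\E\le P(\E)$ for a fixed polynomial $P$, hence a time $T>0$ depending only on the initial data and on $T^*$ on which the iterates exist with a uniform bound; a compactness argument then produces a solution in the asserted class for $k\ge2$.

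For uniqueness and continuous dependence when $k\ge3$, we estimate the difference of two solutions. After transporting both onto a common domain --- for instance via the harmonic extension of the difference of the two parametrizations of $\Gs$ --- the differences satisfy a linear system whose coefficients are controlled by $\E$; since differentiating the geometric and the domain-dependent elliptic terms costs one derivative, this difference estimate closes only one level below the regularity of the solutions, which is precisely why $k\ge3$ is required. A Gr\"onwall argument on the difference energy then yields uniqueness and continuous dependence on the data, completing the proof of local well-posedness.

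The principal obstacle is expected to be closing the energy estimate with no loss of derivatives, given the strong nonlinear coupling across $\Gt$: the Dirichlet datum $\kappa+\tfrac{1}{2}\abs{\vhh}^2$ for the pressure ties the interface geometry, the plasma fields, and the vacuum field together, and $\vhh$ itself depends on $\Gt$ through the elliptic system on $\Om_t^-$, so the normal trace $\grad_\vn p$ --- which through that datum again depends on the trace of $\vhh$ on $\Gt$ --- must be estimated sharply in terms of the surface regularity. What makes the scheme close, and is the crux of the paper, is that the surface-tension operator appearing in \eqref{eqn Dt^2 kappa} is strictly of higher order than every competing magnetic or pressure contribution, so that after the correct integration by parts on $\Gt$ its coercivity absorbs the dangerous terms; establishing this, together with the commutator estimates needed to propagate $H^{\kk}$-regularity through $\Dt$ and through the interface-dependent elliptic operators, is the technical heart of the work.
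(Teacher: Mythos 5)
Your overall strategy coincides with the paper's: the Shatah--Zeng geometric parametrization over a reference hypersurface, the second-order evolution equation for the mean curvature in which surface tension appears as the third-order positive operator $-\lap_\Gt\n_+$, recovery of the bulk fields by div-curl systems, and an iteration closed by energy estimates. However, there is a genuine gap in how you propagate the magnetic field and the vorticity. You propose to evolve $\vh$ directly through $\Dt\vh=(\vh\vdot\grad)\vv$ and to treat the vorticity as solving ``a transport equation forced by the magnetic term.'' Both steps lose a derivative: controlling $\norm{\vh}_{H^{\kk}(\Om_t^+)}$ from its transport equation requires $\vv\in H^{\kk+1}$, and the vorticity equation \eqref{eqn pdt vom} contains the top-order coupling $(\vh\vdot\grad)\vj=(\vh\vdot\grad)\curl\vh$, which is not a lower-order forcing relative to $\vom\in H^{\kk-1}$ once $\vj$ in turn evolves through $(\vh\vdot\grad)\vom$. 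Your scheme as written therefore does not close at the stated regularity, and this is precisely the strong velocity--magnetic coupling that makes the MHD free boundary problem harder than the Euler one.

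The paper's resolution is to take the \emph{current} $\vj=\curl\vh$, not $\vh$ itself, as an evolved unknown alongside $\vom$, and to exploit the symmetric structure of the coupled current-vorticity system \eqref{eqn pdt vom}--\eqref{eqn pdt vj}: since $\vh\vdot\vn=0$ on $\Gt$, both $\vv+\vh$ and $\vv-\vh$ are evolution velocities of $\Om_t^+$, and the combinations $\vom\mp\vj$ satisfy genuine transport equations along $\vv\pm\vh$ (see \eqref{eqn xi}--\eqref{eqn eta}), which close at the level $H^{\kk-1}$ with no derivative loss. The fields $\vv,\vh\in H^{\kk}$ are then \emph{recovered} from $(\vom,\vj)$ and the boundary data by the div-curl systems \eqref{div-curl nonlinear v}--\eqref{div-curl nonlinear h}, gaining back the derivative elliptically. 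You should replace your unknowns $(\Gt,\vvr,\vh)$ by $(\ka,\vom,\vj)$ accordingly. Two smaller points: the existence argument is not purely a priori estimates plus compactness --- since the iteration is performed on the reformulated system, one must verify a posteriori that the fixed point solves \eqref{MHD}--\eqref{BC}, which the paper does by showing that the residuals $\vV,\vH$ have vanishing divergence, curl, and normal trace (the last via an elliptic equation on $\Gt$); and the equation you cite, \eqref{eqn Dt^2 kappa}, is the refined $k\ge3$ version used for the Rayleigh--Taylor analysis, whereas the $\alpha=1$, $k\ge2$ existence proof uses \eqref{eqn dt2 kappa} and its pulled-back form \eqref{eqn pdt2 ka "}.
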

	
	\begin{rk}
		In \cite{Gu-Luo-Zhang2021}, the authors showed the local well-posedness under the assumptions that $\vhh \equiv \vb{0}$ and the initial free interface is flat. Note that these two additional assumptions are removed here.
	\end{rk}
	
	\subsection{Vanishing Surface Tension Limit under the Rayleigh-Taylor Sign Condition}
	
	Assume that $\vhh \equiv \vb{0}$ (which follows automatically if $\vu{J} \equiv \vb{0}$), and there is a positive constant $ \lambda_0 $ such that the following Rayleigh-Taylor sign condition holds:
	\begin{equation}\label{R-T}
		\mathfrak{t} \coloneqq - \nabla_{\vn} p \ge \lambda_0 > 0 \qq{on} \Gt,
	\end{equation}
	where $ p $ is the effective pressure, then the following vanishing surface tension result holds:
	\begin{thm}[$\alpha \to 0$ limit]\label{thm vani st RT}
		Assume that $k \ge 3$ is an integer, $ 0 < \alpha \le 1 $, and $ \vhh \equiv \vb{0} $.  For given initial data $ \Gamma_0 \in H^{\kk+1}$ and $ \vv_0, \vh_0 \in H^{\kk}(\Om_0^+) $ satisfying the Rayleigh-Taylor sign condition \eqref{R-T}, there exists a constant $T > 0$, independent of $\alpha$, so that the problem \eqref{MHD}, \eqref{BC} is well-posed for $t \in [0, T]$. Furthermore, as $ \alpha \to 0 $, subject to a subsequence, the solution to the problem \eqref{MHD}, \eqref{BC} with surface tension converges weakly to a solution to \eqref{MHD}, \eqref{BC} with $ \alpha = 0 $ in the space
		\begin{equation*}\label{key}
			\Gt \in H^{\kk} \qc \vv \in H^{\kk}(\Om_t^+), \qand \vh \in H^{\kk}(\Om_t^+).
		\end{equation*}
	\end{thm}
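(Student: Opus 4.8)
The plan is to first derive energy estimates for the capillary problem \eqref{MHD}, \eqref{BC} that are uniform with respect to the surface tension coefficient $\alpha\in(0,1]$, and then to extract from the family of solutions a weakly convergent subsequence whose limit solves the problem with $\alpha=0$. Note first that since $\vhh\equiv\vb{0}$ (equivalently $\vu{J}\equiv\vb{0}$), the pre‑Maxwell system \eqref{eqn pM} together with \eqref{BC} forces the vacuum fields to vanish, so the vacuum region decouples completely and the dynamic boundary condition in \eqref{BC} reduces to $p=\alpha^2\kappa$ on $\Gt$.

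\textbf{Uniform time of existence.} For each fixed $\alpha\in(0,1]$ the proof of Theorem~\ref{thm p-v wp} applies essentially verbatim (for $k\ge3$, with uniqueness and continuous dependence) and yields a solution on some interval $[0,T_\alpha]$; the issue is to bound $T_\alpha$ from below by a constant $T>0$ depending only on the initial data and on the lower bound $\lambda_0$ in \eqref{R-T}. To this end I would propagate a higher‑order energy schematically of the form
\begin{equation*}
	\E_\alpha(t)\ \sim\ \norm{\vv}_{H^{\kk}(\Om_t^+)}^2+\norm{\vh}_{H^{\kk}(\Om_t^+)}^2+\norm{\Gt}_{H^{\kk}}^2+\big\langle\,\mathfrak{t}\,\mathcal{N}\Lambda,\ \Lambda\,\big\rangle_{L^2(\Gt)}+\alpha^2\norm{\Gt}_{H^{\kk+1}}^2,
\end{equation*}
where $\mathfrak{t}=-\nabla_\vn p$ is the Rayleigh--Taylor coefficient, $\mathcal N$ is a first‑order operator comparable to the Dirichlet--Neumann operator on $\Gt$, and $\Lambda$ collects the top‑order derivatives of the interface (equivalently of the normal speed $\theta$). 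The first three terms are the natural Sobolev energies of the state; the fourth is the contribution of the Rayleigh--Taylor term, which is coercive of half a derivative thanks to \eqref{R-T} and the positivity of $\mathcal N$; the last is the \emph{non‑negative} contribution of surface tension, which now merely supplies an extra half‑integer of regularity weighted by $\alpha^2$ instead of being needed for closure. The main computation is to differentiate the curvature equation \eqref{eqn Dt^2 kappa} to top order, producing a second‑order‑in‑time equation whose principal part is $\Dt^2+\mathfrak{t}\,\mathcal{N}+\alpha^2(\text{third‑order positive})$, and to run the associated energy identity together with the div--curl elliptic estimates for $\vv,\vh$ (using $\div\vv=\div\vh=0$, the boundary relations $\vv\cdot\vn=\theta$ and $\vh\cdot\vn=0$ and the preservation of $\vh\cdot\vn=0$ along the flow) and the elliptic estimate for $p$ (with $-\Delta p=\operatorname{tr}((\nabla\vv)^2)-\operatorname{tr}((\nabla\vh)^2)$ and Dirichlet data $\alpha^2\kappa$, whose contribution is $O(\alpha^2)$ and hence harmless). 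This yields $\frac{d}{dt}\E_\alpha\le C(\E_\alpha)$ with $C$ independent of $\alpha$, so $\E_\alpha$ stays bounded on a common interval $[0,T]$; on the same interval, uniform chord--arc bounds from the $H^{\kk}$‑control keep $\Gt$ a non‑self‑intersecting hypersurface splitting $\Om$ into two simply connected pieces, and, after possibly shrinking $T$, \eqref{R-T} persists with $\mathfrak{t}\ge\lambda_0/2$.

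\textbf{Compactness and passage to the limit.} From the uniform bounds one has, independently of $\alpha$, $\Gt^\alpha$ bounded in $L^\infty_tH^{\kk}$, $\vv^\alpha,\vh^\alpha,p^\alpha$ bounded in $L^\infty_tH^{\kk}$ (and $\alpha^2\Gt^\alpha$ bounded in $L^\infty_tH^{\kk+1}$), while \eqref{MHD} and the kinematic condition give $\partial_t\Gt^\alpha,\partial_t\vv^\alpha,\partial_t\vh^\alpha$ bounded in $L^\infty_tH^{\kk-1}$. Parametrizing all the interfaces over $\Gamma_0$ by the associated flow maps and transporting the fields to fixed reference domains, the Aubin--Lions--Simon lemma furnishes a subsequence along which $\Gt^\alpha\to\Gt$ strongly in $C^0_tH^{\kk-\varepsilon}$ and $\vv^\alpha\to\vv$, $\vh^\alpha\to\vh$ strongly in $C^0_tH^{\kk-\varepsilon}$, with weak‑$*$ limits in $L^\infty_tH^{\kk}$. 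Since $\kk-\varepsilon$ still lies well above the thresholds for the Sobolev algebra and the trace theorem, one may pass to the limit in every nonlinear term of \eqref{MHD}, in the constraints $\div\vv=\div\vh=0$, in the elliptic problem for $p$, and in the boundary conditions $\vv\cdot\vn=\theta$ and $\vh\cdot\vn=0$; and because $\norm{\kappa^\alpha}_{H^{\kk-2}}$ is bounded uniformly, $p^\alpha=\alpha^2\kappa^\alpha\to0$ on $\Gt$. Hence the limit $(\Gt,\vv,\vh)$ solves \eqref{MHD}, \eqref{BC} with $\alpha=0$ in the asserted class; no uniqueness for the $\alpha=0$ problem is claimed, so weak convergence suffices.

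\textbf{Main obstacle.} The crux is the uniform estimate: when $\alpha=0$ the smoothing third‑order surface‑tension operator is gone, so the top‑order control of $\Gt$ must come \emph{solely} from the first‑order Rayleigh--Taylor operator $\mathfrak{t}\,\mathcal N$. The energy therefore has to be reorganized to close one derivative below the natural capillary level, and one must verify that every commutator, every lower‑order correction in \eqref{eqn Dt^2 kappa}, and the coupling with the magnetic field (the Lorentz force and the frozen‑in transport of $\vh$, together with its tangency to $\Gt$) contribute constants that do not degenerate as $\alpha\to0$. A secondary difficulty is securing the uniform‑in‑$\alpha$ elliptic regularity for $p$ and for $\vv,\vh$ on the moving domains, which must be obtained within the same bootstrap that controls $\Gt^\alpha$.
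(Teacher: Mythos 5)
Your proposal follows essentially the same route as the paper: existence for each fixed $\alpha$ comes from the capillary well-posedness theorem, and the uniform-in-$\alpha$ bound comes from an energy built on the refined curvature evolution equation whose principal part is $\Dt^2 + \mathfrak{t}\,\n_+ + \alpha^2(\text{third order})$, with the Rayleigh--Taylor term supplying the coercivity that survives as $\alpha\to 0$; the paper's functional $\bar{E}_\alpha$ in \eqref{E_alpha} is exactly your schematic energy, carrying in addition the term $\abs{\opd^{k-2}\n_+^{1/2}(\DD_\vh\kappa)}^2$ needed to absorb $\DD_\vh\DD_\vh\kappa$ and to recover $\norm{\vh}_{H^{\kk}(\Om_t^+)}$, i.e.\ precisely the magnetic coupling you flag as the point to verify. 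The passage to the limit is then the same standard weak-convergence argument.
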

	
	\begin{rk}
		In \cite{Gu-Luo-Zhang2022}, the authors showed the vanishing surface tension limit under \eqref{R-T} for a flat initial interface. Our result can be regarded as a generalization of that in \cite{Gu-Luo-Zhang2022}. Although it seems plausible to derive the result for general interface problems via partitions of unity, the arguments will involve challenging difficulties. For instance, one is that the estimates of the transition maps are quite demanding, due to the strong coupling between multiple plasma bulks and the interactions between the velocity and magnetic fields. The other is that the Alinhac good unknown methods applied in \cite{Gu-Luo-Zhang2022} cannot be utilized straightforwardly to the general case.
	\end{rk}	
	
	\subsection{Stabilization Effect of the Non-collinear Magnetic Fields}
	If there is no surface tension, we show the local well-posedness under \eqref{eqn non collienar}.	Due to the hairy ball theorem, \eqref{eqn non collienar} cannot hold uniformly on a surface diffeomorphic to a sphere. Thus, we consider the following region:
	\begin{center}
		\begin{tikzpicture}
			\path
			(0,0) coordinate (a0)
			(4,0) coordinate (a1)
			(4,4) coordinate (a2)
			(0,4) coordinate (a3);
			\draw[dashed, thick] (a0) -- (a3);
			\draw[dashed, thick] (a1) -- (a2);
			\draw[ultra thick] (a0) -- (a1);
			\draw[ultra thick] (a2) -- (a3);
			
			\path;
			\draw[red,use Hobby shortcut] (0,2) .. (0.25,2) .. (1,2.5) .. (2,3) .. (3,2.5) .. (2,2) .. (1.5,1) .. (2,1.2) .. (3.5,2) .. (4,2);
			
			\node[blue] at (3.5,1) {\Large $ \Omega^-_t $};
			\node[blue] at (0.7,3) {\Large $ \Omega^+_t $};
			\node[red] at (2.5, 2.5) {\Large $ \Gamma_t $};
			\node at (6.5,2) {\Large $ \Omega \coloneqq \mathbb{T}^2 \times (-1, 1) $};
			\node at (6.5,0.2) {\Large $ \Gamma_- \coloneqq \mathbb{T}^2 \times \{-1\} $};
			\node at (6.5,3.8) {\Large $ \Gamma_+ \coloneqq \mathbb{T}^2 \times \{+1\} $};
		\end{tikzpicture}
	\end{center}
	here $\Gt$ is a $C^1 \cap H^2 $ hypersurface diffeomorphic to $\mathbb{T}^2$. Note that $\Gt$ is not assumed to be a graph, so it can represent a general portion of sea waves. Accordingly, the boundary conditions are modified as:
	\begin{equation}\label{BC'}
		(\mathrm{BC'}) \begin{cases*}
			\vv \vdot \vn = \theta &on $ \Gt $, \\
			p = \alpha^2 \kappa + \txfrac{1}{2}\abs*{\vhh}^2 &on $ \Gt $, \\
			\vh \vdot \vn = 0 = \vhh \vdot \vn &on $ \Gt $, \\
			\vv \vdot \wt{\vn}_+ = \vh \vdot \wt{\vn}_+ = 0 &on $ \Gamma_+ $, \\
			\wt{\vn}_- \cp \vhh = \vu{J} \qc \wt{\vn}_- \vdot \vu{E} = 0 &on $ \Gamma_- $.
		\end{cases*}
	\end{equation}
	Furthermore, suppose that the surface current $\vu{J}(t)$ satisfies
	\begin{equation}\label{J'}
		\vu{J} \in C^0\qty{[0, T^*]; H^{\kk-\frac{1}{2}}(\Gamma_-)} \cap C^1 \qty{[0, T^*]; H^{\kk-\frac{3}{2}}(\Gamma_-)},
	\end{equation}
	and the following compatibility conditions:
	\begin{equation}\label{compatibility J_hat}
		\Div_{\mathbb{T}^2} \vu{J} = 0 \qand \vu{J} \vdot \wt{\vn}_- = 0 \qq{on} \Gamma_-.
	\end{equation}
	
	It follows from \eqref{eqn non collienar} that
	\begin{equation}\label{def Upsilon vh vhh}
		0 < \Upsilon(\vh, \vhh) \coloneqq \inf_{\substack{\vb{a} \in \mathrm{T}\Gamma_t; \\ \abs{\vb{a}}=1}} \inf_{z \in \Gamma_t} \abs{\vb{a} \vdot \vh(z)}^2 + \abs*{\vb{a} \vdot \vhh (z)}^2.
	\end{equation}
	Then, the following local well-posedness result holds:
	\begin{thm}[$ \alpha = 0 $ case]\label{thm alpha=0 case"}
		Let $ k \ge 3 $ be an integer and $ \Om \coloneqq \mathbb{T}^2 \times (-1, 1)$. Suppose that $ \Gamma_0 $ is a $ C^1 \cap H^{\kk+\frac{1}{2}} $ hypersurface diffeomorphic to $ \mathbb{T}^2 $ separating $ \Om $ into two parts. Assume that $ \vv_0, \vh_0 \in H^{\kk}(\Om_0^+) $ are two $ H^{\kk} $ solenoidal vector fields, and $ \vhh_0 \in H^\kk(\Om_0^-) $ obtained by solving \eqref{eqn pM}, \eqref{BC'} with $ \vu{J}(0) $ satisfying
		\begin{equation*}
			\begin{split}
				\Upsilon(\vh_0, \vhh_0) \ge 2\mathfrak{s}_0 > 0.
			\end{split}
		\end{equation*}
		If $ \Gamma_0 $ does not touch the top or the bottom boundary, namely, for some positive constant  $ c_0 $,
		\begin{equation*}\label{key}
			\dist\qty(\Gamma_0, \Gamma_{\pm}) \ge 2c_0 > 0,
		\end{equation*}
		then there exists a constant $ T > 0 $, so that the plasma-vacuum interface problem \eqref{MHD}-\eqref{eqn pM}, \eqref{BC'} admits a unique solution in the space
		\begin{equation*}\label{key}
			\Gt \in C^0\qty([0, T]; H^{\kk+\frac{1}{2}}); (\vv, \vh) \in C^0\qty([0, T]; H^{\kk}(\Om_t^+)); \vhh \in C^0\qty([0, T]; H^{\kk}(\Om_t^-)).
		\end{equation*}
		Furthermore, the solution $ (\Gt, \vv, \vh, \vhh) $ satisfies
		\begin{equation*}\label{key}
			\Upsilon(\vh, \vhh) \ge \mathfrak{s}_0 \qand \dist\qty(\Gt, \Gamma_\pm) \ge c_0,
		\end{equation*}
		and depends on the initial data continuously. That is, the problem \eqref{MHD}-\eqref{eqn pM}, \eqref{BC'} is locally wellposed.
	\end{thm}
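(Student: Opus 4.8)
The plan is to adopt the geometric framework of Shatah--Zeng used in the authors' work on current--vortex sheets, treating the interface $\Gt$ as the primary unknown and slaving the remaining fields to its geometry. First I would fix a reference surface $\Gs\cong\mathbb{T}^2$ and represent $\Gt$ as a normal perturbation of $\Gs$ in $H^{\kk+\frac12}$; since the hypotheses $\dist(\Gamma_0,\Gamma_\pm)\ge 2c_0$ and $\Upsilon(\vh_0,\vhh_0)\ge 2\mathfrak{s}_0$ are open conditions, they persist on a short time interval and furnish the quantitative constants governing all later estimates. Next, solve the two elliptic problems that express the auxiliary quantities in terms of the geometry and the transported fields: the div--curl system \eqref{eqn pM} together with the wall conditions in \eqref{BC'} and $\vhh\vdot\vn=0$ on $\Gt$ determines $\vhh\in H^{\kk}(\Om_t^-)$ as a functional of $\Gt$ and $\vu{J}$, while taking the divergence of the first equation of \eqref{MHD} and using $\div\vv=\div\vh=0$ gives $-\lap p=\pd_i v^j\pd_j v^i-\pd_i h^j\pd_j h^i$ in $\Om_t^+$ with $p=\txfrac{1}{2}\abs{\vhh}^2$ on $\Gt$, so that $p$---and in particular the Neumann datum $\nabla_\vn p$ entering the interface equation---is a functional of $(\Gt,\vv,\vh)$. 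Elliptic and trace estimates on these moving domains, uniform in $c_0$, then reduce the whole problem to closed estimates for $(\Gt,\vv,\vh)$.

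The core of the argument is the second-order evolution equation \eqref{eqn Dt^2 kappa} for the interface. Applying $\Dt=\pd_t+\vv\vdot\grad$ twice to the kinematic identity, and using \eqref{MHD}, the induction equation $\Dt\vh=(\vh\vdot\grad)\vv$, the tangency $\vh\vdot\vn=0=\vhh\vdot\vn$ on $\Gt$, and the slaving of $p$ and $\vhh$, one arrives at an equation whose leading part is $\Dt^2\kappa+\alpha^2(\text{third order})+\LL_{\vh,\vhh}\kappa+\mathfrak{t}\,(\text{first order})=(\text{lower order})$, where for $\alpha=0$ the decisive term is the self-adjoint second-order operator $\LL_{\vh,\vhh}$ on $\Gt$ whose principal symbol at a cotangent vector $\vxi$ equals $\abs{\vh\vdot\vxi}^2+\abs*{\vhh\vdot\vxi}^2$. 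By \eqref{eqn non collienar} and \eqref{def Upsilon vh vhh} this symbol is bounded below by $\Upsilon(\vh,\vhh)\abs{\vxi}^2\ge\mathfrak{s}_0\abs{\vxi}^2$, so $\LL_{\vh,\vhh}$ is elliptic and positive, dominating the indefinite first-order term $\mathfrak{t}$; this is the stabilizing mechanism that replaces surface tension and explains why the interface gains exactly half a derivative over the fields. I would then build an energy functional $E(t)=\norm{\vv}_{H^{\kk}(\Om_t^+)}^2+\norm{\vh}_{H^{\kk}(\Om_t^+)}^2+\norm{\vhh}_{H^{\kk}(\Om_t^-)}^2$ plus a boundary term built from $\LL_{\vh,\vhh}$ applied to $\kk-\tfrac12$ tangential derivatives of $\theta$, coercive over $\norm{\theta}_{H^{\kk}(\Gt)}^2$, and differentiate it in time to obtain $\dot E\lesssim P(E)$ on an interval $[0,T]$ depending only on the initial data, $\mathfrak{s}_0$ and $c_0$, with $P$ a fixed polynomial.

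For existence I would run a Picard-type iteration $\itn{(\Gamma,\vv,\vh)}\mapsto\itm{(\Gamma,\vv,\vh)}$: the $(n\!+\!1)$-st interface solves the linear second-order equation with coefficients frozen at step $n$ (hence with the coercive operator $\LL_{\itn{\vh},\itn{\vhh}}$), while the $(n\!+\!1)$-st fields solve the associated linear transport--elliptic system on the domain bounded by $\itn{\Gamma}$ and $\Gamma_\pm$; the uniform bounds of the previous paragraph yield boundedness of the iterates in the high norm and contraction in a norm one order lower, producing a solution with the claimed regularity. Continuity in time of $(\Gt,\vv,\vh,\vhh)$ then propagates $\Upsilon(\vh,\vhh)\ge\mathfrak{s}_0$ and $\dist(\Gt,\Gamma_\pm)\ge c_0$ for short time, and uniqueness together with continuous dependence follow from an energy estimate for the difference of two solutions, transported to a common domain, at one regularity level below the existence class.

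I expect the main obstacle to be the top-order energy estimate: one must compute the commutators of $\Dt$ and of tangential derivatives with the Dirichlet-to-Neumann operators for $p$ and for $\vhh$ on the two moving domains, and show that the dangerous highest-order boundary contributions assemble precisely into the coercive quadratic form associated with $\LL_{\vh,\vhh}$ rather than into an uncontrollable remainder---all while keeping the elliptic, trace and commutator estimates uniform in the distance-to-wall parameter $c_0$ and the non-collinearity parameter $\mathfrak{s}_0$, and handling the half-integer regularity of the interface. A secondary difficulty is the estimate of the iteration's transition maps and the verification that the frozen-coefficient linear problems remain well-posed and coercive along the iteration.
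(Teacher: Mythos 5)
Your outline follows essentially the same route as the paper's proof: the Shatah--Zeng geometric reduction of the interface motion to a second-order evolution equation for the (modified) mean curvature, the identification of the coercive second-order operator $\DD^\top_\vh\DD^\top_\vh+\DD^\top_\vhh\DD^\top_\vhh$ whose ellipticity under $\Upsilon(\vh,\vhh)\ge\mathfrak{s}_0$ replaces the surface-tension term, a top-order energy carrying the quadratic form $\abs{\DD_\vh(\cdot)}^2+\abs*{\DD_\vhh(\cdot)}^2$, and a Picard iteration on frozen-coefficient linear problems with the bulk fields recovered from transported vorticity and current by div--curl theory. So the strategy is the right one and matches the paper.

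Two concrete points need repair. First, $\Om_t^+$ is diffeomorphic to $\mathbb{T}^2\times(0,1)$ and is therefore \emph{not} simply connected, so the div--curl system with prescribed divergence, curl and normal traces does not determine $\vv$ and $\vh$ uniquely: the harmonic (cohomology) components must also be prescribed. The paper encodes these as the averages $\va{\mathfrak{v}}=\int_{\Gamma_+}\vv\dd{S}$ and $\va{\mathfrak{h}}=\int_{\Gamma_+}\vh\dd{S}$, adds them to the iteration unknowns, evolves them by \eqref{eqn dvt va v"}--\eqref{eqn dvt va h"}, and must verify at the fixed point that $\int_{\Gamma_+}\vV\dd{S}=\vb{0}=\int_{\Gamma_+}\vH\dd{S}$ when passing back to \eqref{MHD}. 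Your iteration omits this data, so the reconstruction of the fields at each step is not well defined, and the corresponding term $\mathscr{S}(\ka)\pd_t\va{\mathfrak{v}}$ is missing from the curvature equation. Second, the top-order boundary energy should be carried by the mean curvature, not by the normal speed $\theta$: the paper controls $\abs{\DD_\vh\kappa}_{H^{\kk-\frac{5}{2}}(\Gt)}+\abs*{\DD_\vhh\kappa}_{H^{\kk-\frac{5}{2}}(\Gt)}$, which by the global-frame property of $(\vh,\vhh)$ under non-collinearity yields $\kappa\in H^{\kk-\frac{3}{2}}$ and hence $\Gt\in H^{\kk+\frac{1}{2}}$. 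An energy ``coercive over $\abs{\theta}^2_{H^{\kk}(\Gt)}$'' is not attainable, since $\theta=\vv\vdot\vn$ with $\vv\in H^{\kk}(\Om_t^+)$ has trace only in $H^{\kk-\frac{1}{2}}(\Gt)$, and in any case control of $\theta$ does not directly deliver the claimed interface regularity.
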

	
	\begin{rk}
		Theorem~\ref{thm alpha=0 case"} is a generalization of the main results in \cite{Sun-Wang-Zhang2019}, whose authors established the local well-posedness under the graph assumption on the free boundary, while Theorem~\ref{thm alpha=0 case"} applies to general interfaces. We utilize a geometric characterization of the free interface to eliminate the graph assumption.
	\end{rk}
	
	Moreover, the following result on the vanishing surface tension limit under \eqref{eqn non collienar} holds:
	\begin{thm}[$ \alpha \to 0 $ limit]\label{thm vanishing surf limt"}
		Assume that $ 0 \le \alpha \le 1,  k \ge 3 $, $ \Om = \mathbb{T}^2 \times (-1, 1) $, $ \vu{J} $ is a given tangential vector field on $ \Gamma_- $ satisfying \eqref{J'}-\eqref{compatibility J_hat}, the initial data $ \Gamma_0 \in H^{\kk+1} $ is diffeomorphic to $ \mathbb{T}^2 $ with $ \dist(\Gamma_0, \Gamma_\pm) \ge 2 c_0 > 0 $, and $ \vv_{0}, \vh_{0} \in H^{\kk}(\Om_0^+)$ are solenoidal. Suppose further that $ \vhh_0 \in H^{\kk}(\Om_0^-) $ satisfies $ \Upsilon(\vh_0, \vhh_0) \ge 2\mathfrak{s}_0 > 0 $. Then, there is a constant $ T > 0 $, independent of $ \alpha $, so that the problem \eqref{MHD}-\eqref{eqn pM}, \eqref{BC'} is well-posed for $ t \in [0, T] $. Furthermore, as $ \alpha \to 0 $, by passing to a subsequence, the solution to \eqref{MHD}-\eqref{eqn pM}, \eqref{BC'} with surface tension converges weakly to a solution to \eqref{MHD}-\eqref{eqn pM}, \eqref{BC'} with $ \alpha = 0 $ in the space $ \Gt \in H^{\kk+\frac{1}{2}} $, $ \vv, \vh \in H^{\kk}(\Om_t^+) $, and $ \vhh \in H^{\kk}(\Om_t^-) $. 
	\end{thm}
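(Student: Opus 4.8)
The plan is to treat the capillary problem \eqref{MHD}--\eqref{eqn pM}, \eqref{BC'} with $0<\alpha\le1$ by the same geometric scheme that yields Theorem~\ref{thm alpha=0 case"}, keeping the third-order surface-tension term, and then to promote the a priori estimates to bounds that are \emph{uniform in} $\alpha$; these furnish a common existence time and, by weak compactness, a limiting solution at $\alpha=0$. The guiding observation, read off from the interface evolution \eqref{eqn Dt^2 kappa}, is that surface tension produces a \emph{positive} third-order operator on $\Gt$ while the non-collinearity condition \eqref{eqn non collienar}, quantified through $\Upsilon(\vh,\vhh)\ge 2\mathfrak{s}_0$ as in \eqref{def Upsilon vh vhh}, produces a \emph{positive} second-order operator; since these have the same sign, dropping the capillary contribution from the coercive part costs nothing, and the surviving second-order control is $\alpha$-independent. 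In particular, for each fixed $\alpha>0$ the capillary term alone already closes the estimates, so existence on some ($\alpha$-dependent) interval is immediate from the proof of Theorem~\ref{thm p-v wp} adapted to $\Om=\mathbb{T}^2\times(-1,1)$; the real work lies in the uniformity.

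\textbf{Step 1: a closed, $\alpha$-uniform energy estimate.} I would work with an energy of the form
\[
\E_\alpha(t)=\|\vv\|_{H^{\kk}(\Om_t^+)}^2+\|\vh\|_{H^{\kk}(\Om_t^+)}^2+\|\vhh\|_{H^{\kk}(\Om_t^-)}^2+\mathcal{E}_{\mathrm{nc}}(t)+\alpha^2\,\mathcal{E}_{\mathrm{cap}}(t),
\]
where $\mathcal{E}_{\mathrm{nc}}$ is the interface energy built from the non-collinear second-order operator at regularity $H^{\kk+\frac12}$, and $\mathcal{E}_{\mathrm{cap}}$ is a curvature energy at regularity $H^{\kk}$, so that $\alpha^2\mathcal{E}_{\mathrm{cap}}\gtrsim\alpha^2\|\Gt\|_{H^{\kk+1}}^2$. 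Differentiating in $t$ and using \eqref{eqn Dt^2 kappa}, the transport structure of \eqref{MHD}, and the div--curl/elliptic estimates for $\vhh$ in the vacuum, the capillary terms of order $\kk+\frac32$ are absorbed with a favorable sign while the remaining top order is controlled by $\mathcal{E}_{\mathrm{nc}}$ provided $\Upsilon\ge\mathfrak{s}_0$; this yields $\dv{t}\E_\alpha(t)\lesssim\mathcal{P}\bigl(\E_\alpha(t),\mathfrak{s}_0^{-1},c_0^{-1},\|\vu{J}\|\bigr)$ with $\mathcal{P}$ a fixed polynomial and all constants independent of $\alpha$. Complementary transport estimates for $\vh$, $\vhh$, and for the interface show that $\Upsilon(\vh,\vhh)\ge\mathfrak{s}_0$ and $\dist(\Gt,\Gamma_\pm)\ge c_0$ persist on a time interval depending only on the data; a continuity/bootstrap argument then produces $T>0$ and $M>0$, independent of $\alpha$, with $\sup_{[0,T]}\E_\alpha\le M$.

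\textbf{Step 2: compactness and passage to the limit.} Pulling the plasma and vacuum regions back to fixed reference domains by harmonic coordinates adapted to $\Gt$ (or by the flow of a divergence-free extension of $\vv$), the uniform bounds give, along a subsequence $\alpha\to0$: $\Gt^\alpha\rightharpoonup\Gt$ weakly-$*$ in $L^\infty_tH^{\kk+\frac12}$, $(\vv^\alpha,\vh^\alpha)\rightharpoonup(\vv,\vh)$ and $\vhh^\alpha\rightharpoonup\vhh$ weakly-$*$ in $L^\infty_tH^{\kk}$, and, bounding the time derivatives from the equations, strong convergence one order lower by an Aubin--Lions argument. Strong convergence in the lower norms is enough to pass to the limit in every quadratic nonlinearity of \eqref{MHD}--\eqref{eqn pM} and in \eqref{BC'}; for the pressure condition one uses $\alpha^2\kappa^\alpha=\alpha\cdot(\alpha\kappa^\alpha)=O(\alpha)$ in $H^{\kk-1}$, by the $\alpha^2$-weighted part of $\E_\alpha$, so that $p^\alpha=\alpha^2\kappa^\alpha+\frac12|\vhh^\alpha|^2\to\frac12|\vhh|^2$. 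Finally one checks that $\Upsilon(\vh,\vhh)\ge\mathfrak{s}_0$ and $\dist(\Gt,\Gamma_\pm)\ge c_0$ survive the limit, so $(\Gt,\vv,\vh,\vhh)$ is an admissible solution of \eqref{MHD}--\eqref{eqn pM}, \eqref{BC'} with $\alpha=0$ in the asserted spaces.

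\textbf{Expected main obstacle.} The crux is Step~1: one must remove the regularizing capillary term yet keep the estimates closed, i.e.\ show that the second-order coercivity from \eqref{eqn non collienar} alone controls exactly the top-order interface quantities that surface tension controls one order higher, with constants blind to $\alpha$. This forces careful treatment of (i) the Dirichlet--Neumann-type operator governing $\vhh$ in the vacuum and its commutators with tangential derivatives, which must be estimated at the $H^{\kk}$ level using only an $H^{\kk+\frac12}$ interface, uniformly in $\alpha$; (ii) the reparametrization maps between the $\alpha$-dependent moving domains and the reference configuration, whose high Sobolev norms must not degenerate as $\alpha\to0$; and (iii) the $\alpha$-uniform transport estimates needed to propagate the lower bounds on $\Upsilon$ and on $\dist(\Gt,\Gamma_\pm)$. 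Once these uniform estimates are in place, Step~2 is comparatively routine and parallels the vanishing-surface-tension argument behind Theorem~\ref{thm vani st RT}.
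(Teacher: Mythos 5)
Your proposal follows essentially the same route as the paper: the paper's energy $\mathcal{E}_1$ in \eqref{E1} is precisely your $\mathcal{E}_{\mathrm{nc}}+\alpha^2\mathcal{E}_{\mathrm{cap}}$ (the $\alpha^2$-weighted curvature term at $H^{\kk-1}$ plus the $\DD_\vh\kappa$, $\DD_{\vhh}\kappa$ terms whose coercivity comes from $\Upsilon\ge\mathfrak{s}_0$), the lower bounds on $\Upsilon$ and $\dist(\Gt,\Gamma_\pm)$ are propagated exactly as you describe via \eqref{dist est}--\eqref{Upsi est}, and the limit is taken by the same weak-compactness argument. The approach and the key coercivity structure match; no substantive differences.
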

	
	\begin{rk}
		It should be emphasized here that Theorem~\ref{prop evo kappa'} is in fact also a main result in this paper, which yields a fine description of the evolution of the mean curvature for the plasma-vacuum interface that enables one to clarify the effects of various stability conditions, such as the surface tension, the non-collinearity condition, and the Rayleigh-Taylor sign condition. Indeed, the a priori estimates under those stability conditions can be derived from \eqref{eqn Dt^2 kappa} by using the methods introduced in \cite{Shatah-Zeng2008-Geo} and \cite{Liu-Xin2023}. In addition, it is natural to ask whether the uniform-in-$\alpha$ a priori estimates hold when $ \vhh \not\equiv \vb{0} $. Although it seems feasible to derive them from \eqref{eqn Dt^2 kappa}, one needs to control the Sobolev norm of the term $ \DD_{(\Dt\vhh - \DD_\vhh \vv)}\kappa$, which cannot be dominated by the terms appeared in the standard energy. Thus, one may need new techniques to establish the local well-posedness theory under merely the Rayleigh-Taylor sign condition if $\vhh \not\equiv \vb{0}$.
	\end{rk}

	\section{Preliminaries}\label{sec prelimi}
	\subsection{Geometry of Hypersurfaces}\label{sec geo}
	In this subsection, we shall discuss briefly some geometric properties of hypersurfaces, and one can refer to \cite{Ecker2004, Shatah-Zeng2008-Geo, Liu-Xin2023} for more detailed discussions.
	Suppose that  $ \Gt \subset \R^d $ is a family of  hypersurfaces evolving with the velocity $ \vv : \Gt \to \R^d $, and $\vn$ is the unit normal vector of $\Gt$.
	
		Note that the derivatives on hypersurfaces can be defined extrinsically via the projections from $ \R^d $. In particular, for a function $ f $ and a vector field $ \vb{X} $ defined in a neighborhood of $ \Gt \subset \R^d $, the tangential gradient of $ f $ is given by
		\begin{equation}\label{key}
			\grad_\Gt f = \grad f - (\vn \vdot \grad f) \vn,
		\end{equation}
		where $ \grad f $ is the gradient in $ \R^d $; and the tangential divergence of $ \vb{X} $ is defined to be
		\begin{equation}\label{eqn div Gt div Rd}
			\Div_\Gt \vb{X} = \Div_{\R^d} \vb{X} - \vn \vdot \DD_{\vn} \vb{X},
		\end{equation}
		where $ \DD $ is the covariant derivative in $ \R^d $. Thus, the Laplace-Beltrami operator can be computed via
		\begin{equation}\label{lap gt f alt}
			\begin{split}
				\lap_\Gt f &= \Div_\Gt \grad_\Gt f = \Div_\Gt \grad f - \Div_\Gt \qty[(\vn\vdot\grad f)\vn] \\
				&= \lap_{\R^d} f - \DD^2 f(\vn, \vn) - \kappa \vn \vdot \grad f,
			\end{split}
		\end{equation}
		for any $ C^2 $ function defined in a neighborhood of $ \Gt \subset \R^d $, where $\kappa$ is the mean curvature of $\Gt$ to be given in \eqref{eqn kp div N}. The above definitions are identical to the intrinsic ones (c.f. \cite[Appendix~A]{Ecker2004}).
		
		The second fundamental form of $\Gt$ is a $(0, 2)$-tensor defined by:
		\begin{equation}\label{second fundamental form}
			\II (\vb*{\tau}) \coloneqq \DD_{\vb*{\tau}} \vn \qfor \vb*{\tau} \in \mrm{T}\Gt.
		\end{equation}
	The mean curvature is defined to be the trace of $ \II $, i.e.
	\begin{equation}\label{eqn kp div N}
		\kappa \coloneqq \tr(\II) = \Div_\Gt \vn,
	\end{equation}
	while the last equality follows from the definitions of tangential divergence and the second fundamental form (see \cite{Ecker2004} or \cite{Liu-Xin2023}). If $\Gt$ is regarded as a submanifold of $\R^d$, one can define the natural Riemannian metric and covariant derivatives. Here we state some identities, whose derivations can be found in \cite[Appendix~A]{Ecker2004}.
	The first one is Simons' identity:
	\begin{equation}\label{simons' identity}
		\lap_\Gt \II = \qty(\DD_\Gt)^2 \kappa + (\kappa \II - \abs{\II}^2 \vb{I}) \vdot \II,
	\end{equation}
	where $\DD_\Gt$ is the covariant derivative on $\Gt$.
	The second one can be derived from Codazzi's equation:
	\begin{equation}\label{lap vn}
		\lap_\Gt \vn = - \abs{\II}^2 \vn + \grad_\Gt{\kappa}.
	\end{equation}
	Moreover, the Codazzi equation also implies that $\DD_\Gt \II$ is a totally symmetric $(0, 3)$-tensor on $\Gt$.

	Next, we shall introduce some evolution equations. Denote by $\Dt$ the material derivative along the particle path given by $\vv$. For the evolution of $ \vn $, it holds that
	\begin{equation}\label{eqn dt n}
		\Dt \vn = - \qty[(\grad\vv)^* \vdot\vn]^\top,
	\end{equation}
	where ${}^\top$ is the tangential projection onto $ \mathrm{T}\Gt$.
	For the area element of $\Gt$, one has
	\begin{equation}\label{eqn Dt dS}
		\Dt \dd{S_t} = \Div_\Gt \vv \dd{S_t}.
	\end{equation}
	The evolution equation for the second fundamental form is
	\begin{equation}\label{eqn dt II tan}
		(\Dt\II)^\top = - \DD^\top \qty[(\DD\vv)^*\vn]^\top - \II \vdot \qty(\DD\vv)^\top,
	\end{equation}
	where $\DD^\top$ is also the covariant derivative on $\Gt$.	The evolution of $ \kappa $ is given by
	\begin{equation}\label{eqn dt kappa}
			\Dt \kappa = -\vn \vdot \lap_\Gt \vv - 2 \ip{\II}{\vb{A}},
	\end{equation}
	where $ \ip{\cdot}{\cdot} $ is the standard inner product of tensors, and $\vb{A}$ is a tensor on $\Gt$ given by
	\begin{equation}
		2\vb{A} \coloneqq (\DD\vv)^\top+\qty[(\DD\vv)^\top]^*.
	\end{equation}
	The second order evolution equation for $\kappa$ is:
	\begin{equation}\label{eqn Dt2 kappa alt}
		\begin{split}
			\Dt^2 \kappa =\,  &-\lap_\Gt(\vn\vdot\Dt\vv) - \abs{\II}^2 (\vn\vdot\Dt\vv) + \grad_\Gt\kappa \vdot\Dt\vv + 2\vn\vdot(\grad\vv)\vdot(\lap_\Gt\vv)^\top \\
			&+ 4 \ip{\vb{A}}{\vn\vdot\qty(\DD_\Gt)^2\vv}
			-\kappa \abs{\qty[(\grad\vv)^*\vdot\vn]^\top}^2 -2\ip{\II}{\DD_\Gt \vv \vdot \DD_\Gt \vv}  + 4 \ip{\II \vdot\vb{A} + \vb{A}\vdot\II}{\vb{A}}.
		\end{split}
	\end{equation}

	\subsection{Reference Hypersurface}
	
	Suppose that $ k \ge 2 $ is an integer, and $ \Gs \subset \Om $ is a compact reference hypersurface without boundary separating $ \Om $ into two disjoint simply-connected parts $ \Om_*^\pm $. Assume that $ \Gs $  is of Sobolev class $ H^{\kk + 1} $. Denote by $ \vn_{*} $ the outward unit normal of $ \pd \Om_*^+ = \Gs $. Let $ \II_{*} $ be the second fundamental form of $ \Gs $ with respect to $ \vn_{*} $, and $ \kappa_{*} $ the corresponding mean curvature.
	
	As in  \cite{Shatah-Zeng2011} and \cite{Liu-Xin2023}, we shall consider the evolution of hypersurfaces in a tubular neighborhood of $ \Gs $. Take a unit vector field $ \vb*{\nu} \in H^{\kk+1} (\Gs; \mathbb{R}^{2})$ for which $ \vnu \cdot \vn_{*} \ge 9/10 $ by mollifying $ \vn_* $.
	Then, one can derive from the implicit function theorem that there exists a constant $ \delta_0 > 0 $, determined by $ \Gs $ and $ \vnu $, so that each hypersurface $ \Gamma $ in a small $ C^1 $ neighborhood of $\Gs$ is associated to a unique height function $ \gamma_\Gamma : \Gs \to \R $ so that
	\begin{equation}\label{key}
		\Phi_\Gamma (p) \coloneqq p + \gamma_\Gamma(p)\vnu(p) \qfor p \in \Gs
	\end{equation}
	is a diffeomorphism from $ \Gs $ to $ \Gamma $, as long as $ \gamma_\Gamma \in [ - \delta_0, \delta_0]$. Thus, the function $ \gamma_\Gamma $ can be used to represent any hypersurface $ \Gamma $ close to $\Gs$.
	
	\begin{defi}\label{def Lambda}
		For $ \delta>0 $ and $ \frac{1+3}{2} < s \le 1+\kk $, define $ \Lambda(\Gs, s, \delta) $ to be the collection of all hypersurfaces $ \Gamma $ close to $ \Gs $, whose associated coordinate functions $ \gamma_\Gamma $ satisfy $ \abs{\gamma_\Gamma}_{H^s(\Gs)} < \delta $.
	\end{defi}
	
	As $ s > \frac{3-1}{2} + 1 $, one has $ H^s(\Gs) \hookrightarrow C^1(\Gs) $. Thus, $ \delta \ll 1 $ yields that each $ \Gamma \in \Lambda $ also separates $ \Om $ into two disjoint simply-connected domains.
	
	\subsection{Recovering a Hypersurface from Its Mean Curvature}
	Here, we represent the evolution of a hypersurface by its mean curvature $ \kappa \coloneqq \tr \II $. For an $ H^s $ hypersurface $ \Gamma \in \Lambda(\Gs, s, \delta_0) $ with $ s> 2 $, the unit normal $ \vn $ has the same regularity as $ \grad \gamma_\Gamma $. Then the mapping from $ \gamma_\Gamma \in \H{s} $ to the mean curvature $ \kappa \circ \Phi_\Gamma \in \H{s-2} $ is smooth. 
	
	In order to establish a bijection between a hypersurface and its mean curvature, one can consider the following quantity:
	\begin{equation}\label{def ka}
		\K [\gmgm](p) \equiv \ka (p) \coloneqq \kappa \circ \Phi_\Gamma (p) + a^2 \gmgm (p) \qfor p \in \Gs,
	\end{equation}
	where $ a $ is a parameter depending only on $ \Gs $ and $ \vnu $, and the leading order term of $\ka$ is the mean curvature (c.f. \cite{Shatah-Zeng2011}).
	
	For a small constant $ \delta_0 > 0 $, define
	\begin{equation}\label{def lambda*}
		\Lambda_* \coloneqq \Lambda \qty(\Gs, \kk - \frac{1}{2}, \delta_0).
	\end{equation}
	Then, one can derive the following lemma from bootstrap arguments and Simons' identity \eqref{simons' identity} (c.f. \cite[p. 719]{Shatah-Zeng2008-Geo}):
	\begin{lem}\label{est ii}
		For $ \Gamma \in \Lambda_* $ with $ \kappa \in H^s(\Gamma) $, $ \kk - \frac{5}{2} \le s \le \kk - 1 $, the following estimate holds:
		\begin{equation}\label{key}
			\abs{\vn}_{H^{s+1}(\Gamma)} + \abs{\II}_{H^s(\Gamma)} \le C_* \qty(1 + \abs{\kappa}_{H^s(\Gamma)}),
		\end{equation}
		for some constant $ C_* $ depending only on $ \Lambda_* $.
	\end{lem}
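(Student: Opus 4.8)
The plan is to read Lemma~\ref{est ii} as an (essentially elliptic) regularity statement for the mean–curvature map on the fixed reference surface $\Gs$, and to bootstrap it starting from the base regularity built into $\ls$. Since every $\Gamma\in\ls$ is of the form $\Phi_\Gamma(p)=p+\gmgm(p)\vnu(p)$ with $\abs{\gmgm}_{H^{\kk-\frac12}(\Gs)}<\delta_0$, and $H^{\kk-\frac12}(\Gs)\hookrightarrow C^1(\Gs)$ (as $\kk-\frac12\ge\frac52>2$ and $\dim\Gs=2$), the map $\Phi_\Gamma$ is a bi-Lipschitz $C^1$-diffeomorphism whose first fundamental form and unit normal $\vn$ are smooth algebraic functions of $(\gmgm,\grad_\Gs\gmgm)$ with coefficients made from the (fixed, $H^{\kk+1}$) geometry of $\Gs$; in particular $\vn$ is first order and $\II$ is second order in $\gmgm$. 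The estimate then asserts exactly that controlling the trace $\kappa$ of $\II$ in $H^s(\Gamma)$ forces control of the full tensor $\II$ in $H^s(\Gamma)$. Intrinsically, the mechanism for this gain is Simons' identity~\eqref{simons' identity},
\begin{equation*}
	\lap_\Gamma\II=(\DD_\Gamma)^2\kappa+(\kappa\II-\abs{\II}^2\vb I)\vdot\II,
\end{equation*}
together with the Codazzi consequence~\eqref{lap vn}, $\lap_\Gamma\vn=-\abs{\II}^2\vn+\grad_\Gamma\kappa$: each expresses the Laplace--Beltrami operator applied to $\II$ (resp.\ $\vn$) as two (resp.\ one) derivatives of $\kappa$ plus lower-order curvature terms, so an elliptic estimate for $\lap_\Gamma$ on the closed surface $\Gamma$ recovers $\II\in H^s$ (resp.\ $\vn\in H^{s+1}$) from $\kappa\in H^s$.

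The bootstrap I would run is the following. \emph{Base step.} For $s_0\coloneqq\kk-\frac52$, the bound $\abs{\vn}_{H^{s_0+1}(\Gamma)}+\abs{\II}_{H^{s_0}(\Gamma)}\le C_*$ (with the $\abs{\kappa}$-term not even needed) follows directly from $\Gamma\in\ls$: it is a Sobolev product/composition estimate for the algebraic expressions above, using smallness of $\gmgm$ in $H^{\kk-\frac12}$ and that $\Gs$ is $H^{\kk+1}$, the constant depending only on $\Gs,\vnu,\delta_0$, i.e.\ only on $\ls$. \emph{Inductive step.} Assume the estimate holds at some level $s'\in[s_0,\kk-1)$ and set $s''\coloneqq\min\{s'+\tfrac12,\kk-1\}$, a fixed increment short enough that no Sobolev-multiplication threshold on the surface is crossed. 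Applying the elliptic estimate for $\lap_\Gamma$ on $\Gamma$ — whose metric and Christoffel symbols are controlled through $\abs{\vn}_{H^{s'+1}(\Gamma)}\le C_*(1+\abs{\kappa}_{H^{s'}(\Gamma)})$, already available, and whose kernel contribution is dominated by $\abs{\II}_{L^2}\le C_*$ — to Simons' identity gives
\begin{equation*}
	\abs{\II}_{H^{s''}(\Gamma)}\le C\Bigl(\abs{(\DD_\Gamma)^2\kappa}_{H^{s''-2}(\Gamma)}+\bigl|(\kappa\II-\abs{\II}^2\vb I)\vdot\II\bigr|_{H^{s''-2}(\Gamma)}+\abs{\II}_{L^2(\Gamma)}\Bigr),
\end{equation*}
where $\abs{(\DD_\Gamma)^2\kappa}_{H^{s''-2}}\le C\abs{\kappa}_{H^{s''}}\le C\abs{\kappa}_{H^s}$, and the multilinear curvature terms are estimated by two-dimensional Sobolev product and interpolation inequalities, the sub-top factors being absorbed into $C_*$ via the base-step bound so that the result is \emph{affine} in $\abs{\kappa}_{H^s}$. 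The same argument on~\eqref{lap vn} yields $\abs{\vn}_{H^{s''+1}(\Gamma)}\le C_*(1+\abs{\kappa}_{H^s(\Gamma)})$. After finitely many steps $s''$ reaches $s$, which is the claim. Equivalently, one may transport everything to $\Gs$ through $\Phi_\Gamma$, write the mean-curvature equation as $\K[\gmgm]-a^2\gmgm=\kappa\circ\Phi_\Gamma\in H^s$ with $\K$ as in~\eqref{def ka}, split the second-order operator into its frozen principal part $-\lap_\Gs$ — which enjoys elliptic estimates up to $H^{\kk+1}$ precisely because $\Gs\in H^{\kk+1}$ — plus a remainder whose top-order piece $\tilde a^{ij}\pd_i\pd_j\gmgm$ has $\tilde a$ small in $C^1$ (and in every $H^\sigma$, $\sigma<\kk-\frac12$), hence absorbable, and bootstrap $\gmgm\in H^{s+2}$ from $\kappa\in H^s$.

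The main obstacle is the combination of elliptic regularity with only Sobolev-class coefficients and the tame control of the nonlinear curvature terms with the correct (affine, not polynomial) dependence on $\abs{\kappa}_{H^s}$. Concretely one must: (i) choose the bootstrap increment small enough that at each stage the just-improved regularity of $\Gamma$ (equivalently of $\gmgm$) already suffices to bound the coefficients of $\lap_\Gamma$ at the next level — it is this coupling of coefficient and solution regularity, and the fact that the target regularity $s+2$ must not exceed that of $\Gs$, namely $\kk+1$, that forces the upper endpoint $s\le\kk-1$; and (ii) distribute every multilinear term so that at most one factor sits at the top regularity, the remaining ones being controlled by $C_*$ through the $\ls$-bound. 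The lower endpoint $s\ge\kk-\frac52$ is needed only to seat the base step and to guarantee that all lower-order curvature terms encountered along the way are a priori controlled by $\ls$. The borderline case $k=2$ is the most delicate point of the product estimates, since then $s_0=\tfrac12$ and $s''-2\le0$, so the nonlinear terms must be handled by careful use of the compact-domain embeddings $L^q(\Gamma)\hookrightarrow L^2(\Gamma)$, fractional Sobolev--Gagliardo--Nirenberg inequalities, and frequency splitting rather than by a crude algebra estimate; for $k\ge3$ all factors can simply be placed in $L^\infty\cap H^{s_0}$ and the bookkeeping is routine.
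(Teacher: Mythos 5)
Your proposal is correct and follows essentially the same route the paper indicates: the paper does not write out a proof but attributes the lemma to ``bootstrap arguments and Simons' identity'' with a reference to Shatah--Zeng, and your argument --- base regularity of $\II$ and $\vn$ from membership in $\Lambda_*$, then elliptic bootstrap on $\Gamma$ via \eqref{simons' identity} and \eqref{lap vn} with interpolation to keep the dependence on $\abs{\kappa}_{H^s}$ affine --- is precisely that scheme, including the correct identification of why the range $\kk-\frac{5}{2}\le s\le \kk-1$ appears.
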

	
	If $ \Lambda_* $ is regarded as an open subset of $ \H{\kk-\frac{1}{2}} $, then $ \K $ is a $ C^3$-morphism from $ \Lambda_* \subset \H{\kk - \frac{1}{2}} $ to $ \H{\kk -\frac{5}{2}} $. Furthermore, by taking $ a \gg 1 $, one can check the positivity of $ \eval{\var{\K}}_{\Gs}^{} $ and show that $ \K $ is actually a $ C^3 $ diffeomorphism. Indeed, the following proposition holds (\cite[Lemma 2.2]{Shatah-Zeng2011}):
	
	\begin{prop}\label{prop K}
		There are positive constants $ C_*, \delta_0, \delta_1, a_0 $ depending only on $ \Gs $ and $ \vnu $ such that for $ a \ge a_0 $, $ \K $ is a $ C^3 $ diffeomorphism from $ \Lambda_* \subset \H{\kk-\frac{1}{2}} $ to $ \H{\kk-\frac{5}{2}} $. Denote by
		\begin{equation*}
			B_{\delta_1} \coloneqq \Set*{\ka \given \abs{\ka - \kappa_{*}}_{\H{\kk-\frac{5}{2}}} < \delta_1},
		\end{equation*}
		where $ \kappa_{*} $ is the mean curvature of $ \Gs $ with respect to $ \vn_{*} $, then
		\begin{equation*}
			\abs{\K^{-1}}_{C^3\qty(B_{\delta_1}; \H{\kk-\frac{1}{2}})} \le C_*.
		\end{equation*}
		Furthermore, if $ \ka \in B_{\delta_1} \cap \H{s-2}$ with $ \kk-\frac{1}{2} \le s \le \kk +1 $, then $ \gmgm,  \Phi_\Gamma \in \H{s}$, and for $ \max\qty{s'-2, -s} \le s'' \le s' \le s $, it holds that
		\begin{equation}\label{var est K^-1}
			\abs{\var{\K^{-1}}(\ka)}_{\LL\qty(\H{s''}; \H{s'})} \le C_* a^{s'-s''-2} \qty(1+ \abs{\ka}_{\H{s-2}}).
		\end{equation}		  		
	\end{prop}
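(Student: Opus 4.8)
The plan is to follow the argument of \cite[Lemma~2.2]{Shatah-Zeng2011}; the conceptual point is that the first variation $\var{\K}$ is a second order elliptic operator on $\Gs$ whose zeroth order part is exactly $a^2\,\id$, so that taking $a$ large makes it uniformly invertible over the whole family $\ls$, with resolvent bounds that scale like powers of $a^{-1}$, and the inverse function theorem then produces $\K^{-1}$ with the stated estimates. To compute the linearisation, write $\Phi_\Gamma(p)=p+\gmgm(p)\vnu(p)$ and use the classical formula for the variation of the mean curvature under a perturbation whose normal component is $(\vnu\vdot\vn)\rho$; pulling the resulting expression back to $\Gs$ one obtains
\[
\var{\K}(\gmgm)[\rho]=a^2\rho-\frac{1}{\vnu\vdot\vn}\,\lap_{\Gamma}\!\big((\vnu\vdot\vn)\,\rho\big)+(\text{first order in }\rho),
\]
where $\lap_{\Gamma}$ is the Laplace--Beltrami operator pulled back from $\Gamma=\Phi_\Gamma(\Gs)$, $\vnu\vdot\vn\ge 9/10$ on $\ls$, and the lower order coefficients are polynomial expressions in $\vn$, $\II$, $\grad\gmgm$ and the fixed data of $(\Gs,\vnu)$. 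By Lemma~\ref{est ii} these coefficients are controlled in $\H{\kk-\frac{5}{2}}$ uniformly for $\Gamma\in\ls$, so $\var{\K}(\gmgm)$ is a bounded elliptic operator $\H{s}\to\H{s-2}$ with ellipticity and norm bounds depending only on $\ls$, and with every term except $a^2\,\id$ independent of $a$.

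\emph{Uniform invertibility and the resolvent estimate.} Since $-\lap_{\Gamma}\ge 0$, an integration by parts on $\Gs$ gives, for $a\ge a_0(\ls)$,
\[
\langle\var{\K}(\gmgm)[\rho],\rho\rangle_{L^2(\Gs)}\ \ge\ \tfrac12 a^2\,\abs{\rho}_{L^2(\Gs)}^2+\tfrac12\,\abs{\grad\rho}_{L^2(\Gs)}^2 ,
\]
so $\var{\K}(\gmgm)$ is injective; being elliptic of order two on the closed manifold $\Gs$ it is Fredholm of index zero, hence an isomorphism $\H{s}\to\H{s-2}$. The energy estimate gives $\abs{(\var{\K}(\gmgm))^{-1}}_{\LL(L^2(\Gs);L^2(\Gs))}\lesssim a^{-2}$; feeding this back into the equation and using elliptic regularity for $\var{\K}(\gmgm)$ (which costs no power of $a$), together with duality and interpolation, one gets
\[
\abs{(\var{\K}(\gmgm))^{-1}}_{\LL(\H{s''};\H{s'})}\ \lesssim\ a^{s'-s''-2}\qq{for}\max\{s'-2,-s\}\le s''\le s'\le s,
\]
with constants depending only on $\ls$; the lower bounds on $s''$ record, respectively, the two-derivative gain from ellipticity and the finite regularity of the coefficients. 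This is exactly \eqref{var est K^-1}, and it is the analytic core of the proposition.

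\emph{$C^3$ regularity, the diffeomorphism, and the bootstrap.} Since $\H{\kk-\frac{1}{2}}=H^{\frac{3k-1}{2}}(\Gs)$ is a Banach algebra embedded in $C^1(\Gs)$, Moser-type product, composition and commutator estimates show that $\gmgm\mapsto\Phi_\Gamma\mapsto(\vn,\II,\kappa\circ\Phi_\Gamma)$, and hence $\K$, is at least $C^3$ from an open subset of $\H{\kk-\frac{1}{2}}$ into $\H{\kk-\frac{5}{2}}$, the loss of two derivatives coming from the two derivatives appearing in $\kappa$. Combined with the previous step and the inverse function theorem, $\K$ is a local $C^3$ diffeomorphism near every point of $\ls$; since $\delta_0$ is small and $(\var{\K})^{-1}$ is uniformly bounded, a Hadamard-type argument (equivalently, a contraction mapping for $\gmgm\mapsto\gmgm-(\var{\K}(\gmgm))^{-1}(\K[\gmgm]-\ka)$) shows that $\K$ is injective on $\ls$ and maps onto a neighbourhood $B_{\delta_1}$ of $\K[0]=\kappa_*$, producing $\K^{-1}:B_{\delta_1}\to\ls$. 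Differentiating $\K\circ\K^{-1}=\id$ up to three times expresses the Fréchet derivatives of $\K^{-1}$ as universal polynomials in $(\var{\K})^{-1}$ and the first three derivatives of $\K$, all uniformly bounded on $\ls$, whence $\abs{\K^{-1}}_{C^3(B_{\delta_1};\H{\kk-\frac{1}{2}})}\le C_*$. Finally, for $\ka\in B_{\delta_1}\cap\H{s-2}$ with $\kk-\frac{1}{2}\le s\le\kk+1$, one bootstraps in the quasilinear elliptic equation $\K[\gmgm]=\ka$: starting from $\gmgm\in\H{\kk-\frac{1}{2}}$, finitely many applications of elliptic regularity for $\var{\K}(\gmgm)$ together with Lemma~\ref{est ii} and product estimates raise the regularity of $\gmgm$, hence of $\Phi_\Gamma=\id+\gmgm\vnu$, up to $\H{s}$, and \eqref{var est K^-1} then follows by applying the resolvent estimate at $\Gamma=\Phi_\Gamma(\Gs)\in\H{s}$.

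I expect the resolvent estimate, and its propagation, to be the main obstacle: one must extract the \emph{sharp} power $a^{s'-s''-2}$ uniformly over all of $\ls$ while the coefficients of $\var{\K}(\gmgm)$ are only barely in an algebra (namely $\H{\kk-\frac{5}{2}}$), and then track every Sobolev index and every power of $a$ through the elliptic bootstrap and through the chain-rule computation of the higher Fréchet derivatives of $\K^{-1}$.
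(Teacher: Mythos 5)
The paper gives no proof of this proposition: it simply notes that positivity of $\eval{\var{\K}}_{\Gs}$ for $a\gg 1$ upgrades the $C^3$-morphism $\K$ to a diffeomorphism, and cites \cite[Lemma~2.2]{Shatah-Zeng2011}. Your proposal fleshes out exactly that argument — coercivity of the second-order elliptic linearisation with zeroth-order part $a^2\,\id$, interpolation to get the $a^{s'-s''-2}$ resolvent scaling, inverse function theorem, and elliptic bootstrap for the higher regularity statement — so it is correct and takes essentially the same approach as the source the paper relies on.
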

	
	\subsection{Harmonic Coordinates and Dirichlet-Neumann Operators}\label{sec harmonic coord}
	
	For a given hypersurface $ \Gamma \in \Lambda(\Gs, s, \delta) $, define a map $ \mathcal{X}_\Gamma^\pm : \Om_*^\pm \to \Om_\Gamma^\pm $ by
	\begin{equation}\label{key}
		\begin{cases*}
			\lap_y \X_\Gamma^\pm = 0 &for $ y \in \Om_*^\pm $, \\
			\X_\Gamma^\pm (z) = \Phi_\Gamma(z) &for $ z \in \Gamma_* $, \\
			\X_\Gamma^- (z') = z' &for $ z' \in \pd \Omega $.
		\end{cases*}
	\end{equation}
	Then, it is clear that
	\begin{equation}\label{key}
		\norm{\nabla \X_\Gamma^\pm - \id|_{\Om_*^\pm}}_{H^{s-\frac{1}{2}}(\Om_*^\pm)} \le C \abs{\gamma_\Gamma}_{H^s(\Gs)} < C\delta,
	\end{equation}
	where $ C > 0 $ is uniform in $ \Gamma \in \Lambda(\Gs, s, \delta) $. Thus, there is a constant $ \delta_0 >0 $ determined by $ \Gs $ and $ \vnu $, so that $ \X_\Gamma^\pm $ are diffeomorphisms from $ \Om_*^\pm $ to $ \Om^\pm_\Gamma $ respectively, whenever $ \delta \le \delta_0 $.
	
	We list some basic inequalities, whose proofs are standard (c.f. \cite{Shatah-Zeng2008-Geo,Bahouri-Chemin-Danchin2011}).
	\begin{lem}\label{lem composition harm coordi}
		Suppose that $ \Gamma \in \ls $. Then there are constants $ C_1, C_2 > 0 $, depending on $ \ls $, so that
		\begin{enumerate}
			\item If $ u_\pm \in H^\sigma (\Omega_\Gamma^\pm) $ for $ \sigma \in \qty[-\kk, \kk] $, then 
			\begin{equation*}
				\dfrac{1}{C_1} \norm{u_\pm}_{H^\sigma(\Om_\Gamma^\pm)} \le \norm{u_\pm \circ \X_\Gamma^\pm}_{H^\sigma(\Om_*^\pm)} \le C_1  \norm{u_\pm}_{H^\sigma(\Om_\Gamma^\pm)}.
			\end{equation*}
			\item If $ f \in H^s (\Gamma) $ for $ s \in \qty[\frac{1}{2}-\kk, \kk-\frac{1}{2}] $, then 
			\begin{equation*}
				\dfrac{1}{C_2} \abs{f}_{H^{s}(\Gamma)} \le \abs{f\circ\Phi_\Gamma}_{\H{s}} \le C_2 \abs{f }_{H^{s}(\Gamma)}.
			\end{equation*}
		\end{enumerate}
	\end{lem}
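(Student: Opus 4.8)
The plan is to read both estimates as uniform bounds on composition operators; the only quantitative input is the near-identity control of the maps $\X_\Gamma^\pm$ and $\Phi_\Gamma$ established above, and the remainder is the standard Sobolev calculus of change of variables (cf. \cite{Shatah-Zeng2008-Geo,Bahouri-Chemin-Danchin2011}).

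For the first estimate, I would begin by taking $s=\kk-\tfrac{1}{2}$ in the bound $\norm{\nabla\X_\Gamma^\pm-\id}_{H^{s-\frac{1}{2}}(\Om_*^\pm)}\le C\abs{\gamma_\Gamma}_{H^{s}(\Gs)}$, which under $\Gamma\in\ls$ reads $\norm{\nabla\X_\Gamma^\pm-\id}_{H^{\kk-1}(\Om_*^\pm)}<C\delta_0$. Since $k\ge 2$ forces $\kk-1\ge 2>\tfrac{3}{2}$, the space $H^{\kk-1}(\Om_*^\pm)$ embeds into $L^\infty$ and multiplies $H^{m}(\Om_*^\pm)$ into itself for $0\le m\le\kk-1$, and $H^{\kk}(\Om_*^\pm)\hookrightarrow C^1$; hence, for $\delta_0$ small, $\X_\Gamma^\pm$ is a $C^1$-diffeomorphism onto $\Om_\Gamma^\pm$ whose Jacobian $\det\DD\X_\Gamma^\pm$ is bounded from above and below by positive constants depending only on $\ls$, and whose inverse lies in the same regularity class by the Sobolev inverse-function theorem. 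For $\sigma=0$ the asserted two-sided bound is just the change-of-variables formula. For a positive integer $\sigma\le\kk$, I would expand $\DD^{\sigma}(u_\pm\circ\X_\Gamma^\pm)$ by the chain rule and control the resulting polynomial expressions in $(\DD^{\beta}u_\pm)\circ\X_\Gamma^\pm$ and $\DD^{\gamma}\X_\Gamma^\pm$ by Moser-type estimates, closing the induction on $\sigma$ via the multiplication property above and $\norm{\DD\X_\Gamma^\pm}_{H^{\kk-1}}\less 1$, with all constants uniform over $\ls$; interpolation then fills in $0<\sigma\le\kk$, and the reverse inequality follows by running the same estimate for $(\X_\Gamma^\pm)^{-1}$. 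Finally, for $-\kk\le\sigma<0$ I would argue by duality, using the identity $\langle u_\pm\circ\X_\Gamma^\pm,\varphi\rangle_{\Om_*^\pm}=\langle u_\pm,\,\abs{\det\DD(\X_\Gamma^\pm)^{-1}}\,(\varphi\circ(\X_\Gamma^\pm)^{-1})\rangle_{\Om_\Gamma^\pm}$ together with the positive-order statement and boundedness of multiplication by the Jacobian $\abs{\det\DD(\X_\Gamma^\pm)^{-1}}\in H^{\kk-1}$ to reduce the negative range to the positive one.

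The second estimate is the same argument transferred to the hypersurface: cover the compact $H^{\kk+1}$-surface $\Gs$ by finitely many coordinate patches and push them forward by $\Phi_\Gamma$ to obtain an atlas of $\Gamma$, so that in these compatible charts the comparison of $\abs{f}_{H^{s}(\Gamma)}$ with $\abs{f\circ\Phi_\Gamma}_{H^{s}(\Gs)}$ becomes the Euclidean composition estimate for the near-identity map $p\mapsto p+\gamma_\Gamma(p)\vnu(p)$ on bounded subsets of $\R^{2}$, whose first derivatives lie in $H^{\kk-3/2}$ with norm $\less 1$. Since $k\ge 2$ gives $\kk-\tfrac{3}{2}\ge\tfrac{3}{2}>1$, the chain-rule/Moser/duality scheme of the first part applies verbatim on $\R^{2}$ for $\tfrac{1}{2}-\kk\le s\le\kk-\tfrac{1}{2}$; one then checks that the transported atlas is equivalent to a fixed reference atlas of $\Gamma$, with constants controlled only by $\delta_0$ and the fixed geometry of $\Gs$ and $\vnu$, which yields the claimed uniform bound.

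I do not expect a deep obstacle here: the substantive fact is the near-identity control of $\X_\Gamma^\pm$ and $\Phi_\Gamma$ inherited from $\Gamma\in\ls$, which is precisely what makes these composition operators uniformly bounded, and the rest is organization. The points requiring the most care are the negative-order and endpoint exponents $\abs{\sigma}=\kk$ and $\abs{s}=\kk-\tfrac{1}{2}$, where the diffeomorphisms are exactly one derivative short of the target regularity; there one must exploit that the relevant Jacobians still lie in the multiplier classes $H^{\kk-1}(\Om_*^\pm)$, respectively $H^{\kk-3/2}$ in charts, with the endpoint $\sigma=-\kk$ handled either through the extra interior smoothness of the harmonic extension $\X_\Gamma^\pm$ or by interpolating between $H^{-\kk+1}$ and $H^{\kk}$.
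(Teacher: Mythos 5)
The paper does not actually prove this lemma; it simply declares the estimates standard and cites \cite{Shatah-Zeng2008-Geo,Bahouri-Chemin-Danchin2011}. Your proposal reconstructs exactly the argument those references contain — near-identity control of $\nabla\X_\Gamma^\pm$ in $H^{\kk-1}\hookrightarrow L^\infty$, chain rule plus Moser/product estimates for positive orders, duality for negative orders, and localization to charts for the surface version — so in substance you are giving the intended proof, and the approach is sound. Two endpoint remarks are worth making. First, when $k$ is odd, $\kk=\frac{3}{2}k$ is a half-integer, so interpolating between integer-order chain-rule estimates only reaches $H^{\lfloor\kk\rfloor}$; to attain $\sigma=\kk$ one needs a genuinely fractional argument (paraproducts, or the equivalent characterization of $H^{\sigma}$ norms), which is how the cited references handle it. Second, your fallback of ``interpolating between $H^{-\kk+1}$ and $H^{\kk}$'' cannot produce the endpoint $\sigma=-\kk$, since that exponent lies outside the interpolation scale of those two spaces; the duality argument at $\sigma=-\kk$ requires the adjoint map $\varphi\mapsto\abs{\det\DD(\X_\Gamma^\pm)^{-1}}\,(\varphi\circ(\X_\Gamma^\pm)^{-1})$ to be bounded on $H^{\kk}$, and multiplication by the Jacobian, which only lies in $H^{\kk-1}$, must then be treated by a commutator/paraproduct refinement rather than the crude product estimate. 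These are borderline technicalities rather than flaws in the strategy, but the specific fixes you name for them would not work as stated.
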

	
	\begin{lem}\label{lem product est}
		Assume that $ \Gamma \in \Lambda_* $. Then there are constants $ C_1, C_2 > 0 $ determined by $ \Lambda_* $ such that
		\begin{enumerate}
			\item For $ u_\pm \in H^{\sigma_1}(\Om_\Gamma^\pm)  $, $ w_\pm \in H^{\sigma_2}(\Om_\Gamma^\pm)$ and $ \sigma_1 \le \sigma_2 $,
			\begin{equation*}
				\begin{split}
					\norm{u_\pm \cdot w_\pm}_{H^{\sigma_1 + \sigma_2 - \frac{3}{2}}(\Om_\Gamma^\pm)} \le C_1  \norm{u_\pm}_{H^{\sigma_1}(\Om_\Gamma^\pm)} \norm{w}_{H^{\sigma_2}(\Om_\Gamma^\pm)} \qif \sigma_2 < \dfrac{3}{2} \qc 0 <  \sigma_1 + \sigma_2 \le \kk.
				\end{split}
			\end{equation*}
			\begin{equation*}
				\begin{split}
					\norm{u_\pm \cdot w_\pm}_{H^{\sigma_1}(\Om_\Gamma^\pm)} \le C_1 \norm{u_\pm}_{H^{\sigma_1}(\Om_\Gamma^\pm)} \norm{w_\pm}_{H^{\sigma_2}(\Om_\Gamma^\pm)}  \qif \frac{3}{2} < \sigma_2 \le \kk \qc \sigma_1 + \sigma_2 > 0.
				\end{split}
			\end{equation*}
			
			\item For $ f \in H^{s_1}(\Gamma) $, $ g \in H^{s_2}(\Gamma) $ and $ s_1 \le s_2 $, 
			\begin{equation*}\label{key}
				\abs{fg}_{H^{s_1+s_2-\frac{2}{2}}(\Gamma)} \le C_2 \abs{f}_{H^{s_1}(\Gamma)} \abs{g}_{H^{s_2}(\Gamma)} \qif s_2 < 1 \qc 0 \le s_1 + s_2 \le \kk-\frac{1}{2},
			\end{equation*}
			\begin{equation*}\label{key}
				\abs{fg}_{H^{s_1}(\Gamma)} \le C_2 \abs{f}_{H^{s_1}(\Gamma)} \abs{g}_{H^{s_2}(\Gamma)}\qif 1 < s_2 \le \kk-\frac{1}{2} \qc s_1 + s_2 > 0.
			\end{equation*}
		\end{enumerate}
	\end{lem}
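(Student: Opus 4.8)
The plan is to reduce both assertions to the classical Sobolev multiplication theorem on the \emph{fixed} reference configuration $(\Om_*^\pm,\Gs)$, and then to transport the conclusion back along the harmonic coordinate maps of \textsection~\ref{sec harmonic coord}.

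The key to the reduction is that pulling back a pointwise product equals the product of the pullbacks:
\[
	(u_\pm w_\pm)\circ\X_\Gamma^\pm=\bigl(u_\pm\circ\X_\Gamma^\pm\bigr)\bigl(w_\pm\circ\X_\Gamma^\pm\bigr),\qquad (fg)\circ\Phi_\Gamma=\bigl(f\circ\Phi_\Gamma\bigr)\bigl(g\circ\Phi_\Gamma\bigr).
\]
By Lemma~\ref{lem composition harm coordi}, $\|v\|_{H^\sigma(\Om_\Gamma^\pm)}$ is comparable to $\|v\circ\X_\Gamma^\pm\|_{H^\sigma(\Om_*^\pm)}$ for $\sigma\in[-\kk,\kk]$, and $\|g\|_{H^s(\Gamma)}$ is comparable to $\|g\circ\Phi_\Gamma\|_{H^s(\Gs)}$ for $s\in[\tfrac12-\kk,\kk-\tfrac12]$, uniformly for $\Gamma\in\ls$. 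One then checks directly from the hypotheses in part (1) — namely $\sigma_1\le\sigma_2$, $0<\sigma_1+\sigma_2\le\kk$ with $\sigma_2<\tfrac32$ for the first estimate, and $\tfrac32<\sigma_2\le\kk$, $\sigma_1+\sigma_2>0$ for the second — together with $k\ge2$, that $\sigma_1$, $\sigma_2$ and the relevant target exponent all lie in $[-\kk,\kk]$; part (2) is the same verification on $\Gs$ in the window $[\tfrac12-\kk,\kk-\tfrac12]$. Hence it suffices to prove the four inequalities with $(\Om_\Gamma^\pm,\Gamma)$ replaced by the fixed $(\Om_*^\pm,\Gs)$, after which the stated dependence of $C_1,C_2$ on $\ls$ is automatic.

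On the reference configuration I would invoke the classical multiplication theorem (see e.g.\ \cite{Bahouri-Chemin-Danchin2011}): for $\sigma_1+\sigma_2>0$ one has $H^{\sigma_1}\cdot H^{\sigma_2}\hookrightarrow H^{\sigma_0}$ on $\R^n$ whenever either \textup{(i)} $\sigma_1,\sigma_2<\tfrac n2$ and $\sigma_0=\sigma_1+\sigma_2-\tfrac n2\le\min\{\sigma_1,\sigma_2\}$, or \textup{(ii)} $\sigma_2>\tfrac n2$, $\sigma_1\le\sigma_2$, and $\sigma_0=\sigma_1$. For the bulk estimates ($n=3$): when $\sigma_2<\tfrac32$ the ordering forces $\sigma_1,\sigma_2<\tfrac n2$ and case \textup{(i)} gives the target $\sigma_1+\sigma_2-\tfrac32$; when $\sigma_2>\tfrac32$ and $\sigma_1+\sigma_2>0$, case \textup{(ii)} gives the target $\sigma_1$. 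Thus the two regimes $\sigma_2<\tfrac32$ versus $\sigma_2>\tfrac32$ are exactly the sub-/super-critical dichotomy of the Euclidean theorem, and the strict inequalities rule out the logarithmic endpoint failure at $\sigma_2=\tfrac n2$. To carry these from $\R^3$ to the bounded reference domains $\Om_*^\pm$ (which are $C^1$, hence Lipschitz) I would compose a Stein-type extension operator $H^\sigma(\Om_*^\pm)\to H^\sigma(\R^3)$, valid for $\sigma\ge0$, with restriction. For the interface estimates I take $n=2$, so that $\tfrac n2=1$ matches the thresholds $s_2<1$ and $s_2>1$ in the statement; localizing via a fixed finite atlas with a subordinate partition of unity on the fixed $C^1$ closed hypersurface $\Gs$ reduces them to the same theorem on $\R^2$, with fixed constants.

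I expect the only genuine bookkeeping to be the negative-regularity instances ($\sigma_1<0$, resp.\ $s_1<0$) on the bounded domains, where extension is unavailable: there the product must be defined by duality, $\langle u_\pm w_\pm,\varphi\rangle=\langle u_\pm,w_\pm\varphi\rangle$, and one must verify that the resulting dual pair of Sobolev exponents again satisfies the hypotheses of the Euclidean multiplication theorem. This is exactly where the constraints $\sigma_1+\sigma_2>0$ and $\sigma_2\neq\tfrac32$ (resp.\ $s_1+s_2>0$, $s_2\neq1$) are used: they are what keeps the dual exponents sub-critical, and the rough factor super-critical, after the flip. Granting that, the lemma follows by combining the classical multiplication theorem with Lemma~\ref{lem composition harm coordi}.
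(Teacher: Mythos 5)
Your argument is correct and is exactly the standard route the paper has in mind: the lemma is stated without proof, deferring to \cite{Shatah-Zeng2008-Geo,Bahouri-Chemin-Danchin2011}, and your reduction via Lemma~\ref{lem composition harm coordi} to the fixed reference configuration followed by the Euclidean multiplication theorem (with extension, localization on $\Gs$, and duality for the negative-exponent cases) is precisely that standard proof. The exponent windows you verify do lie inside the ranges where Lemma~\ref{lem composition harm coordi} applies, so nothing further is needed.
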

	
	For any smooth function $ f $ defined on $ \Gamma \in \Lambda_* $, denote by $ \h_\pm f $ the harmonic extension to $ \Om_\Gamma^\pm $, namely
	\begin{equation}\label{harmonic ext +}
		\begin{cases*}
			\lap \h_+ f = 0 \qfor x \in \Om_\Gamma^+, \\
			\h_+ f = f \qfor x \in \Gamma,
		\end{cases*}
	\end{equation}
	and
	\begin{equation}\label{harmonic ext 2}
		\begin{cases*}
			\lap \h_- f = 0 \qfor x \in \Om_\Gamma^-, \\
			\h_- f = f \qfor x \in \Gamma, \\
			\DD_{\wt{\vn}} \h_- f = 0 \qfor x \in \pd\Om.
		\end{cases*}
	\end{equation}
	The Dirichlet-Neumann operators are defined to be 
	\begin{equation}\label{def DN op}
		\n_\pm f \coloneqq \pm \vn \vdot (\grad \h_\pm f)|_{\Gamma}^{}.
	\end{equation}
	
	Assume that $ \Gamma \in \Lambda_* \subset H^{\kk-\frac{1}{2}} $ and $ \frac{3}{2}-\kk \le s \le \kk-\frac{1}{2} $. Then, the Dirichlet-Neumann operators $ \n_\pm : H^{s}(\Gamma) \to H^{s-1}(\Gamma) $ satisfy the following properties (c.f. \cite[pp. 738-741]{Shatah-Zeng2008-Geo}):
	\begin{enumerate}[1.]
		\item $ \n_\pm $ are self-adjoint on $ L^2(\Gamma) $ with compact resolvents;
		\item $\ker(\n_\pm) = \qty{\mathrm{const.}}$;
		\item There is a constant $ C_* > 0 $ uniform in $ \Gamma \in \Lambda_* $  so that
		\begin{equation}\label{property1 n}
			C_*\abs{f}_{H^{s}(\Gamma)} \ge \abs{\n_\pm(f)}_{H^{s-1}(\Gamma)} \ge \frac{1}{C_*}\abs{f}_{H^{s}(\Gamma)},
		\end{equation} 
		for any $ f $ satisfying $ \int_\Gamma f \dd{S} = 0 $;
		\item For $ \frac{1}{2}-\kk \le s_1 \le \kk-\frac{1}{2} $, there is a constant $ C_* $ determined by $ \Lambda_* $ so that
		\begin{equation}\label{equiv n lap}
			\dfrac{1}{C_*} \qty(\mathrm{I}-\lap_\Gamma)^{\frac{s_1}{2}} \le \qty(\mathrm{I}+\n_\pm)^{s_1} \le C_*\qty(\mathrm{I}-\lap_\Gamma)^{\frac{s_1}{2}},
		\end{equation}
		i.e., the norms on $ H^{s_1}(\Gamma) $ defined by interpolating $ \qty(\mathrm{I}-\lap_\Gamma)^\frac{1}{2} $ and $ \qty(\mathrm{I}+\n_\pm) $ are equivalent;
		\item For $ \frac{1}{2}-\kk \le s_2 \le \kk-\frac{3}{2} $,
		\begin{equation*}
			(\n_\pm)^{-1} : H^{s_2}_{0}(\Gamma) \to H^{s_2+1}_{0}(\Gamma),
		\end{equation*}
		\begin{equation*}
			H^{s_2}_{0}(\Gamma) = \Set*{f\in H^{s_2}(\Gamma) \given \int_\Gamma f \dd{S} = 0 }
		\end{equation*}
		are well-defined and bounded uniformly in $ \Gamma \in \Lambda_* $.
	\end{enumerate}
	
	
	At the end of this subsection, we state an important lemma (c.f. \cite[p. 863]{Shatah-Zeng2008-vortex}):
	\begin{lem}\label{lem lap-n}
		Suppose that $ \Gamma \in \Lambda_* $ with $ \kappa \in H^{\kk-\frac{3}{2}}(\Gamma) $. Then for $ \frac{1}{2} - \kk \le s \le \kk-\frac{1}{2} $, one has
		\begin{equation}\label{key}
			\abs{\qty(-\lap_\Gamma)^{\frac{1}{2}} - \n_\pm}_{\LL\qty(H^s(\Gamma))} \le C_* \qty(1+\abs{\kappa}_{H^{\kk-\frac{3}{2}}(\Gamma)}),
		\end{equation}
		where the constant $ C_*>0 $ is uniform in $ \Gamma \in \Lambda_* $.
	\end{lem}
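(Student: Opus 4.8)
The plan is to compare the two operators $(-\lap_\Gamma)^{1/2}$ and $\n_\pm$ by exploiting the structure already recorded in \eqref{equiv n lap} and the elliptic regularity of the harmonic extensions \eqref{harmonic ext +}--\eqref{harmonic ext 2}. The key observation is that both operators are, to leading order, the ``half-Laplacian'' on $\Gamma$, and their difference is a pseudodifferential operator of order $0$ whose symbol depends on the geometry of $\Gamma$ only through the first and second fundamental forms. By Lemma~\ref{est ii}, the regularity of $\vn$ and $\II$ on $\Gamma \in \Lambda_*$ is controlled by $\abs{\kappa}_{H^{\kk-3/2}(\Gamma)}$, so all we need is to track how the geometry enters the difference and invoke the product estimates of Lemma~\ref{lem product est}.

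First I would reduce to the case of functions with zero mean on $\Gamma$, since both $(-\lap_\Gamma)^{1/2}$ and $\n_\pm$ annihilate constants (property~2 of the Dirichlet--Neumann operators), so the difference is automatically well-defined and continuous on constants; the complement is a rank-one modification that is harmless in $\LL(H^s(\Gamma))$. Next, I would pass to the flattened picture using $\Phi_\Gamma$ and the harmonic coordinate map $\X_\Gamma^\pm$: by Lemma~\ref{lem composition harm coordi}, norms on $H^s(\Gamma)$ and $\H{s}$ are equivalent uniformly in $\Gamma \in \Lambda_*$, so it suffices to prove the bound for the pulled-back operators on the fixed reference surface $\Gs$. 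In those coordinates, $\n_\pm$ becomes a pseudodifferential operator whose principal symbol is $\sqrt{g^{ij}(x)\xi_i\xi_j}$ with $g$ the induced metric, which is precisely the principal symbol of $(-\lap_\Gamma)^{1/2}$; hence the difference has order $0$. The symbol of the difference is a smooth function of the metric coefficients and their first derivatives (equivalently, of $\vn$ and $\II$), so by the Kato--Ponce / paraproduct estimates together with Lemma~\ref{est ii}, the $\LL(H^s(\Gamma))$ norm of the difference is bounded by a polynomial in $\abs{\vn}_{H^{\kk-1/2}} + \abs{\II}_{H^{\kk-3/2}} \le C_*(1 + \abs{\kappa}_{H^{\kk-3/2}(\Gamma)})$. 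Choosing the admissible Sobolev range $\frac12 - \kk \le s \le \kk - \frac12$ guarantees that the product estimates close.

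The main obstacle I anticipate is making the symbolic comparison rigorous at low regularity: the metric on $\Gamma$ is only $H^{\kk-3/2}$, so $\n_\pm$ is not a classical pseudodifferential operator and one cannot quote the standard symbol calculus directly. The clean way around this is to avoid symbols altogether and argue variationally: for zero-mean $f$, write $\abs{f}_{H^{s}(\Gamma)}^2$ using the spectral decomposition of $\n_\pm$ via \eqref{equiv n lap}, then estimate $\langle (-\lap_\Gamma)^{1/2} f - \n_\pm f, \phi\rangle$ by testing the defining elliptic problems for $\h_\pm f$ against harmonic extensions and integrating by parts, so that every error term carries an explicit factor of $\grad\vn$ or $\II$; one then bounds these terms with Lemma~\ref{lem product est}(1) in the bulk and Lemma~\ref{lem product est}(2) on the boundary, converting back to $\Gamma$ via Lemma~\ref{lem composition harm coordi}. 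This is exactly the scheme of \cite[p.~863]{Shatah-Zeng2008-vortex}, and the only new point is bookkeeping the dependence on $\abs{\kappa}_{H^{\kk-3/2}(\Gamma)}$, which is linear because the top-order contribution of the geometry enters the error terms only through a single factor of $\II$ (or $\grad \vn$).
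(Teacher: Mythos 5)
Your proposal is correct and, in its final form (the variational/integration-by-parts comparison of $\h_\pm f$ with the flat extension, with every error term carrying a factor of $\II$ or $\grad\vn$ controlled via Lemma~\ref{est ii}), it is exactly the argument the paper relies on: the paper gives no independent proof of Lemma~\ref{lem lap-n} and simply cites \cite[p.~863]{Shatah-Zeng2008-vortex}, whose scheme you have reproduced. Your own caveat about the symbol-calculus route is well taken --- at $H^{\kk-\frac{3}{2}}$ regularity of the metric one must indeed use the energy/duality argument (with duality and interpolation covering the negative range of $s$), and that is the route you correctly settle on.
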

	
	\subsection{Commutator Estimates}
	For a vector field (not necessarily being solenoidal) $ \vv(t) : \Om_t^+ \to \R^3 $, denote by
	\begin{equation*}\label{key}
		\Dt \coloneqq \pd_t + \DD_{\vv}.
	\end{equation*}
	Then the following lemma can be derived from the identities introduced in \cite[pp. 709-710]{Shatah-Zeng2008-Geo} and standard product estimates:
	\begin{lem}\label{Dt comm est lemma}
		Suppose that $ \Gt \in \Lambda_* $, and $ \vv  \in H^{\kk}(\Om_t^+) $ is the evolution velocity of $ \Om_t^+ $. Let $ f (t, x) : \Gt \to \R $ and $ h (t, x) : \Om_t^+ \to \R $ be two functions. Then the following commutator estimates hold:
		\begin{enumerate}[I.]
			\item For $ 1 \le s \le \kk $, $\norm{\comm{\Dt}{\h_+}f}_{H^s(\Om_t^+)} \lesssim_{\Lambda_*} \abs{f}_{H^{s-\frac{1}{2}}(\Gt)}\cdot \norm{\vv}_{H^{\kk}(\Om_t^+)}$;
			
			\item For $ 1 \le s \le \kk $, $\norm{\comm{\Dt}{\lap^{-1}}h}_{H^s(\Om_t^+)} \lesssim_{\Lambda_*} \norm{h}_{H^{s-2}(\Om_t^+)} \cdot \norm{\vv}_{H^\kk(\Om_t^+)} $;
			
			\item For $ -\frac{1}{2} \le s \le \kk- \frac{3}{2} $, $\abs{\comm{\Dt}{\n_\pm}f}_{H^s(\Gt)} \lesssim_{\Lambda_*} \abs{f}_{H^{s+1}(\Gt)} \cdot \abs{\vv}_{H^{\kk-\frac{1}{2}}(\Gt)} $;
			
			\item For $ \frac{1}{2} \le s \le \kk - \frac{1}{2} $, $\abs{\comm{\Dt}{\n_\pm^{-1}}f}_{H^s(\Gt)} \lesssim_{\Lambda_*} \abs{f}_{H^{s-1}(\Gt)} \cdot \abs{\vv}_{H^{\kk-\frac{1}{2}}(\Gt)}$;
			
			\item For $ -2 \le s \le \kk - \frac{5}{2} $, $\abs{\comm{\Dt}{\lap_\Gt}f}_{H^s(\Gt)} \lesssim_{\Lambda_*} \abs{f}_{H^{s+2}(\Gt)} \cdot \abs {\vv}_{H^{\kk-\frac{1}{2}}(\Gt)} $;
			
			\item For $ 0 \le s \le \kk-1 $, $ \norm{\comm{\Dt}{\DD}h}_{H^s(\Om_t^+)} \less \norm{h}_{H^{s+1}(\Om_t^+)} \cdot \norm{\vv}_{H^\kk(\Om_t^+)}$.
		\end{enumerate}
	\end{lem}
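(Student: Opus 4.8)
The plan is to reduce all six estimates to a single commutator identity together with the elliptic bounds already recorded. Writing $\Dt=\pd_t+\DD_\vv$ in local coordinates on $\Om_t^+$ (or on $\Gt$) yields
\begin{equation*}
\comm{\Dt}{\DD}h=-(\DD\vv)\vdot\DD h,\qquad \comm{\lap}{\Dt}w=(\lap\vv)\vdot\grad w+2\,\grad\vv\tdot\grad^2 w;
\end{equation*}
moreover, since $\vv$ is the evolution velocity of $\Gt$, the operator $\Dt$ commutes with the trace onto $\Gt$, so that $\comm{\Dt}{\h_\pm}f$ and $\comm{\Dt}{\lap^{-1}}h$ have vanishing trace on $\Gt$. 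Using these, together with the evolution equations \eqref{eqn dt n} and \eqref{eqn dt kappa} for $\vn$ and $\kappa$, I would establish the six bounds in the order VI $\to$ I $\to$ II $\to$ III $\to$ IV $\to$ V, each feeding into the next.

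Step VI is immediate from the first identity and Lemma~\ref{lem product est}, since $\DD\vv\in H^{\kk-1}(\Om_t^+)$ with $\kk-1>\tfrac32$. For I, the function $w\coloneqq\comm{\Dt}{\h_+}f$ has zero trace on $\Gt$ and solves $\lap w=\comm{\lap}{\Dt}\h_+f$ in $\Om_t^+$; the Dirichlet estimate and Lemma~\ref{lem product est} give $\norm{w}_{H^s}\less\norm{\vv}_{H^\kk}\norm{\h_+f}_{H^s}\less\norm{\vv}_{H^\kk}\abs{f}_{H^{s-\frac12}(\Gt)}$. Item II is the same argument: $\comm{\Dt}{\lap^{-1}}h$ has zero trace and Laplacian $\comm{\lap}{\Dt}\lap^{-1}h$, and $\norm{\lap^{-1}h}_{H^s}\less\norm{h}_{H^{s-2}}$ closes it. For III I would expand, on $\Gt$, $\n_\pm f=\pm\vn\vdot(\grad\h_\pm f)$ and differentiate:
\begin{equation*}
\Dt(\n_\pm f)=\pm(\Dt\vn)\vdot\grad\h_\pm f\pm\vn\vdot\comm{\Dt}{\grad}\h_\pm f\pm\vn\vdot\grad\qty(\h_\pm\Dt f+\comm{\Dt}{\h_\pm}f),
\end{equation*}
where the first term is controlled by \eqref{eqn dt n}, the second by VI, and the last by I; after subtracting $\n_\pm\Dt f$ and checking that $\vv$ enters only through its trace $\vv|_\Gt$, one gets the bound on $-\tfrac12\le s\le\kk-\tfrac32$. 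Item IV follows from $\comm{\Dt}{\n_\pm^{-1}}f=-\n_\pm^{-1}\comm{\Dt}{\n_\pm}\,\n_\pm^{-1}f$ and the boundedness of $\n_\pm^{-1}$ on the zero-average spaces. For V I would use $\lap_\Gt f=-\DD^2(\h_+f)(\vn,\vn)-\kappa\,\n_+f$ on $\Gt$, obtained from \eqref{lap gt f alt} applied to the harmonic extension, push $\Dt$ through each factor via VI, \eqref{eqn dt n}, \eqref{eqn dt kappa} and III, and bound $\vn$, $\II$, $\kappa$ using Lemma~\ref{est ii}. The $\h_-$/$\n_-$ variants are obtained verbatim after replacing $\vv$ by a solenoidal extension tangent to $\pd\Om$, which changes nothing on $\Gt$.

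The main obstacle is not any single inequality but keeping all the product and elliptic estimates inside the stated index windows simultaneously: one must repeatedly select the correct branch of Lemma~\ref{lem product est} according to whether the rougher factor lies above or below $\tfrac32$ (resp.\ $1$ on $\Gt$), and one must spend the half-derivative gained from the harmonic extension — or the derivative smoothed by $\n_\pm^{-1}$ — precisely where $s$ is most negative, e.g.\ $s\to-\tfrac12$ in III, $s\to-2$ in V, and throughout when $k=2$, where the coefficient $\lap\vv$ lies only in $H^1(\Om_t^+)$. The genuinely delicate case is V: there $\Dt$ must be pushed simultaneously past the ambient Hessian, the unit normal, the mean curvature, and the tangential projection, and controlling the curvature factors that appear forces the use of Lemma~\ref{est ii} (hence Simons' identity \eqref{simons' identity}) and exploits the regularity $\Gt\in\ls\subset H^{\kk-\frac12}$ at its threshold.
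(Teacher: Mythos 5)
Your plan is correct and is essentially the route the paper itself takes: Lemma~\ref{Dt comm est lemma} is stated there with only a pointer to the commutator identities of Shatah--Zeng and "standard product estimates," and your identities ($\comm{\Dt}{\DD}h=-(\DD\vv)^*\vdot\DD h$, $\comm{\lap}{\Dt}w=\lap\vv\vdot\grad w+2\grad\vv\tdot\DD^2w$, and the observation that $\Dt$ commutes with restriction to $\Gt$ so that $\comm{\Dt}{\h_\pm}f$ and $\comm{\Dt}{\lap^{-1}}h$ solve homogeneous Dirichlet problems) are exactly the right inputs, as is the ordering VI $\to$ I $\to$ II $\to$ III $\to$ IV $\to$ V and the identity $\comm{\Dt}{\n_\pm^{-1}}=-\n_\pm^{-1}\comm{\Dt}{\n_\pm}\n_\pm^{-1}$ (modulo the adjustment for the zero-mean constraint, since $\Dt$ does not preserve $\int_\Gt f\dd{S_t}$).

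The one place where the write-up is thinner than the difficulty warrants is the negative endpoints $s=-\tfrac12$ in III and $s=-2$ in V. There the direct "differentiate the formula and take traces" argument does not literally close: for $f\in H^{1/2}(\Gt)$ one only has $\grad\h_\pm f\in L^2(\Om_t^+)$, which has no classical trace, so the terms $(\Dt\vn)\vdot\grad\h_\pm f$ and $\vn\vdot\grad\comm{\Dt}{\h_\pm}f$ must either be interpreted through the weak normal-derivative map for the elliptic problem, or — more cleanly — the estimate should be proved first for $s$ in the upper part of the range and then extended downward by duality, using that $\n_\pm$ and $\lap_\Gt$ are self-adjoint on $L^2(\Gt)$ and that the adjoint of $\Dt$ with respect to $\dd{S_t}$ is $-\Dt-\Div_\Gt\vv$, with $\Div_\Gt\vv$ controlled by $\abs{\vv}_{H^{\kk-\frac12}(\Gt)}$. "Spending the half-derivative from the harmonic extension" is not by itself enough at these endpoints; make the duality step explicit and the proof is complete.
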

	
	\subsection{Div-Curl Systems}\label{sec div-cul system}
	In this subsection, we list some basic results on div-curl systems (c.f. \cite{Cheng-Shkoller2017} for details):
	\begin{thm}\label{thm div-curl}
		Suppose that $ U $ is a bounded domain in $ \R^3 $ for which $\pd U \in H^{\kk-\frac{1}{2}} $. Given $ \vb{f}, g \in H^{l-1}(U) $ with $ \div \vb{f} = 0 $ and $ h \in H^{l-\frac{1}{2}}(\pd U) $, consider the following boundary value problem:
		\begin{subnumcases}{}
			\curl \vbu = \vb{f} &in $ U $, \label{3.27a}\\
			\div \vbu = g &in $ U $, \label{3.27b}\\
			\vbu \vdot \vn = h &on $ \pd U $. \label{3.27c}
		\end{subnumcases}
		If on each connected component $ \Gamma $ of $ \pd U $, it holds that
		\begin{equation}\label{compatibility div-curl}
			\int_\Gamma \vf \vdot \vn \dd{S} = 0,
		\end{equation}
		and the following compatibility condition holds:
		\begin{equation}\label{3.29}
			\int_{\pd U} h \dd{S} = \int_U g \dd{x},
		\end{equation}
		then for $ 1 \le l \le \kk-1 $, there is a solution $ \vbu \in H^l(U) $  such that
		\begin{equation}\label{key}
			\norm{\vbu}_{H^l(U)} \le C\qty(\abs{\pd U}_{H^{\kk-\frac{1}{2}}}) \cdot \qty(\norm{\vf}_{H^{l-1}(U)} + \norm{g}_{H^{l-1}(U)} + \abs{h}_{H^{l-\frac{1}{2}}(\pd U)}).
		\end{equation}
		The solution is unique whenever $ U $ is simply-connected.
	\end{thm}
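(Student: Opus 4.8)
The plan is to recast the first-order system \eqref{3.27a}--\eqref{3.27c} as a second-order elliptic boundary value problem. The vector identity $-\lap\vbu = \curl\curl\vbu - \grad\div\vbu$ shows that any solution satisfies $-\lap\vbu = \curl\vf - \grad g$ in $U$, and the natural boundary conditions to append are $\vbu\vdot\vn = h$ together with the requirement that the tangential part of $\curl\vbu$ agree with that of $\vf$, i.e. $(\curl\vbu - \vf)\cp\vn = \vb{0}$ on $\pd U$ (understood variationally at the endpoint $l=1$, or via the Hodge splitting mentioned below). These three scalar boundary conditions satisfy the Agmon--Douglis--Nirenberg complementing condition for $-\lap$, so the problem
\begin{equation*}
	-\lap\vbu = \curl\vf - \grad g \ \text{ in } U, \qquad \vbu\vdot\vn = h, \quad (\curl\vbu - \vf)\cp\vn = \vb{0} \ \text{ on } \pd U
\end{equation*}
is solvable. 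The interior datum lies in $H^{l-2}(U)$, the boundary data lie in $H^{l-\frac{1}{2}}(\pd U)$ and $H^{l-\frac{3}{2}}(\pd U)$ respectively (trace theorem), and for the stated range $1 \le l \le \kk-1$ the boundary regularity $\pd U \in H^{\kk-\frac{1}{2}}$ suffices to close the ADN estimates at the $H^l$ level, so the elliptic theory on Sobolev-class domains --- as developed in \cite{Cheng-Shkoller2017} --- produces $\vbu \in H^l(U)$ with
\begin{equation*}
	\norm{\vbu}_{H^l(U)} \le C\qty(\abs{\pd U}_{H^{\kk-\frac{1}{2}}}) \qty(\norm{\vf}_{H^{l-1}(U)} + \norm{g}_{H^{l-1}(U)} + \abs{h}_{H^{l-\frac{1}{2}}(\pd U)}).
\end{equation*}

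The essential step is to verify that this $\vbu$ solves the div-curl system itself, not merely the elliptic reduction. Set $\vb{r} \coloneqq \curl\vbu - \vf$ and $r \coloneqq \div\vbu - g$. From $\div\vf = 0$ one gets $\div\vb{r} = \div\curl\vbu = 0$; taking the curl of the interior equation gives $\curl\vb{r} = \grad\div\vbu - \grad g = \grad r$, whence $\lap r = \div\grad r = \div\curl\vb{r} = 0$. The boundary condition is $\vb{r}\cp\vn = \vb{0}$, so $\vb{r}$ is purely normal on $\pd U$; testing $\curl\vb{r} = \grad r$ against $\grad r$ and integrating by parts, the boundary term vanishes because $\grad r\cp\vb{r}$ is tangential to $\pd U$ (as $\vb{r}$ is normal there), so $\grad r \equiv 0$; since $\int_U r\dd{x} = \int_{\pd U} h\dd{S} - \int_U g\dd{x} = 0$ by \eqref{3.29}, this forces $r \equiv 0$, i.e. $\div\vbu = g$. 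Then $\curl\vb{r} = \grad r = \vb{0}$, so $\vb{r}$ is curl-free, divergence-free, and tangentially vanishing on $\pd U$; the space of such fields is finite-dimensional and is faithfully detected by the fluxes through the connected components of $\pd U$, and $\int_\Gamma \vb{r}\vdot\vn\dd{S} = \int_\Gamma \curl\vbu\vdot\vn\dd{S} - \int_\Gamma\vf\vdot\vn\dd{S} = 0$ by Stokes' theorem and \eqref{compatibility div-curl}. Hence $\vb{r} \equiv 0$ and $\curl\vbu = \vf$.

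For uniqueness, assume $U$ is simply-connected and let $\vbu$ solve the homogeneous problem, so $\curl\vbu = \vb{0}$, $\div\vbu = 0$, $\vbu\vdot\vn = 0$. On a simply-connected domain $\curl\vbu = \vb{0}$ forces $\vbu = \grad\varphi$ for some $\varphi \in H^{l+1}(U)$; then $\varphi$ is harmonic with vanishing Neumann data, hence constant, so $\vbu = \vb{0}$. Equivalently, $\vbu$ belongs to the space of curl-free, divergence-free fields with vanishing normal trace, which is trivial precisely when $U$ is simply-connected.

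The step I expect to be the main obstacle is the verification in the second paragraph on a merely $H^{\kk-\frac{1}{2}}$, possibly multiply-connected $U$: one has to identify the finite-dimensional obstruction spaces --- the constants for the divergence equation, and the tangentially-vanishing harmonic fields for the curl equation --- and check that the hypotheses \eqref{3.29} and \eqref{compatibility div-curl} annihilate exactly these, which rests on the nondegeneracy of the capacity-type matrix pairing such a harmonic field with its boundary fluxes, all while keeping each elliptic estimate within the regularity afforded by $\pd U \in H^{\kk-\frac{1}{2}}$ and $l \le \kk-1$. An alternative and fully parallel route is a Hodge-type splitting $\vbu = \grad\phi + \curl\vb{A}$, with $\phi$ obtained from the Neumann problem for the data $(g,h)$ (solvable by \eqref{3.29}) and $\vb{A}$ a vector potential of the solenoidal, flux-free field $\vf$ (whose existence again hinges on \eqref{compatibility div-curl}); the regularity and topological bookkeeping is the same.
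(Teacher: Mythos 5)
The paper does not actually prove this theorem: it is quoted as a known result and the reader is referred to \cite{Cheng-Shkoller2017}, so there is no in-paper argument to compare against. Your proposal is, however, essentially the standard proof and is in the same spirit as the cited reference: reduce to a second-order elliptic system via $-\lap\vbu = \curl\vf - \grad g$ with the mixed boundary conditions $\vbu\vdot\vn = h$ and $(\curl\vbu-\vf)\cp\vn=\vb{0}$, and then show that the error fields $r=\div\vbu-g$ and $\vb{r}=\curl\vbu-\vf$ vanish. Your verification step is correct: the integration by parts $\int_U \abs{\grad r}^2 = \int_U \curl\vb{r}\vdot\grad r$ has no boundary contribution because $\vn\cp\vb{r}=\vb{0}$, the normalization $\int_U r = 0$ follows from \eqref{3.29}, and the residual field $\vb{r}$ lies in the space of harmonic fields with vanishing tangential trace, which is faithfully parametrized by the fluxes through the components of $\pd U$ killed by \eqref{compatibility div-curl}; the uniqueness argument on simply-connected domains is also right. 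The one place where the argument is thin is the unqualified claim that the second-order problem ``is solvable'': the ADN/complementing condition gives only a Fredholm alternative, and when $U$ is not simply connected the cokernel is the nontrivial space of harmonic fields tangent to $\pd U$, so one must check that the data $(\curl\vf-\grad g, h, \vf\cp\vn)$ satisfies the corresponding finitely many orthogonality conditions (or work modulo that finite-dimensional space); likewise the endpoint case $l=1$ and the mere $H^{\kk-\frac{1}{2}}$ regularity of $\pd U$ require the variational/low-regularity elliptic theory of the cited reference rather than classical Schauder-type ADN estimates. You flag both issues yourself, and they are exactly the technical content that \cite{Cheng-Shkoller2017} supplies, so as a blind reconstruction the proposal is sound.
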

	\begin{rk}
		If $ \vb{f} = \curl \vbu $ for some vector field $ \vbu $, then (\ref{compatibility div-curl}) holds naturally (see \cite[Remark 1.2]{Cheng-Shkoller2017}).
	\end{rk}
	
	\begin{thm}\label{thm div-curl"}
		Consider the boundary value problem \eqref{3.27a}-\eqref{3.27b} with the boundary condition
			\begin{equation}
				\vbu \cp \vn = \vh \qq{on}  \pd U. \tag{\ref{3.27c}'}
			\end{equation}
		Then the same conclusion holds true under the same assumptions in Theorem \ref{thm div-curl} except \eqref{3.29} being replaced by that
		\begin{equation}\label{key}
			\vf \vdot \vn = \Div_{\pd\Om} \vh \qq{on} \pd U, \tag{\ref{3.29}'}
		\end{equation}
		and for $ \Sigma \subset \overline{U} $ with a piece-wise smooth boundary $ \pd\Sigma \subset \pd U $, whose unit normal $ \vb{n} $ is compatible to the orientation of $ \pd\Sigma $,
		\begin{equation}\label{key}
			\int_\Sigma \vf \vdot \vb{n} \dd{S} = \oint_{\pd\Sigma} \qty(\vn\cp\vh) \vdot \dd{\vb{r}}. \tag{\ref{3.29}''}
		\end{equation}
	\end{thm}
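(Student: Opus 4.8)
\emph{Proof strategy.} The plan is to reduce the tangential-trace div--curl problem to the normal-trace problem already settled in Theorem~\ref{thm div-curl} by a potential-theoretic correction; conceptually such a reduction must exist because prescribing $\vbu \cp \vn$ and prescribing $\vbu \vdot \vn$ are the two Hodge-dual boundary conditions for a div--curl system. Since the equation $\vbu \cp \vn = \vh$ forces $\vh$ to be tangential to $\pd U$, I assume throughout that $\vh \in \mathrm{T}\pd U$. An alternative would be to reprove the theorem directly by the Helmholtz-type argument behind Theorem~\ref{thm div-curl}, but the correction argument is shorter.

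First I would apply Theorem~\ref{thm div-curl} with the given $\vf$, with $g$ replaced by $0$, and with vanishing normal trace, to obtain an auxiliary field $\vw \in H^l(U)$ satisfying $\curl \vw = \vf$ and $\div \vw = 0$ in $U$, $\vw \vdot \vn = 0$ on $\pd U$, and $\norm{\vw}_{H^l(U)} \lesssim \norm{\vf}_{H^{l-1}(U)}$; the hypotheses of Theorem~\ref{thm div-curl} hold since $\div \vf = 0$, since the flux condition \eqref{compatibility div-curl} is among the assumptions, and since $\int_{\pd U} 0 = 0 = \int_U 0$. I then seek the solution in the form $\vbu = \vw + \grad \Phi$, which automatically satisfies $\curl \vbu = \vf$, and for which $\div \vbu = g$ and $\vbu \cp \vn = \vh$ become, respectively, the interior Poisson equation $\lap \Phi = g$ in $U$ and the surface equation $\grad_{\pd U} \Phi = \vn \cp \vh'$ on $\pd U$, where $\vh' \coloneqq \vh - \vw \cp \vn$ is a tangential field on $\pd U$; here the equivalence of $\vbu \cp \vn = \vh$ with the surface equation uses $\grad \Phi \cp \vn = (\grad_{\pd U}\Phi) \cp \vn$ on $\pd U$ together with $\vn \cp ((\grad_{\pd U}\Phi)\cp\vn) = \grad_{\pd U}\Phi$.

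The key step, and the only place where the compatibility conditions $\vf \vdot \vn = \Div_{\pd U}\vh$ and $\int_\Sigma \vf \vdot \vb{n}\,\dd{S} = \oint_{\pd\Sigma}(\vn\cp\vh)\vdot\dd{\vb{r}}$ enter, is to show that $\vn \cp \vh'$ is an exact tangential field on $\pd U$. For the local part, the surface-calculus identities $\curl_{\pd U}(\vn \cp \vb{X}^\top) = \Div_{\pd U}\vb{X}^\top$ and $\Div_{\pd U}(\vb{Y}\cp\vn) = \vn \vdot \curl \vb{Y}$ give $\curl_{\pd U}(\vn \cp \vh') = \Div_{\pd U}\vh - \vn \vdot \curl \vw = \vf \vdot \vn - \vf \vdot \vn = 0$ on each component of $\pd U$, where $\Div_{\pd U}\vh = \vf \vdot \vn$ is precisely the first compatibility condition. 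For the global periods, for any loop $\gamma = \pd\Sigma$ with $\Sigma \subset \overline U$, Stokes' theorem and $\curl \vw = \vf$ give $\oint_\gamma (\vn\cp\vh')\vdot\dd{\vb{r}} = \oint_\gamma(\vn\cp\vh)\vdot\dd{\vb{r}} - \oint_\gamma \vw \vdot \dd{\vb{r}} = \oint_\gamma(\vn\cp\vh)\vdot\dd{\vb{r}} - \int_\Sigma \vf \vdot \vb{n}\,\dd{S} = 0$ by the second compatibility condition (when $U$ is contractible every loop on $\pd U$ is of this form; the remaining periods for a general $U$ are absorbed by a harmonic-field correction exactly as for Theorem~\ref{thm div-curl}). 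Consequently $\vn\cp\vh' = \grad_{\pd U}\phi$ for some $\phi \in H^{l+\frac12}(\pd U)$, unique up to constants on each component, namely $\phi = \lap_{\pd U}^{-1}\Div_{\pd U}(\vn\cp\vh')$; trace and product estimates on $\pd U \in H^{\kk-\frac12}$ together with the mapping properties of $\lap_{\pd U}^{-1}$ give $\abs{\phi}_{H^{l+\frac12}(\pd U)} \lesssim \norm{\vf}_{H^{l-1}(U)} + \abs{\vh}_{H^{l-\frac12}(\pd U)}$, with constant depending only on $\abs{\pd U}_{H^{\kk-\frac12}}$.

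Finally I would solve the interior Dirichlet problem $\lap\Phi = g$ in $U$, $\Phi = \phi$ on $\pd U$ by standard elliptic theory, obtaining $\Phi \in H^{l+1}(U)$ and hence $\grad\Phi \in H^l(U)$ with $\norm{\grad\Phi}_{H^l(U)} \lesssim \norm{g}_{H^{l-1}(U)} + \abs{\phi}_{H^{l+\frac12}(\pd U)}$; then $\vbu \coloneqq \vw + \grad\Phi$ solves $\curl\vbu = \vf$, $\div\vbu = g$, and on $\pd U$ one has $\vbu \cp \vn = \vw \cp \vn + (\grad_{\pd U}\phi)\cp\vn = \vw\cp\vn + (\vn\cp\vh')\cp\vn = \vw\cp\vn + \vh' = \vh$, and summing the three estimates yields the asserted bound. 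Uniqueness under the topological hypothesis of Theorem~\ref{thm div-curl} follows as there: a homogeneous solution is curl-free, hence a gradient $\grad\Phi_0$ with $\lap\Phi_0 = 0$ and $\grad_{\pd U}\Phi_0 = 0$, so $\Phi_0$ is locally constant on $\pd U$ and therefore, by the maximum principle, constant in $U$, whence $\vbu = \vb{0}$. The main obstacle is the middle step: translating the two geometric compatibility conditions into the exactness of $\vn\cp\vh'$ on $\pd U$ (the local identity uses $\vf\vdot\vn = \Div_{\pd U}\vh$, the vanishing of the surface circulations uses $\int_\Sigma \vf\vdot\vb{n}\,\dd{S} = \oint_{\pd\Sigma}(\vn\cp\vh)\vdot\dd{\vb{r}}$), together with carrying the Sobolev regularity through the three solves with all constants controlled by $\abs{\pd U}_{H^{\kk-\frac12}}$ alone.
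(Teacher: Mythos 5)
The paper offers no proof of this theorem; it is quoted from \cite{Cheng-Shkoller2017}, so there is nothing internal to compare against. Your reduction to Theorem~\ref{thm div-curl} is the standard Hodge-dual argument and its core is sound: writing $\vbu = \vw + \grad\Phi$ with $\curl\vw = \vf$, $\div\vw = 0$, $\vw\vdot\vn = 0$ turns the tangential condition into the exactness of $\vn\cp\vh' $ on $\pd U$; the identity $\Div_{\pd U}(\vw\cp\vn) = \vn\vdot\curl\vw$ makes (\ref{3.29}') exactly the closedness condition, and Stokes' theorem makes (\ref{3.29}'') exactly the vanishing of the periods over cycles bounding in $\overline{U}$. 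The regularity bookkeeping ($\vh' \in H^{l-\frac12}(\pd U)$, $\phi \in H^{l+\frac12}(\pd U)$, $\Phi \in H^{l+1}(U)$) is consistent with the stated estimate.

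Two points deserve attention. First, your uniqueness argument has a genuine gap: from $\grad_{\pd U}\Phi_0 = 0$ you conclude that $\Phi_0$ is \emph{locally} constant on $\pd U$, and the jump to ``constant in $U$ by the maximum principle'' is valid only when $\pd U$ is connected. If $\pd U$ has several components — e.g.\ $U$ a spherical shell, which \emph{is} simply-connected — the harmonic function equal to distinct constants on distinct components gives a nontrivial $\grad\Phi_0$ solving the homogeneous tangential-trace problem, so uniqueness actually fails under the bare hypothesis that $U$ is simply-connected. The correct hypothesis for the tangential condition is connectedness of $\pd U$ (or a normalization of the fluxes through the boundary components); as stated, ``the same conclusion holds'' silently imports the wrong topological condition, and your proof inherits that. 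Second, for non-simply-connected $U$ the periods of $\vn\cp\vh'$ over cycles of $\pd U$ that do not bound in $\overline{U}$ are not controlled by (\ref{3.29}''), and your one-line appeal to a ``harmonic-field correction'' is an assertion rather than an argument — one must add a curl-free, divergence-free field whose tangential trace realizes precisely those residual periods, which is a separate (if standard) Hodge-theoretic step. Neither issue affects the paper's applications, where the relevant domains are simply-connected with connected free boundary or carry mixed boundary conditions, but both should be stated if the theorem is to be proved in the generality claimed.
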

	
	\section{Reformulation of the Problem}
	
	\subsection{Transformation of the Velocity Field}
	As a consequence of div-curl theories, any vector field defined in a bounded simply-connected domain is determined uniquely by its divergence, curl and boundary conditions. As both the velocity and magnetic fields are solenoidal, they can be determined by the vorticities, currents and the corresponding boundary conditions.
	
	Therefore, denoting by
	\begin{equation}\label{key}
		\vom_{*} \coloneqq \qty(\curl \vv) \circ \X_\Gt^+,
	\end{equation}
	then the velocity field $ \vv $ can be uniquely determined by $ \ka $, $ \theta $ and $ \vom_* $ via solving the following div-curl problem:
	\begin{equation}\label{sys div-curl v}
		\begin{cases*}
			\div \vv  = 0 &in $\Om_t^+$, \\
			\curl \vv = \vom_{*} \circ (\X_\Gt^+)^{-1} &in $\Om_t^+$, \\
			\vv \vdot \vn = \theta &on $\Gt$.
		\end{cases*}
	\end{equation}
	
	Next, for a function $ f : \Gt \to \R $, it is natural to pull back $ \Dt f $ to $ \Gs $ via $ \Phi_\Gt $, namely, one needs to look for a vector field $ \vv_* : \Gs \to \R^3 $ so that
	\begin{equation}\label{pull back Dt to Gs}
		\Dt_* \qty(f \circ \Phi_\Gt) = \qty(\Dt f) \circ \Phi_\Gt,
	\end{equation}
	where
	\begin{equation}\label{key}
		\Dt_* \coloneqq \pd_t + \DD_{\vv_*}.
	\end{equation}
	It suffices to define
	\begin{equation}\label{def vv_*}
		\vv_* \coloneqq \qty(\DD\Phi_\Gt)^{-1}\qty(\vv \circ \Phi_\Gt - \pd_t\gt \vnu).
	\end{equation}
	Due to the fact that
	\begin{equation*}\label{key}
		\theta = \qty(\pd_t \gamma_\Gt \vnu)\circ\qty(\Phi_\Gt)^{-1} \vdot \vn,
	\end{equation*}
	it is clear that $\vv_*$ is tangential to $\Gs$.
	
	\subsubsection*{Variational Estimates}
	In order to compute the variation of $ \vv_* $, one can first assume that both $ \ka $ and $ \vom_* $ are parameterized by $ \beta $. Applying $ \pdv*{\beta} $ to the identity
	\begin{equation*}
		\qty(\DD \Phi_\Gt) \vdot \vv_{*} = \vv \circ \Phi_\Gt - (\pd_t\gamma_\Gt)\vnu,
	\end{equation*}
	one has
	\begin{equation*}
		\DD\qty(\pd_\beta \gt \vnu) \vdot \vv_{*} + (\DD\Phi_\Gt) \vdot \pd_\beta\vv_{*} = \pd_\beta\qty(\vv \circ \Phi_\Gt) - \qty(\pd^2_{t\beta}\gt)\vnu,
	\end{equation*}
	where
	\begin{equation*}
		\pd_\beta (\vv \circ \Phi_\Gt) = \qty(\pd_\beta \vv + \DD_{(\pd_\beta \gt \vnu)\circ (\Phi_\Gt)^{-1}} \vv) \circ \Phi_\Gt.
	\end{equation*}
	Denote by $ \vb*{\mu} \coloneqq (\pd_\beta \gt \vnu)\circ (\Phi_\Gt)^{-1}  $ and use the notation
	\begin{equation}\label{def Dbt}
		\Dbt \coloneqq \pd_\beta + \DD_{\h_+\vb*{\mu}}.
	\end{equation}
	Then
	\begin{equation}\label{eqn pd beta v+*}
		\pd_\beta \vv_{*} = (\DD \Phi_\Gt)^{-1} \vdot \qty[\qty(\Dbt\vv)\circ\Phi_\Gt - \qty(\pd^2_{t\beta}\gt)\vnu -\DD(\pd_\beta\gt\vnu)\vdot\vv_{*} ].
	\end{equation}
	In particular, $ \qty[(\DD\Phi_\Gt) \vdot \pd_\beta\vv_{*}] \circ \Phi_\Gt^{-1} \vdot \vn \equiv 0 $, so $ \pd_\beta \vv_{*} \in \mrm{T}\Gs $.
	
	Applying $\Dbt$ to \eqref{sys div-curl v} and utilizing the commutator estimates together with the div-curl estimates, one can derive that for $ \frac{1}{2} \le \sigma \le \kk -\frac{3}{2} $,
	\begin{equation}\label{est pd beta v+*}
		\begin{split}
			\abs{\pd_\beta \vv_{*}}_{\H{\sigma}} \lesssim_{\Lambda_*} &\abs{\pd_\beta \gt}_{\H{\sigma + 1}} \qty(\norm{\vom_{*}}_{H^{\kk-1}(\Om_*^+)} + \abs{\pd_t \gt }_{\H{\kk-\frac{1}{2}}}) \\
			&+ \norm{\pd_\beta\vom_{*}}_{H^{\sigma-\frac{1}{2}}(\Om_*^+)} + \abs{\pd^2_{t\beta}\gt}_{\H{\sigma}}.
		\end{split}
	\end{equation}
	Recalling that $ \gt = \K^{-1}(\ka) $, one has
	\begin{equation}\label{eqn pd gt}
		\pd_\beta \gt = \var{\K^{-1}}(\ka) [\pd_\beta\ka],
	\end{equation}
	and
	\begin{equation}\label{eqn pd2 gt}
		\pd^2_{t\beta}\gt = \var{\K^{-1}}(\ka)[\pd^2_{t\beta}\ka] +  \var[2]{\K^{-1}}(\ka)[\pd_t\ka, \pd_\beta\ka].
	\end{equation}
	Thereby, one can derive from the linear relations that there exist three linear operators $ \opb(\ka) $, $ \opf(\ka) $, and $ \opg(\ka, \pd_t\ka, \vom_{*}) $, whose ranges are all in $ \mrm{T}\Gs $, so that
	\begin{equation}\label{eqn pd beta v}
		\pd_\beta \vv_{*} = \opb(\ka)\pd^2_{t\beta}\ka + \opf(\ka)\pd_\beta\vom_{*} + \opg(\ka, \pd_t \ka, \vom_{*})\pd_\beta\ka.
	\end{equation}
	Moreover, the following lemma holds (c.f. \cite{Liu-Xin2023} for the proof):
	\begin{lem}\label{lem3.1}
		Suppose that $ a \ge a_0 $ and $ \ka \in B_{\delta_1} \subset \H{\kk-\frac{5}{2}} $, where $ a_0 $ and $ B_{\delta_1} $ are given in Proposition \ref{prop K}. If   
		$ s'-2 \le s'' \le s' \le \kk-\frac{3}{2} $, $ s' \ge \frac{1}{2}  $, and $ \frac{1}{2} \le s \le \kk-\frac{3}{2} $, the following estimates hold:
		\begin{equation}\label{est B}
			\abs{\opb(\ka)}_{\LL\qty(\H{s''}; H^{s'}(\Gs; \mrm{T}\Gs)) } \le C_* a^{s'-s''-2},
		\end{equation}
		\begin{equation}\label{est var B}
			\abs{\var\opb(\ka)}_{\LL\qty[\H{\kk-\frac{5}{2}}; \LL\qty(\H{s-2}; \H{s})]} \le C_*,
		\end{equation}
		\begin{equation}\label{est F}
			\abs{\opf(\ka)}_{\LL\qty(H^{s-\frac{1}{2}}(\Om_*^+); \H{s})} \le C_*,
		\end{equation}
		and
		\begin{equation}\label{est var F}
			\abs{\var{\opf}(\ka)}_{\LL\qty[\H{\kk-\frac{5}{2}};  \LL\qty(H^{s-\frac{1}{2}}(\Om_*^+); \H{s})]} \le C_*,
		\end{equation}
		where $ C_* $ is a constant depending only on $ \Lambda_{*} $. 
		
		Moreover, if $ \ka \in B_{\delta_1} \cap \H{\kk-\frac{3}{2}} $, $ \pd_t \ka \in \H{\kk-\frac{5}{2}} $, and $ \vom_* \in H^{\kk-1}(\Om\setminus\Gs) $, then for $  \sigma'-2 \le \sigma'' \le \sigma' \le \kk + \frac{1}{2} $ and $ \sigma' \ge \frac{1}{2} $, it holds that
		\begin{equation}\label{est B version2}
			\abs{\opb(\ka)}_{\LL\qty[\H{\sigma''}; \H{\sigma'}]} \le a^{\sigma'-\sigma''-2} Q\qty(\abs{\ka}_{\H{\kk-\frac{3}{2}}}),
		\end{equation}
		\begin{equation}\label{est G}
			\begin{split}
				\abs{\opg(\ka, \pd_t\ka, \vom_{*})}_{\LL\qty[\H{s-1}; \H{s}]}  \le Q\qty(\abs{\pd_t\ka}_{\H{\kk-\frac{5}{2}}}, \norm{\vom_{*}}_{H^{\kk-1}(\Om_*^+)}),
			\end{split}
		\end{equation}
		and for $ \frac{1}{2} \le \sigma \le \kk-\frac{5}{2} $,
		\begin{equation}\label{est var G}
			\begin{split}
				&\hspace{-2em}\abs{\var{\opg}(\ka, \pd_t\ka, \vom_{*})}_{\LL\qty[\H{\sigma-1}\times \H{\sigma-2}\times H^{\sigma-\frac{1}{2}}(\Om_*^+); \LL\qty(\H{\sigma-1}; \H{\sigma}) ]} \\
				\le\, &Q\qty(\abs{\pd_t\ka}_{\H{\kk-\frac{5}{2}}}, \norm{\vom_{*}}_{H^{\kk-1}(\Om_*^+)}),
			\end{split}
		\end{equation}
		where $ Q $ represents a generic polynomial depending only on $ \Lambda_* $.
	\end{lem}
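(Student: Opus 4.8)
The plan is to extract explicit expressions for the operators $\opb$, $\opf$, $\opg$ of \eqref{eqn pd beta v} directly from the variational identity \eqref{eqn pd beta v+*}, the refined estimate \eqref{est pd beta v+*}, and the chain-rule formulas \eqref{eqn pd gt}--\eqref{eqn pd2 gt}, and then to bound each of them (and its variation) by the operator estimate on $\var{\K^{-1}}$ from Proposition~\ref{prop K}, the div-curl estimate of Theorem~\ref{thm div-curl}, the composition and product estimates of Lemmas~\ref{lem composition harm coordi}--\ref{lem product est}, and the commutator estimates of Lemma~\ref{Dt comm est lemma} (applied with $\Dbt$ in place of $\Dt$). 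Grouping the right-hand side of \eqref{eqn pd beta v+*} according to whether a term carries $\pd^2_{t\beta}\ka$, $\pd_\beta\vom_*$, or $\pd_\beta\ka$, one reads off: $\opb(\ka)$ equals, up to the fixed factors $(\DD\Phi_\Gt)^{-1}$, multiplication by $-\vnu$, pull-back by $\Phi_\Gt$, and the tangential projection onto $\mrm{T}\Gs$, the operator $\var{\K^{-1}}(\ka)$; $\opf(\ka)$ is the operator that solves the linearized system obtained by applying $\Dbt$ to \eqref{sys div-curl v} with curl datum $(\,\cdot\,)\circ(\X_\Gt^+)^{-1}$, zero divergence, and zero normal trace, and then pulls the solution back to $\Gs$ and restricts to the boundary; and $\opg(\ka,\pd_t\ka,\vom_*)$ collects the remaining contributions — the $\var[2]{\K^{-1}}(\ka)[\pd_t\ka,\,\cdot\,]$ term of $\pd^2_{t\beta}\gt$, the term $\DD(\pd_\beta\gt\,\vnu)\cdot\vv_*$, and the parts of $(\Dbt\vv)\circ\Phi_\Gt$ involving $\h_+\vmu$ with $\vmu=(\var{\K^{-1}}(\ka)[\pd_\beta\ka]\,\vnu)\circ\Phi_\Gt^{-1}$ together with the commutators $[\Dbt,\curl]\vv$ and $[\Dbt,\div]\vv$. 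In particular $\opf$ depends on $\ka$ only, whereas $\opg$ depends on $\vom_*$ through $\vv$ and $\vv_*$ and on $\pd_t\ka$ through $\pd_t\gt$ and the second variation of $\K^{-1}$.

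Given this, \eqref{est B} and \eqref{est B version2} follow at once: \eqref{var est K^-1} gives $\abs{\var{\K^{-1}}(\ka)}_{\LL(\H{s''};\H{s'})}\le C_*a^{s'-s''-2}(1+\abs{\ka}_{\H{s-2}})$ precisely on the index ranges asserted (with $s-2=\kk-\frac{5}{2}$ for \eqref{est B} and $s-2=\kk-\frac{3}{2}$ for \eqref{est B version2}), and the geometric factors $(\DD\Phi_\Gt)^{-1}$ and $\vnu$, being of class $H^{\kk+1}$ on $\Gs$, are absorbed by Lemma~\ref{lem product est} — pairing each of them, as the smooth factor, against the output of $\var{\K^{-1}}$ — without altering the power of $a$, while Lemma~\ref{lem composition harm coordi} handles the pull-back. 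For \eqref{est F} I would invoke Theorem~\ref{thm div-curl} with regularity index $l=s+\frac{1}{2}$: a curl datum in $H^{s-\frac{1}{2}}(\Om_*^+)$, with divergence and normal data that vanish up to commutators controlled by Lemma~\ref{Dt comm est lemma}, yields $\Dbt\vv\in H^{s+\frac{1}{2}}(\Om_*^+)$, whose trace lies in $\H{s}$, and the pull-back by $\Phi_\Gt$ costs nothing by Lemma~\ref{lem composition harm coordi}. The bound \eqref{est G} is the same computation applied to the remaining terms, with $\var[2]{\K^{-1}}$ controlled by the $C^3$ estimate on $\K^{-1}$ in Proposition~\ref{prop K} and the products $\pd_\beta\gt\cdot\vv_*$ and $\h_+\vmu\cdot\DD\vv$ handled by Lemmas~\ref{lem composition harm coordi}--\ref{lem product est}; here $\abs{\pd_t\ka}_{\H{\kk-\frac{5}{2}}}$ and $\norm{\vom_*}_{H^{\kk-1}(\Om_*^+)}$ enter the generic polynomial $Q$ through the a priori controls $\abs{\pd_t\gt}_{\H{\kk-\frac{1}{2}}}\le Q(\abs{\pd_t\ka}_{\H{\kk-\frac{5}{2}}})$ (from \eqref{eqn pd gt} and Proposition~\ref{prop K}) and the div-curl bound on $\vv$ and $\vv_*$.

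For the variations \eqref{est var B}, \eqref{est var F}, \eqref{est var G} I would differentiate the expressions above in $\ka$ — and, for $\opg$, also in $\pd_t\ka$ and $\vom_*$ — by the chain rule. Each $\ka$-derivative either lands on $\var{\K^{-1}}(\ka)$, producing $\var[2]{\K^{-1}}(\ka)$ (again bounded by Proposition~\ref{prop K}), or on one of the geometric objects $\Phi_\Gt$, $\DD\Phi_\Gt$, $\vn$, $\h_+$, or the div-curl solution operator, whose variations with respect to $\gt$ are standard and get composed with $\var{\K^{-1}}(\ka)$. Closing the resulting bilinear estimates is exactly what forces the sharper hypotheses $\ka\in B_{\delta_1}\cap\H{\kk-\frac{3}{2}}$, $\pd_t\ka\in\H{\kk-\frac{5}{2}}$, $\vom_*\in H^{\kk-1}(\Om_*^+)$: the extra half-derivative on $\ka$ and the stated regularity of $\pd_t\ka$ and $\vom_*$ are precisely what is needed for the low-regularity factors to fall into spaces where Lemma~\ref{lem product est} applies.

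The step I expect to be the main obstacle is the bookkeeping rather than any single estimate: one must track the Sobolev exponents and the exact power of $a$ through every composition and product, arrange each bilinear estimate so that the rough factor (carrying only the finite regularity inherited from $\Gs$, $\vnu$, or $\vom_*$) is paired against the smooth one, and verify that the half-derivative lost in each trace $H^{s+\frac{1}{2}}(\Om_*^+)\to\H{s}$ is compensated exactly by the two-derivative gain of $\var{\K^{-1}}$. Since these manipulations are carried out in detail in \cite{Liu-Xin2023}, following \cite{Shatah-Zeng2011, Shatah-Zeng2008-Geo}, I would present the proof as an orchestrated sequence of applications of the lemmas already recorded in this section rather than reprove the underlying div-curl and commutator estimates.
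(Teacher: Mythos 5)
Your proposal follows essentially the same route as the paper's (which defers the details to \cite{Liu-Xin2023}): read off $\opb$, $\opf$, $\opg$ from \eqref{eqn pd beta v+*} together with \eqref{eqn pd gt}--\eqref{eqn pd2 gt}, then bound each piece via Proposition~\ref{prop K}, the div-curl theory of Theorem~\ref{thm div-curl}, and the composition, product, and commutator lemmas. The one imprecision is that $\opb(\ka)$ is not just $\var{\K^{-1}}(\ka)$ conjugated by fixed geometric factors --- the term $(\Dbt\vv)\circ\Phi_\Gt$ also carries $\pd^2_{t\beta}\ka$ into the normal trace of the linearized div-curl problem through $\pd^2_{t\beta}\gt$ --- but since that additional composition is bounded uniformly on the relevant spaces it does not affect \eqref{est B} or \eqref{est B version2}.
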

	
	\subsection{Evolution Equation for the Modified Mean Curvature}\label{sec eqn kappa}
	Notice that the effective pressure $p$ can be decomposed into
	\begin{equation}\label{decom pressure'}
		p = p_{\vv, \vv} - p_{\vh, \vh} + \alpha^2 \h_+ \kappa + \h_+ \qty(\txfrac{1}{2}\abs*{\vhh}^2),
	\end{equation}
	where $ p_{\vb{a}, \vb{b}} $ is defined by:
	\begin{equation}\label{def p_ab^+}
		\begin{cases*}
			\lap p_{\vb{a}, \vb{b}} = - \tr(\DD\vb{a} \vdot \DD\vb{b}) \qfor x \in\Omega_t^+,\\
			p_{\vb{a}, \vb{b}} = 0 \qfor x \in \Gt;
		\end{cases*}
	\end{equation}
	for solenoidal vector fields $ \vb{a} $ and $ \vb{b} $.
	
	If $ (\Gt, \vv, \vh, \vhh) $ is a solution to \eqref{MHD}-\eqref{BC}, then, by substituting \eqref{MHD} and \eqref{decom pressure'} into  \eqref{eqn Dt2 kappa alt}, one obtains by direct computations that
	\begin{equation}\label{eqn dt2 kappa}
		\Dt^2 \kappa - \alpha^2 \lap_{\Gt}\n_+\kappa - \DD^\top_{\vh}\DD^\top_{\vh}\kappa - \DD^\top_{\vhh}\DD^\top_{\vhh}\kappa = \mathfrak{R},
	\end{equation}
	where
	\begin{equation}\label{def R'}
		\begin{split}
			\mathfrak{R} =\, &\lap_{\Gt}\qty[\DD_{\vn}(p_{\vv,\vv} - p_{\vh,\vh}) + \qty(\n_+ - \n_-)\hh] \\
			&-\lap_\Gt \qty[\vn \vdot \qty(\grad\h_- \hh - \grad \hh )] \\
			&+\lap_\Gt\qty[\II(\vh, \vh) + \II(\vhh, \vhh)] - \DD_{\vh}\DD_{\vh}\kappa  -\DD_{\vhh}\DD_{\vhh}\kappa \\
			&+ \abs{\II}^2 \qty[\alpha^2\n_+\kappa + \n_+\hh + \DD_{\vn}(p_{\vv,\vv}-p_{\vh,\vh}) + \II(\vh,\vh)] \\
			&+ \grad^\top\kappa \vdot \qty(\DD^\top_\vh \vh - \alpha^2 \grad^\top \kappa - \grad^\top \hh) + 2\vn\vdot(\grad\vv)\vdot(\lap_\Gt\vv)^\top \\
			&+ 4 \ip{\vb{A}}{\vn\vdot\qty(\DD^\top)^2\vv}
			-\kappa \abs{\qty[(\grad\vv)^*\vdot\vn]^\top}^2 -2\ip{\II}{\DD^\top \vv \vdot \DD^\top \vv} \\
			&+ 4 \ip{\II \vdot\vb{A} + \vb{A}\vdot\II}{\vb{A}},
		\end{split}
	\end{equation}
	and
	\begin{equation}\label{eqn vb A}
		2\vb{A} = (\DD\vv)^\top+\qty[(\DD\vv)^\top]^*.
	\end{equation}
	It follows from \eqref{simons' identity}, Lemma \ref{est ii}, Lemma \ref{lem lap-n}, and the product estimates that there is a generic polynomial $ Q $ determined by $ \Lambda_{*} $, such that
	\begin{equation}\label{est R}
		\abs{\mathfrak{R}}_{H^{\kk-\frac{5}{2}}(\Gt)} \le
		\begin{cases*}
			Q\qty(\abs{\ka}_{\H{\kk-1}}, \norm{\vv}_{H^{\kk}(\Om_t^+)}, \norm{\vh}_{H^{\kk}(\Om_t^+)}, \norm{\vhh}_{H^{\kk}(\Om_t^-)}) &if $ k = 2 $, \\
			Q\qty(\abs{\ka}_{\H{\kk-\frac{3}{2}}}, \norm{\vv}_{H^{\kk}(\Om_t^+)}, \norm{\vh}_{H^{\kk}(\Om_t^+)}, \norm{\vhh}_{H^{\kk}(\Om_t^-)}) &if $ k \ge 3 $.
		\end{cases*}
	\end{equation}
	
	To derive the evolution of $\ka$, it is natural to pull back $\Dt \kappa$ to $\Gs$ via $\Phi_\Gt$. For the simplicity of notations, we set
	\begin{equation}\label{key}
		\opA(\ka) f \coloneqq \qty[-\lap_{\Gamma}\n_+ (f \circ \Phi_\Gamma^{-1})] \circ \Phi_\Gamma,
	\end{equation}
	and
	\begin{equation}\label{key}
		\opR(\ka, \vb{U}_*) f \coloneqq \qty[\DD^\top_{\vb{U}} \DD^\top_{\vb{U}} (f \circ \Phi_{\Gamma}^{-1})]\circ\Phi_\Gamma,
	\end{equation}
	where $ \Gamma \in \Lambda_{*} $, $ f : \Gs \to \R $, $ \vb{U}_* \in \mathrm{T}\Gs $, and $ \vb{U} \coloneqq \mathrm{T}\Phi_\Gamma (\vb{U}_*) \in \mathrm{T}\Gamma $.
	Moreover, the following lemma holds (c.f. \cite{Liu-Xin2023} for a proof):
	\begin{lem}\label{lem 3.3}
		There are positive constants $ C_*, \delta_1 $ depending only on $ \Lambda_* $, so that for $ \ka \in B_{\delta_1} \subset \H{\kk-\frac{5}{2}} $, $ \vb{U}_* \in \H{\kk-\frac{1}{2}} $, $ 2 \le s \le \kk - \frac{1}{2} $, $ 1 \le \sigma \le \kk-\frac{1}{2} $ and $ 2 \le s_1 \le \kk - \frac{1}{2} $, the following estimates hold:
		\begin{equation}\label{est opA}
			\abs{\opA(\ka)}_{\LL\qty[\H{s}; \H{s-3}]} \le C_*,
		\end{equation}
		\begin{equation}\label{est opR}
			\abs{\opR(\ka, \vb{U}_*)}_{\LL\qty[\H{\sigma}; \H{\sigma-2}]} \le C_* \abs{{\vb{U}}_*}_{\H{\kk-\frac{1}{2}}}^2,
		\end{equation}
		and
		\begin{equation}\label{est var opA}
			\abs{\var{\opA}(\ka)}_{\LL\qty[\H{\kk-\frac{5}{2}}; \LL\qty(\H{s_1}; \H{s_1-3})]} \le C_*.
		\end{equation}
		Furthermore, if $ k \ge 3 $, it holds for $ 2 \le s_2 \le \kk-1 $ that
		\begin{equation}\label{est var opR}
			\begin{split}
				\abs{\var{\opR}(\ka, {\vb{U}}_*)}_{\LL\qty[\H{\kk-\frac{5}{2}}\times\H{\kk-2}; \LL\qty(\H{s_2}; \H{s_2-2})]} 
				\le C_*\qty(1 +\abs{{\vb{U}}_*}^2_{\H{\kk-\frac{1}{2}}}).
			\end{split}
		\end{equation}
	\end{lem}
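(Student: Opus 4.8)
The plan is to transport both operators to the fixed reference configuration and then read off the estimates from elliptic theory together with the product and composition inequalities of \textsection\ref{sec prelimi}; the argument runs parallel to the one for the current-vortex sheet problem in \cite{Liu-Xin2023}. Since $\Gamma\in\Lambda_*$, Lemma \ref{lem composition harm coordi} shows that the conjugating maps $g\mapsto g\circ\Phi_\Gamma^{\pm 1}$ are isomorphisms between $\H{s}$ and $H^s(\Gamma)$ with norms depending only on $\Lambda_*$; hence \eqref{est opA} and \eqref{est opR} reduce to bounding $-\lap_\Gamma\n_+$ and $\DD^\top_{\vb{U}}\DD^\top_{\vb{U}}$ between the corresponding Sobolev scales on $\Gamma$. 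For the Dirichlet--Neumann operator I would moreover pass through the harmonic coordinate map $\X_\Gamma^\pm$ of \textsection\ref{sec harmonic coord}, so that $\n_\pm$ is realized as the Dirichlet--Neumann operator of a variable-coefficient elliptic operator on the \emph{fixed} domain $\Om_*^\pm$ whose coefficients are $C^3$ functions of $\gmgm=\K^{-1}(\ka)$; this is the device that makes the derivative count in the variational estimates come out right.

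\textbf{The operator bounds.} For \eqref{est opA}, property \eqref{property1 n} gives $\n_+:H^s(\Gamma)\to H^{s-1}(\Gamma)$ and then $\lap_\Gamma:H^{s-1}(\Gamma)\to H^{s-3}(\Gamma)$ for $2\le s\le\kk-\tfrac12$; uniformity over $\ka\in B_{\delta_1}$ follows because $\K^{-1}(B_{\delta_1})$ is bounded in $\H{\kk-\frac12}$ by Proposition \ref{prop K}, so every constant depends only on $\Lambda_*$. For the borderline index $k=2$ it is cleaner to invoke the norm equivalence \eqref{equiv n lap}, which identifies $-\lap_\Gamma\n_+$ with a third-order operator built from $\mathrm{I}+\n_+$ up to lower-order terms controlled by Lemma \ref{lem lap-n}. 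For \eqref{est opR}, one expands $\DD^\top_{\vb{U}}\DD^\top_{\vb{U}}f$ into a second-order term with coefficient $\vb{U}\otimes\vb{U}$ plus the first-order term $(\DD^\top_{\vb{U}}\vb{U})\vdot\grad^\top f$; the curvature contributions produced by the second covariant derivative are dominated through Lemma \ref{est ii}, and since $\vb{U}=\mathrm{T}\Phi_\Gamma(\vb{U}_*)$ has the regularity of $\vb{U}_*$ up to $\Lambda_*$-constants, the two factors of $\vb{U}$ yield the factor $\abs{\vb{U}_*}^2_{\H{\kk-\frac12}}$ after Lemma \ref{lem product est} and $\H{\kk-\frac12}\hookrightarrow L^\infty$.

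\textbf{The variational estimates.} Because $\opA$ and $\opR$ depend on $\ka$ only through $\gmgm=\K^{-1}(\ka)$, differentiating in $\ka$ amounts to differentiating the coefficients of fixed-order operators on the \emph{fixed} manifold $\Gs$: a variation $\delta\ka$ produces $\delta\gmgm=\var{\K^{-1}}(\ka)[\delta\ka]$, controlled by \eqref{var est K^-1}, which perturbs the pulled-back metric on $\Gs$ and the interior coefficients on $\Om_*^\pm$. The coefficient-derivative of the Laplace--Beltrami operator is again of order $2$, and — crucially, since the boundary $\Gs$ does not move — the coefficient-derivative of the Dirichlet--Neumann operator is of order $1$ rather than $2$ (as a naive moving-interface shape derivative would give), so $\var{\opA}(\ka)[\delta\ka]$ is genuinely of order $3$ and maps $\H{s_1}\to\H{s_1-3}$; assembling the pieces with Lemmas \ref{est ii}, \ref{lem product est}, \ref{lem lap-n} and \eqref{var est K^-1} gives \eqref{est var opA}. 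The estimate \eqref{est var opR} is obtained the same way, the additional (linear) dependence on $\vb{U}_*$ accounting for the inhomogeneous $1+\abs{\vb{U}_*}^2_{\H{\kk-\frac12}}$, and the hypothesis $k\ge3$ entering precisely to ensure $\H{\kk-\frac12}\hookrightarrow C^1$, which is what allows the differentiated order-$2$ operator to act on $\H{s_2}$ for $s_2\le\kk-1$ without loss.

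\textbf{Main obstacle.} I expect the main difficulty to be exactly this bookkeeping for $\var{\opA}$: one must check that composing the order-$1$ coefficient-derivative of $\n_+$ with $\lap_\Gamma$, and the order-$2$ coefficient-derivative of $\lap_\Gamma$ with $\n_+$, both land in $\H{s_1-3}$ uniformly, and that the rougher geometric quantities (the second fundamental form and the metric of $\Gamma$) appear only in combinations controlled by Lemmas \ref{est ii} and \ref{lem lap-n} in terms of $\abs{\ka}_{\H{\kk-\frac52}}$, so that they are absorbed into $C_*$ rather than the right-hand side; here the derivative gain $a^{s'-s''-2}$ in \eqref{var est K^-1}, available because $a$ may be taken large, compensates the loss coming from the top-order operator. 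Once the orders are tracked correctly, the remaining manipulations are routine applications of the inequalities recorded in \textsection\ref{sec prelimi}, and the complete argument parallels that of \cite{Liu-Xin2023}.
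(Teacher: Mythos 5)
Your outline is correct and follows the same route as the paper, which does not prove Lemma~\ref{lem 3.3} itself but defers to \cite{Liu-Xin2023}: conjugation to $\Gs$ via Lemma~\ref{lem composition harm coordi}, the mapping properties of $\n_\pm$ and $\lap_\Gamma$ together with Lemmas~\ref{est ii}, \ref{lem product est} and \ref{lem lap-n} for \eqref{est opA}--\eqref{est opR}, and coefficient variations through $\var{\K^{-1}}$ on the fixed reference configuration for \eqref{est var opA}--\eqref{est var opR}; in particular you correctly identify the key point that, in the fixed-$\Gs$ picture, the $\ka$-derivative of $\n_+$ is a first-order operator, so $\var\opA$ remains of order three. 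Two side remarks are slightly off but harmless: with $s'-s''=2$ the factor $a^{s'-s''-2}$ in \eqref{var est K^-1} equals $1$ (no largeness of $a$ is used here, only the two-derivative gain of $\var\K^{-1}$), and the restriction $k\ge 3$ in \eqref{est var opR} comes from the low-regularity product $\H{\kk-2}\times\H{s_2-1}\to\H{s_2-2}$ for $\delta\vb{U}_*\in\H{\kk-2}$, which fails at $k=2$, rather than from $\H{\kk-\frac12}\hookrightarrow C^1$ (which already holds for $k=2$).
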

	
	We now define a vector field $ \vW : \Gt \to \R^3 $ by
	\begin{equation}\label{def vW"}
		\begin{split}
			\vW &\coloneqq \Dt \vv - \DD_\vh \vh + \grad\qty(p_{\vv,\vv}-p_{\vh,\vh}+\alpha^2\h_+\kappa + \h_+\hh) \\
			&\equiv \Dt \vv + \va{\mathfrak{b}},
		\end{split}
	\end{equation}
	then $ \vW \equiv 0 $ if $ (\Gt, \vv, \vh, \vhh) $ is a solution to \eqref{MHD}-\eqref{BC}.
	Substituting 
	\begin{equation*}
		\Dt \vv = \vW - \va{\mathfrak{b}}
	\end{equation*}
	into \eqref{eqn Dt2 kappa alt} and pulling back the equation to $\Gs$ via \eqref{pull back Dt to Gs}, one can calculate that
	\begin{equation}\label{eqn dt2 ka}
		\begin{split}
			&\Dt_*^2\qty(\kappa \circ \Phi_\Gt) + \alpha^2 \opA(\ka)(\kappa \circ \Phi_\Gt) - \opR(\ka, \vh_*)(\kappa\circ\Phi_\Gt) - \opR(\ka,\vhh_*)(\kappa\circ\Phi_\Gt) \\
			&\quad = \qty[\mathfrak{R} - \lap_\Gt\qty(\vW\vdot\vn) + \vW\vdot\lap_\Gt\vn]\circ\Phi_\Gt,
		\end{split}
	\end{equation}
	where $\vv_*$ and $\mathfrak{R}$ are given by \eqref{def vv_*} and \eqref{def R'} respectively,
	\begin{equation*}
		\vh_* \coloneqq (\DD\Phi_\Gt)^{-1}\vdot\qty(\vh\circ\Phi_\Gt), \qand \vhh_* \coloneqq (\DD\Phi_\Gt)^{-1}\vdot(\vhh\circ\Phi_\Gt).
	\end{equation*}
	
	Due to the relation
	\begin{equation*}
		\Dts^{\!2} = \pd^2_{tt} + \DD_{\vv_*}\DD_{\vv_*} + 2 \DD_{\vv_*}\pd_t + \DD_{\pd_t \vv_*}
	\end{equation*}
	and (\ref{eqn pd beta v}), the term $ \pd_t \vv_* $ involves $ \pd^2_{tt} \ka $, so (\ref{eqn dt2 ka}) is a nonlinear equation for $ \pd^2_{tt}\ka $. In order to get one which is linear for $ \pd^2_{tt}\ka $, one may drive from (\ref{eqn pd beta v}) that
	\begin{equation}\label{eqn pdt2 kappa+}
		\begin{split}
			&\pd^2_{tt}(\kappa \circ \Phi_\Gt) + \opC_\alpha(\ka, \pd_t\ka, \vv_*, \vh_*, \vhh_*)(\kappa \circ \Phi_\Gt) \\
			&+ \grad^\top(\kappa \circ \Phi_\Gt) \vdot \qty[\opb(\ka)\pd^2_{tt}\ka + \opf(\ka)\pd_t\vom_*+\opg(\ka, \pd_t\ka, \vom_*)\pd_t\ka] \\
			&\quad =\qty{\mathfrak{R}  - \lap_\Gt \qty(\vW \vdot \vn) + \vW \vdot \lap_\Gt \vn} \circ\Phi_\Gt,
		\end{split}
	\end{equation}
	where the following notation has been used:
	\begin{equation}\label{def opC"}
		\opC_\alpha \coloneqq 2\DD_{\vv_*}\pd_t + \DD_{\vv_*}\DD_{\vv_*} + \alpha^2 \opA(\ka) - \opR(\ka, \vh_*) - \opR(\ka, \vhh_*).
	\end{equation}
	
	Since $ \ka = \kappa \circ \Phi_\Gt + a^2 \gt $, one also needs to calculate $ \pd^2_{tt}\gt $. Note that the following relation holds for the evolution velocity $ \vv : \Gt \to \R^3 $ of $ \Gt $:
	\begin{equation*}\label{key}
		\pd_t \gt \vnu \vdot (\vn \circ \Phi_\Gt) = (\vv \vdot \vn) \circ \Phi_\Gt,
	\end{equation*}
	which implies (c.f. \cite{Liu-Xin2023} for details)
	\begin{equation}\label{eqn pd2 tt gt}		
		\begin{split}
			\pd^2_{tt}\gt = \dfrac{(\vn \circ \Phi_\Gt)}{\vnu \vdot (\vn \circ \Phi_\Gt)} \vdot \qty[\qty(\vW - \va{\mathfrak{b}})\circ\Phi_\Gt - \DD_{\vv_*}\qty(\vv\circ\Phi_\Gt + \pd_t\gt\vnu) ].
		\end{split}
	\end{equation}
	In particular, the term $ \pd^2_{tt}\gt $ does not involve $ \pd^2_{tt}\ka $.
	
	The combination of (\ref{def ka}) and (\ref{eqn pdt2 kappa+}) yields
	\begin{equation}\label{eqn pdt2 ka premitive}
		\begin{split}
			&\qty[\mrm{Id} + \grad^\top(\kappa \circ \Phi_\Gt) \vdot \opb(\ka) ]\pd^2_{tt}\ka + \opC_\alpha(\ka, \pd_t\ka, \vv_*, \vh_*, \vhh_*) \ka \\ &+ \grad^\top(\kappa \circ \Phi_\Gt) \vdot \qty[\opf(\ka)\pd_t\vom_* + \opg(\ka, \pd_t\ka, \vom_*)\pd_t\ka] \\
			&+ a^2 \dfrac{\vn \circ \Phi_\Gt}{\vnu \vdot (\vn \circ \Phi_\Gt)} \vdot \qty[\va{\mathfrak{b}} \circ \Phi_\Gt + \DD_{\vv_*}\qty(\vv\circ\Phi_\Gt + \pd_t\gt\vnu)]-a^2 \opC_\alpha(\ka, \pd_t\ka, \vv_*, \vh_*, \vhh_*)\gt \\
			&\quad = \mathfrak{R} \circ \Phi_\Gt + \qty[-\lap_\Gt \qty(\vW \vdot \vn) + \vW \vdot \lap_\Gt \vn + a^2 \dfrac{\vW \vdot \vn}{\vn \vdot (\vnu \circ \Phi_\Gt^{-1})}] \circ \Phi_\Gt.
		\end{split}
	\end{equation}
	Define a new operator:
	\begin{equation}\label{def opB}
		\opB(\ka) \coloneqq \grad^\top(\kappa \circ \Phi_\Gt) \vdot \opb(\ka).
	\end{equation}
	Then according to Lemma \ref{lem3.1}, the following estimate holds:
	\begin{equation}\label{est opB}
		\abs{\opB(\ka)}_{\LL\qty[\H{s''}; \H{s'}]} \less a^{s'-s''-2+\epsilon} \abs{\ka}_{\H{\kk-1}},
	\end{equation}
	for $ s'-2 \le s'' \le s' \le \kk - 2 $, $ s' \ge \frac{1}{2} $, and $ 0 < \epsilon \le s'' -s' +2 $. If $ k \ge 3 $, one may take $ \epsilon = 0 $ and for $ \sigma'-2\le \sigma'' \le \sigma' \le \kk-\frac{5}{2} $, $ \sigma' \ge \frac{1}{2} $,
	\begin{equation}\label{est opB'}
		\abs{\opB(\ka)}_{\LL\qty[\H{\sigma''}; \H{\sigma'}]} \less a^{\sigma'-\sigma''-2} \abs{\ka}_{\H{\kk-\frac{3}{2}}}. \tag{\ref{est opB}'}
	\end{equation}
	
	Letting $ s'=s'' $, $ 0 < \epsilon < \frac{1}{2} $ ($ \epsilon = 0 $ if $ k \ge 3 $) and $ a_0 $ large enough compared to $ \abs{\ka}_{\H{\kk-1}} $ (or $ a_0 $ large compared to $ \abs{\ka}_{\H{\kk-\frac{3}{2}}} $ if $ k \ge 3 $), one has
	\begin{equation}\label{est opcal b}
		\abs{\opB(\ka)}_{\LL(\H{s'})} \le \frac{1}{2} < 1,
	\end{equation}
	for $ \frac{1}{2} \le s' \le \kk-2 $ (or $ \frac{1}{2} \le s' \le \kk - \frac{5}{2} $ if $ k \ge 3 $). Namely, $ \qty[\mathrm{I}+\opB(\ka)] $ is an isomorphism on $ \H{s'} $. 
	Define
	\begin{equation}\label{key}
		\vj_* \coloneqq (\curl \vh) \circ \X_\Gt^+.
	\end{equation}
	Then $ \vh $ can be obtained from $ (\ka, \vj_*) $ via solving the corresponding div-curl problems. Similarly, $ \vhh $ can be uniquely determined by $ \ka $ and $ \vu{J} $.
	Applying the operator $ \qty[\mathrm{I}+\opB(\ka)]^{-1} $ to (\ref{eqn pdt2 ka premitive}), one can get the evolution equation for $ \ka $ as (which is, in particular, irrelevant to the original plasma-vacuum problems):
	\begin{equation}\label{eqn pdt2 ka "}
		\begin{split}
			&\pd^2_{tt} \ka + \opC_\alpha (\ka, \pd_t\ka, \vv_*, \vh_*, \vhh_*)\ka - \opF(\ka)\pd_t\vom_* - \opG(\ka, \pd_t\ka, \vom_*, \vj_*, \vu{J}) \\
			&\quad = \qty[\mathrm{I} + \opB(\ka)]^{-1}\qty{\qty[-\lap_\Gt \qty(\vW \vdot \vn) + \vW \vdot \lap_\Gt \vn + a^2 \dfrac{\vW \vdot \vn}{\vn \vdot (\vnu \circ \Phi_\Gt^{-1})}] \circ \Phi_\Gt},
		\end{split}
	\end{equation}
	The operators $ \opF $ and $ \opG $ defined above satisfy the following lemma, whose proof follows form the same arguments as those in \cite{Liu-Xin2023}:
	\begin{lem}\label{lem 3.4}
		Assume that $ a \ge a_0 $ and $ \ka \in B_{\delta_1} $ as in Proposition \ref{prop K}. For $ \frac{1}{2} \le s \le \kk-2 $ and $ \epsilon > 0 $, there are positive constants $ C_* $ and generic polynomials $ Q $ determined by $ \Lambda_* $, so that
		\begin{equation}\label{est opF"}
			\abs{\opF(\ka)}_{\LL\qty[H^{s+\epsilon-\frac{1}{2}}(\OGs); \H{s}]} \le C_* \abs{\ka}_{\H{\kk-1}},
		\end{equation}
		and
		\begin{equation}\label{est opG"}
			\begin{split}
				&\hspace{-1em}\abs{\opG(\ka, \pd_t\ka, \vom_*, \vj_*, \vu{J})}_{\H{\kk-\frac{5}{2}}} \\ \le\, &a^2 Q\qty(\abs{\ka}_{\H{\kk-1}}, \abs{\pd_t\ka}_{\H{\kk-\frac{5}{2}}}, \norm{\vom_*}_{H^{\kk-1}(\Om_*^+)}, \norm{\vj_*}_{H^{\kk-1}(\Om_*^+)}, \abs*{\vu{J}}_{H^{\kk-\frac{1}{2}}(\pd\Om)}).
			\end{split}
		\end{equation}
		Furthermore, if $ k \ge 3 $, for $ \frac{1}{2} \le \sigma \le \kk-\frac{5}{2} $, there hold
		\begin{equation}\label{est opF""}
			\abs{\opF(\ka)}_{\LL\qty[H^{\sigma-\frac{1}{2}}(\Om_*^+); \H{\sigma}]} \le C_* \abs{\ka}_{\H{\kk-\frac{3}{2}}},
		\end{equation}
		\begin{equation}\label{est var opF"}
			\abs{\var\opF(\ka)}_{\LL\qty[\H{\kk-\frac{5}{2}}; \LL\qty(H^{\kk-4}(\Om_*^+); \H{\kk-4})]} \le C_*,
		\end{equation}
		\begin{equation}\label{est opG""}
			\begin{split}
				&\hspace{-1em}\abs{\opG(\ka, \pd_t\ka, \vom_*, \vj_*, \vu{J})}_{\H{\kk-\frac{5}{2}}} \\ &\le a^2 Q\left(\alpha\abs{\ka}_{\H{\kk-1}}, \abs{\ka}_{\H{\kk-\frac{3}{2}}}, \abs{\pd_t\ka}_{\H{\kk-\frac{5}{2}}}, \norm{\vom_*}_{H^{\kk-1}(\Om_*^+)}, \right. \\
				&\hspace{20em} \left. \norm{\vj_*}_{H^{\kk-1}(\Om_*^+)}, \abs*{\vu{J}}_{H^{\kk-\frac{1}{2}}(\pd\Om)} \right),
			\end{split}
		\end{equation}
		and
		\begin{equation}\label{est var opG"}
			\begin{split}
				&\hspace{-1em}\abs{\var'\opG}_{\LL\qty[\H{\kk-\frac{5}{2}}\times\H{\kk-4}\times H^{\kk-\frac{5}{2}}(\Om_*^+) \times H^{\kk-\frac{5}{2}}(\Om_*^+); \H{\kk-4}]} \\
				\le\, &a^2 Q\qty(\abs{\ka}_{\H{\kk-\frac{3}{2}}}, \abs{\pd_t\ka}_{\H{\kk-\frac{5}{2}}}, \norm{\vom_*}_{H^{\kk-1}(\Om_*^+)}, \norm{\vj_*}_{H^{\kk-1}(\Om_*^+)}, \abs*{\vu{J}}_{H^{\kk-\frac{1}{2}}(\pd\Om)}),
			\end{split}
		\end{equation}
		where $ \var'\opG $ is the variational derivative with respect to the first four variables.
	\end{lem}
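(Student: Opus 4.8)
The plan is to unwind the definitions of $\opF$ and $\opG$ implicit in passing from \eqref{eqn pdt2 ka premitive} to \eqref{eqn pdt2 ka "}, and then to bound each constituent by the estimates already in place, chaining them through the product and composition estimates of Lemmas~\ref{lem product est} and~\ref{lem composition harm coordi}. When \eqref{eqn pdt2 ka premitive} is solved for $\pd^2_{tt}\ka$ by applying $[\mathrm{I}+\opB(\ka)]^{-1}$, the coefficient of $\pd_t\vom_*$ becomes, up to sign, $\opF(\ka)g=[\mathrm{I}+\opB(\ka)]^{-1}(\grad^\top(\kappa\circ\Phi_\Gt)\vdot\opf(\ka)g)$, while $\opG(\ka,\pd_t\ka,\vom_*,\vj_*,\vu{J})$ collects everything else placed under $[\mathrm{I}+\opB(\ka)]^{-1}$: the pulled-back curvature remainder $\mathfrak{R}\circ\Phi_\Gt$ with $\mathfrak{R}$ as in \eqref{def R'} (estimated by \eqref{est R}); the lower-order term $\grad^\top(\kappa\circ\Phi_\Gt)\vdot\opg(\ka,\pd_t\ka,\vom_*)\pd_t\ka$; the force and convective contributions $a^2\dfrac{\vn\circ\Phi_\Gt}{\vnu\vdot(\vn\circ\Phi_\Gt)}\vdot[\va{\mathfrak{b}}\circ\Phi_\Gt+\DD_{\vv_*}(\vv\circ\Phi_\Gt+\pd_t\gt\vnu)]$ coming from \eqref{eqn pd2 tt gt}, in which $\va{\mathfrak{b}}$ unfolds through \eqref{def vW"}, \eqref{decom pressure'}, \eqref{def p_ab^+} into harmonic extensions, Dirichlet--Neumann operators, and the pressures $p_{\vv,\vv}$, $p_{\vh,\vh}$; the term $a^2\opC_\alpha(\ka,\pd_t\ka,\vv_*,\vh_*,\vhh_*)\gt$, whose pieces are $\DD_{\vv_*}\pd_t\gt$, $\DD_{\vv_*}\DD_{\vv_*}\gt$, $\alpha^2\opA(\ka)\gt$, $\opR(\ka,\vh_*)\gt$ and $\opR(\ka,\vhh_*)\gt$; and the discrepancy $([\mathrm{I}+\opB(\ka)]^{-1}-\mathrm{I})\opC_\alpha(\ldots)\ka$, which is of strictly lower order because $\opB(\ka)$ is smoothing of order two by \eqref{est opB}. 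The lemma thereby reduces to operator bounds for $[\mathrm{I}+\opB(\ka)]^{-1}$, for multiplication by $\grad^\top(\kappa\circ\Phi_\Gt)$, for $\opf,\opg,\opA,\opR$, for $\vv_*,\pd_t\vv_*,\vh_*,\vhh_*$, and for the bulk fields $\vv,\vh,\vhh$, together with their first variations.

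For these I would invoke: \eqref{est opcal b}, so that $[\mathrm{I}+\opB(\ka)]^{-1}$ is bounded on $\H{s'}$ for $\tfrac12\le s'\le\kk-2$ (resp. $s'\le\kk-\tfrac52$ when $k\ge3$), once $a_0$ is large compared with $\abs{\ka}_{\H{\kk-1}}$ (resp. $\abs{\ka}_{\H{\kk-\frac32}}$); Proposition~\ref{prop K}, which from $\ka\in B_{\delta_1}\cap\H{\kk-1}$ gives $\gt,\Phi_\Gt\in\H{\kk+1}$, hence $\kappa\circ\Phi_\Gt=\ka-a^2\gt\in\H{\kk-1}$ and $\grad^\top(\kappa\circ\Phi_\Gt)\in\H{\kk-2}$ (resp. $\H{\kk-\frac52}$ starting from $\ka\in\H{\kk-\frac32}$); Lemma~\ref{lem3.1} for the norms of $\opf(\ka)$, $\opg(\ka,\pd_t\ka,\vom_*)$, $\opb(\ka)$ and \eqref{est var B}, \eqref{est var F}, \eqref{est var G} for their variations; Lemma~\ref{lem 3.3} for $\opA(\ka)$, $\opR(\ka,\vh_*)$, $\opR(\ka,\vhh_*)$ and their variations; \eqref{est pd beta v+*}--\eqref{eqn pd beta v} with Lemma~\ref{lem composition harm coordi} for $\vv_*$ and $\pd_t\vv_*$; and div-curl theory for the bulk fields, namely Theorem~\ref{thm div-curl} to recover $\vv$ from $(\ka,\vom_*,\theta)$ via \eqref{sys div-curl v} (with $\theta$, hence $\vv$, controlled by $(\ka,\pd_t\ka,\vom_*)$ through \eqref{eqn pd gt}) and $\vh$ from $(\ka,\vj_*)$ subject to $\vh\vdot\vn=0$ on $\Gt$, and Theorem~\ref{thm div-curl"} to recover $\vhh$ from $\ka$ and the surface current via \eqref{eqn pM}, \eqref{BC} (its hypotheses being met because $\Div_{\pd\Om}\vu{J}=0$), so that $\norm{\vhh}_{H^{\kk}(\Om_t^-)}$, and therefore $\abs{\vhh_*}_{\H{\kk-\frac12}}$, is dominated by a polynomial in $\abs{\ka}$ times $\abs{\vu{J}}_{H^{\kk-\frac12}(\pd\Om)}$.

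Multiplying these estimates together with Lemma~\ref{lem product est} then yields \eqref{est opF"} and \eqref{est opG"} for all $k\ge2$, and --- with the sharper Sobolev indices available for $k\ge3$ --- the improved bounds \eqref{est opF""} and \eqref{est opG""}; the $a^2$ prefactor in $\opG$ is carried through the $a^2$ appearing in $\ka=\kappa\circ\Phi_\Gt+a^2\gt$, in \eqref{eqn pd2 tt gt}, and in $a^2\opC_\alpha\gt$, while the $\alpha$-dependence records that the only top-regularity appearance of $\ka$ sits in $\alpha^2\opA(\ka)$, which is exactly why $Q$ in \eqref{est opG""} pairs $\alpha\abs{\ka}_{\H{\kk-1}}$ with the remaining $\H{\kk-\frac32}$-quantities. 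The variational estimates \eqref{est var opF"} and \eqref{est var opG"} come from differentiating through the composition: one uses $\var{[\mathrm{I}+\opB(\ka)]^{-1}}=-[\mathrm{I}+\opB(\ka)]^{-1}\,\var{\opB(\ka)}\,[\mathrm{I}+\opB(\ka)]^{-1}$ with \eqref{def opB}, \eqref{est var B} and the relation $\var{(\kappa\circ\Phi_\Gt)}[\delta\ka]=\delta\ka-a^2\,\var{\K^{-1}}(\ka)[\delta\ka]$ estimated by \eqref{var est K^-1}, together with $\var\opf,\var\opg,\var\opA,\var\opR$ from Lemmas~\ref{lem3.1} and~\ref{lem 3.3}, and assembles the pieces by the Leibniz rule and the product estimates into the (lossy) target $\H{\kk-4}$.

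The step I expect to be the main obstacle is not any single inequality but the organization: $\opG$ is a sum of structurally dissimilar pieces, and for each one must verify that all Sobolev exponents close --- in particular that every factor appearing as a pointwise multiplier lies in $\H{s}$ with $s$ strictly larger than the critical exponent $1=\tfrac12\dim\Gs$. This constraint is tightest precisely when $k=2$, where $\grad^\top(\kappa\circ\Phi_\Gt)$ only belongs to $\H{\kk-2}=\H{1}$; that borderline is what forces the small loss $\epsilon>0$ in \eqref{est opF"} and the appearance of the stronger norm $\abs{\ka}_{\H{\kk-1}}$ (rather than $\abs{\ka}_{\H{\kk-\frac32}}$) throughout the $k=2$ estimates. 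Finally, the only genuinely new ingredient relative to the current-vortex sheet analysis of \cite{Liu-Xin2023} is the pair $\opR(\ka,\vhh_*)$ and the $\vu{J}$-dependence entering through the vacuum field $\vhh$; once Theorem~\ref{thm div-curl"} supplies the control of $\vhh$ (hence of $\vhh_*$) in terms of $\ka$ and $\vu{J}$, the remaining term-by-term manipulations are identical to those in \cite{Liu-Xin2023}.
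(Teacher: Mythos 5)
Your proposal is correct and follows exactly the route the paper intends: the paper itself gives no detailed argument for Lemma~\ref{lem 3.4} but defers to the term-by-term analysis of \cite{Liu-Xin2023}, and your reconstruction --- reading off $\opF$ and $\opG$ from the passage \eqref{eqn pdt2 ka premitive}$\to$\eqref{eqn pdt2 ka "}, bounding each constituent via Proposition~\ref{prop K}, Lemmas~\ref{lem3.1} and~\ref{lem 3.3}, the invertibility \eqref{est opcal b}, the div-curl recovery of $\vv,\vh,\vhh$, and the resolvent identity for the variations --- is precisely that analysis, with the new $\vhh$-dependent pieces handled as you describe. The accounting of the $\epsilon$-loss at $k=2$ and of where the $a^2$ prefactor and the $\alpha$-weighted norm enter matches the structure of the stated estimates.
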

	
	\subsection{Evolution Equations for the Current and Vorticity}\label{sec curr-vorticity eqn}
	Since the vorticity, current and the corresponding boundary conditions can be used to recover the vector fields $ \vv $ and $ \vh $ by solving the corresponding div-curl problems, it is natural to investigate the evolution of the current and vorticity.
	
	Setting
	\begin{equation*}
		\vom \coloneqq \curl \vv \qc
		\vj \coloneqq \curl \vh.
	\end{equation*}
	Then taking curl of the equations \eqref{MHD} yields
	\begin{equation}\label{eqn pdt vom}
		\pd_t \vom + (\vv \vdot \grad)\vom - (\vh \vdot \grad)\vj = (\vom \vdot \grad)\vv - (\vj \vdot \grad) \vh,
	\end{equation}
	and
	\begin{equation}\label{eqn pdt vj}
		\pd_t\vj + (\vv \vdot \grad)\vj - (\vh \vdot \grad) \vom = (\vj \vdot \grad)\vv - (\vom \vdot \grad) \vh - 2\tr(\grad \vv \cp \grad \vh),
	\end{equation}
	where in the Cartesian coordinate
	\begin{equation*}
		\tr(\grad \vv \cp \grad \vh ) = \sum_{l=1}^3 \grad v^l \cp \grad h^l.
	\end{equation*}

	\section{Linear Estimates}
	Suppose that $ \Gs \in H^{\kk+1} $ ($ k \ge 2 $) is a reference hypersurface, and $ \Lambda_* $ defined by (\ref{def lambda*}) satisfies all the properties given in the preliminary.
	\subsection{Linear Problems for the Modified Mean Curvature}\label{sec linear ka}
	Now, assume that there are a family of hypersurfaces $ \Gt \in \Lambda_* $ and three tangential vector fields $ \vv_*, \vh_{*}, \vhh_* : \Gs \to \mathrm{T}\Gs $ satisfying:
	\begin{equation}\label{key}
		\ka \in C^0\qty{[0, T]; \H{\kk-1}} \cap C^1\qty{[0, T]; B_{\delta_1} \subset \H{\kk-\frac{5}{2}}}, 
	\end{equation}
	and
	\begin{equation}\label{key}
		\vv_{*}, \vh_{*}, \vhh_* \in C^0\qty{[0, T]; \H{\kk-\frac{1}{2}}} \cap C^1\qty{[0, T]; \H{\kk-2}}. 
	\end{equation}
	
	Define three positive constants $ L_0, L_1$, and $ L_2 $ to be
	\begin{equation}
		L_0 \coloneqq \abs{\vv_*(0)}_{\H{\kk-2}},
	\end{equation}
	\begin{equation}\label{key}
		L_1 \coloneqq \sup_{t\in[0, T]} \qty{\abs{\ka (t)}_{\H{\kk-1}},  \abs{\pd_t \ka(t)}_{\H{\kk-\frac{5}{2}}}, \abs{\qty(\vv_{*}(t), \vh_{*}(t), \vhh_*(t))}_{\H{\kk-\frac{1}{2}}}},
	\end{equation}
	and
	\begin{equation}\label{key}
		L_2 \coloneqq \sup_{t\in [0, T]} \abs{\qty(\pd_t\vv_{*}(t), \pd_t\vh_{*}(t), \pd_t\vhh_*(t))}_{\H{\kk-2}}.
	\end{equation}
	Consider the following linear initial value problem:
	\begin{equation}\label{eqn linear 1"}
		\begin{cases}
			\pd^2_{tt} \f + \opC(\ka, \pd_t\ka, \vv_*, \vh_*, \vhh_*) \f = \g, \\
			\f(0) = \f_0, \quad \pd_t \f(0) = \f_1,
		\end{cases}
	\end{equation}
	where $ \f_0, \f_1, \g(t) : \Gs \to \R $ are three given functions, and $ \opC $ is given by:
	\begin{equation*}
		\opC(\ka, \pd_t\ka, \vv_*, \vh_*, \vhh_*) \coloneqq 2\DD_{\vv_*}\pd_t + \DD_{\vv_*}\DD_{\vv_*} + \opA(\ka) - \opR(\ka, \vh_*) - \opR(\ka, \vhh_*),
	\end{equation*}
	which is exactly \eqref{def opC"} with $ \alpha = 1 $.
	For any integer $0 \le l \le k-2$, the energy functional is defined to be:
	\begin{equation}\label{key}
		\begin{split}
			E_l (t, \f, \pd_t\f) \coloneqq \int_\Gt &\abs{\qty(-\n_+^\frac{1}{2}\lap_\Gt\n_+^\frac{1}{2})^\frac{l}{2}\n_+^\frac{1}{2}\qty[(\pd_t\f + \DD_{\vv_*}\f) \circ \Phi_\Gt^{-1}]}^2 \\
			&+ \abs{\qty(-\n_+^\frac{1}{2}\lap_\Gt\n_+^\frac{1}{2})^{\frac{1+l}{2}}\n_+^\frac{1}{2}(\f \circ \Phi_\Gt^{-1})}^2 \\
			&+ \abs{\qty(-\n_+^\frac{1}{2}\lap_\Gt\n_+^\frac{1}{2})^\frac{l}{2}\n_+^\frac{1}{2}\qty[(\DD_{\vh_*}\f) \circ \Phi_\Gt^{-1}]}^2 \\
			&+ \abs{\qty(-\n_+^\frac{1}{2}\lap_\Gt\n_+^\frac{1}{2})^\frac{l}{2}\n_+^\frac{1}{2}\qty[(\DD_{\vhh_*}\f) \circ \Phi_\Gt^{-1}]}^2 \dd{S_t}.
		\end{split}
	\end{equation}
	
	Then, the following lemma holds:
	\begin{lem}\label{lem est E_l}
		For any integer $ 0 \le  l \le k-2 $, and $ 0 \le t \le T $, it holds that
		\begin{equation}\label{est E_l}
			\begin{split}
				&\hspace{-2em}E_l (t, \f, \pd_t\f) - E_l (0, \f_0, \f_1) \\
				\le&Q(L_1, L_2)\int_0^t \qty(\abs{\f(s)}_{\H{\frac{3}{2}l + 2}} + \abs{\pd_t\f(s)}_{\H{\frac{3}{2}l + \frac{1}{2}}} + \abs{\g(s)}_{\H{\frac{3}{2}l + \frac{1}{2}}}) \times \\
				&\hspace{8em} \times \qty(\abs{\f(s)}_{\H{\frac{3}{2}l + 2}} + \abs{\pd_t\f(s)}_{\H{\frac{3}{2}l + \frac{1}{2}}}) \dd{s},
			\end{split}
		\end{equation}
		where $ Q $ is a generic polynomial determined by $ \Lambda_* $.
	\end{lem}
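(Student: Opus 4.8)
The plan is to recast the linear equation \eqref{eqn linear 1"} as a second-order wave-type equation posed directly on the moving surface $\Gt$, run a higher-order energy estimate with the weight $\Lambda_l\coloneqq\qty(-\n_+^{\frac{1}{2}}\lap_\Gt\n_+^{\frac{1}{2}})^{\frac{l}{2}}\n_+^{\frac{1}{2}}$, and close it using exact algebraic cancellations together with the commutator estimates of Lemma~\ref{Dt comm est lemma}. First I would rewrite $\opC$: since $\Dts=\pd_t+\DD_{\vv_*}$, one has $\pd^2_{tt}+2\DD_{\vv_*}\pd_t+\DD_{\vv_*}\DD_{\vv_*}=\Dts^{\!2}-\DD_{\pd_t\vv_*}$, so \eqref{eqn linear 1"} is equivalent to $\Dts^{\!2}\f+\opA(\ka)\f-\opR(\ka,\vh_*)\f-\opR(\ka,\vhh_*)\f=\g+\DD_{\pd_t\vv_*}\f$. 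Pushing this forward by $\Phi_\Gt$, using \eqref{pull back Dt to Gs} and the definitions of $\opA,\opR$, and setting $f\coloneqq\f\circ\Phi_\Gt^{-1}$, $g\coloneqq\g\circ\Phi_\Gt^{-1}$, $\vh\coloneqq\mrm{T}\Phi_\Gt(\vh_*)$, $\vhh\coloneqq\mrm{T}\Phi_\Gt(\vhh_*)$, $\vw\coloneqq\mrm{T}\Phi_\Gt(\pd_t\vv_*)$ (all tangential to $\Gt$), one obtains on $\Gt$ the equation
\[
\Dt^2 f=\lap_\Gt\n_+ f+\DD^\top_\vh\DD^\top_\vh f+\DD^\top_\vhh\DD^\top_\vhh f+g+\DD^\top_\vw f,
\]
and, by \eqref{pull back Dt to Gs}, with $P\coloneqq-\n_+^{\frac{1}{2}}\lap_\Gt\n_+^{\frac{1}{2}}$, the four summands of $E_l$ are exactly $\abs{\Lambda_l\Dt f}_{L^2(\Gt)}^2$, $\abs{P^{\frac{1}{2}}\Lambda_l f}_{L^2(\Gt)}^2$, $\abs{\Lambda_l\DD^\top_\vh f}_{L^2(\Gt)}^2$ and $\abs{\Lambda_l\DD^\top_\vhh f}_{L^2(\Gt)}^2$.

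Next I would differentiate $E_l$ using the transport identity $\dv{t}\int_\Gt\abs{G}^2\dd{S_t}=\int_\Gt\qty(2\ip{G}{\Dt G}+\abs{G}^2\Div_\Gt\vv)\dd{S_t}$ (cf.~\eqref{eqn Dt dS}), where $\vv$ is the velocity generating $\Gt$ (normal part $\theta\vn$, tangential part $\mrm{T}\Phi_\Gt(\vv_*)$), moving $\Dt$ through $\Lambda_l$, $P^{\frac{1}{2}}$ and the $\DD^\top_{\vb{X}}$'s up to commutators, and substituting the expression for $\Dt^2 f$ above. The point is that the top-order contributions cancel in three ways. (i) The surface-tension pairing dies because $\Lambda_l\opA=P\Lambda_l$ holds \emph{identically} --- a consequence of $P\,\n_+^{\frac{1}{2}}=\n_+^{\frac{1}{2}}(-\lap_\Gt)\n_+$ and the self-adjointness of $\n_+$ on $L^2(\Gt)$ --- so $2\ip{\Lambda_l\Dt f}{\Lambda_l\lap_\Gt\n_+ f}=-2\ip{\Lambda_l\Dt f}{P\Lambda_l f}$ is cancelled by $2\ip{P^{\frac{1}{2}}\Lambda_l f}{P^{\frac{1}{2}}\Lambda_l\Dt f}=2\ip{\Lambda_l\Dt f}{P\Lambda_l f}$. (ii) The pairing $2\ip{\Lambda_l\Dt f}{\Lambda_l\DD^\top_\vh\DD^\top_\vh f}$ is cancelled by $2\ip{\Lambda_l\DD^\top_\vh f}{\Lambda_l\DD^\top_\vh\Dt f}$, after integrating by parts the outer $\DD^\top_\vh$ (via $\ip{\DD^\top_\vh u}{w}=-\ip{u}{\DD^\top_\vh w}-\ip{u}{(\Div_\Gt\vh)w}$ on the closed surface $\Gt$) and commuting $\DD^\top_\vh$ past $\Lambda_l^*\Lambda_l$; the residue is $-2\ip{\comm{\DD^\top_\vh}{\Lambda_l^*\Lambda_l}\Dt f}{\DD^\top_\vh f}-2\ip{(\Div_\Gt\vh)\Lambda_l^*\Lambda_l\Dt f}{\DD^\top_\vh f}$. (iii) The same with $\vhh$. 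What remains is $\dv{t}E_l=\mathcal{E}$, where $\mathcal{E}$ collects the forcing pairing $2\ip{\Lambda_l\Dt f}{\Lambda_l g}$, the $\vw$-pairing $2\ip{\Lambda_l\Dt f}{\Lambda_l\DD^\top_\vw f}$, zeroth-order terms such as $\int_\Gt\abs{G}^2\Div_\Gt\vv$ and $\ip{(\Div_\Gt\vh)\Lambda_l^*\Lambda_l\Dt f}{\DD^\top_\vh f}$, and commutator terms $\ip{\Lambda_l\Dt f}{\comm{\Dt}{\Lambda_l}\Dt f}$, $\ip{P^{\frac{1}{2}}\Lambda_l f}{\comm{\Dt}{P^{\frac{1}{2}}\Lambda_l}f}$, $\ip{\Lambda_l\DD^\top_\vh f}{\comm{\Dt}{\Lambda_l\DD^\top_\vh}f}$, $\ip{\comm{\DD^\top_\vh}{\Lambda_l^*\Lambda_l}\Dt f}{\DD^\top_\vh f}$ and their $\vhh$-analogues.

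Then I would estimate $\mathcal{E}$ term by term. Every commutator of $\Dt$, or of a tangential vector field $\DD^\top_{\vb{X}}$, with a power of $\n_+$ or of $\lap_\Gt$ is of the same order as the operator itself, with coefficients controlled by $\abs{\qty(\vv_*,\vh_*,\vhh_*)}_{\H{\kk-\frac{1}{2}}}$ and $\abs{\II}_{H^{\kk-1}(\Gt)}$ --- hence by $Q(L_1)$, via Lemma~\ref{est ii} and $\ka\in\H{\kk-1}$ --- and, in the $\vw$- and $\comm{\Dt}{\cdot}$-contributions, also by $\abs{\qty(\pd_t\vv_*,\pd_t\vh_*,\pd_t\vhh_*)}_{\H{\kk-2}}$, hence by $Q(L_1,L_2)$; these follow from Lemma~\ref{Dt comm est lemma}, from functional-calculus arguments reducing commutators with $\n_+^{\frac{1}{2}}$ and $P^{\frac{1}{2}}$ to commutators with $\n_+$ and $\lap_\Gt$, and from the product and composition estimates of Lemmas~\ref{lem composition harm coordi}--\ref{lem product est}. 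Since $\Lambda_l$ has order $\frac{3}{2}l+\frac{1}{2}$, each term of $\mathcal{E}$ factors as a product of one quantity bounded --- using \eqref{pull back Dt to Gs}, the norm equivalence \eqref{equiv n lap}, and a product estimate absorbing $\DD_{\vv_*}\f$ --- by $Q(L_1)\qty(\abs{\f}_{\H{\frac{3}{2}l+2}}+\abs{\pd_t\f}_{\H{\frac{3}{2}l+\frac{1}{2}}})$, and one quantity bounded by $Q(L_1,L_2)\qty(\abs{\f}_{\H{\frac{3}{2}l+2}}+\abs{\pd_t\f}_{\H{\frac{3}{2}l+\frac{1}{2}}}+\abs{\g}_{\H{\frac{3}{2}l+\frac{1}{2}}})$. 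All Sobolev exponents that appear stay within the admissible ranges of the cited lemmas precisely because $0\le l\le k-2$ (for $k=2$ only $l=0$ occurs, and the slightly stronger norm $\abs{\ka}_{\H{\kk-1}}$ built into $L_1$ covers the borderline losses). Integrating in time then yields \eqref{est E_l}.

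I expect the main obstacle to be Step~2 and its aftermath. The structural heart is that $\Lambda_l$ is engineered so that $\Lambda_l\opA=P\Lambda_l$ holds exactly, which is what turns the otherwise non-self-adjoint surface-tension operator $-\lap_\Gt\n_+$ into a positive self-adjoint one in the energy and produces the clean cancellation (i); the cancellations (ii), (iii) similarly rest on $\DD^\top_\vh$ and $\DD^\top_\vhh$ being skew-adjoint on $L^2(\Gt)$ up to the zeroth-order multiplier $\Div_\Gt\vh$. The residual difficulty is purely technical: the commutator $\comm{\DD^\top_\vh}{\Lambda_l^*\Lambda_l}$ and its analogues are of the full top order $3l+1$, so the estimate only just closes, and one must verify there is no loss of derivatives --- this is exactly where the extra half-derivative carried by $\n_+^{\frac{1}{2}}$ inside $\Lambda_l$, the sharp exponents in Lemma~\ref{lem product est}, and the $L_2$-regularity of $\pd_t\vv_*,\pd_t\vh_*,\pd_t\vhh_*$ are used. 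Since the intermediate pairings are only formally defined at the energy level, the computation would first be carried out for smooth $\f,\g$ and then extended to the stated class by approximation.
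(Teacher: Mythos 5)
Your proposal is correct and follows essentially the same route as the paper: the paper likewise pushes the equation forward to $\Gt$, applies $\opd^l\n_+^{\frac{1}{2}}$ to the combined material derivative $(\pd_t\f+\DD_{\vv_*}\f)\circ\Phi_\Gt^{-1}$, and closes the energy identity through exactly your three cancellations (the exact intertwining $\opd^l\n_+^{\frac12}(-\lap_\Gt\n_+)=\opd^{l+2}\n_+^{\frac12}$ for the surface-tension term in $I_3$, and integration by parts of $\DD_\vh$, $\DD_\vhh$ against the third and fourth energy summands in $I_4$, $I_5$), with the remaining convection, forcing, and commutator terms estimated via Lemma~\ref{Dt comm est lemma} just as you describe.
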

	\begin{proof}
		We first introduce some notations:
		\begin{equation*}
			\Dt \coloneqq \pd_t + \DD_\vmu, \qq{with} \vmu \coloneqq (\pd_t\gt \vnu) \circ \Phi_\Gt^{-1} : \Gt \to \R^3 ,
		\end{equation*} 
		and for any function $ f : \Gs \to \R $ and vector field $ \vb{a}_*: \Gs \to \mathrm{T}\Gs $, set
		\begin{equation*}
			\bar{f} \coloneqq f \circ \Phi_\Gt^{-1} : \Gt \to \R, \quad \vb{a} \coloneqq \qty(\DD\Phi_\Gt \cdot \vb{a}_*) \circ \Phi_\Gt^{-1} : \Gt \to \mathrm{T}\Gt.
		\end{equation*} 
		Thus,
		\begin{equation*}
			(\pd_t\f) \circ \Phi_\Gt^{-1} = \Dt \bar{\f}, \quad \qty(\DD_{\vb{a}_*}\f) \circ\Phi_\Gt^{-1} = \DD_{\vb{a}} \bar{\f},
		\end{equation*}
		and the equation in \eqref{eqn linear 1"} can be rewritten as:
		\begin{equation}\label{eqn lin'}
			\Dt^2\baf + 2\DD_\vv\Dt\baf + \DD_\vv\DD_\vv \baf + (-\lap_\Gt\n_+)\baf - \DD_\vh\DD_\vh \baf - \DD_\vhh\DD_\vhh \baf = \bar{\g}.
		\end{equation}
		
		Furthermore, we will use the conventions that
		\begin{equation*}\label{key}
			\abs{f}_{s} \coloneqq \abs{f}_{H^s(\Gt)},
		\end{equation*}
		and
		\begin{equation*}
			\mathfrak{u} \lesq \mathfrak{v}
		\end{equation*}
		if there exists a generic polynomial $ Q = Q(L_1, L_2) $ determined by $\Lambda_{*}$, such that $ \mathfrak{u} \le Q(L_1, L_2) \mathfrak{v} $.
		For simplicity, set
		\begin{equation}
			\mathcal{D} \coloneqq \qty(-\n_+^{\frac{1}{2}}\lap_\Gt\n_+^{\frac{1}{2}})^{\frac{1}{2}},
		\end{equation}
		which is a $\frac{3}{2}$-th order self-adjoint differential operator on $\Gt$.
		
		It follows from Lemma \ref{Dt comm est lemma} that the following estimate holds (see \cite[Lemma 5.1]{Liu-Xin2023} for more detailed calculations):
		\begin{equation}\label{lin est 1}
			\abs{\frac{1}{2}\dv{t}\int_\Gt \abs{\opd^l\n_+^\frac{1}{2}\qty(\Dt\baf+\DD_\vv\baf)}^2 \dd{S_t} - I } \lesq \abs{\vmu}_{\kk-\frac{1}{2}} \times \qty(\abs{\f}_{\H{\frac{3}{2}l+\frac{3}{2}}} + \abs{\pd_t\f}_{\H{\frac{3}{2}l+\frac{1}{2}}}),
		\end{equation}
		where
		\begin{equation}\label{eqn I}
			I = \int_\Gt \opd^l \n_+^\frac{1}{2}\qty(\Dt^2\baf + \Dt\DD_\vv\baf) \times \opd^l\n_+^{\frac{1}{2}}\qty(\Dt\baf + \DD_\vv\baf) \dd{S_t}.
		\end{equation}
		Plugging \eqref{eqn lin'} into \eqref{eqn I} yields:
		\begin{equation}\label{eqn lin I}
			\begin{split}
				I =\, &\int_\Gt \opd^l \n_+^\frac{1}{2}\qty(\Dt\baf + \DD_\vv \baf) \times \opd^l\n_+^\frac{1}{2}\g \dd{S_t} \\
				&+ \int_\Gt \opd^l \n_+^\frac{1}{2}\qty(\Dt\DD_\vv\baf - 2 \DD_\vv\Dt\baf - \DD_\vv\DD_\vv\baf) \times \opd^l\n_+^\frac{1}{2}\qty(\Dt\baf + \DD_\vv\baf) \dd{S_t} \\
				&-\int_\Gt \opd^l\n_+^\frac{1}{2}\qty(-\lap_\Gt\n_+\baf) \times \opd^l \n_+^\frac{1}{2}\qty(\Dt\baf+\DD_\vv\baf)\dd{S_t} \\
				&+\int_\Gt \opd^l\n_+^\frac{1}{2}\qty(\DD_\vh\DD_\vh\baf) \times \opd^l\n_+^\frac{1}{2}\qty(\Dt\baf + \DD_\vv\baf) \dd{S_t} \\
				&+\int_\Gt \opd^l\n_+^\frac{1}{2}\qty(\DD_\vhh\DD_\vhh\baf) \times \opd^l\n_+^\frac{1}{2}\qty(\Dt\baf +\DD_\vv\baf) \dd{S_t} \\
				\eqqcolon &\, I_1 +\cdots + I_5
			\end{split}
		\end{equation}
		It is clear that
		\begin{equation}\label{est I_1}
			\abs{I_1} \lesq \abs{\g}_{\H{\frac{3}{2}l + \frac{1}{2}}} \times \qty(\abs{\pd_t\f}_{\H{\frac{3}{2}l + \frac{1}{2}}} + \abs{\f}_{\H{\frac{3}{2}l + \frac{3}{2}}}).
		\end{equation}
		Note that for two functions $ \phi, \psi : \Gt \to \R $, the integration-by-parts formula on $\Gt$ is:
		\begin{equation}\label{formula int by parts Gt}
			\int_\Gt - (\DD_\vv \phi) \cdot \psi \dd{S_t} = \int_\Gt (\DD_\vv\psi)\cdot\phi + \phi\psi(\Div_\Gt\vv) \dd{S_t}.
		\end{equation}
		$ I_2 $ can be computed via:
		\begin{equation}\label{int DuDuf * Dtf}
			\begin{split}
				\int_\Gt \opd^l \n_+^\frac{1}{2}(\DD_\vv\DD_\vv \baf) \cdot \opd^l \n_+^\frac{1}{2}(\Dt\baf) \dd{S_t}  = \int_\Gt \n_+(\DD_\vv\DD_\vv\baf) \cdot (-\lap_\Gt\n_+)^l (\Dt\baf) \dd{S_t}.
			\end{split}
		\end{equation}
		Commuting $ \DD_\vv $ with $ \n_+ $ and $ \lap_\Gt $ yields
		\begin{equation}\label{est DuDu f * Dt f}
			\begin{split}
				&\hspace{-1em}\abs{ \int_\Gt \opd^l \n_+^\frac{1}{2} (\DD_\vv \DD_\vv \baf) \cdot \opd^l \n_+^\frac{1}{2} (\Dt\baf) + \opd^l\n_+^\frac{1}{2}(\DD_\vv\baf) \cdot \opd^l\n_+^\frac{1}{2} (\DD_\vv\Dt \baf)  \dd{S_t}} \\
				&\lesq  \abs{\f}_{\H{\frac{3}{2}l +\frac{3}{2}}} \cdot \abs{\pd_t \f}_{\H{\frac{3}{2}l+\frac{1}{2}}}.
			\end{split}
		\end{equation}		
		It follows from \eqref{formula int by parts Gt} that
		\begin{equation}\label{key}
			\begin{split}
				\abs{\int_\Gt \opd^l \n_+^\frac{1}{2} (\DD_\vv \DD_\vv \baf) \cdot \opd^l \n_+^\frac{1}{2} (\DD_\vv\baf) \dd{S_t}} \lesq\abs{\f}_{\H{\frac{3}{2}l+\frac{3}{2}}}^2,
			\end{split}
		\end{equation}
		and
		\begin{equation}
			\begin{split}
				\abs{\int_\Gt \opd^l \n_+^\frac{1}{2} (\DD_\vv \Dt \baf) \cdot \opd^l \n_+^\frac{1}{2} (\Dt \baf) \dd{S_t} } \lesssim_{L_1} \abs{\pd_t \f}_{\H{\frac{3}{2}l + \frac{1}{2}}}^2.
			\end{split}
		\end{equation}
		Furthermore, since
		\begin{equation}\label{est I2 last}
			\abs{\comm{\Dt}{\DD_\vv}\baf}_{\frac{3}{2}l+\frac{1}{2}} = \abs{\DD_{\qty(\Dt\vv-\DD_\vv \vmu)} \baf}_{\frac{3}{2}l+\frac{1}{2}} \lesq \abs{\f}_{\H{\frac{3}{2}l + \frac{7}{4}}},
		\end{equation}
		(\ref{int DuDuf * Dtf})-(\ref{est I2 last}) yield
		\begin{equation}\label{est I_2}
			\abs{I_2} \lesq \abs{\pd_t\f}_{\H{\frac{3}{2}l+\frac{1}{2}}}^2 + \abs{\f}_{\H{\frac{3}{2}l+2}}^2.
		\end{equation}
		
		$ I_3 $ can be calculated as:
		\begin{equation*}\label{key}
			\begin{split}
				I_3 = &-\int_\Gt \opd^l \n_+^\frac{1}{2} \qty(-\lap_\Gt\n_+)\baf \cdot \opd^l\n_+^\frac{1}{2} \qty(\Dt\baf + \DD_\vv \baf) \dd{S_t} \\
				= &-\int_\Gt \n_+\baf \cdot (-\lap_\Gt\n_+)^{l+1}(\Dt\baf + \DD_\vv\baf) \dd{S_t},
			\end{split}
		\end{equation*}
		which, together with the commutator estimates and integration by parts, yields
		\begin{equation}\label{est I_3}
			\begin{split}
				\abs{I_3 + \dfrac{1}{2}\dv{t}\int_\Gt \abs{\opd^{l+1}\n_+^\frac{1}{2}\baf}^2 \dd{S_t}} 
				\lesq \abs{\f}_{\H{\frac{3}{2}l + 2}}^2.
			\end{split}
		\end{equation}
		
		As for $ I_4 $, the relation that
		\begin{equation}
			\comm{\DD_\vv}{\DD_\vh} \baf = \DD_{\comm{\vv}{\vh}} \baf,
		\end{equation}
		and the integration-by-parts lead to
		\begin{equation}\label{key}
			\begin{split}
				\abs{\int_\Gt \opd^l\n_+^\frac{1}{2}(\DD_\vh\DD_\vh\baf) \cdot \opd^l\n_+^\frac{1}{2}(\DD_\vv\baf)\dd{S_t}} 
				\lesq \abs{\f}_{\H{\frac{3}{2}l+\frac{3}{2}}}^2.
			\end{split}
		\end{equation}		
		It is also not hard to show that
		\begin{equation}\label{key}
			\begin{split}
				&\hspace{-1em}\abs{ \int_\Gt \opd^l \n_+^\frac{1}{2} (\DD_\vh \DD_\vh \baf) \cdot \opd^l \n_+^\frac{1}{2} (\Dt\baf) + \opd^l\n_+^\frac{1}{2}(\DD_\vh\baf) \cdot \opd^l\n_+^\frac{1}{2} (\DD_\vh\Dt \baf)  \dd{S_t}} \\
				&\lesq  \abs{\f}_{\H{\frac{3}{2}l +\frac{3}{2}}} \times \abs{\pd_t \f}_{\H{\frac{3}{2}l+\frac{1}{2}}},
			\end{split}
		\end{equation}
		and
		\begin{equation}\label{I_4 est end}
			\begin{split}
				&\hspace{-1em}\abs{\int_\Gt \opd^l \n_+^\frac{1}{2} (\DD_\vh \Dt \baf) \cdot \opd^l \n_+^\frac{1}{2} (\DD_\vh\baf) \dd{S_t } - \dfrac{1}{2}\dv{t} \int_\Gt \abs{\opd^l\n_+^\frac{1}{2}(\DD_\vh\baf)}^2 \dd{S_t}} \\
				&\lesq\abs{\f}_{\H{\frac{3}{2}l+\frac{3}{2}}} \times \qty(\abs{\f}_{\H{\frac{3}{2}l+2}} + \abs{\pd_t\f}_{\H{\frac{3}{2}l+\frac{1}{2}}}).
			\end{split}
		\end{equation}
		Thus,
		\begin{equation}\label{est I_4}
			\begin{split}
				&\abs{I_4 + \frac{1}{2} \dv{t}\int_\Gt \abs{\opd^l\n_+^\frac{1}{2}(\DD_\vh\baf)}^2 \dd{S_t}} \\
				&\quad\lesq \abs{\f}_{\H{\frac{3}{2}l+\frac{3}{2}}} \times \qty(\abs{\f}_{\H{\frac{3}{2}l+2}} + \abs{\pd_t\f}_{\H{\frac{3}{2}l+\frac{1}{2}}}).
			\end{split}
		\end{equation}
		Similar arguments yield
		\begin{equation}\label{est I_5}
			\begin{split}
				&\abs{I_5 + \frac{1}{2} \dv{t}\int_\Gt \abs{\opd^l\n_+^\frac{1}{2}(\DD_\vhh\baf)}^2 \dd{S_t}} \\
				&\quad\lesq \abs{\f}_{\H{\frac{3}{2}l+\frac{3}{2}}} \times \qty(\abs{\f}_{\H{\frac{3}{2}l+2}} + \abs{\pd_t\f}_{\H{\frac{3}{2}l+\frac{1}{2}}}).
			\end{split}
		\end{equation}
		
		In conclusion, it follows from \eqref{lin est 1}-\eqref{est I_1}, \eqref{est I_2}-\eqref{est I_3}, and \eqref{est I_4}-\eqref{est I_5} that
		\begin{equation}\label{key}
			\begin{split}
				\abs{\dv{t} E_l} \lesq &\qty(\abs{\f}_{\H{\frac{3}{2}l+2}} + \abs{\pd_t\f}_{\H{\frac{3}{2}l+\frac{1}{2}}}) \times \\
				&\quad\times \qty(\abs{\f}_{\H{\frac{3}{2}l+2}} + \abs{\pd_t\f}_{\H{\frac{3}{2}l+\frac{1}{2}}} + \abs{\g}_{\H{\frac{3}{2}l+\frac{1}{2}}}),
			\end{split}
		\end{equation}
		which completes the proof of Lemma \ref{lem est E_l}.
	\end{proof}
	
	With the help of Lemma \ref{lem est E_l}, it is routine prove that (see \cite{Liu-Xin2023} for more details):
	\begin{prop}
		There exists a positive constant $ C_* $, depending only on $ \Lambda_* $, so that for each integer $ 0 \le l \le k-2 $, and $ 0 \le t \le T $ ($ T \le C $ for some constant $ C = C(L_1, L_2) $), the following estimate holds:
		\begin{equation}\label{est linear eqn ka}
			\begin{split}
				&\hspace{-1em}\abs{\f(t)}_{\H{\frac{3}{2}l+2}}^2 + \abs{\pd_t\f(t)}_{\H{\frac{3}{2}l+\frac{1}{2}}}^2 \\
				\le\, &C_* \exp\qty\big[Q(L_1, L_2)t]\times \\
				& \times \qty{ \abs{\f_0}_{\H{\frac{3}{2}l+2}}^2 + \abs{\f_1}_{\H{\frac{3}{2}l+\frac{1}{2}}}^2 + (L_0)^{12} \abs{\f_0}_{\H{\frac{3}{2}l+\frac{1}{2}}}^2 + \int_0^t \abs{\g(t')}^2_{\H{\frac{3}{2}l+\frac{1}{2}}} \dd{t'} },
			\end{split}
		\end{equation}
		where $ Q $ is a generic polynomial determined by $ \Lambda_* $.
	\end{prop}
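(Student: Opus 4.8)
The plan is to derive \eqref{est linear eqn ka} from the differential inequality \eqref{est E_l} of Lemma~\ref{lem est E_l} by the standard energy-method bootstrap, in three steps: first establish that $E_l$ is coercive for the pair of Sobolev norms on the left of \eqref{est linear eqn ka}, up to a lower-order term; then propagate that lower-order term as an auxiliary quantity and close a Gronwall inequality; and finally bound the initial energy $E_l(0,\f_0,\f_1)$ by the prescribed data. All constants are taken uniform over $\Lambda_*$ using $\Gt\in\Lambda_*$ and the bounds $L_0,L_1,L_2$, and the computations follow the same pattern as in \cite{Liu-Xin2023}.

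For the coercivity I would begin from the fact that $\opd=\qty(-\n_+^{\frac12}\lap_\Gt\n_+^{\frac12})^{\frac12}$ is a nonnegative, self-adjoint operator of order $\frac32$ whose nonconstant part is, modulo operators of lower order, comparable to $\qty(\mathrm{I}-\lap_\Gt)^{\frac34}$ — this is precisely what the norm-equivalence \eqref{equiv n lap} and Lemma~\ref{lem lap-n} give. Combining this with the change-of-variables bounds of Lemma~\ref{lem composition harm coordi}, the second term of $E_l$ controls $\abs{\f}_{\H{\frac{3}{2}l+2}}^2$ up to $C_*\abs{\f}_{\H{\frac{3}{2}l+\frac{1}{2}}}^2$ and the first term controls $\abs{\pd_t\f+\DD_{\vv_*}\f}_{\H{\frac{3}{2}l+\frac{1}{2}}}^2$ up to the same error; to pass from $\pd_t\f+\DD_{\vv_*}\f$ to $\pd_t\f$ I would bound $\abs{\DD_{\vv_*}\f}_{\H{\frac{3}{2}l+\frac{1}{2}}}\lesssim L_1\abs{\f}_{\H{\frac{3}{2}l+\frac{3}{2}}}$ via Lemma~\ref{lem product est} and interpolate $\abs{\f}_{\H{\frac{3}{2}l+\frac{3}{2}}}\le\epsilon\abs{\f}_{\H{\frac{3}{2}l+2}}+C_\epsilon\abs{\f}_{\H{\frac{3}{2}l+\frac{1}{2}}}$ with $\epsilon$ small relative to $L_1$. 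This yields
\begin{equation*}
\begin{split}
E_l(t,\f,\pd_t\f)&\ge\tfrac{1}{C_*}\qty(\abs{\f}_{\H{\frac{3}{2}l+2}}^2+\abs{\pd_t\f}_{\H{\frac{3}{2}l+\frac{1}{2}}}^2)-Q(L_1)\abs{\f}_{\H{\frac{3}{2}l+\frac{1}{2}}}^2,\\
E_l(t,\f,\pd_t\f)&\le Q(L_1)\qty(\abs{\f}_{\H{\frac{3}{2}l+2}}^2+\abs{\pd_t\f}_{\H{\frac{3}{2}l+\frac{1}{2}}}^2).
\end{split}
\end{equation*}

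Next I would set $\wt{E}_l(t)\coloneqq E_l(t,\f,\pd_t\f)+\abs{\f(t)}_{\H{\frac{3}{2}l+\frac{1}{2}}}^2$. Since $\dv{t}\abs{\f}_{\H{\frac{3}{2}l+\frac{1}{2}}}^2\le\abs{\f}_{\H{\frac{3}{2}l+\frac{1}{2}}}^2+\abs{\pd_t\f}_{\H{\frac{3}{2}l+\frac{1}{2}}}^2$, combining \eqref{est E_l}, the coercivity just obtained, and Young's inequality gives $\dv{t}\wt{E}_l\le Q(L_1,L_2)\,\wt{E}_l+Q(L_1,L_2)\abs{\g}_{\H{\frac{3}{2}l+\frac{1}{2}}}^2$ on $[0,T]$, whence Gronwall yields $\wt{E}_l(t)\le e^{Q(L_1,L_2)t}\qty(\wt{E}_l(0)+\int_0^t\abs{\g(t')}_{\H{\frac{3}{2}l+\frac{1}{2}}}^2\dd{t'})$; the lower bound above — after shrinking $T$ so the $Q(L_1)\abs{\f}_{\H{\frac{3}{2}l+\frac{1}{2}}}^2$ error is reabsorbed into $\wt{E}_l$, which is where $T\le C(L_1,L_2)$ enters — turns the left side into $\abs{\f(t)}_{\H{\frac{3}{2}l+2}}^2+\abs{\pd_t\f(t)}_{\H{\frac{3}{2}l+\frac{1}{2}}}^2$. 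Finally, applying the reverse bound at $t=0$ and estimating the shift by $\abs{\DD_{\vv_*(0)}\f_0}_{\H{\frac{3}{2}l+\frac{1}{2}}}\lesssim\abs{\vv_*(0)}_{\H{\kk-2}}\abs{\f_0}_{\H{\frac{3}{2}l+\frac{3}{2}}}$ (Sobolev embedding together with Lemma~\ref{lem product est}, legitimate for $0\le l\le k-2$), then interpolating $\abs{\f_0}_{\H{\frac{3}{2}l+\frac{3}{2}}}^2$ between $\abs{\f_0}_{\H{\frac{3}{2}l+2}}^2$ and $\abs{\f_0}_{\H{\frac{3}{2}l+\frac{1}{2}}}^2$ and applying Young's inequality, one obtains $\wt{E}_l(0)\le C_*\abs{\f_0}_{\H{\frac{3}{2}l+2}}^2+C_*\abs{\f_1}_{\H{\frac{3}{2}l+\frac{1}{2}}}^2+(L_0)^{12}\abs{\f_0}_{\H{\frac{3}{2}l+\frac{1}{2}}}^2$; inserting this into the Gronwall bound gives \eqref{est linear eqn ka}.

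I expect the coercivity step to be the main obstacle. The energy $E_l$ only sees the shifted derivative $\pd_t\f+\DD_{\vv_*}\f$ rather than $\pd_t\f$, and the operators $\opd^l$ are genuinely nonlocal for odd $l$; hence matching $E_l$ to the Sobolev pair must go through the norm-equivalence \eqref{equiv n lap}, Lemma~\ref{lem lap-n}, and the composition estimates, and the lower-order remainders this produces can only be controlled by carrying the auxiliary norm $\abs{\f}_{\H{\frac{3}{2}l+\frac{1}{2}}}^2$ alongside $E_l$ and keeping it subordinate to $\wt{E}_l$ on the whole interval (the reason for the restriction $T\le C(L_1,L_2)$). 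Otherwise the argument is a routine Gronwall iteration and uses no idea beyond those in \cite{Shatah-Zeng2008-Geo,Liu-Xin2023}.
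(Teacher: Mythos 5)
Your proposal is correct and takes essentially the same route the paper intends: the paper deduces \eqref{est linear eqn ka} from Lemma \ref{lem est E_l} by precisely this coercivity-plus-Gr\"onwall bookkeeping (deferring the details to \cite{Liu-Xin2023}), including carrying the auxiliary lower-order norm $\abs{\f}_{\H{\frac{3}{2}l+\frac{1}{2}}}^2$ to absorb the kernel of $\n_+$ and the shift $\pd_t\f\mapsto\pd_t\f+\DD_{\vv_*}\f$, and bounding $E_l(0,\f_0,\f_1)$ by interpolation and Young's inequality to generate the $(L_0)^{12}\abs{\f_0}_{\H{\frac{3}{2}l+\frac{1}{2}}}^2$ term.
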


	The existence of solutions to the linear initial value problem \eqref{eqn linear 1"} can be derived from \eqref{est linear eqn ka} by using Galerkin's method (or standard semi-group method in \cite{Shatah-Zeng2011}). In other words, the following linear well-posedness result holds:
	\begin{prop}
		For $ 0 \le l \le k-2 $ and $ \g \in C^0 \qty([0, T]; H^{\frac{3}{2}l + \frac{1}{2}}(\Gs)) $, there exists a positive constant $ T \le C(L_1, L_2) $, so that the problem \eqref{eqn linear 1"} is well-posed in the space $ C^0\qty{[0, T]; \H{\frac{3}{2}l+2}} \cap C^1\qty{[0, T]; \H{\frac{3}{2}l+\frac{1}{2}}} $, and the estimate \eqref{est linear eqn ka} holds.
	\end{prop}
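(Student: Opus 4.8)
The plan is to construct the solution by a Galerkin scheme on the fixed reference hypersurface $\Gs$, to pass to the limit using the uniform bound \eqref{est linear eqn ka}, and then to upgrade the time-regularity to strong continuity and to read off uniqueness and continuous dependence directly from \eqref{est linear eqn ka}. Concretely, I would fix an $L^2(\Gs)$-orthonormal basis $\{e_j\}_{j\ge1}$ of eigenfunctions of $(\mathrm{I}-\lap_{\Gs})$ and, for each $N$, seek $\f^{(N)}(t)=\sum_{j\le N}c^{(N)}_j(t)e_j$ solving the projected equation $\langle \pd^2_{tt}\f^{(N)}+\opC(\ka,\pd_t\ka,\vv_*,\vh_*,\vhh_*)\f^{(N)}-\g,\,e_j\rangle_{L^2(\Gs)}=0$ for $j\le N$, with $\f^{(N)}(0),\pd_t\f^{(N)}(0)$ the $N$-th partial sums of $\f_0,\f_1$. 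Since the coefficients of $\opC$ enter only through $\ka$, $\pd_t\ka$, the three tangential fields, and hence through $\Phi_\Gt$, they are continuous in $t$, so this is a linear second-order ODE system with continuous coefficients, and Picard--Lindel\"of gives a unique $c^{(N)}\in C^2([0,T])$.

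Next I would run the computation of Lemma~\ref{lem est E_l} on $\f^{(N)}$. Because $\{e_j\}$ are eigenfunctions of a $t$-independent operator, inserting the Galerkin projection as a test function does not disturb the commutator manipulations or the integration-by-parts identity \eqref{formula int by parts Gt}, so one recovers \eqref{est E_l}, and hence \eqref{est linear eqn ka}, for $\f^{(N)}$ with constants uniform in $N$. This yields uniform bounds for $\f^{(N)}$ in $C^0([0,T];\H{\frac32 l+2})$ and for $\pd_t\f^{(N)}$ in $C^0([0,T];\H{\frac32 l+\frac12})$; reading $\pd^2_{tt}\f^{(N)}=\g-\opC\f^{(N)}$ together with the (at most three-derivative-loss) mapping property of $\opC$ from \eqref{est opA} and \eqref{est opR} further bounds $\pd^2_{tt}\f^{(N)}$ in $C^0([0,T];\H{\frac32 l-1})$. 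Banach--Alaoglu and the Aubin--Lions lemma then give a subsequence converging weak-$*$ in $L^\infty_t\H{\frac32 l+2}$ and in $L^\infty_t\H{\frac32 l+\frac12}$, and strongly in $C^0([0,T];\H{\frac32 l+\frac32})$; as \eqref{eqn linear 1"} is linear in $\f$ with coefficients that are fixed functions of $t$, this suffices to pass to the limit in the projected equations, so the limit $\f$ solves \eqref{eqn linear 1"} with the prescribed data and \eqref{est linear eqn ka} holds for $\f$.

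To obtain the asserted continuity, I would note that the weak-$*$ limits give $\f\in C^0_w([0,T];\H{\frac32 l+2})$ and $\pd_t\f\in C^0_w([0,T];\H{\frac32 l+\frac12})$, while by Lemma~\ref{lem lap-n} and the equivalence \eqref{equiv n lap} the energy $E_l$ is, up to constants controlled by $L_1$ and lower-order terms, comparable to $\abs{\f(t)}^2_{\H{\frac32 l+2}}+\abs{\pd_t\f(t)}^2_{\H{\frac32 l+\frac12}}$, and \eqref{est E_l} shows $t\mapsto E_l(t,\f,\pd_t\f)$ is continuous. The standard fact that weak continuity together with continuity of the norm forces strong continuity then upgrades $\f$ to $C^0([0,T];\H{\frac32 l+2})$ and $\pd_t\f$ to $C^0([0,T];\H{\frac32 l+\frac12})$ (reversing time handles both endpoints). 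Uniqueness and continuous dependence are then immediate: the difference of two solutions with the same data is again a solution of \eqref{eqn linear 1"} by linearity, so \eqref{est linear eqn ka} forces it to vanish, and the same estimate expresses boundedness — hence continuity — of the linear solution map $(\f_0,\f_1,\g)\mapsto(\f,\pd_t\f)$ between the indicated spaces.

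The main obstacle I anticipate is the compatibility of the fixed Galerkin basis on $\Gs$ with the moving, non-self-adjoint operator $\opC(t)$ whose natural energy lives on $\Gt$: one has to be sure that truncating to $\mathrm{span}\{e_1,\dots,e_N\}$ before testing preserves all the cancellations in Lemma~\ref{lem est E_l}, in particular those coming from the self-adjointness of $\opd$ and from the integration-by-parts formula \eqref{formula int by parts Gt} on the moving interface. This is handled, as in \cite{Shatah-Zeng2011, Liu-Xin2023}, by conjugating the whole equation back to $\Gs$ via $\Phi_\Gt$ before performing the truncation, so that the energy identity and the Galerkin projection are carried out on the same fixed surface; alternatively, the semigroup construction of \cite{Shatah-Zeng2011} bypasses the point at the cost of a slightly more abstract framework.
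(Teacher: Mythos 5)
Your proposal is correct and follows essentially the same route as the paper, which obtains existence for \eqref{eqn linear 1"} from the a priori estimate \eqref{est linear eqn ka} via Galerkin's method (or alternatively the semigroup construction of \cite{Shatah-Zeng2011}), with uniqueness and continuous dependence read off from the same estimate by linearity. Your additional care about performing the truncation on the fixed surface $\Gs$ so as to preserve the cancellations of Lemma \ref{lem est E_l} is exactly the point the paper leaves implicit.
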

	
	\subsection{Linear Problems for the Current and Vorticity}\label{sec linear current vortex}
	Assume that $ \{\Gt\}_{0\le t \le T} \subset \Lambda_* $ is  a family of hypersurfaces, and $ \vv(t), \vh(t) : \Om_t^+ \to \R^3 $ are given vector fields satisfying
	\begin{equation}\label{key}
		\begin{cases*}
			\div\vv = 0 = \div\vh &in $ \Om_t^+ $, \\
			\vh \vdot \vn = 0 &on $ \Gt $, \\
			\vv \vdot \vn = \vn \vdot \qty(\pd_t\gt\vnu)\circ\Phi_\Gt^{-1} &on $ \Gt $.
		\end{cases*}
	\end{equation}
	For the simplicity of notations, we denote by $ \bar{L}_1 $ a positive constant such that
	\begin{equation}\label{key}
		\sup_{t \in [0, T]} \qty(\abs{\ka(t)}_{\H{\kk-2}}, \abs{\pd_t\ka(t)}_{\H{\kk-\frac{5}{2}}}, \norm{\qty(\vv(t), \vh(t))}_{H^{\kk}(\Om_t^+)}) \le \bar{L}_1.
	\end{equation}
	
	Consider the following linearized current-vorticity systems in $ \Om_t^+ $:
	\begin{equation}\label{eqn linear vom}
		\pd_t \wt{\vom} + (\vv\vdot\grad)\wt{\vom} - (\vh\vdot\grad)\wt{\vj} = (\wt{\vom}\vdot\grad)\vv - (\wt{\vj}\vdot\grad)\vh,
	\end{equation}
	and
	\begin{equation}\label{eqn linear vj}
		\pd_t \wt{\vj} + (\vv\vdot\grad)\wt{\vj} - (\vh\vdot\grad)\wt{\vom} = (\wt{\vj}\vdot\grad)\vv - (\wt{\vom}\vdot\grad)\vh -2\tr(\grad\vv\cp \grad\vh).
	\end{equation}
	Set 
	\begin{equation}\label{def vxi veta}
		\vb*{\xi} \coloneqq \wt{\vom} - \wt{\vj}, \quad \vb*{\eta} \coloneqq \wt{\vom} + \wt{\vj}.
	\end{equation}
	Then
	\begin{equation}\label{eqn xi}
		\pd_t\vxi  + \qty[(\vv+\vh)\vdot\grad]\vb*{\xi} = (\vb*{\xi}\vdot\grad)(\vv+\vh) + 2 \tr(\grad\vv\cp\grad\vh),
	\end{equation}
	\begin{equation}\label{eqn eta}
		\pd_t \veta + \qty[(\vv-\vh)\vdot\grad]\veta = (\veta\vdot\grad)(\vv-\vh) - 2\tr(\grad\vv\cp\grad\vh).
	\end{equation}
	
	As shown in \cite{Sun-Wang-Zhang2018}, due to the fact that $ \vh \vdot \vn \equiv 0 $, $\vv \pm \vh$ are also evolution velocities of $\Om_t^+$. In particular, the local well-posedness of the above systems on $(\vxi, \veta)$ can be shown via using characteristic method (c.f. \cite{Caflisch-Klapper_Steele97}). Furthermore, the following energy estimates hold (see \cite{Sun-Wang-Zhang2018} or \cite{Liu-Xin2023} for a proof):
	\begin{prop}\label{prop linear vom vj}
		For $ 0 \le t \le T $, it follows that
		\begin{equation}\label{est linear vom vj}
			\begin{split}
				&\hspace{-2em}\norm{\wt{\vom}(t)}_{H^{\kk-1}(\Om_t^+)}^2 + \norm{\wt{\vj}(t)}_{H^{\kk-1}(\Om_t^+)}^2 \\
				\le\,  &C_*\exp{Q\qty(\bar{L}_1)t} \qty(1+ \norm{\wt{\vom}(0)}_{H^{\kk-1}(\Om_0^+)}^2 + \norm{\wt{\vj}(0)}_{H^{\kk-1}(\Om_0^+)}^2),
			\end{split}
		\end{equation}
		where $ Q $ is a generic polynomial determined by $ \Lambda_* $.
	\end{prop}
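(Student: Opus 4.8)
The plan is to diagonalize the linearized current--vorticity system and then run a direct $H^{\kk-1}$ energy estimate on the moving domain. With $\vxi=\wt{\vom}-\wt{\vj}$ and $\veta=\wt{\vom}+\wt{\vj}$ as in \eqref{def vxi veta}, the unknowns solve the decoupled transport equations \eqref{eqn xi}--\eqref{eqn eta}: $\vxi$ is carried by $\vv+\vh$, $\veta$ by $\vv-\vh$, each equation having a zeroth-order term such as $(\vxi\vdot\grad)(\vv+\vh)$ together with the forcing $\pm2\tr(\grad\vv\cp\grad\vh)$. As already observed, since $\div\vv=\div\vh=0$ and $\vh\vdot\vn=0$ on $\Gt$, the fields $\vv\pm\vh$ are divergence-free evolution velocities of $\Om_t^+$; hence the characteristics of \eqref{eqn xi}--\eqref{eqn eta} never leave $\Om_t^+$, which gives the solvability of the system by the method of characteristics (c.f. \cite{Caflisch-Klapper_Steele97}) and, crucially, makes all boundary contributions in the energy identity below vanish.

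First I would record two nonlinear bounds, both immediate from the product estimates in Lemma~\ref{lem product est} (legitimate because $\kk-1>\frac{3}{2}$): $\norm{\tr(\grad\vv\cp\grad\vh)}_{H^{\kk-1}(\Om_t^+)}\less\norm{\vv}_{H^\kk(\Om_t^+)}\norm{\vh}_{H^\kk(\Om_t^+)}$ and $\norm{(\vxi\vdot\grad)(\vv\pm\vh)}_{H^{\kk-1}(\Om_t^+)}\less\norm{\vxi}_{H^{\kk-1}(\Om_t^+)}\norm{\vv\pm\vh}_{H^\kk(\Om_t^+)}$. Next, for each multi-index $\abs{\alpha}\le\kk-1$, apply $\pd^\alpha$ to \eqref{eqn xi} and pair the outcome with $\pd^\alpha\vxi$ in $L^2(\Om_t^+)$; since $\vv+\vh$ transports $\Om_t^+$ and is solenoidal, the Reynolds transport theorem produces
\begin{equation*}
\frac{1}{2}\dv{t}\int_{\Om_t^+}\abs{\pd^\alpha\vxi}^2\dd{x}=\int_{\Om_t^+}\pd^\alpha\vxi\vdot\qty(\pd^\alpha\qty[(\vxi\vdot\grad)(\vv+\vh)]+2\pd^\alpha\tr(\grad\vv\cp\grad\vh)-\comm{\pd^\alpha}{(\vv+\vh)\vdot\grad}\vxi)\dd{x}
\end{equation*}
with no boundary term. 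The first two terms on the right are controlled by the two bounds above (Cauchy--Schwarz and the Leibniz rule), and the commutator is handled by a standard Kato--Ponce type estimate giving $\norm{\comm{\pd^\alpha}{(\vv+\vh)\vdot\grad}\vxi}_{L^2(\Om_t^+)}\less\norm{\vv+\vh}_{H^\kk(\Om_t^+)}\norm{\vxi}_{H^{\kk-1}(\Om_t^+)}$; the only point needing care is the endpoint piece $\norm{\pd^{\kk-1}(\vv+\vh)}_{L^6}\norm{\grad\vxi}_{L^3}$, which is absorbed by $H^1\hookrightarrow L^6$, a Gagliardo--Nirenberg interpolation, and the boundedness of $\Om_t^+$ --- this is the tightest spot, and it is why $k\ge2$ already suffices. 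Summing over $\abs{\alpha}\le\kk-1$, repeating the argument for $\veta$, and keeping every constant uniform in $\Gt\in\ls$ by means of the uniform Sobolev and product estimates of \textsection~\ref{sec prelimi} (and a uniform extension operator), one obtains
\begin{equation*}
\dv{t}\qty(\norm{\vxi}_{H^{\kk-1}(\Om_t^+)}^2+\norm{\veta}_{H^{\kk-1}(\Om_t^+)}^2)\le Q(\bar{L}_1)\qty(1+\norm{\vxi}_{H^{\kk-1}(\Om_t^+)}^2+\norm{\veta}_{H^{\kk-1}(\Om_t^+)}^2).
\end{equation*}
Grönwall's inequality then yields the asserted bound on $\norm{\vxi}^2+\norm{\veta}^2$, and reverting via $\wt{\vom}=\frac{1}{2}(\veta+\vxi)$, $\wt{\vj}=\frac{1}{2}(\veta-\vxi)$ gives \eqref{est linear vom vj}.

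The main obstacle is organizational rather than conceptual: one must legitimately differentiate the transport equations $\kk-1$ times on a domain $\Om_t^+$ that is itself only $H^{\kk-\frac{1}{2}}$ and is moving, and then run the commutator and Moser estimates at this borderline regularity with all constants uniform in $\Gt\in\ls$ and in $t$. I would do this by transporting everything back to the fixed domain $\Om_*^+$ via $\X_\Gt^+$, using Lemma~\ref{lem composition harm coordi} for the norm equivalences and Lemma~\ref{lem product est} for the nonlinear terms; the time derivative $\pd_t\X_\Gt^+$ depends on $\pd_t\gt$, hence on $\abs{\pd_t\ka}_{\H{\kk-\frac{5}{2}}}$, which is exactly how the polynomial $Q$ comes to depend on $\bar{L}_1$. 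These are the same steps carried out in \cite{Sun-Wang-Zhang2018} and \cite{Liu-Xin2023}.
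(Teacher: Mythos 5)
Your argument is correct and is essentially the route the paper takes (the paper itself only sketches this step and defers to the cited works of Sun--Wang--Zhang and Liu--Xin): diagonalize into $\vxi,\veta$, use that $\vv\pm\vh$ are solenoidal evolution velocities of $\Om_t^+$ because $\vh\vdot\vn=0$ on $\Gt$, and close an $H^{\kk-1}$ transport energy estimate with Gr\"onwall, the constant $1$ in the bound coming from the $\vxi,\veta$-independent forcing $2\tr(\grad\vv\cp\grad\vh)$. The only cosmetic caveat is that $\kk-1=\frac{3}{2}k-1$ is a half-integer for odd $k$, so the literal multi-index differentiation $\pd^\alpha$ must be replaced by fractional derivatives (or interpolation between integer levels) after pulling back to $\Om_*^+$, exactly as your final paragraph already indicates.
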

	
	\section{Nonlinear Problems}\label{sec nonlinear}
	Consider a reference hypersurface $ \Gs \in H^{\kk+1} $ separating $ \Om $ into two disjoint simply-connected parts, a transversal vector field $ \vnu : \Gt \to \R^3 $, and a small constant $ \delta_0 > 0 $ so that
	\begin{equation}\label{key}
		\Lambda_{*} \coloneqq \Lambda\qty(\Gs, \kk-\frac{1}{2}, \delta_0)
	\end{equation}
	satisfies all the properties discussed in the preliminary. Assume that for some large $ T^* > 0 $
	\begin{equation}\label{key}
		\vu{J} \in C^0\qty([0, T^*]; H^{\kk-\frac{1}{2}}(\pd\Om)) \cap C^1\qty([0, T^*]; H^{\kk-\frac{3}{2}}(\pd\Om)),
	\end{equation} 
	and
	\begin{equation}\label{key}
		\sup_{[0, T^*]} \qty(\abs*{\vu{J}}_{H^{\kk-\frac{1}{2}}(\pd\Om)} + \abs*{\vu{J}}_{H^{\kk-\frac{3}{2}}(\pd\Om)}) \le M_*.
	\end{equation}
	We shall solve the nonlinear problem via iterations on the linearized problems in the following space:
	\begin{equation}\label{key}
		\begin{split}
			&\ka \in C^0\qty([0, T]; \H{\kk-1}) \cap C^1\qty([0, T]; B_{\delta_1}\subset\H{\kk-\frac{5}{2}}) \cap C^2\qty([0, T]; \H{\kk-4}); \\
			&\vom_{*}, \vj_{*} \in  C^0\qty([0, T]; H^{\kk-1}(\Om_*^+)) \cap C^1\qty([0, T]; H^{\kk-2}(\Om_*^+)).
		\end{split}
	\end{equation}
	Now, we introduce the function spaces more precisely:
	\begin{defi}
		For given positive constants $ T, M_0, M_1, M_2$, and $ M_3 $, define $ \mathfrak{X} $ to be the collection of $ \qty(\ka, \vom_*, \vj_*) $ satisfying:
		\begin{equation*}
			\abs{\ka(0) - \kappa_{*}}_{\H{\kk-\frac{5}{2}}} \le \delta_1,
		\end{equation*}
		\begin{equation*}
			\abs{\qty(\pd_t \ka)(0)}_{\H{\kk-4}}, \norm{\vom_*(0)}_{H^{\kk-\frac{5}{2}}(\Om_*^+)}, \norm{\vj_*(0)}_{H^{\kk-\frac{5}{2}}(\Om_*^+)} \le M_0,
		\end{equation*}
		\begin{equation*}
			\sup_{t \in [0, T]} \qty(\abs{\ka}_{\H{\kk-1}}, \abs{\pd_t \ka}_{\H{\kk-\frac{5}{2}}}, \norm{\vom_*}_{H^{\kk-1}(\Om_*^+)}, \norm{\vj_*}_{H^{\kk-1}(\Om_*^+)}) \le M_1,
		\end{equation*}
		\begin{equation*}
			\sup_{t \in [0, T]} \qty(\norm{\pd_t\vom_*}_{H^{\kk-2}(\Om_*^+)}, \norm{\pd_t\vj_*}_{H^{\kk-2}(\Om_*^+)}) \le M_2,
		\end{equation*}
		and
		\begin{equation*}
			\sup_{t \in [0, T]} \abs{\pd^2_{tt}\ka}_{\H{\kk-4}} \le a^2 M_3 \ (\text{here $ a $ is the constant in the definition of $ \ka $}).
		\end{equation*}
	\end{defi}
	
	For $ 0 < \epsilon \ll \delta_1 $ and $ A > 0 $, consider a collection of initial data:
	\begin{equation*}
		\mathfrak{I}(\epsilon, A) \coloneqq\qty{\qty\big(\ini{\ka}, \ini{\pd_t\ka}, \ini{\vom_*}, \ini{\vj_*})}
	\end{equation*}
	for which
	\begin{gather*}
		\abs{\ini{\ka}-\kappa_{*}}_{\H{\kk-1}}<\epsilon; \\ \abs{\ini{\pd_t\ka}}_{\H{\kk-\frac{5}{2}}},\
		\norm{\ini{\vom_*}}_{H^{\kk-1}(\Om_*^+)},\ \norm{\ini{\vj_*}}_{H^{\kk-1}(\Om_*^+)} < A.
	\end{gather*}

	\subsection{Fluid Region, Velocity and Magnetic Fields}\label{section recovery}
	For $ \qty\big(\ka, \vom_*, \vj_*)  \in \mathfrak{X}$, $ \ka(t) $ induces a family of hypersurfaces $ \Gt \in \Lambda_* $, provided that $ M_1 T $ is not too large. Thus $ \Phi_\Gt $ and $ \X_\Gt $ can be defined by $ \ka(t) $.
	
	Set
	\begin{equation}\label{def bar vom vj}
		\bar{\vom} \coloneqq \Pb \qty(\vom_*(t) \circ \qty(\X_\Gt^+)^{-1}), \quad \bar{\vj} \coloneqq \Pb\qty(\vj_*(t) \circ \qty(\X_\Gt^+)^{-1}),
	\end{equation}
	where $\Pb$ is the Leray projection of a vector fields into its divergence-free part.
	
	Now, by solving the following div-curl problems:
	\begin{equation}\label{div-curl nonlinear v}
		\begin{cases*}
			\div \vv = 0 &in $ \Om_t^+ $, \\
			\curl \vv = \bar{\vom} &in $ \Om_t^+ $, \\
			\vv \vdot \vn = \vn \vdot \qty(\pd_t \gt \vnu)\circ\Phi_\Gt^{-1} &on $ \Gt $;
		\end{cases*}
	\end{equation}
	and
	\begin{equation}\label{div-curl nonlinear h}
		\begin{cases*}
			\div \vh = 0 &in $ \Om_t^+ $, \\
			\curl \vh = \bar{\vj} &in $ \Om_t^+ $, \\
			\vh \vdot \vn = 0 &on $ \Gt $.
		\end{cases*}
	\end{equation}
	one can obtain the corresponding velocity and magnetic fields $ \vv, \vh: \Om_t^+ \to \R^3 $. Furthermore, the following estimate holds thanks to Theorem \ref{thm div-curl}:
	\begin{equation}\label{key}
		\sup_{t \in [0, T]}\qty(\norm{\vv}_{H^{\kk}(\Om_t^+)}, \norm{\vh}_{H^{\kk}(\Om_t^+)}) \le Q(M_1),
	\end{equation}
	where $ Q $ is a generic polynomial determined by $ \Lambda_* $. For the magnetic field in the vacuum, consider the following div-curl problem:
	\begin{equation}\label{eqn div-curl vhh}
		\begin{cases*}
			\div\vhh = 0 &in $ \Om_t^- $, \\
			\curl \vhh  = \vb{0} &in $ \Om_t^- $, \\
			\vhh \vdot \vn = 0 &on $ \Gt $, \\
			\wt{\vn} \cp \vhh = \vu{J} &on $ \pd\Om $.
		\end{cases*}
	\end{equation}
	Then, it follows from the div-curl estimate that
	\begin{equation}\label{key}
		\norm{\vhh}_{H^{\kk}(\Om_t^-)} \le C\qty(\abs{\ka}_{\H{\kk-\frac{3}{2}}}) \cdot \abs{\vu{J}}_{H^{\kk-\frac{1}{2}}(\pd\Om)} \le Q\qty(M_1, M_*).
	\end{equation}
	Similarly, the time derivative of $ \vhh $ satisfies:
	\begin{equation}\label{eqn dt vhh}
		\begin{cases*}
			\div \pd_t\vhh = 0 \qc \curl \pd_t\vhh = \vb{0} &in $ \Om_t^- $, \\
			\pd_t\vhh \vdot \vn = \vn \vdot \qty(\DD_\vhh\vv - \DD_\vv\vhh) &on $ \Gt $, \\
			\wt{\vn} \cp \pd_t \vhh = \pd_t \vu{J} &on $ \pd\Om $.
		\end{cases*}
	\end{equation}
	Hence,
	\begin{equation}\label{key}
		\norm{\pd_t\vhh}_{H^{\kk-1}(\Om_t^-)} \le Q(M_1, M_*)
	\end{equation}
	for some generic polynomial $ Q $ depending on $ \Lambda_{*} $.
	
	\subsection{Iteration Mapping}\label{sec itetarion map}
	For $ \qty(\itn{\ka}, \itn{\vom_*}, \itn{\vj_*}) \in \mathfrak{X} $ and $ \qty\Big(\ini{\ka}, \ini{\pd_t\ka}, \ini{\vom_*}, \ini{\vj_*}) \in \mathfrak{I}(\epsilon, A) $,
	define the $ (n+1)$-th step by solving:
	\begin{equation}\label{eqn (n+1)ka"}
		\begin{cases*}
			\pd^2_{tt}\itm{\ka} + \opC\qty(\itn{\ka}, \itn{\pd_t\ka}, \itn{\vv_*}, \itn{\vh_*}, \itn{\vhh_*})\itm{\ka} \\ \qquad = \opF\qty(\itn{\ka})\pd_t \itn{\vom_*} + \opG\qty(\itn{\ka}, \itn{\pd_t\ka}, \itn{\vom_*}, \itn{\vj_*}, \vu{J}) \\
			\itm{\ka}(0) = \ini{\ka}, \quad \itm{\pd_t\ka}(0) = \ini{\pd_t\ka};
		\end{cases*}
	\end{equation}
	and
	\begin{equation}\label{eqn linear (n+1) vom vj"}
		\begin{cases*}
			\pd_t\itm{\vom} + \DD_{\itn{\vv}}\itm{\vom} - \DD_{\itn{\vh}}\itm{\vj} = \DD_{\itm{\vom}}\itn{\vv} - \DD_{\itm{\vj}}\itn{\vh}, \\
			\pd_t\itm{\vj} + \DD_{\itn{\vv}}\itm{\vj} - \DD_{\itn{\vh}}\itm{\vom} \\
			\hspace{6em}  =  \DD_{\itm{\vj}}\itn{\vv} - \DD_{\itm{\vom}}\itn{\vh}
			- 2\tr(\grad\itn{\vv} \cp \grad \itn{\vh}), \\
			\itm{\vom}(0) = \Pb \qty(\ini{\vom_*} \circ \qty(\X_{\itn{\Gamma_0}}^+)^{-1}), \quad \itm{\vj}(0) = \Pb\qty(\ini{\vj_*}\circ \qty(\X_{\itn{\Gamma_0}}^+)^{-1}),
		\end{cases*}
	\end{equation}
	where $ \qty(\itn{\vv}, \itn{\vh}, \itn{\vhh}) $ is induced by $ \qty(\itn{\ka}, \itn{\vom_*}, \itn{\vj_*}, \vu{J}) $ via solving \eqref{def bar vom vj}-\eqref{div-curl nonlinear h} and \eqref{eqn div-curl vhh}; the tangential vector fields $ \itn{\vv_*} $, $ \itn{\vh_*} $, and $ \itn{\vhh_*} $ on $ \Gs $ are defined by
	\begin{equation}\label{def itn vv_*}
		\begin{split}
			\itn{\vv_{*}} &\coloneqq \qty(\DD\Phi_{\itn{\Gt}})^{-1}\qty[\itn{\vv}\circ\Phi_{\itn{\Gt}} - \qty(\pd_t\gamma_{\itn{\Gt}})\vnu], \\
			\itn{\vh_{*}} &\coloneqq \qty(\DD\Phi_{\itn{\Gt}})^{-1} \qty(\itn{\vh} \circ \Phi_{\itn{\Gt}}), \\
			\itn{\vhh_{*}} &\coloneqq \qty(\DD\Phi_{\itn{\Gt}})^{-1} \qty(\itn{\vhh} \circ \Phi_{\itn{\Gt}});
		\end{split}
	\end{equation}
	and the current-vorticity equations are considered in the domain $ \itn{\Om_t^+} $.
	
	Denoting by
	\begin{equation}\label{key}
		\itm{\vom_*}\coloneqq \itm{\vom}\circ\X_{\itn{\Gt}}^+, \qand \itm{\vj_*}\coloneqq \itm{\vj}\circ\X_{\itn{\Gt}}^+,
	\end{equation}
	we will show that $ \qty(\itm{\ka}, \itm{\vom_*}, \itm{\vj_*}) \in \mathfrak{X} $.
	Indeed, Lemma \ref{lem 3.4} implies
	\begin{equation*}\label{key}
		\begin{split}
			&\hspace{-2em}\abs{\opF\qty(\itn{\ka})\itn{\pd_t\vom_*}}_{\H{\kk-\frac{5}{2}}} \\
			\le\, &Q\qty(\abs{\itn{\ka}}_{\H{\kk-1}})\norm{\itn{\pd_t\vom_*}}_{H^{\kk-\frac{5}{2}}(\Om_*^+)} \\
			\le\, &Q(M_1) \cdot M_2,
		\end{split} 
	\end{equation*}
	and
	\begin{equation*}\label{key}
		\begin{split}
			&\hspace{-2em}\abs{\opG\qty(\itn{\ka}, \itn{\pd_t\ka}, \itn{\vom_*}, \itn{\vj_*}, \vu{J})}_{\H{\kk-\frac{5}{2}}} \\
			\le\, &a^2 Q\qty(\abs{\itn{\ka}}_{\H{\kk-1}}, \abs{\itn{\pd_t\ka}}_{\H{\kk-\frac{5}{2}}}, \norm{\qty(\itn{\vom_*}, \itn{\vj_*})}_{H^{\kk-1}(\Om_*^+)}, \abs{\vu{J}}_{H^{\kk-\frac{1}{2}}(\pd\Om)}) \\
			\le\, &a^2 Q(M_1, M_*).
		\end{split}
	\end{equation*}
	Moreover, one can derive from \eqref{est linear eqn ka} that
	\begin{equation*}\label{key}
		\begin{split}
			&\sup_{t \in [0, T]}\qty(\abs{\itm{\ka}}_{\H{\kk-1}} + \abs{\itm{\pd_t\ka}}_{\H{\kk-\frac{5}{2}}}) \\
			&\le C_* \exp\qty{Q\qty(M_*, M_1, M_2, a^2M_3)T}   \times \qty[C_* + \epsilon + A + (M_0)^{12} + T\cdot \qty(a^2+M_2)Q(M_*, M_1)].
		\end{split}
	\end{equation*}
	If $ T \ll 1 $, $ M_1 \gg M_0$, and $M_1 \gg A$, one can get
	\begin{equation}\label{est ka n+1}
		\sup_{t \in [0, T]}\qty(\abs{\itm{\ka}}_{\H{\kk-1}}+ \abs{\itm{\pd_t\ka}}_{\H{\kk-\frac{5}{2}}}) \le M_1.
	\end{equation}		
	Moreover, choosing $ M_3 $ large enough compared to $M_*, M_1$, and $M_2 $, one may obtain from (\ref{eqn (n+1)ka"}) that 
	\begin{equation}\label{key}
		\sup_{t \in [0, T]} \abs{\itm{\pd^2_{tt}\ka}}_{\H{\kk-4}} \le a^2M_3.
	\end{equation}
	
	Similarly, combining Proposition \ref{prop linear vom vj} and (\ref{eqn linear (n+1) vom vj"}) yields for $T \ll 1$ and $M_1 \gg \abs{\kappa_{*}}_{\H{\kk-\frac{5}{2}}}$ that
	\begin{equation}\label{key}
		\begin{split}
			&\hspace{-2em}\sup_{t \in [0, T]}\qty(\norm{\itm{\vom_*}}_{H^{\kk-1}(\Om_*^+)}, \norm{\itm{\vj_*}}_{H^{\kk-1}(\Om_*^+)}) \\
			\le\, &Q\qty(\abs{\itn{\ka}}_{C^0_t\H{\kk-\frac{5}{2}}}) e^{Q(M_1)T}(1+2A) \\
			\le\, &M_1.
		\end{split}
	\end{equation}
	Next, choosing $ M_2 \gg M_1 $, one can get that
	\begin{equation*}\label{key}
		\begin{split}
			&\hspace{-2em}\norm{\itm{\pd_t\vom_*}(t)}_{H^{\kk-2}(\Om_*^+)} \\
			\le\, &C_* \qty(\norm{\itm{\pd_t\vom}}_{H^{\kk-2}(\itn{\Om_t^+})} + \norm{\itm{\vom}}_{H^{\kk-1}(\itn{\Om_t^+})}\abs{\itn{\pd_t\ka}}_{\H{\kk-\frac{5}{2}}} ) \\
			\le\,  &Q(M_1) \le M_2.
		\end{split}
	\end{equation*}
	Namely,
	\begin{equation}\label{key}
		\sup_{t \in [0, T]} \qty(\norm{\itm{\pd_t\vom_*}}_{H^{\kk-2}(\Om_*^+)}, \norm{\itm{\pd_t\vj_*}}_{H^{\kk-2}(\Om_*^+)}) \le M_2.
	\end{equation}
	
	Define
	\begin{equation}\label{key}
		\begin{split}
			\mathfrak{T}\qty{\qty[\ini{\ka}, \ini{\pd_t\ka}, \ini{\vom_*}, \ini{\vj_*}], \qty[\itn{\ka}, \itn{\vom_*}, \itn{\vj_*}]}
			\coloneqq \qty[\itm{\ka}, \itm{\vom_*}, \itm{\vj_*}].
		\end{split}
	\end{equation}
	The following proposition follows directly from the previous arguments:
	\begin{prop}
		Suppose that $ k \ge 2 $. For any $ 0 < \epsilon \ll \delta_0 $, $ A > 0 $, and $M_* \ge 0$, there are positive constants $ M_0, M_1, M_2$, and $ M_3 $, so that for small $ T > 0 $,
		\begin{equation*}
			\mathfrak{T}\qty\Big{\qty[\ini{\ka}, \ini{\pd_t\ka}, \ini{\vom_*}, \ini{\vj_*}], \qty[\ka, \vom_*, \vj_*]} \in \mathfrak{X},
		\end{equation*}
		holds for any $ \qty[\ini{\ka}, \ini{\pd_t\ka}, \ini{\vom_*}, \ini{\vj_*}] \in \mathfrak{I}(\epsilon, A) $ and $ \qty[\ka, \vom_*, \vj_*] \in \mathfrak{X} $.
	\end{prop}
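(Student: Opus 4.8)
The plan is to assemble the estimates of Sections~3--5 into the invariance $\mathfrak{T}(\cdot,\mathfrak{X})\subset\mathfrak{X}$, the substantive content being already contained in the div--curl theory, the operator estimates, and the linear energy estimates, so that the only genuine care is the \emph{order} in which the constants $M_0,M_1,M_2,M_3$ and $T$ are fixed. Concretely, given $\bigl(\itn{\ka},\itn{\vom_*},\itn{\vj_*}\bigr)\in\mathfrak{X}$ and data in $\mathfrak{I}(\epsilon,A)$, one first runs the recovery of \textsection~\ref{section recovery}: for $M_1T$ small, $\itn{\ka}(t)$ produces $\itn{\Gt}\in\Lambda_*$, hence $\Phi_{\itn{\Gt}}$ and $\X_{\itn{\Gt}}^{\pm}$; solving \eqref{def bar vom vj}--\eqref{eqn div-curl vhh} by Theorem~\ref{thm div-curl} gives $\itn{\vv},\itn{\vh},\itn{\vhh}$ bounded in $C^0_tH^{\kk}$ by $Q(M_1,M_*)$, with $\pd_t\itn{\vhh}$ bounded in $H^{\kk-1}$ via \eqref{eqn dt vhh}. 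Pulling back by \eqref{def itn vv_*} yields $\itn{\vv_*},\itn{\vh_*},\itn{\vhh_*}\in C^0_t\H{\kk-\frac12}\cap C^1_t\H{\kk-2}$, where $\pd_t\itn{\vv_*}$ is read off \eqref{eqn pd beta v} together with Lemma~\ref{lem3.1}, so it is bounded in $\H{\kk-2}$ by a polynomial in $M_1,M_2$ and $a^2M_3$ (the last through $\abs{\pd^2_{tt}\itn{\ka}}_{\H{\kk-4}}$). Hence the quantities $L_0,L_1,L_2$ of the linear theory satisfy $L_0\le M_0$, $L_1\le Q(M_1,M_*)$ and $L_2\le Q(M_1,M_2)+a^2M_3\,Q(M_1)$.

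Next one feeds this into the two linear solvers. Lemma~\ref{lem 3.4} bounds the forcing $\g\coloneqq\opF(\itn{\ka})\itn{\pd_t\vom_*}+\opG(\itn{\ka},\itn{\pd_t\ka},\itn{\vom_*},\itn{\vj_*},\vu J)$ in $C^0_t\H{\kk-\frac52}$ by $Q(M_1)M_2+a^2Q(M_1,M_*)$, so the linear well-posedness for \eqref{eqn linear 1"} provides a unique $\itm{\ka}\in C^0_t\H{\kk-1}\cap C^1_t\H{\kk-\frac52}$ solving \eqref{eqn (n+1)ka"}; reading $\pd^2_{tt}\itm{\ka}$ off that equation and using $\opA:\H{\kk-1}\to\H{\kk-4}$ from Lemma~\ref{lem 3.3} (with $\opF,\opG$ already in $\H{\kk-\frac52}\hookrightarrow\H{\kk-4}$) places $\itm{\ka}\in C^2_t\H{\kk-4}$. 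Similarly, the characteristic construction and Proposition~\ref{prop linear vom vj} for \eqref{eqn linear (n+1) vom vj"} give $\itm{\vom},\itm{\vj}\in C^0_tH^{\kk-1}$, and reading the transport equations puts $\pd_t\itm{\vom},\pd_t\itm{\vj}\in C^0_tH^{\kk-2}$; composing with $\X_{\itn{\Gt}}^{+}$ and invoking Lemma~\ref{lem composition harm coordi} transports these bounds to $\itm{\vom_*},\itm{\vj_*}$, the time derivative picking up an extra factor $\norm{\itm{\vom}}_{H^{\kk-1}(\itn{\Om_t^+})}\,\abs{\itn{\pd_t\ka}}_{\H{\kk-\frac52}}\le Q(M_1)$ from differentiating the coordinate map.

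The one delicate point — and I expect it to be the only real obstacle, namely a bookkeeping one rather than an analytic one — is the choice of constants, since the growth factor $\exp\bigl(Q(M_*,M_1,M_2,a^2M_3)t\bigr)$ and the forcing term in \eqref{est linear eqn ka} involve $M_1,M_2,M_3$ themselves, so a naive reading looks circular. The resolution is that \emph{none} of $M_0,M_1,M_2,M_3,a$ depends on $T$: one fixes $M_0\simeq A+\abs{\kappa_*}_{\H{\kk-1}}+1$ (dominating all $t=0$ quantities of the image, via Sobolev embedding and Lemma~\ref{lem composition harm coordi}), then $M_1\ge 4C_*\bigl(C_*+\delta_0+A+M_0^{12}\bigr)$, then $a\ge\max\{a_0,CM_1\}$ so that $\abs{\opB(\ka)}_{\LL}\le\frac12$ as in \eqref{est opcal b}, then $M_2\gg Q(M_1)$, then $M_3$ large compared to $a^{-2}Q(a,M_*,M_1,M_2)$, and only afterwards takes $T$ small. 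With all these frozen, $Q(M_*,M_1,M_2,a^2M_3)$ is a fixed number, so $T$ small forces $\exp(Q\,T)\le2$ and $C_*T(a^2+M_2)Q(M_*,M_1)\le M_1/2$, whence \eqref{est linear eqn ka} with $l=k-2$ (note $\tfrac32(k-2)+2=\kk-1$ and $\tfrac32(k-2)+\tfrac12=\kk-\tfrac52$) reproduces \eqref{est ka n+1}, i.e. $\sup_{[0,T]}\bigl(\abs{\itm{\ka}}_{\H{\kk-1}}+\abs{\itm{\pd_t\ka}}_{\H{\kk-\frac52}}\bigr)\le M_1$. The remaining conditions of $\mathfrak{X}$ are then immediate: $\itm{\ka}(0)=\ini{\ka}$ gives $\abs{\itm{\ka}(0)-\kappa_*}_{\H{\kk-\frac52}}\lesssim\epsilon<\delta_1$ for $\epsilon$ small, and $\abs{\ini{\pd_t\ka}}_{\H{\kk-4}},\norm{\itm{\vom_*}(0)}_{H^{\kk-\frac52}(\Om_*^+)},\norm{\itm{\vj_*}(0)}_{H^{\kk-\frac52}(\Om_*^+)}\lesssim A\le M_0$; $\abs{\pd^2_{tt}\itm{\ka}}_{\H{\kk-4}}\le a^2M_3$ from \eqref{eqn (n+1)ka"} and the choice of $M_3$; $\sup_{[0,T]}\norm{(\itm{\vom_*},\itm{\vj_*})}_{H^{\kk-1}(\Om_*^+)}\le M_1$ from \eqref{est linear vom vj} for $T$ small and $M_1\gg\abs{\kappa_*}_{\H{\kk-\frac52}}$; and $\sup_{[0,T]}\norm{(\pd_t\itm{\vom_*},\pd_t\itm{\vj_*})}_{H^{\kk-2}(\Om_*^+)}\le M_2$ from the transport equations and $M_2\gg Q(M_1)$. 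One last shrinking of $T$ (so that $M_1T\le\delta_1$, keeping every $\itn{\Gt}$ and the image $\itm{\Gt}$ in $\Lambda_*$, and so that the harmonic coordinate maps remain diffeomorphisms) consists of finitely many constraints among already-fixed constants, so a single $T>0$ works.
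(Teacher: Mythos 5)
Your proposal is correct and follows essentially the same route as the paper: recover $\itn{\vv},\itn{\vh},\itn{\vhh}$ by the div--curl theory, bound the forcing via Lemma~\ref{lem 3.4}, apply the linear estimate \eqref{est linear eqn ka} with $l=k-2$ and Proposition~\ref{prop linear vom vj}, and close by fixing $M_0, M_1, a, M_2, M_3$ in that order before shrinking $T$. Your explicit resolution of the apparent circularity in the constants is exactly the (implicit) logic of the paper's argument, so there is nothing to add.
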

	
	\subsection{Contraction of the Iteration Mapping}\label{sec contra ite}
	Suppose that  $ \qty(\itn{\ka}(\beta), \itn{\vom_*}(\beta), \itn{\vj_*}(\beta)) \subset \mathfrak{X} $ and  $ \qty\Big(\ini{\ka}(\beta), \ini{\pd_t\ka}(\beta), \ini{\vom_*}(\beta), \ini{\vj_*}(\beta)) \subset \mathfrak{I}(\epsilon, A)$  are two families parameterized by $ \beta $.
	Define 
	\begin{equation*}
		\begin{split}
			&\hspace{-2em}\qty(\itm{\ka}(\beta), \itm{\vom_*}(\beta), \itm{\vj_*}(\beta)) \\
			\coloneqq\, &\mathfrak{T}\qty{\qty\Big(\ini{\ka}(\beta), \ini{\pd_t\ka}(\beta), \ini{\vom_*}(\beta), \ini{\vj_*}(\beta)), \qty(\itn{\ka}(\beta), \itn{\vom_*}(\beta), \itn{\vj_*}(\beta))}.
		\end{split}
	\end{equation*}
	Then applying $ \pdv*{\beta} $ to (\ref{eqn (n+1)ka"}) and (\ref{eqn linear (n+1) vom vj"}) respectively yields
	\begin{equation}\label{eqn contra 1}
		\begin{cases*}
			\pd^2_{tt} \pd_\beta \itm{\ka} + \itn{\opC}\pd_\beta\itm{\ka}  = - \qty(\pd_\beta\itn{\opC})\itm{\ka} + \pd_\beta \qty(\itn{\opF}\itn{\pd_t\vom_*} + \itn{\opG}), \\
			\pd_\beta\itm{\ka}(0) = \pd_\beta\ini{\ka}(\beta), \quad \pd_t\qty(\pd_\beta\itm{\ka})(0) = \pd_\beta\ini{\pd_t\ka}(\beta),
		\end{cases*}
	\end{equation}
	and
	\begin{equation}\label{eqn beta n+1}
		\begin{cases*}
			\pd_t\Dbt\itm{\vom} + \DD_{\itn{\vv}}\Dbt\itm{\vom} - \DD_{\itn{\vh}}\Dbt\itm{\vj} = \DD_{\Dbt\itm{\vom}}\itn{\vv} - \DD_{\Dbt\itm{\vj}}\itn{\vh} + \va{\g}_1, \\
			\pd_t \Dbt\itm{\vj} + \DD_{\itn{\vv}}\Dbt\itm{\vj} - \DD_{\itn{\vh}}\Dbt\itm{\vom} = \DD_{\Dbt\itm{\vj}}\itn{\vv} - \DD_{\Dbt\itm{\vom}}\itn{\vh} + \va{\g}_2, \\
			\Dbt\itm{\vom}(0) = \Pb\qty{\qty[\pd_\beta\ini{\vom_*}] \circ \qty(\X_{\itn{\Gamma_0}(\beta)}^+)^{-1}},  \quad \Dbt\itm{\vj}(0) = \Pb\qty{\qty[\pd_\beta\ini{\vj_*}]\circ\qty(\X_{\itn{\Gamma_0}(\beta)}^+)^{-1}},
		\end{cases*}
	\end{equation}
	where
	\begin{equation}\label{key}
		\Dbt \coloneqq \pdv{\beta} + \DD_{\vb*{\mu}} \qc \vb*{\mu} \coloneqq \h_+\qty[\qty(\pd_\beta\gamma_{\itn{\Gt}}\vnu)\circ\Phi_{\itn{\Gt}(\beta)}^{-1}],
	\end{equation}
	\begin{equation}\label{key}
		\begin{split}
			\va{\g}_1 \coloneqq\, &\comm{\pd_t}{\Dbt}\itm{\vom} + \comm{\DD_{\itn{\vv}}}{\Dbt}\itm{\vom} - \comm{\DD_{\itn{\vh}}}{\Dbt}\itm{\vj} \\
			&+\itm{\vom} \vdot \Dbt\DD\itn{\vv} - \itm{\vj} \vdot \Dbt\DD\itn{\vh},
		\end{split}
	\end{equation}
	and
	\begin{equation}\label{eqn g2}
		\begin{split}
			\va{\g}_2 \coloneqq\, &-2\Dbt\tr(\grad\itn{\vv} \cp \grad\itn{\vh}) + \comm{\pd_t}{\Dbt}\itm{\vj} + \comm{\DD_{\itn{\vv}}}{\Dbt}\itm{\vj} \\
			&-\comm{\DD_{\itn{\vh}}}{\Dbt}\itm{\vom}+ \itm{\vj}\vdot \Dbt\DD\itn{\vh} - \itm{\vom}\vdot\Dbt\DD\itn{\vh}.
		\end{split}
	\end{equation}
	
	Consider the following energy functionals:
	\begin{equation}\label{key}
		\begin{split}
			\itn{\E}(\beta)\coloneqq &\sup_{t \in [0, T]}\left(\abs{\pd_\beta\itn{\ka}}_{\H{\kk-\frac{5}{2}}} + \abs{\pd_\beta\itn{\pd_t\ka}}_{\H{\kk-4}} + \norm{\pd_\beta\itn{\vom_*}}_{H^{\kk-\frac{5}{2}}(\Om_*^+)} +  \right. \\
			&\qquad\qquad \left. + \norm{\pd_\beta\itn{\vj_*}}_{H^{\kk-\frac{5}{2}}(\Om_*^+)} + \norm{\pd_\beta\pd_t\itn{\vom_*}}_{H^{\kk-4}(\Om_*^+)}   \right),
		\end{split}
	\end{equation}
	and
	\begin{equation}\label{key}
		\begin{split}
			\ini{\E}(\beta)\coloneqq &\sup_{t \in [0, T]}\left(\abs{\pd_\beta\ini{\ka}}_{\H{\kk-\frac{5}{2}}} + \abs{\pd_\beta\ini{\pd_t\ka}}_{\H{\kk-4}}  +  \right. \\
			&\qquad\qquad \left. + \norm{\pd_\beta\ini{\vom_*}}_{H^{\kk-\frac{5}{2}}(\Om_*^+)} + \norm{\pd_\beta\ini{\vj_*}}_{H^{\kk-\frac{5}{2}}(\Om_*^+)} \right).
		\end{split}
	\end{equation}
	
	Note that Lemmas \ref{lem3.1}-\ref{lem 3.4} imply
	\begin{equation*}\label{key}
		\begin{split}
			\abs{\qty(\pd_\beta\itn{\opC})\itm{\ka}}_{\H{\kk-4}}
			\le Q(M_1, M_*) \itn{\E}\abs{\itm{\ka}}_{\H{\kk-2}} \le Q(M_1, M_*) \itn{\E},
		\end{split}
	\end{equation*}
	\begin{equation*}\label{key}
		\abs{\opF\qty(\itn{\ka})\pd_\beta\pd_t\itn{\vom_*}}_{\H{\kk-4}} \le Q(M_1) \norm{\pd_\beta\pd_t \itn{\vom_*}}_{H^{\kk-4}(\Om_*^+)},
	\end{equation*}
	and
	\begin{equation*}\label{key}
		\begin{split}
			&\hspace{-2em}\abs{\qty(\pd_\beta\itn{\opF})\pd_t\itn{\vom_*} + \pd_\beta\itn{\opG}}_{\H{\kk-4}} \\
			\le\, &C_* \abs{\pd_\beta\itn{\ka}}_{\H{\kk-\frac{5}{2}}}\norm{\pd_t\itn{\vom_*}}_{H^{\kk-\frac{7}{2}}(\Om_*^+)} + a^2 Q(M_1, M_*) \itn{\E} \\
			\le\, &\qty(C_*M_2 + a^2 Q(M_1, M_*)) \itn{\E}.
		\end{split}
	\end{equation*}
	Taking $ l=k-3 $ in (\ref{est linear eqn ka}) yields
	\begin{equation}\label{est beta ka(n+1)}
		\begin{split}
			&\sup_{t \in [0, T]} \qty(\abs{\pd_\beta\itm{\ka}}_{\H{\kk-\frac{5}{2}}} + \abs{\pd_\beta\itm{\pd_t\ka}}_{\H{\kk-4}}) \\
			&\quad\le C_* \exp{Q\qty(M_1, M_2, a^2M_3, M_*)T}  \times\qty(\ini{\E} + T \cdot \qty[C_*M_2 + \qty(1+a^2)Q(M_1, M_*)]\itn{\E}).
		\end{split}
	\end{equation}
	To estimate (\ref{eqn beta n+1}), one first observes that
	\begin{equation*}\label{key}
		\begin{split}
			&\norm{\va{\g}_1}_{H^{\kk-\frac{5}{2}}(\itn{\Om_t^+})} \\
			&\quad  \le Q(M_1) \qty(\abs{\pd_t\pd_\beta\itn{\ka}}_{\H{\kk-4}} + \norm{\qty(\Dbt \itn{\vv}, \Dbt\itn{\vh})}_{H^{\kk-\frac{3}{2}}(\itn{\Om_t^+})} + \abs{\pd_\beta\ka}_{\H{\kk-\frac{5}{2}}})
		\end{split}
	\end{equation*}
	and \eqref{est pd beta v+*} imply
	\begin{equation*}\label{key}
		\norm{\va{\g}_1}_{H^{\kk-\frac{5}{2}}(\itn{\Om_t^+})} \le Q(M_1) \itn{\E}.
	\end{equation*}
	Similarly, it holds that
	\begin{equation*}\label{key}
		\norm{\va{\g}_2}_{H^{\kk-\frac{5}{2}}(\itn{\Om_t^+})} \le Q(M_1) \itn{\E}.
	\end{equation*}
	It follows from Proposition \ref{prop linear vom vj} that
	\begin{equation}\label{est dbt vom vj (n+1)}
		\begin{split}
			&\hspace{-2em}\sup_{t \in [0, T]}\qty(\norm{\Dbt\itm{\vom}}_{H^{\kk-\frac{5}{2}}(\itn{\Om_t^+})} + \norm{\Dbt\itm{\vj}}_{H^{\kk-\frac{5}{2}}(\itn{\Om_t^+})}) \\
			\le\, &e^{Q(M_1)T} \qty{\ini{\E} + TQ(M_1)\itn{\E}},
		\end{split}
	\end{equation}
	and thus
	\begin{equation*}\label{key}
		\norm{\pd_t\Dbt\itm{\vom}}_{H^{\kk-4}(\itn{\Om_t^+})} \le e^{Q(M_1)T} Q(M_1)\qty{\ini{\E} + TQ(M_1)\itn{\E}}.
	\end{equation*}
	Since
	\begin{equation*}\label{key}
		\qty[\pd_t\pd_\beta\itm{\vom_*}]\circ\qty(\X_{\itn{\Gt}}^+)^{-1} = \pd_t\Dbt\itm{\vom} + \DD_{\h\qty[\qty(\pd_t\itn{\gt}\vnu)\circ\Phi_{\itn{\Gt}}^{-1}]}\Dbt\itm{\vom},
	\end{equation*}
	one has
	\begin{equation}\label{est pd beta t vom (n+1)}
		\norm{\pd_\beta\pd_t\itm{\vom_*}}_{H^{\kk-4}(\Om_*^+)} \le e^{Q(M_1)T} Q(M_1)\qty{\ini{\E} + TQ(M_1)\itn{\E}}.
	\end{equation}
	
	Then (\ref{est beta ka(n+1)})-(\ref{est pd beta t vom (n+1)}) imply that
	\begin{equation}\label{key}
		\begin{split}
			\itm{\E} \le\, &C_* \exp\qty{Q\qty(M_1, M_2, a^2M_3, M_*)T} \\
			&\quad\times \qty{\ini{\E} + T\itn{\E} \cdot \qty[M_2 + \qty(1+a^2)Q(M_1, M_*)]} \\
			&+ e^{Q(M_1)T} Q(M_1)\qty{\ini{\E} + TQ(M_1)\itn{\E}},
		\end{split}
	\end{equation}
	Thus, if $ T $ is small enough with respect to $ M_1, M_2, M_3, M_* $ and $ a $, it holds that
	\begin{equation}\label{key}
		\itm{\E} \le \frac{1}{2}\itn{\E} + Q(M_1)\ini{\E},
	\end{equation}
	In other words, one has obtained the following proposition:
	\begin{prop}\label{fixed point"}
		Assume that $ k \ge 3 $. For any $ 0 < \epsilon \ll \delta_0 $, $ A > 0 $ and $ M_* \ge 0 $, there are positive constants $ M_0, M_1, M_2$, and $ M_3 $, so that if $ T $ is small enough, there exists a map $ \mathfrak{S} : \mathfrak{I}(\epsilon, A) \to  \mathfrak{X} $ such that
		\begin{equation}\label{key}
			\mathfrak{T}\qty{\mathfrak{x}, \mathfrak{S(x)}} = \mathfrak{S(x)},
		\end{equation}
		for each $ \mathfrak{x} = \qty[\ini{\ka}, \ini{\pd_t\ka}, \ini{\vom_*}, \ini{\vj_*}] \in \mathfrak{I}(\epsilon, A) $.
	\end{prop}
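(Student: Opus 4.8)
The plan is to construct $\mathfrak{S}(\mathfrak{x})$ by a fixed-point iteration for the map $\mathfrak{y}\mapsto\mathfrak{T}\{\mathfrak{x},\mathfrak{y}\}$, combining the invariance $\mathfrak{T}\{\mathfrak{x},\mathfrak{X}\}\subset\mathfrak{X}$ from the preceding proposition with the contraction estimate $\itm{\E}\le\tfrac12\itn{\E}+Q(M_1)\ini{\E}$ established above. Fix $\mathfrak{x}=\qty[\ini{\ka},\ini{\pd_t\ka},\ini{\vom_*},\ini{\vj_*}]\in\mathfrak{I}(\epsilon,A)$ and keep $M_0,M_1,M_2,M_3$ and $T$ as furnished by the preceding proposition, so that $\mathfrak{X}\neq\OO$ (for instance, the time-independent extension of $\mathfrak{x}$ lies in $\mathfrak{X}$ once $T$ is small). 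Choose any $\mathfrak{y}^{(0)}\in\mathfrak{X}$ and set $\mathfrak{y}^{(n+1)}\coloneqq\mathfrak{T}\{\mathfrak{x},\mathfrak{y}^{(n)}\}$; by invariance the whole sequence lies in $\mathfrak{X}$. Throughout, distances in $\mathfrak{X}$ are measured by the low-order norm $\norm{\cdot}_{\E}$ underlying $\itn{\E}$,
\[
\norm{\qty(\ka,\vom_*,\vj_*)}_{\E}\coloneqq\sup_{[0,T]}\qty(\abs{\ka}_{\H{\kk-\frac{5}{2}}}+\abs{\pd_t\ka}_{\H{\kk-4}}+\norm{\qty(\vom_*,\vj_*)}_{H^{\kk-\frac{5}{2}}(\Om_*^+)}+\norm{\pd_t\vom_*}_{H^{\kk-4}(\Om_*^+)}),
\]
in which $\mathfrak{X}$ is a bounded, convex set.

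The next step is to upgrade the variational inequality to a genuine contraction of the sequence. Given $\mathfrak{y}_1,\mathfrak{y}_2\in\mathfrak{X}$, join them by the segment $\mathfrak{y}(\beta)\coloneqq(1-\beta)\mathfrak{y}_1+\beta\mathfrak{y}_2$, $\beta\in[0,1]$, which stays in $\mathfrak{X}$ since every constraint defining $\mathfrak{X}$ is convex; holding the initial datum fixed at $\mathfrak{x}$ along the segment forces $\ini{\E}(\beta)\equiv0$ and $\itn{\E}(\beta)=\norm{\mathfrak{y}_2-\mathfrak{y}_1}_{\E}$. Because $k\ge3$, the variational bounds of Lemmas~\ref{lem3.1}--\ref{lem 3.4}, hence the contraction estimate, apply along this segment, and integrating in $\beta$ yields
\[
\norm{\mathfrak{T}\{\mathfrak{x},\mathfrak{y}_2\}-\mathfrak{T}\{\mathfrak{x},\mathfrak{y}_1\}}_{\E}\le\int_0^1\itm{\E}(\beta)\dd{\beta}\le\tfrac12\int_0^1\itn{\E}(\beta)\dd{\beta}=\tfrac12\norm{\mathfrak{y}_2-\mathfrak{y}_1}_{\E}.
\]
Thus $\mathfrak{T}\{\mathfrak{x},\cdot\}$ is $\tfrac12$-Lipschitz on $\mathfrak{X}$ for $\norm{\cdot}_{\E}$, and $(\mathfrak{y}^{(n)})$ is $\norm{\cdot}_{\E}$-Cauchy; denote its limit by $\mathfrak{y}_\infty$.

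It remains to identify $\mathfrak{y}_\infty$ as a fixed point lying in $\mathfrak{X}$. The iterates are uniformly bounded in the strong norms defining $\mathfrak{X}$: $\itn{\ka}$ in $L^\infty_t\H{\kk-1}$ with $\pd_t\itn{\ka}$ in $L^\infty_t\H{\kk-\frac{5}{2}}$ and $\pd^2_{tt}\itn{\ka}$ in $L^\infty_t\H{\kk-4}$, and $\itn{\vom_*},\itn{\vj_*}$ in $L^\infty_tH^{\kk-1}(\Om_*^+)$ with $\pd_t\itn{\vom_*},\pd_t\itn{\vj_*}$ in $L^\infty_tH^{\kk-2}(\Om_*^+)$. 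Hence, along a subsequence, they converge weakly-$\ast$ in these spaces, necessarily to $\mathfrak{y}_\infty$, and by weak-$\ast$ lower semicontinuity of the norms and continuity of evaluation at $t=0$ all defining inequalities of $\mathfrak{X}$ (in their $L^\infty_t$ form) pass to $\mathfrak{y}_\infty$. Since these $L^\infty_t$ quantities are precisely the data entering the linear estimate \eqref{est linear eqn ka} and Proposition~\ref{prop linear vom vj}, the map $\mathfrak{T}\{\mathfrak{x},\cdot\}$ applies to $\mathfrak{y}_\infty$ and its image lies in $\mathfrak{X}$, carrying the full $C^0_t$ time-regularity from the linear solution operators; combining this with the $\tfrac12$-Lipschitz continuity gives $\mathfrak{T}\{\mathfrak{x},\mathfrak{y}^{(n)}\}\to\mathfrak{T}\{\mathfrak{x},\mathfrak{y}_\infty\}$ in $\norm{\cdot}_{\E}$, while the left side equals $\mathfrak{y}^{(n+1)}\to\mathfrak{y}_\infty$, so $\mathfrak{y}_\infty=\mathfrak{T}\{\mathfrak{x},\mathfrak{y}_\infty\}\in\mathfrak{X}$. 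The contraction bound makes this the unique fixed point in $\mathfrak{X}$, so we set $\mathfrak{S}(\mathfrak{x})\coloneqq\mathfrak{y}_\infty$; carrying this out for every $\mathfrak{x}\in\mathfrak{I}(\epsilon,A)$ yields the map $\mathfrak{S}$, and the $\ini{\E}$-term in the contraction estimate further gives its Lipschitz dependence on $\mathfrak{x}$, to be used for continuous dependence on the data.

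The main obstacle is the loss of derivatives: the contraction holds only in the weak norm $\norm{\cdot}_{\E}$, in which $\mathfrak{X}$ fails to be complete, so one cannot invoke the Banach fixed-point theorem on $\mathfrak{X}$ directly, and the Cauchy limit $\mathfrak{y}_\infty$ a priori retains only $L^\infty_t$, not $C^0_t$, control in the top-order norms. Recovering $\mathfrak{y}_\infty\in\mathfrak{X}$ with the full time-continuity therefore rests on the two features used above, namely that $\mathfrak{y}_\infty$ is a fixed point and that the linear solution operators defining $\mathfrak{T}$ depend on their coefficients only through $\sup_t$-norms while mapping into the $C^0_t$ strong spaces. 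A secondary technical point is to verify that the linear interpolation $\mathfrak{y}(\beta)$ genuinely remains in $\mathfrak{X}$ (convexity of the constraints) and that the hypotheses of the linear theory, in particular $\Gt\in\Lambda_*$ and the solvability of \eqref{eqn (n+1)ka"} and \eqref{eqn linear (n+1) vom vj"}, hold uniformly along it.
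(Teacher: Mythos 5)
Your proposal is correct and follows essentially the same route as the paper, which derives the contraction estimate $\E^{(n+1)}\le\tfrac12\E^{(n)}+Q(M_1)\E_{\mathrm{I}}$ in \S~5.3 and then asserts the fixed point directly. The details you supply — convexity of $\mathfrak{X}$ so the variational bound integrates to a genuine $\tfrac12$-Lipschitz estimate, and the weak-$\ast$ compactness plus re-application of $\mathfrak{T}$ to recover membership of the limit in $\mathfrak{X}$ despite the loss of derivatives in the contraction norm — are exactly the standard steps the paper leaves implicit (cf. \cite{Shatah-Zeng2011, Liu-Xin2023}), and you have executed them correctly.
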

	
	\subsection{The Original Plasma-Vacuum Problems}\label{sec Back to ori p-v problem}
	It still remains to show that the unique fixed point given in Proposition \ref{fixed point"} will induce a solution to the original plasma-vacuum problem \eqref{MHD}-\eqref{BC}.
	
	Following the arguments in \cite{Liu-Xin2023}, one first observes that any initial data $\qty{\Gamma_0, \vv_0, \vh_0}$ can induce the quantity $\qty{\ka(0), \pd_t\ka(0), \vom_*(0), \vj_*(0)}$, as long as $\Gs$ and $\vnu$ is taken so that $\Gamma_0 \in \Lambda_{*}$. (Indeed, one can directly set $\Gs \coloneqq \Gamma_0$.)
	
	Set $ \qty{\ini{\ka}, \ini{\pd_t\ka}, \ini{\vom_*}, \ini{\vj_*}} \coloneqq \qty{\ka(0), \pd_t\ka(0), \vom_*(0), \vj_*(0)} $, and take the unique fixed point $ \qty{\ka(t), \vom_*(t), \vj_*(t)} \in \mathfrak{X} $ given in  Proposition~\ref{fixed point"}. Thus, $ \{\Gt, \vv, \vh, \vhh\} $ can be obtained via solving \eqref{def bar vom vj}-\eqref{div-curl nonlinear h} and \eqref{eqn div-curl vhh}. We will show that	the induced quantity $ \{\Gt, \vv, \vh, \vhh\} $ is a solution to the problem \eqref{MHD}-\eqref{BC} with initial data $ \qty{\Gamma_0, \vv_0, \vh_0} $.
	
	First, we show that
	\begin{equation}\label{key}
		\Pb \qty(\vom_*(t) \circ \qty(\X_\Gt^+)^{-1}) = \vom_*(t) \circ \qty(\X_\Gt^+)^{-1}, \quad \Pb \qty(\vj_*(t) \circ \qty(\X_\Gt^+)^{-1}) = \vj_*(t) \circ \qty(\X_\Gt^+)^{-1}.
	\end{equation}
	Indeed, (\ref{eqn linear (n+1) vom vj"}) and the fact that $ \div \vv \equiv 0 \equiv \div\vh $ yield
	\begin{equation}\label{key}
		\begin{cases*}
			\pd_t (\div\vom) + \DD_{\vv}(\div \vom) = \DD_{\vh} (\div\vj), \\
			\pd_t (\div\vj) + \DD_{\vv}(\div\vj) = \DD_{\vh} (\div\vom),
		\end{cases*}
	\end{equation}
	where
	\begin{equation*}\label{key}
		\vom(t) \coloneqq \vom_*(t) \circ \qty(\X_\Gt^+)^{-1}, \quad \vj(t)\coloneqq \vj_*(t) \circ \qty(\X_\Gt^+)^{-1}.
	\end{equation*}
	Since $ \div \vom(0) = 0 = \div \vj(0) $, it follows from the characteristic methods that $ \div\vom \equiv 0 \equiv \div \vj $ for all $ t $, which proves the claim (c.f. \cite{Liu-Xin2023} for more details).
	Thus,
	\begin{equation}\label{eqn curl vv vh}
		\curl\vv = \vom \qand \curl \vh = \vj.
	\end{equation}
	
	Secondly, define the effective pressure as in \eqref{decom pressure'}:
	\begin{equation*}\label{def p'}
		p = p_{\vv, \vv} - p_{\vh, \vh} + \alpha^2 \h_+ \kappa + \h_+ \qty(\txfrac{1}{2}\abs*{\vhh}^2).
	\end{equation*}
	As in  \cite{Sun-Wang-Zhang2018} and \cite{Liu-Xin2023}, one can introduce the following two vector fields:
	\begin{equation}
		\vV \coloneqq \pd_t \vv + \DD_\vv\vv + \grad p - \DD_\vh\vh, \label{def vV} 
	\end{equation}
	and
	\begin{equation}
		\vH \coloneqq \pd_t \vh + \DD_\vv \vh - \DD_\vh \vv. \label{def vH}
	\end{equation}
	To show that $\{\Gt, \vv, \vh, \vhh\}$ is a solution, one only needs to verify that $\vV \equiv \vb{0}$ and $\vH \equiv \vb{0}$ (since $\vhh$ satisfies \eqref{eqn pM} automatically). The div-curl estimates enable one to check only the following relations:
	\begin{equation}
		\begin{cases*}
			\div \vV = 0\qc\div \vH = 0 &in $\Om_t^+$, \\
			\curl \vV = \vb{0}\qc\curl \vH = \vb{0} &in $\Om_t^+$, \\
			\vV \vdot \vn = 0\qc\vH \vdot \vn = 0 &on $\Gt$.
		\end{cases*}
	\end{equation}
	
	It follows from \eqref{decom pressure'} that $\div \vV \equiv 0$. The curl of $ \vV $ can be computed via (\ref{eqn curl vv vh}) and \eqref{eqn linear (n+1) vom vj"} as:
	\begin{equation}\label{key}
		\curl \vV = \pd_t \vom + \DD_{\vv}\vom - \DD_{\vom}\vv - \DD_{\vh}\vj + \DD_{\vj}\vh = \vb{0}.
	\end{equation}
	To verify the boundary condition, one notes first that $\vV = \vW$ on $\Gt$ for $\vW$ defined by \eqref{def vW"}. Furthermore, as $\qty{\ka, \vom_*, \vj_*}$ is a fixed point of $\mathfrak{T}$, it follows from \eqref{eqn pdt2 ka "} and \eqref{eqn (n+1)ka"} that
	\begin{equation}\label{eqn vW"}
		-\lap_\Gt \qty(\vW \vdot \vn) + \vW \vdot \lap_\Gt \vn + a^2 \dfrac{\vW \vdot \vn}{\vn \vdot (\vnu \circ \Phi_\Gt^{-1})} = 0.
	\end{equation}
	Since $\div \vV = 0$, $\curl \vV = \vb{0}$ and $\Om_t^+$ is simply-connected, one can find a mean-zero function $r(t)\colon \Gt \to \R$ such that
	\begin{equation}
		\vV = \grad\h_+r.
	\end{equation}
	Set
	\begin{equation}
		\Theta \coloneqq \vV \vdot \vn = \vW \vdot \vn \qq{on} \Gt.
	\end{equation}
	Then it is clear that
	\begin{equation}
		\Theta = \n_+ r \qand r = \qty(\n_+)^{-1} \Theta.
	\end{equation}
	One can deduce from \eqref{lap vn} and \eqref{eqn vW"} that
	\begin{equation}\label{key}
		-\lap_\Gt \Theta + \qty(\dfrac{a^2}{\vn \vdot (\vnu\circ \Phi_\Gt^{-1}) } - \abs{\II}^2)\Theta +  \grad^\top r \vdot \grad^\top \kappa = 0.
	\end{equation}
	Standard elliptic estimates and \eqref{property1 n} yield that $\Theta \equiv 0$, as long as $a$ is taken large enough (see \cite{Liu-Xin2023} for more details), which implies $\vV \equiv \vb{0}$. The verification of $\vH \equiv \vb{0}$ follows from a similar and simpler way. 
	
	In conclusion, the solution to the plasma-vacuum problem \eqref{MHD}-\eqref{BC} can be obtained by the fixed point of the iteration mapping, i.e., Theorem \ref{thm p-v wp} holds.
	
	\section{A Priori Estimates under the Rayleigh-Taylor Sign Condition}
	
	\subsection{Evolution of the Mean Curvature}
	
	In order to investigate the effect of the Rayleigh-Taylor (RT) sign condition \eqref{RT}, we shall derive the evolution equation for the mean curvature again to obtain finer estimates. Indeed, the equations \eqref{eqn dt2 kappa}-\eqref{eqn vb A} are sufficient to indicate the stabilization effects of surface tensions and non-collinear magnetic fields, but not enough to reveal the influence of the RT condition. When comparing \eqref{eqn dt2 kappa}-\eqref{est R} with \eqref{eqn Dt^2 kappa} and \eqref{est mathfrak[R]}, one can see that the term related to the RT condition corresponds to a first order derivative of $\kappa$ and is included in the remainder term $\mathfrak{R}$ in \eqref{eqn dt2 kappa}, since all of the first order derivatives are treated as lower order terms there. Therefore, to clarify the effect of the RT condition, one needs to derive the evolution equation for $\kappa$ with finer derivative estimates.
	
	From now on, it is always assumed that $k \ge 3$.  One first notes that $ \vh \vdot \vn \equiv 0 $ on $ \Gt $, so $ \vv \pm \vh $ are both evolution velocities of $ \Gt $. Denote by
	\begin{equation}\label{key}
		\vu{u}_\pm \coloneqq \vv \pm \vh \qand \wh{\Dt}_\pm \coloneqq \pd_t + \DD_{\vu{u}_\pm}.
	\end{equation}
	Then, it follows from \eqref{eqn dt II tan}-\eqref{eqn dt kappa} that
	\begin{equation}\label{key}
		\wh{\Dt}_{+}\kappa = - \vn \vdot \lap_{\Gt}(\vuu_+) - 2 \ip{\II}{\DD^\top\vuu_+},
	\end{equation}
	and
	\begin{equation}\label{key}
		\begin{split}
			\wh{\Dt}_{-}\wh{\Dt}_{+}\kappa =\, &- \vn\vdot\wh{\Dt}_{-}\qty(\lap_{\Gt}\vuu_{+}) - \wh{\Dt}_{-}\vn \vdot \lap_{\Gt}\vuu_{+} - 2 \ip{\wh{\Dt}_{-}\II}{(\DD\vuu_{+})^\top} - 2 \ip{\II}{\wh{\Dt}_{-}\qty(\DD\vuu_{+})^\top} \\
			=\, &- \vn \vdot \lap_\Gt \qty(\wh{\Dt}_{-}\vuu_{+}) - \vn \vdot \comm{\wh{\Dt}_{-}}{\lap_{\Gt}}\vuu_{+} + \vn \vdot \DD\vuu_{-} \vdot \qty(\lap_\Gt\vuu_{+})^\top \\
			&- 2 \ip{\DD^\top\qty[\qty(\DD\vuu_{-})^*\vn]^\top}{(\DD\vuu_{+})^\top} -  2\ip{\II}{\qty(\DD\wh{\Dt}_{-}\vuu_{+})^\top} - 2 \ip{\II}{\qty(\comm{\wh{\Dt}_{-}}{\DD}\vuu_{+})^\top}.
		\end{split}
	\end{equation}
	Thanks to the commutator formulas (c.f. \cite[pp. 709-710]{Shatah-Zeng2008-Geo}):
	\begin{equation}\label{key}
		\comm{\wh{\Dt}_{-}}{\grad}f = - \qty(\DD\vuu_-)^*(\grad f),
	\end{equation}
	and
	\begin{equation}\label{key}
		\comm{\wh{\Dt}_{-}}{\lap_\Gt}f = -2 \ip{\qty(\DD^\top)^2 f}{\DD^\top\vuu_{-}} - \grad^\top f \vdot \lap_{\Gt}\vuu_- + \kappa \vn \vdot \DD_{\grad^\top f}\vuu_{-},
	\end{equation}
	one can derive that
	\begin{equation}\label{eqn Dt_- Dt_+ kappa}
		\wh{\Dt}_- \wh{\Dt}_+ \kappa = - \vn \vdot \lap_\Gt \qty(\wh{\Dt}_- \vuu_+) - 2 \ip{\II}{\qty(\DD\wh{\Dt}_-\vuu_+)^\top} + R,
	\end{equation}
	with
	\begin{equation}\label{key}
		\abs{R}_{H^{\kk-\frac{5}{2}}(\Gt)} \le Q\qty(\abs{\vv}_{H^{\kk-\frac{1}{2}}(\Gt)}, \abs{\vh}_{H^{\kk-\frac{1}{2}}(\Gt)}),
	\end{equation}
	here $ Q $ is a generic polynomial determined by $ \Lambda_{*} $.
	
	If $ (\Gt, \vv, \vh, \vhh) $ is a solution to the problem \eqref{MHD}-\eqref{BC}, then
	\begin{equation}\label{Dt_- vbu_+}
		\begin{split}
			\wh{\Dt}_- \vuu_+ &= \pd_t (\vv + \vh) + \DD_{\vv-\vh}(\vv+ \vh) \\
			&= \pd_t \vv + \DD_\vv\vv - \DD_\vh\vh + \pd_t\vh + \DD_\vv \vh - \DD_\vh \vv \\
			&= - \grad p,
		\end{split}
	\end{equation}
	and the pressure function can be decomposed via \eqref{decom pressure'}.
	For the simplicity of notations, set
	\begin{equation}\label{def p^hat, q}
		\hat{p} \coloneqq \h_+ \qty(\txfrac{1}{2}\abs*{\vhh}^2), \qand q \coloneqq p_{\vv, \vv} - p_{\vh, \vh}.
	\end{equation}
	It follows from the definitions and standard elliptic estimates that
	\begin{equation*}\label{key}
		\norm{\grad p_{\vv, \vv}}_{H^{\kk}(\Om_t^+)} \less  \norm{\vv}_{H^{\kk}(\Om_t^+)} \qc
		\norm{\grad p_{\vh, \vh}}_{H^{\kk}(\Om_t^+)} \less \norm{\vh}_{H^{\kk}(\Om_t^+)},
	\end{equation*}
	and
	\begin{equation*}\label{key}
		\norm{\hat{p}}_{H^{\kk}(\Om_t^+)} \less \abs{\vhh}_{H^{\kk-\frac{1}{2}}(\Gt)}^2.
	\end{equation*}
	
	Substituting \eqref{Dt_- vbu_+} into \eqref{eqn Dt_- Dt_+ kappa} leads to
	\begin{equation}\label{eqn 1 dt-dt+ kappa}
		\begin{split}
			\wh{\Dt}_- \wh{\Dt}_+ \kappa =\, &\vn \vdot \lap_\Gt\qty(\grad p) + 2 \ip{\II}{\qty(\DD\grad p)^\top} + R \\
			=\, &\alpha^2\lap_\Gt\n_+\kappa - \alpha^2\abs{\grad^\top\kappa}^2 + \alpha^2\abs{\II}^2\n_+\kappa \\
			&+\vn \vdot \lap_{\Gt}\qty(\grad q + \grad \hat{p}) + 2 \ip{\II}{\qty[\DD\grad(q+\hat{p})]^\top} + R,
		\end{split}
	\end{equation}
	where the last equality is due to \eqref{lap vn}.
	It can be derived from \eqref{lap gt f alt} that
	\begin{equation}\label{lap grad q}
		\begin{split}
			\vn\vdot\lap_{\Gt}\grad q =\, &\vn \vdot \lap \grad q - \kappa \vn \vdot \DD_{\vn}\grad q - \vn \vdot \DD^2(\grad q)\qty(\vn, \vn) \\
			=\, &\vn \vdot \grad \lap q - \DD_{\vn}\qty[\kappa_{\h_+}\DD_{\vn_{\h_+}}q + \qty(\DD^2 q)\qty(\vn_{\h_+}, \vn_{\h_+})] \\
			&+\qty(\DD_{\vn}q)\n_+ \kappa + \kappa \grad q \vdot \n_+(\vn) + 2 \DD^2 q \qty(\vn, \n_+(\vn)),
		\end{split}
	\end{equation}
	where $ \kappa_{\h_+} \equiv \h_+(\kappa)  $ and $ \vn_{\h_+} \equiv \h_+(\vn) $.
	Define
	\begin{equation}\label{key}
		f \coloneqq \kappa_{\h_+}\DD_{\vn_{\h_+}}q + \qty(\DD^2 q)\qty(\vn_{\h_+}, \vn_{\h_+}).
	\end{equation}
	The relations that
	\begin{equation}\label{eqn q}
		\begin{cases*}
			\lap q = -\tr(\DD\vv)^2 + \tr(\DD\vh)^2 &in $ \Om_t^+ $, \\
			q = 0 &on $ \Gt $,
		\end{cases*}
	\end{equation}
	and \eqref{lap gt f alt} yield
	\begin{equation*}\label{key}
		f|_\Gt = \lap q - \lap_{\Gt} q = \lap q.
	\end{equation*}
	Thus,
	\begin{equation*}\label{key}
		\begin{split}
			\abs{f}_{H^{\kk-\frac{3}{2}}(\Gt)} &\le C_* \norm{\lap q}_{H^{\kk-1}(\Om_t^+)} \\
			&\le Q\qty(\norm{\vv}_{H^{\kk}(\Om_t^+)}, \norm{\vh}_{H^{\kk}(\Om_t^+)}).
		\end{split}
	\end{equation*}
	Furthermore, it follows from the standard product estimates that
	\begin{equation}\label{est lap f Omt}
		\begin{split}
			\norm{\lap f}_{H^{\kk-3}(\Om_t^+)} &\lesssim \norm{\kappa_{\h_+} \vn_{\h_+} \vdot \grad \qty[\tr(\DD\vv)^2 - \tr(\DD\vh)^2]}_{H^{\kk-3}(\Om_t^+)}  \\
			&\qquad + \norm{\DD\kappa_{\h_+} \DD\vn_{\h_+} \vdot \grad q + \DD\kappa_{\h_+} \vn_{\h_+} \DD^2 q}_{H^{\kk-3}(\Om_t^+)} \\ 
			&\qquad + \norm{\kappa_{\h_+} \DD\vn_{\h_+} \DD^2 q}_{H^{\kk-3}(\Om_t^+)} \\
			&\qquad + \norm{\DD^2\qty[\tr(\DD\vv)^2 - \tr(\DD\vh)^2] \vn_{\h_+} \vn_{\h_+}}_{H^{\kk-3}(\Om_t^+)} \\
			&\qquad + \norm{\DD^2 q \DD\vn_{\h_+}\DD\vn_{\h_+} + \DD^3 q \vn_{\h_+} \DD\vn_{\h_+}}_{H^{\kk-3}(\Om_t^+)} \\
			&\le Q\qty(\norm{\vv}_{H^{\kk}(\Om_t^+)}, \norm{\vh}_{H^{\kk}(\Om_t^+)}),
		\end{split}
	\end{equation}
	which implies
	\begin{equation}\label{key}
		\abs{\DD_{\vn}f}_{H^{\kk-\frac{5}{2}}(\Gt)} \le Q\qty(\norm{\vv}_{H^{\kk}(\Om_t^+)}, \norm{\vh}_{H^{\kk}(\Om_t^+)}).
	\end{equation}
	Thus, the product estimates yield that
	\begin{equation}\label{est n vdot lapGt grad q'}
		\abs{\vn\vdot\lap_\Gt\grad q - \qty(\DD_\vn q)\n_+\kappa}_{H^{\kk-\frac{5}{2}}(\Gt)} \le Q\qty(\norm{\vv}_{H^{\kk}(\Om_t^+)}, \norm{\vh}_{H^{\kk}(\Om_t^+)}).
	\end{equation}
	To deals with the terms involving $ \hat{p} $, one defines that
	\begin{equation}\label{key}
		\hat{f}\coloneqq \kappa_{\h_+} \DD_{\vn_{\h_+}}\hat{p} + \qty(\DD^2\hat{p})\qty(\vn_{\h_+}, \vn_{\h_+}).
	\end{equation}
	Hence,
	\begin{equation}\label{key}
		\vn \vdot \lap_\Gt \grad\hat{p} = - \DD_{\vn}\hat{f} + \qty(\DD_\vn\hat{p})\n_+\kappa + \kappa \grad\hat{p}\vdot\n_+(\vn) + 2 \DD^2\hat{p}\qty(\vn, \n_+(\vn)).
	\end{equation}
	It follows from the same calculations as in \eqref{est lap f Omt} that
	\begin{equation*}\label{key}
		\norm{\lap\hat{f}}_{H^{\kk-3}(\Om_t^+)} \le Q\qty(\abs*{\vhh}_{H^{\kk-\frac{1}{2}}(\Gt)}).
	\end{equation*} 
	Since
	\begin{equation*}\label{key}
		\hat{f}|_\Gt = \lap \hat{p} - \lap_{\Gt} \hat{p} = - \lap_{\Gt}\hh,
	\end{equation*}
	one may derive that
	\begin{equation*}\label{key}
		\abs{\DD_\vn \hat{f} - \n_+ \qty[-\lap_\Gt\hh]}_{H^{\kk-\frac{5}{2}}(\Gt)} \less  \norm{\lap\hat{f}}_{H^{\kk-3}(\Om_t^+)},
	\end{equation*}
	which yields
	\begin{equation}\label{vn dot lapGt grad hat p'}
		\begin{split}
			\abs{\vn\vdot\lap_\Gt\grad\hat{p}-\n_+\lap_{\Gt}\hh - \qty(\DD_{\vn}\hat{p})\n_+\kappa}_{H^{\kk-\frac{5}{2}}(\Gt)}  \le Q\qty(\abs*{\vhh}_{H^{\kk-\frac{1}{2}}(\Gt)}).
		\end{split}
	\end{equation}
	It follows from Lemma \ref{lem lap-n} that
	\begin{equation}\label{n - n hat lapGt hh}
		\begin{split}
			\abs{\qty(\n_+ - \n_-)\lap_\Gt\hh}_{H^{\kk-\frac{5}{2}}(\Gt)} \less \abs{\lap_\Gt \hh}_{H^{\kk-\frac{5}{2}}(\Gt)} \less  \abs*{\vhh}_{H^{\kk-\frac{1}{2}}(\Gt)}^2.
		\end{split}
	\end{equation}
	Thus, it is natural to consider $ \n_-\lap_\Gt\hh $ as a replacement of $ \n_+\lap_\Gt\hh $.
	Indeed, by applying the computations \eqref{lap grad q} to $ \Om_t^- $, one can obtain
	\begin{equation}\label{key}
		\begin{split}
			&\vn_- \vdot \lap_\Gt \grad\hh \\
			&\quad=\vn_- \vdot \grad\lap\hh + \qty[\DD_{\vn_-}\hh]{\n}_-\kappa_- \\
			&\qquad- \DD_{\vn_-}\qty[{\h}_-(\kappa_-)\DD_{{\h}_-\vn_{-}}\hh + \DD^2\hh \qty({\h}_-\vn_-, {\h}_-\vn_-)] \\
			&\qquad + \kappa_- \grad \hh \vdot {\n}_-(\vn_-) + 2\DD^2\hh\qty(\vn_-, {\n}_-(\vn_-)),
		\end{split}
	\end{equation}
	where
	\begin{equation}\label{key}
		\vn_- \coloneqq - \vn , \qand \kappa_- \coloneqq -\kappa.
	\end{equation}
	It follows from $ \div \vhh = 0 $ and $ \curl \vhh = \vb{0} $ that
	\begin{equation*}\label{key}
		\grad\hh = \DD_{\vhh}\vhh, \qand \lap\hh = \tr(\DD\vhh)^2 \qin \Om_t^-.
	\end{equation*}
	Thus, by setting
	\begin{equation}\label{key}
		g \coloneqq {\h}_-(\kappa_-)\DD_{{\h}_-\vn_{-}}\hh + \DD^2\hh \qty({\h}_-\vn_-, {\h}_-\vn_-),
	\end{equation}
	one can derive that
	\begin{equation}\label{key}
		g|_\Gt = \lap \hh - \lap_\Gt \hh = \tr(\DD\vhh)^2 - \lap_\Gt \hh,
	\end{equation}
	and
	\begin{equation}\label{key}
		\begin{split}
			\norm{\lap g}_{H^{\kk-3}(\Om_t^-)} \le\, & \norm{\kappa_{\h_-} \vn_{\h_-} \vdot \grad \tr(\DD\vhh)^2}_{H^{\kk-3}(\Om_t^-)} \\
			&+ \norm{\DD\kappa_{\h_-} \DD\vn_{\h_-} \DD\hh}_{H^{\kk-3}(\Om_t^-)} \\
			&+ \norm{\DD^2\hh \DD\kappa_{\h_-}\vn_{\h_-} + \DD^2\hh \kappa_{\h_-} \DD\vn_{\h_-}}_{H^{\kk-3}(\Om_t^-)} \\
			& + \norm{\DD^2\tr(\DD\vhh)^2 (\vn_{\h_-}, \vn_{\h_-})}_{H^{\kk-3}(\Om_t^-)} \\
			& + \norm{\DD^3\hh \DD\vn_{\h_-} \vn_{\h_-}}_{H^{\kk-3}(\Om_t^-)} \\
			\le\, &Q\qty(\abs{\kappa}_{H^{\kk-2}(\Gt)}, \norm{\vhh}_{H^{\kk}(\Om_t^-)}).
		\end{split}
	\end{equation}
	According to the estimates that
	\begin{equation}\label{key}
		\abs{\DD_{\vn_-}g - \n_- (g|_\Gt)}_{H^{\kk-\frac{5}{2}}(\Gt)} \less \norm{\lap g}_{H^{\kk-3}(\Om_t^-)}
	\end{equation}
	and
	\begin{equation}\label{key}
		\abs{\n_- \tr(\DD\vhh)^2}_{H^{\kk-\frac{5}{2}}(\Gt)} \less  \norm{\vhh}_{H^{\kk}(\Om_t^-)}^2,
	\end{equation}
	it can be deduced that
	\begin{equation}\label{n- vdot lap grad hh'}
		\begin{split}
			&\abs{\vn_- \vdot \lap_\Gt \grad\hh + \qty[\DD_{\vn_-}\hh]{\n}_-\kappa -{\n}_-\lap_\Gt\hh}_{H^{\kk-\frac{5}{2}}(\Gt)} \\
			&\quad\le Q\qty(\abs{\kappa}_{H^{\kk-2}(\Gt)}, \norm*{\vhh}_{H^{\kk}(\Om_t^-)}).
		\end{split}
	\end{equation}
	
	The combination of \eqref{vn dot lapGt grad hat p'}-\eqref{n - n hat lapGt hh} and \eqref{n- vdot lap grad hh'} yields:
	\begin{equation}\label{vn vdot lapGt grad p hat}
		\vn \vdot \lap_\Gt \grad\hat{p} = \vn_- \vdot \lap_\Gt\grad\hh + \qty[\DD_{\vn}\hat{p} + \DD_{\vn_-}\hh]\n_+\kappa + r,
	\end{equation}
	where $ r $ satisfies the estimate
	\begin{equation}\label{est reaminder p hat}
		\abs{r}_{H^{\kk-\frac{5}{2}}(\Gt)} \le 
		Q\qty(\abs{\kappa}_{H^{\kk-2}(\Gt)}, \norm*{\vhh}_{H^{\kk}({\Om_t^-})})
	\end{equation}
	
	Next, observe that
	\begin{equation}\label{n- vdot lapGt grad hh}
		\begin{split}
			&\vn_{-} \vdot \lap_{\Gt}\grad\hh = -\vn \vdot \lap_{\Gt}\qty(\DD_{\vhh}\vhh) \\ 
			&\quad= -\lap_\Gt \qty(\vn \vdot \DD_{\vhh}\vhh) + 2 \ip{\II}{\DD^\top\qty(\DD_{\vhh}\vhh)} + \DD_{\vhh}\vhh \vdot \lap_{\Gt} \vn \\
			&\quad= \lap_{\Gt}\qty[\II\qty(\vhh, \vhh)] + 2 \ip{\II}{\DD^\top\qty(\DD_{\vhh}\vhh)} + \grad^\top \kappa \vdot \DD^\top_{\vhh}\vhh - \abs{\II}^2\vn\vdot\DD_{\vhh}\vhh \\
			&\quad= \qty(\DD^\top)^2_{(\vhh, \vhh)}\kappa + \DD^\top_{\DD^\top_\vhh \vhh}\kappa + 2\ip{\DD^\top\II}{\DD^\top(\vhh\otimes\vhh)} + 2 \ip{\II}{\DD^\top\qty(\DD_{\vhh}\vhh)}  - \kappa \qty(\II \vdot \II)(\vhh, \vhh) \\
			&\quad=\DD_{\vhh}\DD_{\vhh}\kappa + 4\ip{\DD^\top_{\vhh}\II}{\DD^\top\vhh}  + 2 \ip{\II}{\DD^\top\qty(\DD_{\vhh}\vhh)} - \kappa \qty(\II \vdot \II)(\vhh, \vhh),
		\end{split}
	\end{equation}
	where the fact that $ \DD^\top \II $ is totally symmetric has been used.
	
	It follows from \eqref{eqn 1 dt-dt+ kappa}, \eqref{est n vdot lapGt grad q'}, \eqref{vn vdot lapGt grad p hat}-\eqref{n- vdot lapGt grad hh}, Lemma \ref{est ii} and the standard product estimates that
	\begin{equation}\label{key}
		\begin{split}
			\wh{\Dt}_{-} \wh{\Dt}_{+} \kappa =\, &\alpha^2 \lap_\Gt\n_+\kappa - \alpha^2\abs{\grad^\top\kappa}^2 + \alpha^2\abs{\II}^2\n_+\kappa + \DD_{\vhh}\DD_{\vhh}\kappa \\
			&+ \qty[\DD_{\vn}\qty(q+\hat{p}) + \DD_{\vn_{-}}\hh]\n_+\kappa + 4\ip{\DD^\top_{\vhh}\II}{\DD^\top\vhh} + \mathfrak{R}',
		\end{split}
	\end{equation}
	for which
	\begin{equation}\label{est mathfrak[R]}
		\abs{\mathfrak{R}'}_{H^{\kk-\frac{5}{2}}(\Gt)} \le
		Q\qty(\abs{\kappa}_{H^{\kk-2}(\Gt)}, \norm{\vv}_{H^{\kk}(\Om_t^+)}, \norm{\vh}_{H^{\kk}(\Om_t^+)}, \norm*{\vhh}_{H^{\kk}({\Om_t^-})}).
	\end{equation}
	Moreover, observe that
	\begin{equation*}\label{key}
		\wh{\Dt}_- \wh{\Dt}_+ = \qty(\Dt - \DD_{\vh}) \qty(\Dt + \DD_{\vh}) = \Dt^2 - \DD_{\vh}\DD_{\vh} + \comm{\Dt}{\DD_{\vh}},
	\end{equation*}
	and
	\begin{equation}\label{comm dt dh}
		\comm{\Dt}{\DD_{\vh}} = \DD_{(\Dt\vh - \DD_{\vh}\vv)} = 0,
	\end{equation}
	so one can deduce from the evolution equation for $ \vh $ that
	\begin{equation*}\label{key}
		\wh{\Dt}_- \wh{\Dt}_+ = \Dt^2 - \DD_{\vh}\DD_\vh.
	\end{equation*}
	
	In conclusion, the following theorem holds:
	\begin{thm}\label{prop evo kappa'}
		There is a generic polynomial $ Q $ determined by $ \Lambda_* $, such that for any solution $ (\Gt, \vv, \vh, \vhh) $ to the problem \eqref{MHD}-\eqref{BC}, with $ \Gt \in \Lambda_* \cap H^{\kk}, (\vv, \vh) \in H^{\kk}(\Om_t^+) $ and $ \vhh \in H^{\kk}(\Om_t^-) $, the evolution equation for the mean curvature $ \kappa $ of the free interface $ \Gt $ is:
		\begin{equation}\label{eqn Dt^2 kappa}
			\begin{split}
				\Dt^2\kappa =\, &\alpha^2 \lap_\Gt\n_+\kappa - \alpha^2\abs{\grad^\top\kappa}^2 + \alpha^2\abs{\II}^2\n_+\kappa + \DD_\vh\DD_\vh\kappa + \DD_{\vhh}\DD_{\vhh}\kappa \\
				&+ \qty{\DD_{\vn}\qty[p_{\vv, \vv} - p_{\vh, \vh} + \h_+ \qty(\txfrac{1}{2}\abs*{\vhh}^2)] - \DD_{\vn}\hh}\n_+\kappa + 4\ip{\DD^\top_{\vhh}\II}{\DD^\top\vhh} + \mathfrak{R}'
			\end{split}
		\end{equation}
		with $\mathfrak{R}'$ satisfying \eqref{est mathfrak[R]}.
	\end{thm}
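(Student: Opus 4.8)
The plan is to differentiate the mean curvature $\kappa$ twice along a carefully chosen material derivative, use the boundary condition $\vh\vdot\vn\equiv 0$ to let the magnetic terms appear naturally, and then substitute the MHD equations together with the pressure decomposition \eqref{decom pressure'} to isolate the leading-order structure, relegating everything else to $\mathfrak{R}'$.

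First I would exploit that, since $\vh\vdot\vn\equiv 0$ on $\Gt$, both $\vuu_\pm:=\vv\pm\vh$ are evolution velocities of the interface, so one may form $\wh{\Dt}_\pm:=\pd_t+\DD_{\vuu_\pm}$. Applying \eqref{eqn dt kappa} with velocity $\vuu_+$ gives $\wh{\Dt}_+\kappa = -\vn\vdot\lap_\Gt\vuu_+ - 2\ip{\II}{\DD^\top\vuu_+}$; differentiating once more with $\wh{\Dt}_-$, using \eqref{eqn dt n} and \eqref{eqn dt II tan} for the evolution of $\vn$ and $\II$ together with the Shatah--Zeng commutator formulas for $\comm{\wh{\Dt}_-}{\grad}$ and $\comm{\wh{\Dt}_-}{\lap_\Gt}$, one obtains $\wh{\Dt}_-\wh{\Dt}_+\kappa = -\vn\vdot\lap_\Gt(\wh{\Dt}_-\vuu_+) - 2\ip{\II}{(\DD\wh{\Dt}_-\vuu_+)^\top} + R$ with $R$ controlled in $H^{\kk-\frac52}(\Gt)$ by $\abs{\vv}_{H^{\kk-\frac12}(\Gt)}$ and $\abs{\vh}_{H^{\kk-\frac12}(\Gt)}$. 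A short computation from \eqref{MHD} then gives $\wh{\Dt}_-\vuu_+ = -\grad p$ (the induction equation kills the $\vv\pm\vh$ cross terms), and \eqref{decom pressure'} splits $p = q + \alpha^2\h_+\kappa + \hat p$ with $q := p_{\vv,\vv}-p_{\vh,\vh}$ and $\hat p := \h_+\hh$.

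The bulk of the work is computing $\vn\vdot\lap_\Gt\grad(\cdot)$ of each of these three pieces. For the capillary term $\alpha^2\h_+\kappa$, the identity \eqref{lap vn} produces exactly $\alpha^2\lap_\Gt\n_+\kappa - \alpha^2\abs{\grad^\top\kappa}^2 + \alpha^2\abs{\II}^2\n_+\kappa$. For $q$, commuting $\lap$ through $\grad$ and using $q|_\Gt=0$ together with $\lap q = -\tr(\DD\vv)^2+\tr(\DD\vh)^2$, one shows $\vn\vdot\lap_\Gt\grad q = (\DD_\vn q)\n_+\kappa$ modulo a remainder bounded in $H^{\kk-\frac52}(\Gt)$ by $Q(\norm{\vv}_{H^{\kk}(\Om_t^+)},\norm{\vh}_{H^{\kk}(\Om_t^+)})$; the trick is to introduce $f:=\kappa_{\h_+}\DD_{\vn_{\h_+}}q + (\DD^2 q)(\vn_{\h_+},\vn_{\h_+})$, observe $f|_\Gt=\lap q$, bound $\lap f$ in $H^{\kk-3}(\Om_t^+)$ by product estimates, and convert $\DD_\vn f$ into a boundary term --- this is precisely where $k\ge 3$ enters. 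The delicate piece is $\hat p$: since $\vhh$ is supported in the vacuum region $\Om_t^-$ and not in $\Om_t^+$, one cannot read geometry directly off $\n_+\lap_\Gt\hh$. The remedy has two steps --- first replace $\n_+$ by $\n_-$ at the cost of a term controlled by $\abs{\lap_\Gt\hh}_{H^{\kk-\frac52}(\Gt)}$ via Lemma~\ref{lem lap-n}, and then run the same elliptic identity on $\Om_t^-$, where $\curl\vhh=\vb 0$ and $\div\vhh=0$ give $\grad\hh = \DD_\vhh\vhh$ and $\lap\hh = \tr(\DD\vhh)^2$. Applying \eqref{lap gt f alt} on the vacuum side and then \eqref{lap vn} and \eqref{simons' identity} rewrites $\vn_-\vdot\lap_\Gt\grad\hh$ as $\DD_\vhh\DD_\vhh\kappa + 4\ip{\DD^\top_{\vhh}\II}{\DD^\top\vhh}$ plus a term proportional to $\n_+\kappa$ plus controllable remainders, where one must track the sign conventions $\vn_-=-\vn$ and $\kappa_-=-\kappa$.

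Finally I would assemble the pieces: the coefficient of $\n_+\kappa$ becomes $\DD_\vn\qty[p_{\vv,\vv}-p_{\vh,\vh}+\h_+\hh]-\DD_\vn\hh$, and the operator identity $\wh{\Dt}_-\wh{\Dt}_+ = \Dt^2 - \DD_\vh\DD_\vh + \comm{\Dt}{\DD_\vh}$ together with $\comm{\Dt}{\DD_\vh}=\DD_{\Dt\vh-\DD_\vh\vv}=0$ (from the evolution equation for $\vh$) turns the left-hand side into $\Dt^2\kappa - \DD_\vh\DD_\vh\kappa$, which yields \eqref{eqn Dt^2 kappa}. All the discarded terms are then shown to lie in $H^{\kk-\frac52}(\Gt)$ with the bound \eqref{est mathfrak[R]} by combining Lemma~\ref{est ii} (to trade $\vn$ and $\II$ for $\kappa$), Lemma~\ref{lem lap-n}, and the product estimates of Lemma~\ref{lem product est}, which close exactly because $k\ge 3$. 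I expect the main obstacle to be the bookkeeping around the vacuum term $\hat p$: one must check that, after the swap $\n_+\leftrightarrow\n_-$ and the passage to $\Om_t^-$, no genuine third-order derivative of $\vhh$ or $\kappa$ survives in the remainder, and that the $\DD_{\vn_-}\hh\,\n_-\kappa$ contribution recombines --- with the correct sign --- into the single $\n_+\kappa$-coefficient displayed in the theorem.
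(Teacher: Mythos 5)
Your proposal follows the paper's own proof essentially step for step: the factorization $\wh{\Dt}_-\wh{\Dt}_+\kappa$ with $\vuu_\pm=\vv\pm\vh$, the identity $\wh{\Dt}_-\vuu_+=-\grad p$, the decomposition of $p$ into $q$, $\alpha^2\h_+\kappa$ and $\hat p$, the auxiliary functions $f$ and $\hat f$ with $f|_\Gt=\lap q$, the swap of $\n_+$ for $\n_-$ via Lemma~\ref{lem lap-n} followed by the elliptic computation on $\Om_t^-$ using $\grad\hh=\DD_\vhh\vhh$, and the final reduction via $\comm{\Dt}{\DD_\vh}=0$ are all exactly the paper's argument. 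The proposal is correct and requires no further comment.
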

	
	\subsection{Uniform Estimates}
	Assume that $k \ge 3$ is an integer, $ \vhh \equiv 0 $, and $ (\Gt, \vv, \vh) $ is a solution to the problem \eqref{MHD},\eqref{BC} for time $ t \in [0, T] $ with $ \Gt \in \Lambda_* \cap H^{\kk} $ and $ \vv, \vh \in H^{\kk}(\Om_t^+) $. 
	Suppose further that there exists a constant $\lambda_0 > 0$ so that the following Rayleigh-Taylor sign condition holds for all $ t \in [0, T] $:
	\begin{equation}\label{RT consition}
		\mathfrak{t} \coloneqq -\DD_{\vn}\qty(p_{\vv, \vv} - p_{\vh, \vh}) \ge \lambda_0 \qq{on} \Gt.
	\end{equation}
	
	Consider the energy functional:
	\begin{equation}\label{E_alpha}
		\begin{split}
			\bar{E}_\alpha (t) \coloneqq \int_\Gt &\abs{\qty(-\n_+^\frac{1}{2}\lap_\Gt\n_+^\frac{1}{2})^{\frac{k-2}{2}}\n_+^\frac{1}{2}(\Dt\kappa)}^2 + \alpha^2 \abs{\qty(-\n_+^\frac{1}{2}\lap_\Gt\n_+^\frac{1}{2})^{\frac{k-1}{2}}\n_+^\frac{1}{2}\kappa}^2 \\
			&+\abs{\qty(-\n_+^\frac{1}{2}\lap_\Gt\n_+^\frac{1}{2})^{\frac{k-2}{2}}\n_+^\frac{1}{2}(\DD_{\vh}\kappa)}^2 + \mathfrak{t}\abs{\qty(-\n_+^\frac{1}{2}\lap_\Gt\n_+^\frac{1}{2})^\frac{k-2}{2}\n_+ \kappa}^2 \dd{S_t}.
		\end{split}
	\end{equation}
	Then, the following lemma holds:
	\begin{lem}
		Suppose that $k \ge 3$ is an integer, $0 \le \alpha \le 1$, and $\qty(\Gt, \vv, \vh)$ is a solution to the problem \eqref{MHD}, \eqref{BC} for $t \in [0, T]$ with $\Gt \in \Lambda_{*} \cap H^{\kk+1}, (\vv, \vhh) \in H^{\kk}(\Om_t^+)$. Then there exists a generic polynomial $Q$ depending only on $\Lambda_{*}$, so that for all $t \in [0, T]$
		\begin{equation}\label{est E_alpha}
			\abs{\dv{t}\bar{E}_\alpha (t)} \le Q\qty(\abs{\qty(\Dt\kappa, \DD_\vh\kappa)}_{H^{\kk-\frac{5}{2}}(\Gt)}, \alpha\abs{\kappa}_{H^{\kk-1}(\Gt)}, \abs{\kappa}_{H^{\kk-2}(\Gt)}, \norm{\qty(\vv, \vh)}_{H^{\kk}(\Om_t^+)} ).
		\end{equation}
	\end{lem}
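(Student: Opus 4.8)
The plan is to run a weighted energy estimate on the reduced second‑order equation for the mean curvature supplied by Theorem~\ref{prop evo kappa'}, along the lines of the proof of Lemma~\ref{lem est E_l} (cf. also \cite[Lemma~5.1]{Liu-Xin2023}); the only genuinely new feature is the Rayleigh--Taylor term, whose companion is the weighted term $\mathfrak{t}\abs{(-\n_+^{\frac{1}{2}}\lap_\Gt\n_+^{\frac{1}{2}})^{\frac{k-2}{2}}\n_+\kappa}^2$ in $\bar E_\alpha$. First I would reduce the equation. Since $\vhh\equiv\vb{0}$, every term of \eqref{eqn Dt^2 kappa} carrying $\vhh$ drops out; writing $q\coloneqq p_{\vv,\vv}-p_{\vh,\vh}$ and using $\mathfrak{t}=-\DD_\vn q$ (which is $\ge\lambda_0>0$ by \eqref{RT consition}), \eqref{eqn Dt^2 kappa} becomes
\begin{equation*}
	\Dt^2\kappa-\alpha^2\lap_\Gt\n_+\kappa-\DD_\vh\DD_\vh\kappa+\mathfrak{t}\,\n_+\kappa=G, \qquad G\coloneqq-\alpha^2\abs{\grad^\top\kappa}^2+\alpha^2\abs{\II}^2\n_+\kappa+\mathfrak{R}'.
\end{equation*}
By \eqref{est mathfrak[R]}, Lemma~\ref{est ii} (with $s=\kk-2$), $\alpha\le1$ and the product estimates of Lemma~\ref{lem product est} (which close because $\H{\kk-2}$ is a Banach algebra for $k\ge3$), the quantity $\abs{G}_{H^{\kk-\frac{5}{2}}(\Gt)}$ is dominated by a polynomial in $\alpha\abs{\kappa}_{H^{\kk-1}(\Gt)}$, $\abs{\kappa}_{H^{\kk-2}(\Gt)}$, $\norm{\vv}_{H^{\kk}(\Om_t^+)}$ and $\norm{\vh}_{H^{\kk}(\Om_t^+)}$. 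I also record $\Dt\dd{S_t}=\Div_\Gt\vv\dd{S_t}$ from \eqref{eqn Dt dS} and $\comm{\Dt}{\DD_\vh}=\DD_{(\Dt\vh-\DD_\vh\vv)}=0$ from \eqref{comm dt dh}.

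Then I would differentiate $\bar E_\alpha$ in $t$ and substitute the equation. Setting $\opd\coloneqq\qty(-\n_+^{\frac{1}{2}}\lap_\Gt\n_+^{\frac{1}{2}})^{\frac{1}{2}}$ and $P\coloneqq\opd^{k-2}\n_+^{\frac{1}{2}}$ (self‑adjoint of order $\kk-\frac{5}{2}$ up to lower‑order errors, since $\opd$ and $\n_+^{\frac{1}{2}}$ commute modulo lower order), the leading contribution to $\dv{t}\bar E_\alpha$ is $2\int_\Gt P(\Dt^2\kappa)\cdot P(\Dt\kappa)\dd{S_t}$ plus $\dv{t}$ of the other three squares; after substituting $\Dt^2\kappa$ from the equation, the three dangerous top‑order pieces cancel against those squares:
\begin{enumerate}[(i)]
	\item the piece $\alpha^2\lap_\Gt\n_+\kappa$ becomes, via $\n_+^{\frac{1}{2}}\lap_\Gt\n_+\kappa=-\opd^2\n_+^{\frac{1}{2}}\kappa$ and the self‑adjointness of $\opd$, minus the leading part of $\dv{t}\int_\Gt\alpha^2\abs{\opd^{k-1}\n_+^{\frac{1}{2}}\kappa}^2\dd{S_t}$ (this is the computation behind \eqref{est I_3});
	\item the piece $\DD_\vh\DD_\vh\kappa$ becomes, after one integration by parts on $\Gt$ via \eqref{formula int by parts Gt}, commuting $\DD_\vh$ through $P$, and using $\comm{\Dt}{\DD_\vh}=0$, minus the leading part of $\dv{t}\int_\Gt\abs{P\DD_\vh\kappa}^2\dd{S_t}$ (as in \eqref{est I_4});
	\item the piece $-\mathfrak{t}\,\n_+\kappa$ gives $-2\int_\Gt P(\mathfrak{t}\n_+\kappa)\cdot P(\Dt\kappa)\dd{S_t}$, which by the self‑adjointness of $\opd^{k-2}$ and $\n_+^{\frac{1}{2}}$ and the fact that these commute with each other and with the multiplier $\mathfrak{t}$ up to lower order, equals, modulo lower‑order terms, $-2\int_\Gt\mathfrak{t}\,\Dt\kappa\cdot\n_+\opd^{2(k-2)}\n_+\kappa\dd{S_t}$; on the other hand $\dv{t}\int_\Gt\mathfrak{t}\abs{\opd^{k-2}\n_+\kappa}^2\dd{S_t}$ equals, up to the analogous lower‑order commutators, the sum of $2\int_\Gt\mathfrak{t}\,\Dt\kappa\cdot\n_+\opd^{2(k-2)}\n_+\kappa\dd{S_t}$, the term $\int_\Gt(\Dt\mathfrak{t})\abs{\opd^{k-2}\n_+\kappa}^2\dd{S_t}$, and an area‑element term; hence the leading pieces cancel.
\end{enumerate}
The contribution of $G$ is bounded by Cauchy--Schwarz, $\abs{2\int_\Gt PG\cdot P\Dt\kappa\dd{S_t}}\le C_*\abs{G}_{H^{\kk-\frac{5}{2}}(\Gt)}\abs{\Dt\kappa}_{H^{\kk-\frac{5}{2}}(\Gt)}$.

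It remains to bound all residual terms, each of which is of the required form: (a) commutators of $\Dt$ with $\opd^{k-2}$, $\n_+^{\frac{1}{2}}$, $\lap_\Gt$ and $\DD_\vh$, controlled by Lemma~\ref{Dt comm est lemma} in terms of $\norm{\vv}_{H^{\kk}(\Om_t^+)}$ together with $\abs{\qty(\Dt\kappa,\DD_\vh\kappa)}_{H^{\kk-\frac{5}{2}}(\Gt)}$, $\alpha\abs{\kappa}_{H^{\kk-1}(\Gt)}$ and $\abs{\kappa}_{H^{\kk-2}(\Gt)}$; (b) the area‑element terms $\int_\Gt F\,\Div_\Gt\vv\dd{S_t}$ with $F$ one of the four energy densities, using that $\Div_\Gt\vv\in L^\infty(\Gt)$ with norm controlled by $\norm{\vv}_{H^{\kk}(\Om_t^+)}$; (c) the lower‑order multiplier commutators (those involving $\mathfrak{t}$ and the half‑power commutators between $\n_+^{\frac{1}{2}}$ and $\opd^{j}$), handled using $\mathfrak{t}\in H^{\kk-\frac{3}{2}}(\Gt)\hookrightarrow C^1(\Gt)$ (standard elliptic estimates for \eqref{eqn q}) and Lemma~\ref{lem lap-n}; and (d) the term $\int_\Gt(\Dt\mathfrak{t})\abs{\opd^{k-2}\n_+\kappa}^2\dd{S_t}$, for which one computes $\Dt\mathfrak{t}=-\Dt\DD_\vn q$ from \eqref{eqn dt n}, the equation \eqref{eqn q} for $q$, Lemma~\ref{Dt comm est lemma}, and the MHD equations \eqref{MHD} (used to trade $\pd_t\vv$, $\pd_t\vh$ for spatial derivatives), obtaining $\abs{\Dt\mathfrak{t}}_{H^{\kk-\frac{5}{2}}(\Gt)}\le Q(\norm{\vv}_{H^{\kk}(\Om_t^+)},\norm{\vh}_{H^{\kk}(\Om_t^+)})$, hence $\Dt\mathfrak{t}\in L^\infty(\Gt)$ since $\kk-\frac{5}{2}>1$. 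Collecting (i)--(iii), the $G$‑bound and (a)--(d) yields \eqref{est E_alpha}.

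\textbf{Main obstacle.} The delicate point is (iii): arranging that the $\mathfrak{t}$‑weighted term in $\bar E_\alpha$ \emph{exactly} absorbs the Rayleigh--Taylor term $\mathfrak{t}\n_+\kappa$ up to genuinely lower‑order errors. This forces careful bookkeeping of how the half‑integer powers of $\n_\pm$ interlace with $\opd$ and with the multiplier $\mathfrak{t}$, and one must verify --- via Lemma~\ref{lem lap-n} and Lemma~\ref{Dt comm est lemma} --- that every commutator so produced is of strictly lower order; closely related is the estimate of $\Dt\mathfrak{t}$ in (d), the only place where the bulk equations \eqref{MHD} enter the computation, where one must avoid any loss of derivatives on $\vv$ and $\vh$. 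The remaining cancellations (i)--(ii) and the bulk of the commutator accounting are as in \cite{Liu-Xin2023} and introduce no difficulty beyond the presence of the weight $\mathfrak{t}$.
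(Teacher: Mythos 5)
Your proposal follows essentially the same route as the paper: differentiate $\bar E_\alpha$, commute $\Dt$ with $\opd^{k-2}\n_+^{\frac{1}{2}}$, substitute the reduced evolution equation \eqref{eqn Dt^2 kappa} (with $\vhh\equiv\vb{0}$), and cancel the three top-order pieces $\alpha^2\lap_\Gt\n_+\kappa$, $\DD_\vh\DD_\vh\kappa$ and $\mathfrak{t}\,\n_+\kappa$ against the time derivatives of the corresponding weighted squares, exactly as in the paper's terms $I_1$, $I_4$ and $I_5$, with the remaining contributions ($I_2$, $I_3$, $I_6$ in the paper, your $G$) and the commutator/area-element/$\Dt\mathfrak{t}$ errors estimated by the same lemmas. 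The argument is correct and matches the paper's proof.
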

	\begin{proof}
		For the simplicity of notations, set
		\begin{equation*}
			\opd \coloneqq \qty(-\n_+^\frac{1}{2}\lap_\Gt\n_+^{\frac{1}{2}})^{\frac{1}{2}}.
		\end{equation*}
		
		Commuting $\Dt$ with $\opd^{k-2}\n_+^\frac{1}{2}$, one can derive from Lemma \ref{Dt comm est lemma} that (see \cite[Lemma 5.1]{Liu-Xin2023} for more details)
		\begin{equation} \label{energy est step 1}
			\begin{split}
				&\abs{\int_\Gt \qty(\opd^{k-2}\n_+^\frac{1}{2}\Dt\kappa) \times \qty(\opd^{k-2}\n_+^\frac{1}{2}\Dt^2\kappa) \dd{S_t} -\frac{1}{2}\dv{t}\int_\Gt \abs{\opd^{k-2}\n_+^{\frac{1}{2}}\Dt\kappa}^2 \dd{S_t} } \\
				&\quad \le Q\qty(\abs{\Dt\kappa}_{H^{\kk-\frac{5}{2}}(\Gt)}, \abs{\vv}_{H^{\kk-\frac{1}{2}}(\Gt)}).
			\end{split}
		\end{equation}
		Plugging \eqref{eqn Dt^2 kappa} into \eqref{energy est step 1} yields
		\begin{equation}\label{eqn I_0}
			\begin{split}
				\int_\Gt &\qty[\opd^{k-2}\n_+^{\frac{1}{2}}\qty(\Dt^2\kappa)] \cdot\qty[\opd^{k-2}\n_+^{\frac{1}{2}}(\Dt\kappa)] \dd{S_t} \\
				=\, &\int_\Gt \alpha^2\qty[\opd^{k}\n_+^{\frac{1}{2}}\kappa] \cdot\qty[\opd^{k-2}\n_+^{\frac{1}{2}}(\Dt\kappa)] \dd{S_t} \\
				&- \int_{\Gt} \alpha^2 \qty[\opd^{k-2}\n_+^{\frac{1}{2}}\abs{\grad^\top\kappa}^2] \cdot\qty[\opd^{k-2}\n_+^{\frac{1}{2}}(\Dt\kappa)] \dd{S_t} \\
				&+ \int_\Gt \alpha^2\qty[\opd^{k-2}\n_+^{\frac{1}{2}}\qty(\abs{\II}^2\n_+\kappa)] \cdot\qty[\opd^{k-2}\n_+^{\frac{1}{2}}(\Dt\kappa)] \dd{S_t}  \\
				&+ \int_\Gt \qty[\opd^{k-2}\n_+^{\frac{1}{2}}\qty(\DD_\vh\DD_\vh\kappa)] \cdot\qty[\opd^{k-2}\n_+^{\frac{1}{2}}(\Dt\kappa)] \dd{S_t} \\
				&- \int_\Gt \qty[\opd^{k-2}\n_+^{\frac{1}{2}}\qty(\mathfrak{t}\n_+\kappa)] \cdot\qty[\opd^{k-2}\n_+^{\frac{1}{2}}(\Dt\kappa)] \dd{S_t}  \\
				&+ \int_\Gt \qty[\opd^{k-2}\n_+^{\frac{1}{2}}\mathfrak{R}'] \cdot\qty[\opd^{k-2}\n_+^{\frac{1}{2}}(\Dt\kappa)] \dd{S_t} \\
				=:\, &I_1 + \cdots + I_6.
			\end{split}
		\end{equation}
		It is not hard to derive from Lemmas~\ref{est ii}-\ref{lem product est} and \eqref{est mathfrak[R]} that
		\begin{equation}
			\abs{I_2} + \abs{I_3} + \abs{I_6} \le Q \qty(\abs{\Dt\kappa}_{H^{\kk-\frac{5}{2}(\Gt)}}, \alpha\abs{\kappa}_{H^{\kk-\frac{3}{2}}(\Gt)}, \norm{\qty(\vv, \vh)}_{H^{\kk}(\Om_t^+)}).
		\end{equation}
		For $I_1$, it follows from the self-adjointness of $\opd$ and the commutator estimates that
		\begin{equation}
			\abs{I_1 - \frac{1}{2}\dv{t}\int_\Gt\alpha^2\abs{\opd^{k-1}\n_+^\frac{1}{2}\kappa}^2\dd{S_t}} \le Q\qty(\alpha\abs{\kappa}_{H^{\kk-1}(\Gt)}, \abs{\vv}_{H^{\kk-\frac{1}{2}}(\Gt)}).
		\end{equation}
		To estimate $I_4$, one can derive from \eqref{formula int by parts Gt} and \eqref{comm dt dh} that
		\begin{equation}
			\abs{I_4 - \frac{1}{2}\dv{t}\int_\Gt \abs{\opd^{k-2}\n_+^\frac{1}{2}(\DD_{\vh}\kappa)}^2 \dd{S_t}} \le Q\qty(\abs{\qty(\Dt\kappa, \DD_\vh\kappa)}_{H^{\kk-\frac{5}{2}}(\Gt)}, \abs{\qty(\vv, \vh)}_{H^{\kk-\frac{1}{2}}(\Gt)}).
		\end{equation}
		To estimate  $I_5$, one can derive from \eqref{RT consition}, \eqref{def p_ab^+} and Lemma~\ref{Dt comm est lemma} that
		\begin{equation*}
			\abs{\Dt\mathfrak{t}}_{H^{\kk-\frac{5}{2}}(\Gt)} + \abs{\mathfrak{t}}_{H^{\kk-\frac{3}{2}}(\Gt)} \le Q\qty(\norm{\qty(\vv, \vh)}_{H^{\kk}(\Om_t^+)}).
		\end{equation*}
		Therefore, standard commutator and product estimates imply that
		\begin{equation}\label{est I5}
			\abs{I_5 - \frac{1}{2}\dv{t}\int_\Gt\mathfrak{t}\abs{\opd^{k-2}\n_+\kappa}^2\dd{S_t}} \le Q\qty(\abs{\kappa}_{H^{\kk-2}(\Gt)}, \norm{\qty(\vv, \vh)}_{H^{\kk}(\Om_t^+)}).
		\end{equation}
		In conclusion, \eqref{est E_alpha} follows from  \eqref{energy est step 1}-\eqref{est I5}
	\end{proof}
	
	Now, observe that \eqref{RT consition}-\eqref{E_alpha} yield
	\begin{equation}
		\begin{split}
			&\abs{\Dt\kappa}_{H^{\kk-\frac{5}{2}}(\Gt)}^2 + \alpha^2 \abs{\kappa}_{H^{\kk-1}(\Gt)}^2 + \abs{\DD_\vh\kappa}_{H^{\kk-\frac{5}{2}}(\Gt)} + \lambda_0 \abs{\kappa}_{H^{\kk-2}(\Gt)}^2 \\
			&\quad\less \bar{E}_\alpha (t) + \abs{\int_\Gt \Dt\kappa\dd{S_t}}^2 + \abs{\int_\Gt \DD_\vh \kappa \dd{S_t}}^2 + \abs{\int_\Gt\kappa\dd{S_t}}^2. 
		\end{split}
	\end{equation}
	On the other hand, \eqref{eqn dt kappa} and the fact that $\Gt \in \Lambda_{*}$ yield
	\begin{equation}
		\abs{\int_\Gt \Dt\kappa\dd{S_t}}^2 + \abs{\int_\Gt \DD_\vh \kappa \dd{S_t}}^2 + \abs{\int_\Gt\kappa\dd{S_t}}^2 \le Q\qty(\norm{\vv}_{H^{\kk}(\Om_t^+)}, \norm{\vh}_{H^{\kk}(\Om_t^+)}),
	\end{equation}
	where $Q$ is a generic polynomial determined by $\Lambda_*$.
	Thereby, it follows from the Gr\"{o}nwall inequality that
	\begin{equation}\label{est E_alpha'}
		\bar{E}_\alpha (t) \le \bar{E}_\alpha(0) + \int_0^t Q\qty(\bar{E}_\alpha(t'), \norm{(\vv, \vh)}_{H^{\kk}(\Om_{t'}^+)}) \dd{t'},
	\end{equation}
	where $Q$ is a generic polynomial determined by $\Lambda_{*}$ and $\lambda_0$.
	
	Now, consider the energy functionals:
	\begin{equation}
		\bar{E}_0 (t) \coloneqq \int_{\Om_t^+} \abs{\vv}^2 + \abs{\vh}^2 \dd{x} + \int_\Gt \alpha^2 \dd{S_t},
	\end{equation}
	\begin{equation}
		\bar{E}_1 (t) \coloneqq \norm{\vom}_{H^{\kk-1}(\Om_t^+)}^2 + \norm{\vj}_{H^{\kk-1}(\Om_t^+)}^2,
	\end{equation}
	and
	\begin{equation}
		\bar{E}(t) \coloneqq \bar{E}_\alpha (t) + \bar{E}_0(t) + \bar{E}_1(t).
	\end{equation}
	
	Since it is assumed that $\vhh \equiv \vb{0}$, \eqref{energy conserv 1}-\eqref{energy conserv 2} yield that
	\begin{equation}
		\dv{t} \bar{E}_0 (t) \equiv 0.
	\end{equation}
	Moreover, one can derive from \eqref{eqn pdt vom}-\eqref{eqn pdt vj}, \eqref{eqn xi}-\eqref{eqn eta}, and the standard estimates for transport equations (c.f. \cite{Sun-Wang-Zhang2018} or \cite{Liu-Xin2023}) that
	\begin{equation}\label{est E_1}
		\abs{\dv{t}\bar{E}_1(t)} \le Q\qty(\norm{\vv}_{H^{\kk}(\Om_t^+)}, \norm{\vh}_{H^{\kk}(\Om_t^+)}),
	\end{equation}
	where $Q$ is a generic constant determined by $\Lambda_{*}$.
	
	Standard div-curl estimates (see \cite{Cheng-Shkoller2017}) imply that
	\begin{equation}\label{vani est v'}
		\norm{\vv}_{H^{\kk}(\Om_t^+)} \less \abs{\vn \vdot \lap_\Gt \vv}_{H^{\kk-\frac{5}{2}}(\Gt)} + \norm{\vom}_{H^{\kk-1}(\Om_t^+)} + \norm{\vv}_{L^2(\Om_t^+)},
	\end{equation}
	and
	\begin{equation}\label{vani est h'}
		\norm{\vh}_{H^{\kk}(\Om_t^+)} \less \abs{\vn \vdot \lap_\Gt \vh}_{H^{\kk-\frac{5}{2}}(\Gt)} + \norm{\vj}_{H^{\kk-1}(\Om_t^+)} + \norm{\vh}_{L^2(\Om_t^+)}.
	\end{equation}
	
	On the other hand, \eqref{eqn dt kappa} yields
	\begin{equation*}
		-\vn\vdot\lap_\Gt\vv = \Dt\kappa + 2\ip{\II}{\qty(\DD\vv)^\top},
	\end{equation*}
	which implies that
	\begin{equation}\label{est lap_gt vv}
		\abs{\vn\vdot\lap_\Gt\vv}_{H^{\kk-\frac{5}{2}}(\Gt)} \less \abs{\Dt\kappa}_{H^{\kk-\frac{5}{2}}(\Gt)} + \norm{\vv}_{H^{\kk-1}(\Om_t^+)}.
	\end{equation}
	Then, it follows from the interpolation inequality that
	\begin{equation}\label{est vv}
		\norm{\vv}_{H^{\kk}(\Om_t^+)}^2 \le Q\qty(\bar{E}).
	\end{equation}
	Furthermore, \eqref{lap vn} and the fact that $\vh \vdot \vn \equiv 0$ lead to
	\begin{equation*}
		-\vn\vdot\lap_\Gt\vh = \DD_\vh \kappa + 2 \ip{\qty(\DD\vh)^\top}{\II},
	\end{equation*}
	which yields that
	\begin{equation}\label{est lap_gt vh}
		\abs{\vn\vdot\lap_\Gt\vh}_{H^{\kk-\frac{5}{2}}(\Gt)} \less \abs{\DD_\vh\kappa}_{H^{\kk-\frac{5}{2}}(\Gt)} + \norm{\vh}_{H^{\kk-1}(\Om_t^+)}.
	\end{equation}
	Hence,
	\begin{equation}\label{est vh}
		\norm{\vh}_{H^{\kk}(\Om_t^+)}^2 \le Q\qty(\bar{E}).
	\end{equation}
	
	In conclusion, \eqref{est E_alpha'}-\eqref{est E_1}, \eqref{est vv}, and \eqref{est vh} yield that
	\begin{equation}\label{RT energy est}
		\bar{E}(t) \le \bar{E}(0) + \int_0^t Q\qty(\bar{E}(t')) \dd{t'},
	\end{equation}
	where $Q$ is a generic polynomial depending only on $\Lambda_{*}$ and $\lambda_0$.
	
	Therefore, Theorem~\ref{thm vani st RT} follows from \eqref{RT energy est} and standard weak convergence arguments.
	
	\section{Stability Effect of Non-Collinear Magnetic Fields}
	
	\subsection{Interfaces, Coordinates and Div-Curl Systems}\label{sec prelimi'}
	From now on, it is assumed that $ \Om \coloneqq \mathbb{T}^2\times(-1, 1) $ and $ \Om_t^+ $ has a solid boundary $ \Gamma_+ \coloneqq \mathbb{T}^2 \times \{+1\} $. Hence, some statements in \textsection~\ref{sec harmonic coord} and \textsection~\ref{sec div-cul system} need slight changes in order to be compatible to the new topology of $ \Om_t^\pm $. More precisely, the harmonic coordinate maps introduced in \textsection~\ref{sec harmonic coord} are now replaced by
	\begin{equation}\label{key}
		\begin{cases*}
			\lap_y \X_\Gamma^\pm = 0 &for $ y \in \Om_*^\pm $, \\
			\X_\Gamma^\pm (z) = \Phi_\Gamma(z) &for $ z \in \Gamma_* $, \\
			\X_\Gamma^\pm (z) = z &for $ z \in \Gamma_\pm$.
		\end{cases*}
	\end{equation}
	Similarly, the definitions of harmonic extensions of a function $ f $ defined on $ \Gamma $ are modified to
	\begin{equation}\label{eqn harm ext'}
		\begin{cases*}
			\lap \h_\pm f = 0 \qfor x \in \Om_\Gamma^\pm, \\
			\h_\pm f = f \qfor x \in \Gamma, \\
			\DD_{\wt{\vn}_\pm} \h_\pm f = 0 \qfor x \in \Gamma_\pm.
		\end{cases*}
	\end{equation}
	The Dirichlet-Neumann operators are also defined by \eqref{def DN op} for $ \h_\pm $ given by \eqref{eqn harm ext'}.
	
	Therefore, Lemmas \ref{lem composition harm coordi} - \ref{lem lap-n}, and those properties of the Dirichlet-Neumann operators introduced in \textsection~\ref{sec harmonic coord} still hold.
	
	As for the div-curl systems, due to the different topology, we introduce the following modifications of Theorems \ref{thm div-curl} and \ref{thm div-curl"} (see \cite{Cheng-Shkoller2017, Sun-Wang-Zhang2018, Sun-Wang-Zhang2019} for detailed proofs).
	
		Assume that $ \Gamma \in \Lambda_{*} $ is diffeomorphic to $ \mathbb{T}^2 $, with
		\begin{equation}\label{surface away condi}
			\dist(\Gamma, \Gamma_\pm) \ge c_0 > 0
		\end{equation}
		for some positive constant $ c_0 $.
		Let $ \vb{f}, g \in H^{l-1}(\Om^+) $ and $ h \in H^{l-\frac{1}{2}}(\Gamma) $ be given, satisfying the compatibility condition
		\begin{equation*}\label{key}
			\int_{\Om^+} g \dd{x} = \int_{\Gamma} h \dd{S},
		\end{equation*}
		and
		\begin{equation}\label{key}
			\div \vb{f} = 0 \text{ in } \Om^+ \qc  \int_{\Gamma_+} \vb{f} \vdot \wt{\vn}_+ \dd{S} = 0.
		\end{equation}
		Then, for $ 1 \le l \le \kk-1 $, the following div-curl problem:
		\begin{equation}\label{key}
			\begin{cases*}
				\curl \vbu = \vb{f} &in $ \Om^+ $, \\
				\div \vbu = g &in $ \Om^+ $, \\
				\vbu \vdot \vn = h &on $ \Gamma $, \\
				\vbu \vdot \wt{\vn}_+ = 0 \qc \int_{\Gamma_+} \vbu \dd{S} = \va{\mathfrak{u}} &on $ \Gamma_+ $
			\end{cases*}
		\end{equation}
		has a unique solution $ \vbu \in H^{l}(\Om^+) $ with
		\begin{equation}\label{key}
			\norm{\vbu}_{H^l(\Om^+)} \le C\qty(\abs{\ka}_{\H{\kk-\frac{3}{2}}}, c_0) \times \qty( \norm{\vf}_{H^{l-1}(\Om^+)} + \norm{g}_{H^{l-1}(\Om^+)} + \abs{h}_{H^{l-\frac{1}{2}}(\Gamma)} + \abs{\va{\mathfrak{u}}}).
		\end{equation}
	
	By solving div-curl problems with the notations:
	\begin{equation}\label{key}
		\va{\mathfrak{v}} \coloneqq \int_{\Gamma_+} \vv \dd{S_+}\qc \va{\mathfrak{h}} \coloneqq \int_{\Gamma_+} \vh \dd{S_+},
	\end{equation}
	the velocity $ \vv $ and magnetic field $ \vh $ are determined by $ (\Gt, \vom, \vj, \va{\mathfrak{v}}, \va{\mathfrak{h}}) $ uniquely.
	
	For the magnetic field in the vacuum, if $ \vu{J} \in \mathrm{T}\Gamma_- $, $ \Div_{\Gamma_-} \vu{J} = 0 $, and $ \Gt \in \Lambda_{*} $ satisfies
	\begin{equation}\label{key}
		\dist\qty(\Gt, \Gamma_\pm) \ge c_0 > 0,
	\end{equation} 
	then the following div-curl problem
	\begin{equation}\label{key}
		\begin{cases*}
			\div \vhh = 0\qc \curl \vhh = \vb{0} &in $ \Om_t^- $, \\
			\vhh \vdot \vn = \hat{\theta} &on $ \Gt $, \\
			\wt{\vn}_- \cp \vhh= \vu{J} &on $ \Gamma_- $
		\end{cases*}
	\end{equation}
	has a unique solution $ \vhh $ with the following estimate (c.f. \cite[pp. 93-94]{Sun-Wang-Zhang2019}):
	\begin{equation}\label{key}
		\begin{split}
			\norm*{\vhh}_{H^{s}(\Om_t^-)} \le C(c_0, \abs{\ka}_{H^{\kk-\frac{3}{2}}}) \times \qty(\abs*{\hat{\theta}}_{H^{s-\frac{1}{2}}(\Gt)} + \abs*{\vu{J}}_{H^{s-\frac{1}{2}}(\Gamma_-)}),
		\end{split}
	\end{equation}
	for $ \Gt \in \Lambda_{*} $ and $ 1 \le s \le \kk $.
	
	Thus, $\vhh$ can also be determined uniquely by $\ka$ and $\vu{J}$.
	
	\subsection{Reformulation of the Problem}
	
	Here, one replaces \eqref{eqn pd beta v} with
	\begin{equation}\label{key}
		\pd_\beta \vv_* = \opb(\ka)\pd^2_{t\beta}\ka + \opf(\ka)\pd_\beta\vom_* + \opg(\ka, \pd_t\ka, \vom_*)\pd_\beta\ka + \vb{S}(\ka)\pd_\beta\va{\mathfrak{v}},
	\end{equation}
	where the new term $ \vb{S} (\ka)\pd_\beta \va{\mathfrak{v}} $ is the pull-back to $ \Gs $ of the solution to the following div-curl problem:
	\begin{equation}\label{def opS}
		\begin{cases*}
			\div \vb{y} = 0 \qc \curl \vb{y}= \vb{0} &in $ \Om^+_t $, \\
			\vb{y} \vdot \vn = 0 &on $ \Gt $, \\
			\int_{\Gamma_+} \vb{y} \dd{\wt{S}} = \pd_\beta \va{\mathfrak{v}} &on $ \Gamma_+ $.
		\end{cases*}
	\end{equation}
	Therefore, since $ \pd_\beta \va{\mathfrak{v}} $ is a constant tangential vector on $ \Gamma_+ $ for each fixed $ \beta $, the following estimates can be derived from the div-curl ones:
	\begin{equation}\label{est S}
		\abs{\vb{S} (\ka)}_{\LL\qty(\R^2; \H{s})} \le C_* \qfor \frac{3}{2} \le s \le \kk-\frac{3}{2},
	\end{equation}
	and
	\begin{equation}\label{est var S}
		\abs{\var \vb{S}(\ka)}_{\LL\qty(\H{\kk-\frac{5}{2}}; \LL\qty(\R^2; \H{s'}))} \le C_* \qfor \frac{3}{2} \le s' \le \kk-\frac{3}{2}.
	\end{equation}
	
	Due to the change of topology of $ \Om^+_t $, the definition of $ p_{\vb{a}, \vb{b}} $ is replaced by:
	\begin{equation}\label{eqn p_ab '}
		\begin{cases*}
			-\lap p_{\vb{a}, \vb{b}} = \tr(\DD\vb{a} \vdot \DD\vb{b}) &in $ \Om_t^+ $, \\
			p_{\vb{a}, \vb{b}} = 0 &on $ \Gt $, \\
			\DD_{\wt{\vn}_+} p_{\vb{a}, \vb{b}} = \wt{\II}_+(\vb{a}, \vb{b}) &on $ \Gamma_+ $,
		\end{cases*}
	\end{equation}
	where the solenoidal vector fields $ \vb{a}, \vb{b} $ satisfy $ \vb{a} \vdot \wt{\vn}_+ = 0 = \vb{b} \vdot \wt{\vn}_+  $ on $ \Gamma_+ $.
	
	Therefore, the evolution equation for $ \ka $ can be obtained via a slight modification of \eqref{eqn pdt2 ka "}:
	\begin{equation}\label{key}
		\begin{split}
			&\pd^2_{tt} \ka + \opC_\alpha (\ka, \pd_t\ka, \vv_*, \vh_*, \vhh_*)\ka - \opF(\ka)\pd_t\vom_*  - \opG(\ka, \pd_t\ka, \vom_*, \vj_*, \va{\mathfrak{v}}, \va{\mathfrak{h}}, \vu{J}) - \mathscr{S}(\ka)\pd_t\va{\mathfrak{v}} \\
			&\quad = \qty[\mathrm{I} + \opB(\ka)]^{-1}\qty{\qty[-\lap_\Gt \qty(\vW \vdot \vn) + \vW \vdot \lap_\Gt \vn + a^2 \dfrac{\vW \vdot \vn}{\vn \vdot (\vnu \circ \Phi_\Gt^{-1})}] \circ \Phi_\Gt}.
		\end{split}
	\end{equation}
	Furthermore, Lemma \ref{lem 3.4} holds for $ k \ge 3, \alpha = 0 $, with a slight change of \eqref{est var opF"} and \eqref{est var opG"}:
	\begin{equation}\label{est var opF""}
		\abs{\var\opF(\ka)}_{\LL\qty[\H{\kk-\frac{5}{2}}; \LL\qty(H^{\kk-\frac{7}{2}}(\Om_*^+); \H{\kk-\frac{7}{2}})]} \le C_*, \tag{\ref{est var opF"}'}
	\end{equation}
	\begin{equation}\label{est var opG""}
		\begin{split}
			&\hspace{-2em}\abs{\var'\opG}_{\LL\qty[\H{\kk-\frac{5}{2}}\times\H{\kk-\frac{7}{2}}\times H^{\kk-2}(\Om_*^+) \times H^{\kk-2}(\Om_*^+) \times \R^2 \times \R^2; \H{\kk-\frac{7}{2}}]} \\
			\le\, &a^2 Q\qty(\abs{\pd_t\ka}_{\H{\kk-\frac{5}{2}}}, \norm{\vom_*}_{H^{\kk-1}(\Om_*^+)}, \norm{\vj_*}_{H^{\kk-1}(\Om_*^+)}),
		\end{split} \tag{\ref{est var opG"}'}
	\end{equation}
	where $ \var'\opG $ is the variation of $ \opG $ with respect to the first six variables.
	In addition, the operator $ \mathscr{S}(\ka) $ satisfies the estimates
	\begin{equation}\label{est opS""}
		\abs{\mathscr{S}(\ka)}_{\LL\qty(\R^2; \H{\kk-\frac{5}{2}})} \le Q\qty(\abs{\ka}_{\H{\kk-\frac{3}{2}}}),
	\end{equation}
	and
	\begin{equation}\label{est var opS""}
		\abs{\var\mathscr{S}(\ka)}_{\LL\qty[\H{\kk-\frac{7}{2}}; \LL\qty(\R^2; \H{\kk-\frac{7}{2}})]} \le Q\qty(\abs{\ka}_{H^{\kk-\frac{3}{2}}}).
	\end{equation}
	
	The discussions on the evolution equations for the current and vorticity remain the same as those in \textsection~\ref{sec curr-vorticity eqn}.
	
	\subsection{Linear Systems}\label{sec noncol linear}
	Similar to the arguments in \textsection~\ref{sec linear ka}, assume that $ \Gs \in H^{\kk+\frac{1}{2}} (k\ge 3) $ is a reference hypersurface, and $ \Lambda_* $  defined by \eqref{def lambda*} satisfies all the properties discussed in the preliminary. Suppose that there are a family of hypersurfaces $ \Gt \in \Lambda_* $ and three tangential vector fields $ \vv_*, \vh_*, \vhh_* : \Gs \to \mathrm{T}\Gs $ satisfying:
	\begin{equation}\label{key}
		\ka \in C^0\qty{[0, T]; \H{\kk-\frac{3}{2}}} \cap C^1\qty{[0, T]; B_{\delta_1} \subset \H{\kk-\frac{5}{2}}}, 
	\end{equation}
	and
	\begin{equation}\label{key}
		\vv_{*}, \vh_{*}, \vhh_* \in C^0\qty{[0, T]; \H{\kk-\frac{1}{2}}} \cap C^1\qty{[0, T]; \H{\kk-\frac{3}{2}}}. 
	\end{equation}
	Furthermore, assume that there are positive constants $ c_0$ and $ \mathfrak{s}_0 $, so that the following two relations hold uniformly on $ [0, T] $:
	\begin{equation}\label{key}
		\Upsilon(\vh_*, \vhh_*) \ge \mathfrak{s}_0 > 0,
		\qand
		\dist\qty(\Gt, \Gamma_\pm) \ge c_0 > 0.
	\end{equation}
	
	The positive constants $ L_1$ and $ L_2 $ are defined to be:
	\begin{equation}\label{key}
		\sup_{t\in[0, T]} \qty{\abs{\ka (t)}_{\H{\kk-\frac{3}{2}}},  \abs{\pd_t \ka(t)}_{\H{\kk-\frac{5}{2}}}, \abs*{(\vv_{*}(t), \vh_{*}(t), \vhh_*(t))}_{\H{\kk-\frac{1}{2}}}} \le L_1,
	\end{equation}
	\begin{equation}\label{key}
		\sup_{t\in [0, T]} \abs*{(\pd_t\vv_{*}(t), \pd_t\vh_{*}(t), \pd_t\vhh_*(t))}_{\H{\kk-\frac{3}{2}}} \le L_2.
	\end{equation}
	
	Consider the linear initial value problem:
	\begin{equation}\label{eqn linear 1""}
		\begin{cases}
			\pd^2_{tt}\f + \opC_0(\ka, \pd_t\ka, \vv_*, \vh_*, \vhh_*)\f = \g, \\
			\f(0) = \f_0, \quad \pd_t \f(0) = \f_1,
		\end{cases}
	\end{equation}
	where $ \f_0, \f_1, \g(t) : \Gs \to \R $ are three given functions, and $ \opC_0 $ is given by \eqref{def opC"} with $ \alpha = 0 $.
	
	Define the following energy functional:
	\begin{equation}\label{eqn linear 2'}
		\begin{split}
			\wt{E}_l (t, \f, \pd_t\f) \coloneqq \int_\Gt &\abs{\qty(-\n_+^\frac{1}{2}\lap_\Gt\n_+^\frac{1}{2})^\frac{l}{2}\n_+^\frac{1}{2}\qty[(\pd_t\f + \DD_{\vv_*}\f) \circ \Phi_\Gt^{-1}]}^2 \\
			&+ \abs{\qty(-\n_+^\frac{1}{2}\lap_\Gt\n_+^\frac{1}{2})^\frac{l}{2}\n_+^\frac{1}{2}\qty[(\DD_{\vh_*}\f) \circ \Phi_\Gt^{-1}]}^2 \\
			&+ \abs{\qty(-\n_+^\frac{1}{2}\lap_\Gt\n_+^\frac{1}{2})^\frac{l}{2}\n_+^\frac{1}{2}\qty[(\DD_{\vhh_*}\f) \circ \Phi_\Gt^{-1}]}^2 \dd{S_t}.
		\end{split}
	\end{equation}
	Then it follows from the arguments in \S~\ref{sec linear ka} that for any integer $ 0 \le  l \le k-2 $ and $ 0 \le t \le T $, it holds that
	\begin{equation}\label{est E_l'}
		\begin{split}
			&\hspace{-2em}\wt{E}_l (t, \f, \pd_t\f) - \wt{E}_l (0, \f_0, \f_1) \\
			\le&Q(L_1, L_2)\int_0^t \qty(\abs{\f(s)}_{\H{\frac{3}{2}l + \frac{3}{2}}} + \abs{\pd_t\f(s)}_{\H{\frac{3}{2}l + \frac{1}{2}}} + \abs{\g(s)}_{\H{\frac{3}{2}l + \frac{1}{2}}}) \times \\
			&\hspace{8em} \times \qty(\abs{\f(s)}_{\H{\frac{3}{2}l + \frac{3}{2}}} + \abs{\pd_t\f(s)}_{\H{\frac{3}{2}l + \frac{1}{2}}}) \dd{s},
		\end{split}
	\end{equation}
	where $ Q $ is a generic polynomial determined by $ \Lambda_* $.
	
	Since $\Upsilon(\vh_*, \vhh_*) \ge \mathfrak{s}_0 > 0$, it is clear that $\vh_*$ and $\vhh_*$ form a global frame of $\Gt$, and the following estimate holds for each function $f \colon \Gt \to \R$ and $1 \le s \le \kk-\frac{1}{2}$:
	\begin{equation}
		\abs{f}_{H^{s}(\Gt)} \le Q(L_1, \mathfrak{s}_0^{-1}) \times \qty( \abs{f}_{L^2(\Gt)} + \abs{\DD_{\vh} f}_{H^{s-1}(\Gt)} + \abs{\DD_\vhh f}_{H^{s-1}(\Gt)}),
	\end{equation}
	where $Q$ is a generic polynomial determined by $\Lambda_*$. Therefore, it is routine to prove the following proposition:
	\begin{prop}
		For $ 0 \le l \le k-2 $, $ T \le C(L_1, L_2, \mathfrak{s}_0) $ and $ \g \in C^0 \qty([0, T]; H^{\frac{3}{2}l + \frac{1}{2}}(\Gs)) $, the problem \eqref{eqn linear 2'} is well-posed in $ C^0\qty([0, T]; \H{\frac{3}{2}l+\frac{3}{2}}) \cap C^1\qty([0, T]; \H{\frac{3}{2}l+\frac{1}{2}}) $, and the following energy estimate holds:
		\begin{equation}\label{est linear eqn ka""}
			\begin{split}
				&\hspace{-1em}\abs{\f(t)}_{\H{\frac{3}{2}l+\frac{3}{2}}}^2 + \abs{\pd_t\f(t)}_{\H{\frac{3}{2}l+\frac{1}{2}}}^2 \\
				\le\, &C_* e^{Q(L_1, L_2, \mathfrak{s}_0^{-1})t} \qty( \abs{\f_0}_{\H{\frac{3}{2}l+\frac{3}{2}}}^2 + \abs{\f_1}_{\H{\frac{3}{2}l+\frac{1}{2}}}^2 + \int_0^t \abs{\g(t')}^2_{\H{\frac{3}{2}l+\frac{1}{2}}} \dd{t'}).
			\end{split}
		\end{equation}
	\end{prop}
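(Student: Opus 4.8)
The plan is to follow the scheme of \S\ref{sec linear ka} and \cite{Liu-Xin2023}: first derive the a priori estimate \eqref{est linear eqn ka""} for sufficiently regular solutions, then obtain existence by a Galerkin approximation (or the semigroup method of \cite{Shatah-Zeng2011}), and finally deduce uniqueness and continuous dependence on the data by applying the same estimate to differences of two solutions. The only structural change compared with \S\ref{sec linear ka} is that the coercivity of the energy now comes from the non-collinearity condition $\Upsilon(\vh_*,\vhh_*)\ge\mathfrak{s}_0>0$ rather than from the third-order term $\alpha^2\opA(\ka)$, so the step that closes the estimate must be rearranged accordingly.

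For the a priori bound, I would start from the differential energy inequality \eqref{est E_l'}; integrating in time and applying Young's inequality, the growth of $\wt{E}_l(t)$ is controlled by $\int_0^t Q(L_1,L_2)\bigl(\abs{\f}_{\H{\frac{3}{2}l+\frac{3}{2}}}^2+\abs{\pd_t\f}_{\H{\frac{3}{2}l+\frac{1}{2}}}^2+\abs{\g}_{\H{\frac{3}{2}l+\frac{1}{2}}}^2\bigr)\,\dd{s}$. The heart of the argument is to reconstruct the full Sobolev norms on the left-hand side from $\wt{E}_l$ plus strictly lower-order quantities. Using the norm equivalence \eqref{equiv n lap} between $\n_\pm$ and $(\id-\lap_\Gt)^{1/2}$ and the composition bounds of Lemma \ref{lem composition harm coordi}, the three terms defining $\wt{E}_l$ dominate --- modulo the one-dimensional kernels $\ker\n_\pm=\{\mathrm{const.}\}$ and the commutator errors arising from moving $\DD_{\vh_*},\DD_{\vhh_*}$ past $\n_+^{1/2}$ and $\lap_\Gt$, all of which are of lower order --- the quantities $\abs{\pd_t\f+\DD_{\vv_*}\f}_{\H{\frac{3}{2}l+\frac{1}{2}}}$, $\abs{\DD_{\vh_*}\f}_{\H{\frac{3}{2}l+\frac{1}{2}}}$ and $\abs{\DD_{\vhh_*}\f}_{\H{\frac{3}{2}l+\frac{1}{2}}}$. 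The frame estimate displayed just before the proposition, applied with $s=\frac{3}{2}l+\frac{3}{2}$ (admissible since $\frac{3}{2}l+\frac{3}{2}\le\kk-\frac{3}{2}\le\kk-\frac{1}{2}$ whenever $0\le l\le k-2$), then upgrades the control of $\DD_\vh\f$ and $\DD_\vhh\f$ to control of the full norm $\abs{\f}_{\H{\frac{3}{2}l+\frac{3}{2}}}$ at the sole cost of $\abs{\f}_{L^2(\Gt)}$; and $\abs{\pd_t\f}_{\H{\frac{3}{2}l+\frac{1}{2}}}$ is recovered by subtracting $\DD_{\vv_*}\f$, a term which, since $\vv_*\in\H{\kk-\frac{1}{2}}$, is bounded by a constant multiple of $\abs{\f}_{\H{\frac{3}{2}l+\frac{3}{2}}}$ and hence already controlled by the previous step.

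It remains to absorb the residual zeroth-order pieces, namely $\abs{\f}_{L^2(\Gt)}$ together with the averages of $\f$ and $\pd_t\f$ coming from the kernels of $\n_\pm$. These close an auxiliary low-order energy: one has $\dv{t}\abs{\f}_{L^2(\Gt)}^2\lesssim\abs{\f}_{L^2(\Gt)}\abs{\pd_t\f}_{L^2(\Gt)}$, while pairing the equation in \eqref{eqn linear 1""} with the constant $1$ on $\Gt$ gives a closed ordinary differential inequality for $\int_\Gt\f\,\dd{S_t}$ and $\int_\Gt\pd_t\f\,\dd{S_t}$ with right-hand side bounded by $\abs{\g}_{L^2(\Gt)}$ and the uniformly bounded coefficients of $\opC_0$. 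Adding the top-order and low-order contributions and invoking Gr\"onwall's inequality yields \eqref{est linear eqn ka""} for $T\le C(L_1,L_2,\mathfrak{s}_0)$, the dependence on $\mathfrak{s}_0$ entering through the polynomial $Q(L_1,\mathfrak{s}_0^{-1})$ in the frame estimate. Existence then follows by solving the finite-dimensional Galerkin projections of \eqref{eqn linear 1""} onto the span of the first $N$ eigenfunctions of $-\n_+^{1/2}\lap_\Gt\n_+^{1/2}$, deriving \eqref{est linear eqn ka""} uniformly in $N$, and passing to the limit, with the asserted time-continuity in $C^0\qty([0,T];\H{\frac{3}{2}l+\frac{3}{2}})\cap C^1\qty([0,T];\H{\frac{3}{2}l+\frac{1}{2}})$ being standard. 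I expect the principal obstacle to be precisely this coercivity step: in the absence of a surface-tension term, $\wt{E}_l$ is a sum of three \emph{directional} energies, and one must verify that the hypothesis $\Upsilon(\vh_*,\vhh_*)\ge\mathfrak{s}_0>0$ really renders their sum equivalent (up to lower order) to $\abs{\f}_{\H{\frac{3}{2}l+\frac{3}{2}}}^2+\abs{\pd_t\f}_{\H{\frac{3}{2}l+\frac{1}{2}}}^2$, while carefully tracking that every commutator error remains of strictly lower order so that it can be absorbed through Gr\"onwall's inequality.
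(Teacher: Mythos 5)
Your proposal is correct and follows essentially the same route the paper intends: the paper itself only sketches this proposition, deferring to the energy inequality \eqref{est E_l'} from \S\ref{sec linear ka} and to the frame estimate displayed just before the statement, which is exactly the coercivity mechanism you identify (the three directional energies in $\wt{E}_l$, combined with $\Upsilon(\vh_*,\vhh_*)\ge\mathfrak{s}_0$, recover $\abs{\f}_{\H{\frac{3}{2}l+\frac{3}{2}}}$ up to lower-order terms, replacing the missing surface-tension term of \S\ref{sec linear ka}). Your treatment of the kernel/zeroth-order remainders, the Gr\"onwall closure, and the Galerkin existence step matches the "routine" argument the paper invokes.
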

	
	For the linearized current-vorticity systems, the arguments in \textsection~\ref{sec linear current vortex} are still valid.
	
	\subsection{Nonlinear Problems}
	As in \textsection~\ref{sec nonlinear}, take a reference hypersurface $ \Gs \in H^{\kk+\frac{1}{2}} $ and $ \delta_0 > 0  $, so that
	\begin{equation*}
		\Lambda_* \coloneqq \Lambda \qty(\Gs, \kk-\frac{1}{2}, \delta_0)
	\end{equation*}
	satisfies all the properties discussed in the preliminary. Furthermore, assume that there is a constant $ c_0 > 0 $ so that \eqref{surface away condi} holds for $ \Gs $. We shall solve the nonlinear problem by iterations on the linearized problems in the space:
	\begin{equation*}\label{key}
		\begin{split}
			&\ka \in C^0\qty([0, T]; \H{\kk-\frac{3}{2}}) \cap C^1\qty([0, T]; B_{\delta_1}\subset\H{\kk-\frac{5}{2}}) \cap C^2\qty([0, T]; \H{\kk-\frac{7}{2}}); \\
			&\vom_{*}, \vj_{*} \in  C^0\qty([0, T]; H^{\kk-1}(\Om_*^\pm)) \cap C^1\qty([0, T]; H^{\kk-2}(\Om_*^\pm)); \\
			&\va{\mathfrak{v}}, \va{\mathfrak{h}} \in C^1\qty([0, T]; \R^2).
		\end{split}
	\end{equation*}
	
	\subsubsection{Fluid Region, Velocity and Magnetic Fields}
	
	As discussed in \textsection~\ref{section recovery}, the bulk region, velocity and magnetic fields can be obtained via solving the div-curl problems:
	\begin{equation}\label{div-curl nonlinear v""}
		\begin{cases*}
			\div \vv = 0 \qc \curl \vv = \bar{\vom} &in $ \Om^+_t $, \\
			\vv \vdot \vn = \vn \vdot (\pd_t\gt \vnu) \circ\Phi_\Gt^{-1} &on $ \Gt $, \\
			\vv \vdot \wt{\vn}_+ = 0 \qc \int_{\Gamma_+} \vv \dd{S} = \va{\mathfrak{v}}  &on $\Gamma_+ $;
		\end{cases*}
	\end{equation}
	\begin{equation}\label{div-curl nonlinear h""}
		\begin{cases*}
			\div \vh = 0 \qc \curl \vh = \bar{\vj} &in $ \Om_t^+ $, \\
			\vh \vdot \vn = 0 &on $ \Gt $, \\
			\vh \vdot \wt{\vn}_+ = 0 \qc \int_{\Gamma_+} \vh \dd{S} = \va{\mathfrak{h}} &on $ \Gamma_+ $,
		\end{cases*}
	\end{equation}
	where $ \bar{\vom} $ and $ \bar{\vj} $ are given by \eqref{def bar vom vj}, and
	\begin{equation}\label{div-curl nonlinear vhh""}
		\begin{cases*}
			\div \vhh = 0 \qc \curl \vhh = \vb{0} &in $ \Om_t^- $, \\
			\vhh \vdot \vn = 0 &on $ \Gt $, \\
			\wt{\vn}_- \cp \vhh = \vu{J} &on $ \Gamma_- $.
		\end{cases*}
	\end{equation}

	\subsubsection{Iteration Mapping}
	Suppose that  $ \vu{J}$ is a tangential vector field on $ \Gamma_- $ satisfying \eqref{J'}-\eqref{compatibility J_hat}, and there exists a positive constant $M_*$ so that
	\begin{equation}
		\sup_{t\in[0, T^*]} \qty(\abs{\vu{J}}_{H^{\kk-\frac{1}{2}}(\Gamma_-)} + \abs{\pd_t\vu{J}}_{H^{\kk-\frac{3}{2}}(\Gamma_-)}) \le M_*
	\end{equation} 
	Consider the following function space:
	\begin{defi}
		For given positive constants $ T, M_0, M_1, M_2, M_3, c_0$, and $ \mathfrak{s}_0 $, define $ \mathfrak{X} $ to be the collection of $ \qty(\ka, \vom_*, \vj_*, \va{\mathfrak{v}}, \va{\mathfrak{h}}) $ satisfying:
		\begin{equation}
			\abs{\ka(0) - \kappa_{*}}_{\H{\kk-\frac{5}{2}}} \le \delta_1,
		\end{equation}
		\begin{equation}
			\abs{\qty(\pd_t \ka)(0)}_{\H{\kk-\frac{7}{2}}}, \norm{\vom_*(0)}_{H^{\kk-2}(\Om_*^+)}, \norm{\vj_*(0)}_{H^{\kk-2}(\Om_*^+)}, \abs{\va{\mathfrak{v}}(0)}, \abs{\va{\mathfrak{h}}(0)} \le M_0,
		\end{equation}
		\begin{equation}\label{M1}
			\sup_{t \in [0, T]} \qty(\abs{\ka}_{\H{\kk-\frac{3}{2}}}, \abs{\pd_t \ka}_{\H{\kk-\frac{5}{2}}}, \norm{\vom_*}_{H^{\kk-1}(\Om_*^+)}, \norm{\vj_*}_{H^{\kk-1}(\Om_*^+)}, \abs{\va{\mathfrak{v}}}, \abs{\va{\mathfrak{h}}} ) \le M_1,
		\end{equation}
		\begin{equation}\label{M2}
			\sup_{t \in [0, T]} \qty(\norm{\pd_t\vom_*}_{H^{\kk-2}(\Om_*^+)}, \norm{\pd_t\vj_*}_{H^{\kk-2}(\Om_*^+)}, \abs{\pd_t \va{\mathfrak{v}}}, \abs{\pd_t\va{\mathfrak{h}}}) \le M_2,
		\end{equation}
		and
		\begin{equation}\label{M3}
			\sup_{t \in [0, T]} \abs{\pd^2_{tt}\ka}_{\H{\kk-\frac{7}{2}}} \le a^2 M_3 \ (\text{here $ a $ is the constant in the definition of $ \ka $}).
		\end{equation}
		For $ \Upsilon(\vh, \vhh) $ defined by \eqref{def Upsilon vh vhh},
		\begin{equation*}\label{key}
			\Upsilon\qty(\vh, \vhh) \ge \mathfrak{s}_0
		\end{equation*}
		holds uniformly for $ 0 \le t \le T $.
		In addition, $ \dist(\Gamma_t, \Gamma_\pm) \ge c_0 > 0 $, and the compatibility conditions
		\begin{equation}\label{compatibility}
			\int_{\Gamma_+} \wt{\vn}_+ \vdot \vom_{*} \dd{S_+} = \int_{\Gamma_+} \wt{\vn}_+ \vdot \vj_{*} \dd{S_+} = 0
		\end{equation}
		hold for all $ t\in [0, T] $.
	\end{defi}
	
	As for the initial data, we assume that $ 0 < \epsilon \ll \delta_1 $ and $ A > 0 $, and consider:
	\begin{equation*}
		\mathfrak{I}(\epsilon, A) \coloneqq\qty{\qty\big(\ini{\ka}, \ini{\pd_t\ka}, \ini{\vom_*}, \ini{\vj_*}), \ini{\va{\mathfrak{v}}}, \ini{\va{\mathfrak{h}} }},
	\end{equation*}
	for which
	\begin{gather*}
		\abs{\ini{\ka}-\kappa_{*+}}_{\H{\kk-\frac{3}{2}}}<\epsilon; \\ \abs{\ini{\pd_t\ka}}_{\H{\kk-\frac{5}{2}}},\
		\norm{\ini{\vom_*}}_{H^{\kk-1}(\Om_*^+)},\ \norm{\ini{\vj_*}}_{H^{\kk-1}(\Om_*^+)},\ \abs{\ini{\va{\mathfrak{v}}}},\ \abs{\ini{\va{\mathfrak{h}}}} < A,
	\end{gather*}
	\begin{equation*}\label{key}
		\Upsilon\qty(\ini{\vh}, \ini{\vhh}) \ge 2\mathfrak{s}_0,
	\end{equation*}
	and
	\begin{equation*}
		\dist\qty(\Gt, \Gamma_\pm) \ge 2c_0.
	\end{equation*}
	Furthermore, $ \ini{\vom_*} $ and $ \ini{\vj_*} $ satisfy the compatibility conditions:
	\begin{equation*}
		\int_{\Gamma_+} \wt{\vn}_+ \vdot {\ini{\vom_{*}}} \dd{S_+} = \int_{\Gamma_+} \wt{\vn}_+ \vdot {\ini{\vj_{*}}} \dd{S_+} = 0.
	\end{equation*}
	
	Then, as in \textsection~\ref{sec itetarion map}, define the iteration mappings to be:
	\begin{equation}\label{eqn (n+1)ka""}
		\begin{cases*}
			\pd^2_{tt}\itm{\ka} + \opC_0\qty(\itn{\ka}, \itn{\pd_t\ka}, \itn{\vv_{*}}, \itn{\vh_{*}}, \itn{\vhh_*})\itm{\ka} \\ \qquad = \opF\qty(\itn{\ka})\pd_t \itn{\vom_*} + \opG\qty(\itn{\ka}, \itn{\pd_t\ka}, \itn{\vom_{*}}, \itn{\vj_{*}}, \itn{\va{\mathfrak{v}}}, \itn{\va{\mathfrak{h}}}, \vu{J}) + \mathscr{S}\qty(\itn{\ka})\pd_t \itn{\va{\mathfrak{v}}} \\
			\itm{\ka}(0) = \ini{\ka}, \quad \itm{\pd_t\ka}(0) = \ini{\pd_t\ka};
		\end{cases*}
	\end{equation}
	and
	\begin{equation}\label{eqn linear (n+1) vom vj""}
		\begin{cases*}
			\pd_t\itm{\vom} + \DD_{\itn{\vv}}\itm{\vom} - \DD_{\itn{\vh}}\itm{\vj} = \DD_{\itm{\vom}}\itn{\vv} - \DD_{\itm{\vj}}\itn{\vh}, \\
			\pd_t\itm{\vj} + \DD_{\itn{\vv}}\itm{\vj} - \DD_{\itn{\vh}}\itm{\vom} \\
			\qquad  =  \DD_{\itm{\vj}}\itn{\vv} - \DD_{\itm{\vom}}\itn{\vh}
			- 2\tr(\grad\itn{\vv} \cp \grad \itn{\vh}), \\
			\itm{\vom}(0) = \Pb \qty(\ini{\vom_{*}} \circ (\X_{\itn{\Gamma_0}}^{+})^{-1}), \quad \itm{\vj}(0) = \Pb\qty(\ini{\vj_{*}}\circ (\X_{\itn{\Gamma_0}}^{+})^{-1}).
		\end{cases*}
	\end{equation}
	where $ \qty(\itn{\vv}, \itn{\vh}, \itn{\vhh}) $ are induced by $ \qty(\itn{\ka}, \itn{\vom_{*}}, \itn{\vj_{*}}, \itn{\va{\mathfrak{v}}}, \itn{\va{\mathfrak{h}}}, \vu{J}) $ via solving \eqref{div-curl nonlinear v""}-\eqref{div-curl nonlinear vhh""}, the tangential vector fields $ \itn{\vv_{*}} $, $ \itn{\vh_{*}} $, and $ \itn{\vhh_*} $ on $ \Gs $ are defined by \eqref{def itn vv_*},	and the current-vorticity equations are considered in the domain $ \itn{\Om_t^+} $.
	
	Define
	\begin{equation}\label{eqn dvt va v"}
		\itm{\va{\mathfrak{v}}}(t) \coloneqq \ini{\va{\mathfrak{v}}} + \int_0^t\int_{\Gamma_+} - \DD_{\itn{\vv}}\itn{\vv} - \grad\itn{p} + \DD_{\itn{\vh}}\itn{\vh} \dd{S_+} \dd{t'},
	\end{equation}
	\begin{equation}\label{eqn dvt va h"}
		\itm{\va{\mathfrak{h}}}(t) \coloneqq \ini{\va{\mathfrak{h}}} + \int_0^t \int_{\Gamma_+} \DD_{\itn{\vh}}\itn{\vv} - \DD_{\itn{\vv}} \itn{\vh} \dd{S_+} \dd{t'},
	\end{equation}
	\begin{equation}\label{def itm vom vj}
		\itm{\vom_*}\coloneqq \itm{\vom}\circ\X_{\itn{\Gt}}^+, \qand \itm{\vj_*}\coloneqq \itm{\vj}\circ\X_{\itn{\Gt}}^+,
	\end{equation}
	where $ \itn{p} $ is given by \eqref{decom pressure'} with $ \qty(\itn{\ka}, \itn{\vv}, \itn{\vh}, \itn{\vhh}) $ plugged in.
	
	To verify that the iteration mapping is indeed a map from $\mathfrak{X}$ to $\mathfrak{X}$, one can first check that \eqref{M1}-\eqref{compatibility} still hold for the output by the arguments in \textsection~\ref{sec itetarion map}. Due to the different geometrical requirements, it remains only to check that:
	\begin{equation}\label{dist est}
		\dist\qty(\itm{\Gt}, \Gamma_\pm) \ge 2c_0 - C_*T\abs{\pd_t\itm{\ka}}_{C^0_tH^{\kk-\frac{5}{2}}(\Gs)} \ge 2c_0 - TQ(M_1) \ge c_0,
	\end{equation}
	and
	\begin{equation}\label{Upsi est}
		\abs{\Upsilon\qty(\itm{\vh}, \itm{\vhh}) - \Upsilon\qty(\ini{\vh}, \ini{\vhh})} \le T Q(M_1, M_2, M_*) \le \mathfrak{s}_0,
	\end{equation}
	provided that $T$ is sufficiently small.
	
	Then, with the same notations as in \textsection~\ref{sec itetarion map}:
	\begin{equation}\label{key}
		\begin{split}
			&\mathfrak{T}\qty(\qty[\ini{\ka}, \ini{\pd_t\ka}, \ini{\vom_{*}}, \ini{\vj_{*}}, \ini{\va{\mathfrak{v}}}, \ini{\va{\mathfrak{h}}}], \qty[\itn{\ka}, \itn{\vom_{*}}, \itn{\vj_{*}}, \itn{\va{\mathfrak{v}}}, \itn{\va{\mathfrak{h}}}]) \\
			&\coloneqq \qty(\itm{\ka}, \itm{\vom_{*}}, \itm{\vj_{*}}, \itm{\va{\mathfrak{v}}}, \itm{\va{\mathfrak{h}}}),
		\end{split}
	\end{equation}
	the following proposition is obtained:
	\begin{prop}
		Suppose that $ k \ge 3 $. For any $ 0 < \epsilon \ll \delta_0 $, $ A > 0 $, and $M_* \ge 0$, there are positive constants $ M_0, M_1, M_2, M_3, c_0$, and $\mathfrak{s}_0 $, so that for small $ T > 0 $,
		\begin{equation*}
			\mathfrak{T}\qty\Big{\qty[\ini{\ka}, \ini{\pd_t\ka}, \ini{\vom_{*}}, \ini{\vj_{*}}, \ini{\va{\mathfrak{v}}}, \ini{\va{\mathfrak{h}}}], \qty[\ka, \vom_{*}, \vj_{*}, \va{\mathfrak{v}}, \va{\mathfrak{h}}]} \in \mathfrak{X},
		\end{equation*}
		holds for any $ \qty(\ini{\ka}, \ini{\pd_t\ka}, \ini{\vom_{*}}, \ini{\vj_{*}}, \ini{\va{\mathfrak{v}}}, \ini{\va{\mathfrak{h}}}) \in \mathfrak{I}(\epsilon, A) $ and $ \qty(\ka, \vom_{*}, \vj_{*}, \va{\mathfrak{v}}, \va{\mathfrak{h}}) \in \mathfrak{X} $.
	\end{prop}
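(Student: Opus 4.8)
The plan is to mimic the argument of \textsection~\ref{sec itetarion map} step by step, the genuinely new features being the constant tangential unknowns $\ini{\va{\mathfrak{v}}},\ini{\va{\mathfrak{h}}}$ together with the operator $\mathscr{S}(\ka)$, the modified topology of $\Om_t^\pm$ (handled by the div-curl results of \textsection~\ref{sec prelimi'}), and---most importantly---the two non-degeneracy constraints $\Upsilon\ge\mathfrak{s}_0$ and $\dist(\Gt,\Gamma_\pm)\ge c_0$ now built into $\mathfrak{X}$. Accordingly I would first fix $c_0$ and $\mathfrak{s}_0$ from the data class $\mathfrak{I}(\epsilon,A)$, which provides the factor-two slack $\Upsilon\ge 2\mathfrak{s}_0$, $\dist\ge 2c_0$, and only then select $M_1\gg\epsilon+A+M_0+\abs{\kappa_{*}}_{\H{\kk-\frac32}}$, next $M_2\gg M_1$, next $M_3$ large compared with all of these, and finally $T$ small relative to everything---exactly the ordering used in the $\alpha=1$ case.

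Concretely: (i) $\itn{\ka}$ induces $\itn{\Gt}\in\Lambda_*$ with $\dist(\itn{\Gt},\Gamma_\pm)\ge c_0$ provided $M_1T\ll1$, and solving \eqref{div-curl nonlinear v""}--\eqref{div-curl nonlinear vhh""} recovers $\itn{\vv},\itn{\vh}\in H^{\kk}(\itn{\Om_t^+})$ and $\itn{\vhh}\in H^{\kk}(\itn{\Om_t^-})$ with the uniform bounds of the topology-adapted div-curl lemma, after which $\itn{\vv_*},\itn{\vh_*},\itn{\vhh_*}$ are defined by \eqref{def itn vv_*}. (ii) Lemma~\ref{lem 3.4} in its $k\ge3,\alpha=0$ form, together with \eqref{est opS""}, bounds the right-hand side of \eqref{eqn (n+1)ka""} in $\H{\kk-\frac52}$ by $a^2Q(M_1,M_*)+Q(M_1)M_2$. (iii) The linear estimate \eqref{est linear eqn ka""} with $l=k-2$ then gives $\sup_t(\abs{\itm{\ka}}_{\H{\kk-\frac32}}+\abs{\itm{\pd_t\ka}}_{\H{\kk-\frac52}})\le M_1$ once $T\ll1$ and $M_1\gg\epsilon,A,M_0$; reading $\pd^2_{tt}\itm{\ka}$ directly from \eqref{eqn (n+1)ka""} and enlarging $M_3$ yields $\abs{\pd^2_{tt}\itm{\ka}}_{\H{\kk-\frac72}}\le a^2M_3$. (iv) Passing to $\vxi,\veta$ in \eqref{eqn linear (n+1) vom vj""} and invoking Proposition~\ref{prop linear vom vj} controls $\itm{\vom},\itm{\vj}$ in $H^{\kk-1}$, hence $\itm{\vom_*},\itm{\vj_*}$ and $\pd_t\itm{\vom_*},\pd_t\itm{\vj_*}$ within the $M_1,M_2$ budgets for $T\ll1$; likewise \eqref{eqn dvt va v"}--\eqref{eqn dvt va h"} give $\abs{\itm{\va{\mathfrak{v}}}}+\abs{\itm{\va{\mathfrak{h}}}}\le A+tQ(M_1,M_*)$ and $\abs{\pd_t\itm{\va{\mathfrak{v}}}}+\abs{\pd_t\itm{\va{\mathfrak{h}}}}\le Q(M_1,M_*)$. (v) Taking the divergence of \eqref{eqn linear (n+1) vom vj""} and using $\div\itn{\vv}=\div\itn{\vh}=0$ together with the solenoidal initial data, the characteristic method keeps $\itm{\vom},\itm{\vj}$ divergence-free; since $\itn{\vv}\vdot\wt{\vn}_+=0$ on $\Gamma_+$, the normal traces $\wt{\vn}_+\vdot\itm{\vom}$ and $\wt{\vn}_+\vdot\itm{\vj}$ satisfy transport equations tangent to $\Gamma_+$, so the flux conditions \eqref{compatibility} propagate from $t=0$. (vi) Finally \eqref{dist est} and \eqref{Upsi est}, both of short-time type, preserve $\dist(\itm{\Gt},\Gamma_\pm)\ge c_0$ and $\Upsilon(\itm{\vh},\itm{\vhh})\ge\mathfrak{s}_0$.

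The main obstacle will be making all of these estimates close simultaneously at the reduced regularity---$\ka$ is now measured at order $\kk-\frac32$ rather than $\kk-1$---so that one single small $T$, chosen after the $M_j$'s are frozen, renders every inequality strict; in particular, the bound for $\pd^2_{tt}\itm{\ka}$ must not feed back destructively through the coefficients of $\opC_0$ in \eqref{eqn (n+1)ka""}, and step (vi) requires the time-derivative norms generated in (iii)--(iv) to be a priori dominated by the $M_j$'s, which is precisely why the selection order $c_0,\mathfrak{s}_0\to M_1\to M_2\to M_3\to T$ cannot be permuted. Controlling the new term $\mathscr{S}(\itn{\ka})\pd_t\itn{\va{\mathfrak{v}}}$ via \eqref{est opS""}--\eqref{est var opS""} and propagating \eqref{compatibility} through the transport step are the remaining delicate points, both of which become routine once the div-curl theory of \textsection~\ref{sec prelimi'} is available.
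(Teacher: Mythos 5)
Your proposal is correct and follows essentially the same route as the paper: the paper likewise reduces everything to the arguments of \textsection~\ref{sec itetarion map} (recovering the fields by the topology-adapted div-curl lemmas, closing the $M_1,M_2,M_3$ bounds via Lemma~\ref{lem 3.4}, \eqref{est opS""}, the linear estimate \eqref{est linear eqn ka""} with $l=k-2$, and Proposition~\ref{prop linear vom vj}), and then notes that the only genuinely new points are the short-time preservation of the two geometric constraints, which is exactly your step (vi) via \eqref{dist est} and \eqref{Upsi est}. Your additional remarks on the ordering of the constants and on propagating the flux conditions \eqref{compatibility} along the transport step are consistent with, and slightly more explicit than, what the paper records.
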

	
	To verify that the iteration map is indeed a contraction, as in \textsection~\ref{sec contra ite}, we assume that $$ \qty(\itn{\ka}(\beta), \itn{\vom_{*}}(\beta), \itn{\vj_{*}}(\beta), \itn{\va{\mathfrak{v}}}(\beta), \itn{\va{\mathfrak{h}}}(\beta) ) \subset \mathfrak{X} $$ and  $$ \qty(\ini{\ka}(\beta), \ini{\pd_t\ka}(\beta), \ini{\vom_{*}}(\beta), \ini{\vj_{*}}(\beta), \ini{\va{\mathfrak{v}}}(\beta), \ini{\va{\mathfrak{h}}}(\beta)) \subset \mathfrak{I}(\epsilon, A) $$ are two families of data depending on a parameter $ \beta $.
	
	Define $ \qty(\itm{\ka}(\beta), \itm{\vom_{*}}(\beta), \itm{\vj_{*}}(\beta), \itm{\va{\mathfrak{v}}}(\beta), \itm{\va{\mathfrak{h}}}(\beta) ) $ to be the output of the iteration map. Then, by applying $ \pdv*{\beta} $ to \eqref{eqn (n+1)ka""}-\eqref{def itm vom vj}, one has the variational problems \eqref{eqn beta n+1}-\eqref{eqn g2} as well as:
	\begin{equation}\label{key}
		\begin{cases*}
			\pd^2_{tt} \pd_\beta \itm{\ka} + \itn{\opC}\pd_\beta\itm{\ka} \\ 
			\qquad  = - \qty(\pd_\beta\itn{\opC})\itm{\ka} + \pd_\beta \qty(\itn{\opF}\itn{\pd_t\vom_*} + \itn{\opG} + \itn{\mathscr{S}}\pd_t\itn{\va{\mathfrak{v}}}), \\
			\pd_\beta\itm{\ka}(0) = \pd_\beta\ini{\ka}(\beta), \quad \pd_t\qty(\pd_\beta\itm{\ka})(0) = \pd_\beta\ini{\pd_t\ka}(\beta),
		\end{cases*}
	\end{equation}
	\begin{equation}\label{eqn var pdt va v}
		\begin{split}
			\pd_t\pd_\beta \itm{\va{\mathfrak{v}}} =\pd_\beta\ini{\va{\mathfrak{v}}}+\int_0^t \int_{\Gamma_+} \Dbt \qty(- \DD_{\itn{\vv}}\itn{\vv} -\grad\itn{p} + \DD_{\itn{\vh}}\itn{\vh}) \dd{{S}_+} \dd{t'},
		\end{split}
	\end{equation}
	and
	\begin{equation}\label{eqn var pdt va h}
		\pd_t\pd_\beta\itm{\va{\mathfrak{h}}} = \pd_\beta\ini{\va{\mathfrak{h}}} +  \int_0^t \int_{\Gamma_+} \Dbt\qty(\DD_{\itn{\vh}}\itn{\vv} - \DD_{\itn{\vv}} \itn{\vh}) \dd{{S}_+} \dd{t'}.
	\end{equation}
	
	Consider the energy functionals:
	\begin{equation}\label{key}
		\begin{split}
			\itn{\E}(\beta)\coloneqq &\sup_{t \in [0, T]}\left(\abs{\pd_\beta\itn{\ka}}_{\H{\kk-\frac{5}{2}}} + \abs{\pd_\beta\itn{\pd_t\ka}}_{\H{\kk-\frac{7}{2}}} + \right. \\
			&\qquad\qquad \left.+ \norm{\pd_\beta\itn{\vom_{*}}}_{H^{\kk-2}(\Om_*^+)} + \norm{\pd_\beta\itn{\vj_{*}}}_{H^{\kk-2}(\Om_*^+)} +  \right. \\
			&\qquad\qquad\qquad \left. + \norm{\pd_\beta\pd_t\itn{\vom_{*}}}_{H^{\kk-4}(\Om_*^+)} + \abs{\pd_\beta\itn{\va{\mathfrak{v}}}} + \abs{\pd_\beta\itn{\va{\mathfrak{h}}}} \right),
		\end{split}
	\end{equation}
	and
	\begin{equation}\label{key}
		\begin{split}
			\ini{\E}(\beta)\coloneqq &\sup_{t \in [0, T]}\left(\abs{\pd_\beta\ini{\ka}}_{\H{\kk-\frac{5}{2}}} + \abs{\pd_\beta\ini{\pd_t\ka}}_{\H{\kk-\frac{7}{2}}} + \right. \\
			&\qquad\qquad \left.+ \norm{\pd_\beta\ini{\vom_{*}}}_{H^{\kk-2}(\Om_*^+)} + \norm{\pd_\beta\ini{\vj_{*}}}_{H^{\kk-2}(\Om_*^+)} +  \right. \\
			&\qquad\qquad\qquad \left.  + \abs{\pd_\beta\ini{\va{\mathfrak{v}}}} + \abs{\pd_\beta\ini{\va{\mathfrak{h}}}} \right).
		\end{split}
	\end{equation}
	
	It follows from \eqref{eqn var pdt va v}-\eqref{eqn var pdt va h}, \eqref{est var opG""}-\eqref{est var opS""}, and the arguments in \textsection~\ref{sec contra ite} that
	\begin{equation}\label{key}
		\itm{\E} \le \frac{1}{2}\itn{\E} + Q(M_1)\ini{\E},
	\end{equation} 
	provided that $ T $ is sufficiently small. That is, the following proposition holds:
	\begin{prop}\label{prop fixed point""}
		Assume that $ k \ge 3 $. For any $ 0 < \epsilon \ll \delta_0 $, $ A > 0 $, and $M_* \ge 0$, there are positive constants $ M_0, M_1, M_2, M_3, c_0$, and $ \mathfrak{s}_0 $ so that if $ T $ is small enough, there is a map $ \mathfrak{S} : \mathfrak{I}(\epsilon, A) \to  \mathfrak{X} $ such that
		\begin{equation}\label{key}
			\mathfrak{T}\qty{\mathfrak{x}, \mathfrak{S(x)}} = \mathfrak{S(x)},
		\end{equation}
		for each $ \mathfrak{x} = \qty(\ini{\ka}, \ini{\pd_t\ka}, \ini{\vom_{*}}, \ini{\vj_{*}}, \ini{\va{\mathfrak{v}}}, \ini{\va{\mathfrak{h}}}) \in \mathfrak{I}(\epsilon, A)$.
	\end{prop}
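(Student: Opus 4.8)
The plan is to run a contraction-mapping argument for $\mathfrak{T}\{\mathfrak{x},\cdot\}$ on $\mathfrak{X}$, measuring differences in the lower-regularity norms bundled into $\itn{\E}$ rather than in the high-regularity norms defining $\mathfrak{X}$. First I would fix $0<\epsilon\ll\delta_0$, $A>0$, $M_*\ge 0$, and choose $M_1$ large (compared with $\epsilon$, $A$, $M_*$, $\abs{\kappa_{*}}_{\H{\kk-\frac{5}{2}}}$), then $M_2,M_3$ large, then $c_0,\mathfrak{s}_0$, and finally $T>0$ small --- in this order --- so that both the preceding self-mapping proposition and the contraction estimate
\[
\itm{\E}\le\tfrac12\,\itn{\E}+Q(M_1)\,\ini{\E}
\]
obtained from \eqref{est var opG""}--\eqref{est var opS""} and the arguments of \textsection\ref{sec contra ite} are available. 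For a fixed datum $\mathfrak{x}=(\ini{\ka},\ini{\pd_t\ka},\ini{\vom_*},\ini{\vj_*},\ini{\va{\mathfrak{v}}},\ini{\va{\mathfrak{h}}})\in\mathfrak{I}(\epsilon,A)$ I would pick any $z^{(0)}\in\mathfrak{X}$ (for instance the time-independent extension of $\mathfrak{x}$) and set $z^{(n+1)}:=\mathfrak{T}\{\mathfrak{x},z^{(n)}\}$; the self-mapping proposition guarantees $z^{(n)}\in\mathfrak{X}$ for all $n$, so the entire sequence enjoys the uniform high-regularity bounds \eqref{M1}--\eqref{M3} and the geometric constraints $\dist(\Gt,\Gamma_\pm)\ge c_0$, $\Upsilon(\vh,\vhh)\ge\mathfrak{s}_0$.

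Next I would convert the variational contraction estimate into a genuine difference estimate. Joining $z^{(n)}$ and $z^{(n-1)}$ by a path $z(\beta)$, $\beta\in[0,1]$, that stays in $\mathfrak{X}$ (or in a mildly convexified variant on which Lemmas \ref{lem3.1}--\ref{lem 3.4}, \eqref{est opS""}--\eqref{est var opS""}, and the linear estimate \eqref{est linear eqn ka""} remain valid) with $\mathfrak{x}$ held fixed, one has $\ini{\E}\equiv 0$ along the path, and integrating the bounds on the $\partial_\beta$-quantities in the norms of $\itn{\E}$ gives $\norm{z^{(n+1)}-z^{(n)}}_{\sharp}\le\tfrac12\norm{z^{(n)}-z^{(n-1)}}_{\sharp}$, where $\norm{\cdot}_{\sharp}$ denotes the lower-order norm bundle $\big(\abs{\cdot}_{\H{\kk-\frac{5}{2}}},\abs{\cdot}_{\H{\kk-\frac{7}{2}}},\norm{\cdot}_{H^{\kk-2}(\Om_*^+)},\norm{\cdot}_{H^{\kk-4}(\Om_*^+)},\abs{\cdot}_{\R^2}\big)$ on $(\ka,\pd_t\ka,\vom_*,\vj_*,\va{\mathfrak{v}},\va{\mathfrak{h}})$. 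Hence $\{z^{(n)}\}$ is Cauchy in $\norm{\cdot}_{\sharp}$ and converges in that topology to some $z^{(\infty)}$. Since the sequence is bounded in the high-regularity topology, weak-$*$ compactness together with interpolation shows that $z^{(\infty)}$ still satisfies every bound defining $\mathfrak{X}$; moreover $\norm{\cdot}_{\sharp}$-convergence of $\ka$ upgrades, via the uniform $\H{\kk-\frac{3}{2}}$ bound and Sobolev embedding, to $C^1$-convergence of the interfaces $\Gt$ and $C^0$-convergence of $\vv,\vh,\vhh$ (the last through the div-curl solution maps of \textsection\ref{sec prelimi'}), so the open constraints $\dist(\Gt,\Gamma_\pm)\ge c_0$, $\Upsilon(\vh,\vhh)\ge\mathfrak{s}_0$ and the compatibility conditions \eqref{compatibility} pass to the limit. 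Thus $z^{(\infty)}\in\mathfrak{X}$.

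I would then pass to the limit in $z^{(n+1)}=\mathfrak{T}\{\mathfrak{x},z^{(n)}\}$: the continuity of $\mathfrak{T}\{\mathfrak{x},\cdot\}$ in $\norm{\cdot}_{\sharp}$ on bounded subsets of $\mathfrak{X}$ --- which is exactly what the contraction estimate encodes, together with the continuous dependence of the div-curl solutions and of $\opC_0,\opF,\opG,\mathscr{S}$ on their data and the linear well-posedness estimate \eqref{est linear eqn ka""} --- yields $z^{(\infty)}=\mathfrak{T}\{\mathfrak{x},z^{(\infty)}\}$. Uniqueness of the fixed point inside $\mathfrak{X}$ is immediate from the $\tfrac12$-contraction, so $\mathfrak{S}(\mathfrak{x}):=z^{(\infty)}$ is well-defined as a map $\mathfrak{S}:\mathfrak{I}(\epsilon,A)\to\mathfrak{X}$ with $\mathfrak{T}\{\mathfrak{x},\mathfrak{S}(\mathfrak{x})\}=\mathfrak{S}(\mathfrak{x})$. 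For continuous (indeed Lipschitz) dependence on $\mathfrak{x}$, I would apply the contraction estimate once more to a $\beta$-family $\mathfrak{x}(\beta)\subset\mathfrak{I}(\epsilon,A)$, for which $\ini{\E}$ now measures $\partial_\beta\mathfrak{x}(\beta)$; iterating $\itm{\E}\le\tfrac12\itn{\E}+Q(M_1)\ini{\E}$ and letting $n\to\infty$ gives $\norm{\partial_\beta\mathfrak{S}(\mathfrak{x}(\beta))}_{\sharp}\le 2Q(M_1)\norm{\partial_\beta\mathfrak{x}(\beta)}_{\sharp}$.

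The main obstacle is the two-norm structure: contraction holds only in the weaker norm $\norm{\cdot}_{\sharp}$, in which $\mathfrak{X}$ is neither closed nor complete, so the fixed point has to be produced by combining the Cauchy property in $\norm{\cdot}_{\sharp}$ with the uniform high-regularity bounds, and one must check that the geometric side conditions ($\dist(\Gt,\Gamma_\pm)\ge c_0$, $\Upsilon(\vh,\vhh)\ge\mathfrak{s}_0$) and the compatibility conditions \eqref{compatibility} are stable under this limit --- this is precisely where the Sobolev embeddings and the strong low-norm convergence of $\Gt,\vh,\vhh$ enter. A secondary technical point is constructing the path inside $\mathfrak{X}$ (or a convexified variant on which all the estimates of Lemmas \ref{lem3.1}--\ref{lem 3.4} and \eqref{est linear eqn ka""} survive) joining successive iterates, needed to turn the variational $\partial_\beta$-estimate into an honest difference estimate, together with the ordered choice of $M_0,\dots,M_3,c_0,\mathfrak{s}_0,T$ making the hypotheses of both the self-mapping proposition and the contraction estimate hold simultaneously.
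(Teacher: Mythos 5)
Your proposal is correct and follows essentially the same route as the paper: the self-mapping property plus the variational estimate $\itm{\E} \le \frac{1}{2}\itn{\E} + Q(M_1)\ini{\E}$ (obtained from \eqref{eqn var pdt va v}--\eqref{eqn var pdt va h}, \eqref{est var opG""}--\eqref{est var opS""} and the arguments of \textsection~\ref{sec contra ite}) is exactly what the paper invokes, and the fixed point is then extracted by the standard two-norm contraction scheme. You spell out details the paper leaves implicit (completeness in the lower-order norm, stability of the constraints $\dist(\Gt,\Gamma_\pm)\ge c_0$ and $\Upsilon(\vh,\vhh)\ge\mathfrak{s}_0$ under the limit, and the path joining successive iterates), all of which are consistent with the intended argument.
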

	
	\subsubsection{The Original MHD Problem}
	From the fixed point given in Proposition~\ref{prop fixed point""}, one can obtain the quantities $(\Gt, \vv, \vhh, \vhh)$ via solving the div-curl problems \eqref{div-curl nonlinear v""}-\eqref{div-curl nonlinear vhh""}. In order to check that it is indeed the solution to the original plasma-vacuum problem \eqref{MHD}-\eqref{eqn pM}, with the help of those arguments in \S~\ref{sec Back to ori p-v problem}, it remains only to check that
	\begin{equation}
		\int_{\Gamma_+} \vV \dd{S_+} = \vb{0} = \int_{\Gamma_+} \vH \dd{S_+},
	\end{equation}
	which follows from \eqref{eqn dvt va v"}-\eqref{eqn dvt va h"} and \eqref{def vV}-\eqref{def vH} easily.
	
	As a direct result, Theorem \ref{thm alpha=0 case"} holds.
	
	\subsection{Vanishing Surface Tension Limit}
	Assume that $k \ge 3$, $\Om \coloneqq \mathbb{T}\times(-1, 1)$, and the initial data satisfy the requirements in Theorem~\ref{thm vanishing surf limt"}. To show that the a priori estimates are uniform in the surface tension coefficient $\alpha$, one can consider the following energy functionals:
	\begin{equation}\label{key}
		\mathcal{E}_0 (t) \coloneqq \frac{1}{2}\int_{\Om_t^+} \abs{\vv}^2 + \abs{\vh}^2 \dd{x} + \int_\Gt \alpha^2 \dd{S_t} + \frac{1}{2}\int_{\Om_t^-} \abs*{\vhh}^2 \dd{x}, 
	\end{equation}
	\begin{equation}\label{E1}
		\mathcal{E}_1 (t) \coloneqq \int_\Gt \abs{\opd^{k-2}\n_+^{\frac{1}{2}}\Dt\kappa}^2 + \alpha^2 \abs{\opd^{k-1}\n_+^{\frac{1}{2}}\kappa}^2 + \abs{\opd^{k-2}\n_+^{\frac{1}{2}}\DD_\vh\kappa}^2 + \abs{\opd^{k-2}\n_+^\frac{1}{2}\DD_{\vhh}\kappa}^2 \dd{S_t},
	\end{equation}
	\begin{equation}\label{key}
		\mathcal{E}_2 (t) \coloneqq \norm{\vom}_{H^{\kk-1}(\Om_t^+)}^2 + \norm{\vj}_{H^{\kk-1}(\Om_t^+)}^2,
	\end{equation}
	\begin{equation}\label{key}
		\mathcal{E}_3 (t) \coloneqq \abs{\va{\mathfrak{v}}}^2 + \abs{\va{\mathfrak{h}}}^2
	\end{equation}
	and
	\begin{equation}\label{key}
		\mathcal{E} \coloneqq \mathcal{E}_0 + \mathcal{E}_1 + \mathcal{E}_2 + \mathcal{E}_3,
	\end{equation}
	where in \eqref{E1}, $\opd \coloneqq \qty(-\n_+^{\sfrac{1}{2}}\lap_\Gt\n_+^{\sfrac{1}{2}})^{\sfrac{1}{2}}$.
	
	Observe first that if $T $ is small enough (depending only on the initial data, not on $\alpha$), one may derive from \eqref{dist est}-\eqref{Upsi est} that
	\begin{equation}\label{vanish up dist}
		\Upsilon(\vh, \vhh) \ge \mathfrak{s}_0 > 0,
		\qand
		\dist\qty(\Gt, \Gamma_\pm) \ge c_0 > 0
	\end{equation}
	hold uniformly. Therefore, there exists a generic constant determined by $\Lambda_{*}$ and $c_0$, so that the following div-curl estimates hold (c.f. \cite{Cheng-Shkoller2017}):
	\begin{equation}\label{vani est v}
		\norm{\vv}_{H^{\kk}(\Om_t^+)} \lesssim_{\Lambda_{*}, c_0} \abs{\vn \vdot \lap_\Gt \vv}_{H^{\kk-\frac{5}{2}}(\Gt)} + \norm{\vom}_{H^{\kk-1}(\Om_t^+)} + \abs{\va{\mathfrak{v}}} + \norm{\vv}_{L^2(\Om_t^+)},
	\end{equation}
	\begin{equation}\label{vani est h}
		\norm{\vh}_{H^{\kk}(\Om_t^+)} \lesssim_{\Lambda_{*}, c_0} \abs{\vn \vdot \lap_\Gt \vh}_{H^{\kk-\frac{5}{2}}(\Gt)} + \norm{\vj}_{H^{\kk-1}(\Om_t^+)} + \abs{\va{\mathfrak{h}}} + \norm{\vh}_{L^2(\Om_t^+)},
	\end{equation}
	and
	\begin{equation}\label{vani est hh}
		\norm{\vhh}_{H^{\kk}(\Om_t^-)} \lesssim_{\Lambda_{*}, c_0} \abs{\vu{J}}_{H^{\kk-\frac{1}{2}}(\Gamma_-)}.
	\end{equation}
	
	For the estimates of the energies, one can first derive from \eqref{conserv physic energy}, \eqref{eqn pM}, and \eqref{eqn dt vhh} that
	\begin{equation}
		\abs{\dv{t}\mathcal{E}_0(t)} \lesssim_{\Lambda_{*}, c_0} \norm{\vv}_{H^{\kk}(\Om_t^+)}^2 + \abs{\pd_t\vu{J}}_{H^{\kk-\frac{3}{2}}(\Gamma_-)}^2 + \abs{\vu{J}}_{H^{\kk-\frac{1}{2}}(\Gamma_-)}^2.
	\end{equation}
	For $\mathcal{E}_1$, it follows from \eqref{eqn dt2 kappa} and the arguments in \S~\ref{sec linear ka} that
	\begin{equation}
		\abs{\dv{t}\mathcal{E}_1(t)} \le  Q\qty(\alpha\abs{\kappa}_{H^{\kk-1}(\Gt)}, \abs{\kappa}_{H^{\kk-\frac{3}{2}}(\Gt)}, \norm{(\vv, \vh)}_{H^{\kk}(\Om_{t}^+)}, \norm*{\vhh}_{H^{\kk}(\Om_{t}^-)}),
	\end{equation}
	with a generic polynomial $Q$ determined by $\Lambda_{*}, c_0$ and $\mathfrak{s}_0$. On the other hand, it is clear from \eqref{eqn dt kappa}, the fact that $\Gt \in \Lambda_{*}$, \eqref{vanish up dist}, and \eqref{equiv n lap} that
	\begin{equation}
		\abs{\Dt\kappa}_{H^{\kk-\frac{5}{2}}(\Gt)}^2 + \alpha^2\abs{\kappa}_{H^{\kk-1}(\Gt)}^2 + \abs{\kappa}_{H^{\kk-\frac{3}{2}}(\Gt)}^2 \lesssim_{\Lambda_{*}, \mathfrak{s}_0} 1+ \mathcal{E}_1.
	\end{equation}
	The estimates of $\mathcal{E}_2$ and $\mathcal{E}_3$ can be derived from \eqref{eqn pdt vom}-\eqref{eqn pdt vj} and \eqref{eqn dvt va v"}-\eqref{eqn dvt va h"} that
	\begin{equation}
		\abs{\dv{t}\mathcal{E}_2(t)} + \abs{\dv{t}\mathcal{E}_3(t)} \le Q\qty(\abs{\kappa}_{H^{\kk-\frac{3}{2}}(\Gt)}, \norm{\qty(\vv, \vh)}_{H^{\kk}(\Om_t^+)}).
	\end{equation}
	Furthermore, it is not difficult to derive from \eqref{vani est v}-\eqref{vani est h}, \eqref{est lap_gt vv}, and \eqref{est lap_gt vh} that
	\begin{equation}
		\norm{\vv}_{H^{\kk}(\Om_t^+)}^2 + \norm{\vh}_{H^{\kk}(\Om_t^+)}^2 \le Q(\mathcal{E}).
	\end{equation}
	
	In conclusion, the above arguments lead to
	\begin{equation}
		\mathcal{E}(t) \le \mathcal{E}(0) + \int_0^t Q \qty(\mathcal{E}(t'), \abs{\vu{J}(t')}_{H^{\kk-\frac{1}{2}}(\Gamma_-)}, \abs{\pd_t\vu{J}(t')}_{H^{\kk-\frac{3}{2}}(\Gamma_-)}) \dd{t'},
	\end{equation}
	for a generic polynomial $Q$ depending on $\Lambda_{*}, c_0, \mathfrak{s}_0$, but not on $\alpha$, which yields Theorem \ref{thm vanishing surf limt"}.

	\bibliographystyle{alpha}
	\bibliography{ref.bib}
\end{document}